\newtheorem{condition}[theorem]{Condition}
\numberwithin{equation}{section}
\newcommand{\sint}{{\textstyle\int}}
\let\liminf\relax
\let\limsup\relax
\DeclareMathOperator*{\liminf}{liminf}
\DeclareMathOperator*{\limsup}{limsup}
\DeclareMathOperator*{\lspan}{span}
\DeclareMathOperator*{\Bernoulli}{Bernoulli}
\DeclareMathOperator*{\Poisson}{Poisson}
\newcommand{\Normal}{\mathcal{N}}
\DeclareMathOperator*{\argmin}{arg\,min}
\DeclareMathOperator*{\argmax}{arg\,max}
\newcommand{\R}{\mathbb{R}}
\newcommand{\N}{\mathbb{N}}
\newcommand{\Z}{\mathbb{Z}}
\newcommand{\E}{\mathrm{E}}
\renewcommand{\Pr}{\mathbb{P}}
\newcommand{\I}{\mathds{1}}
\newcommand{\X}{\mathcal{X}}
\newcommand{\Y}{\mathcal{Y}}
\renewcommand{\epsilon}{\varepsilon}
\newcommand{\T}{\mathtt{T}}
\newcommand{\branch}[4]{
\left\{
	\begin{array}{ll}
        #1  & #2 \\[0.5em]
		#3 & #4
	\end{array}
\right.
}
\def\app#1#2{%
  \mathrel{%
    \setbox0=\hbox{$#1\sim$}%
    \setbox2=\hbox{%
      \rlap{\hbox{$#1\propto$}}%
      \lower1.3\ht0\box0%
    }%
    \raise0.25\ht2\box2%
  }%
}
\begin{document}

\title{Asymptotic normality, concentration, and coverage of generalized posteriors}

\author{\name Jeffrey W. Miller \email jwmiller@hsph.harvard.edu \\
       \addr Department of Biostatistics\\
       Harvard University\\
       % Harvard T.H. Chan School of Public Health\\
       Boston, MA 02115, USA}

\editor{TBD}

\maketitle

\begin{abstract}%   <- trailing '%' for backward compatibility of .sty file
Generalized likelihoods are commonly used to obtain consistent estimators with attractive computational and robustness properties. Formally, any generalized likelihood can be used to define a generalized posterior distribution, but an arbitrarily defined ``posterior'' cannot be expected to appropriately quantify uncertainty in any meaningful sense. In this article, we provide sufficient conditions under which generalized posteriors exhibit concentration, asymptotic normality (Bernstein--von Mises), an asymptotically correct Laplace approximation, and asymptotically correct frequentist coverage. We apply our results in detail to generalized posteriors for a wide array of generalized likelihoods, including pseudolikelihoods in general, the Gaussian Markov random field pseudolikelihood, the fully observed Boltzmann machine pseudolikelihood, the Ising model pseudolikelihood, the Cox proportional hazards partial likelihood, and a median-based likelihood for robust inference of location. Further, we show how our results can be used to easily establish the asymptotics of standard posteriors for exponential families and generalized linear models. We make no assumption of model correctness so that our results apply with or without misspecification.
\end{abstract}

\begin{keywords}
% Boltzmann machine,
% Gaussian Markov random field,
Bayesian theory,
consistency,
misspecification,
pseudolikelihood,
robustness
\end{keywords}

%%%%%%%%%%%%%%%%%%%%%%%%%%%%%%%%%%%%%%%%%%%%%%%%%%%%%%%%%%%%%%%%%%%%%%%%%%%%%%%%

\section{Introduction}
\label{section:introduction}

Many statistical estimation methods are based on maximizing a generalized likelihood function
such as a pseudolikelihood, partial likelihood, or composite likelihood.
Generalized likelihood functions are often advantageous in terms of computation or robustness while still having consistency guarantees,
even though they do not necessarily correspond to the standard likelihood of a probabilistic model.

Formally, any generalized likelihood can be used to construct a generalized posterior
proportional to the generalized likelihood times a prior.
Generalized posteriors have been proposed based on a variety of generalized likelihoods, including 
composite likelihoods \citep{smith2009extended,pauli2011bayesian,ribatet2012bayesian,friel2012bayesian},
restricted likelihoods \citep{pettitt1983likelihood,doksum1990consistent,hoff2007extending,lewis2014bayesian},
partial likelihoods \citep{raftery1996accounting,sinha2003bayesian,kim2009bayesian,ventura2016pseudo},
substitution likelihoods \citep{lavine1995approximate,dunson2005approximate},
modular likelihoods \citep{liu2009modularization,jacob2017better},
quasi-likelihoods \citep{ventura2010default},
generalized method of moments likelihoods \citep{yin2009bayesian},
loss-based likelihoods \citep{jiang2008gibbs,zhang2006information,holmes2016general}, and more.
% see more examples in pauli2011bayesian: profile likelihood, empirical likelihood, ...

Generalized posteriors have been used in a range of applications, including
spatial statistics \citep{ribatet2012bayesian},
social networks \citep{friel2012bayesian},
neural networks \citep{hyvarinen2006consistency},
protein modeling \citep{zhou2009bayesian}, 
computer model emulators of physical processes \citep{liu2009modularization},
copula models \citep{hoff2007extending},
survival analysis \citep{raftery1996accounting},
infrastructure networks \citep{bouranis2017efficient},
longitudinal studies \citep{yin2009bayesian}, and
survey sampling \citep{williams2018bayesian}.
% actuarial analysis \citep{lewis2014bayesian},
% genotoxicity studies \citep{dunson2005approximate},
Although various theoretical guarantees have been provided in various cases, 
bespoke theory has been needed to verify whether a given generalized posterior will be valid for statistical inference.
% a number of questions remain regarding their theoretical validity.
% generalized posteriors have not yet been widely adopted,
% perhaps due to questions regarding their theoretical validity.
% To establish theoretical validity, two fundamental desiderata are
% (a) concentration at the true parameter if one exists, that is, consistency, 
% and (b) appropriate uncertainty quantification, for instance, correct frequentist coverage of credible sets.

% In Bayesian statistics,
% Consequently, generalized posteriors have only been used sporadically
% and have not yet been widely adopted among Bayesians.
% perhaps because of concerns over theoretical validity.
% and (b) correct calibration of the amount of concentration, in order to appropriately represent uncertainty.

% and because other techniques such as Markov chain Monte Carlo are often used to try to deal with intractable likelihoods.

In this article, we provide new theoretical results on the asymptotic validity of generalized posteriors.
We provide a range of sufficient conditions for concentration (Section~\ref{section:concentration}),
Bernstein--von Mises asymptotic normality and the Laplace approximation (Section~\ref{section:BVM}),
and asymptotic frequentist coverage of credible sets (Section~\ref{section:coverage}) for generalized posteriors.
For generalized posteriors derived from composite likelihoods---a large class covering essentially all the examples in this article---we informally discuss what can be expected
in terms of consistency and coverage (Section~\ref{section:composite-likelihood}).
We show how our results can easily be applied to many standard posteriors,
including i.i.d.\ exponential family models and (non-i.i.d.)\ generalized linear models for regression (Section~\ref{section:standard-applications}).
We then apply our results to generalized posteriors for an array of generalized likelihoods,
including pseudolikelihoods in general, the Gaussian Markov random field pseudolikelihood,
the fully observed Boltzmann machine pseudolikelihood, the Ising model pseudolikelihood,
the Cox proportional hazards partial likelihood,
and a median-based likelihood for robust inference of location (Section~\ref{section:generalized-applications}).
Finally, we provide a discussion of previous work (Section~\ref{section:previous-work}).

\subsection{Novelty and overview of the results}

In some sense, new Bernstein--von Mises (BvM) theorems are never surprising since they only verify what we already expect to happen
if things are sufficiently nice.
Thus, the utility of a BvM result is directly related to the ease and generality with which it can be applied.
% Unfortunately, many BvM results rely on abstract conditions that are difficult to verify in practice, particularly for non-experts.
The main novelty of this article is that we provide results that are not only general, but are also relatively easy to apply in practice. 

More specifically, the results in this article are novel in the following respects: (a) we provide rigorous results on generalized posteriors
for non-i.i.d.\ data without any assumption of model correctness (in fact, in our main results, we do not even require there to be a probability model -- true or assumed),
% we do not even assume there is any true probability model in our main results), 
(b) we provide sufficient conditions that are relatively easy to verify when they hold,
and (c) we apply our results to a number of non-trivial examples, providing precise and concrete sufficient conditions for each example.

Standard BvM theorems are only applicable to standard posteriors under correctly specified i.i.d.\ probabilistic models
\citep{van2000asymptotic,ghosh2003bayesian}.
\citet{kleijn2012bernstein} generalize by establishing a Bernstein--von Mises theorem under misspecification, 
but their result still only applies to standard posteriors, and they focus mainly on the i.i.d.\ case.
In contrast, our main results in Sections~\ref{section:concentration} and \ref{section:BVM} do not involve a probability model at all
and are applicable to arbitrary distributions of the form $\pi_n(\theta) \propto \exp(-n f_n(\theta))\pi(\theta)$,
where $\pi$ and the sequence of functions $f_n$ are required to satisfy certain conditions.
By treating the problem in this generality, we provide results for i.i.d.\ and non-i.i.d.\ cases with or without misspecification;
see the examples in Sections~\ref{section:standard-applications} and~\ref{section:generalized-applications}.
Additionally, BvM theorems often only show that the total variation distance converges to zero in probability;
in contrast, we prove it converges to zero almost surely.
% Note to self: It looks like a.s. convergence is not uncommon: https://www.encyclopediaofmath.org/index.php/Bernstein-von_Mises_theorem
% Note to self: Leave out the following remark since it sounds a bit naive:
%   Further, this underscores the well-known fact that Bernstein--von Mises asymptotic normality has nothing to do with statistical models \textit{per se},
%   but instead reflects the purely analytic fact that as a distribution concentrates, the quadratic terms dominate in the Taylor expansion of the log density at its mode.
% For semiparametric and nonparametric models, a number of BvM results have been established; see Section~\ref{section:previous-work}.

For generalized posteriors, much of the previous work on asymptotic normality
tends to rely on unspecified regularity conditions or only establishes weak convergence, that is, convergence in distribution
\citep{doksum1990consistent,lazar2003bayesian,greco2008robust,pauli2011bayesian,ribatet2012bayesian,ventura2016pseudo}.
In contrast, we show convergence in total variation distance
% (that is, Bernstein--von Mises asymptotic normality)
and we provide rigorous results with all assumptions explicitly stated.
Further, the usual regularity conditions in previous work include an assumption of concentration \citep{bernardo2000bayesian}; 
in contrast, we prove concentration.

In general, we make no assumption of model correctness.
However, to ensure that a generalized posterior is doing something reasonable,
it is desirable to have a guarantee of consistency---that is, concentration at the true parameter---if the assumed model is correct or at least partially correct.
% if there is a true parameter in some true model, even if the model is not fully known.
% To define ``consistency'', it is necessary to suppose that there is some true parameter in some true model, even if the model is not known.
To this end, in Section~\ref{section:composite-likelihood} we show that for any composite likelihood derived from a correct model, the resulting generalized posterior 
concentrates at the true parameter under fairly general conditions.
Since many generalized likelihoods can be viewed as composite likelihoods, this establishes consistency in a wide range of cases.
% including all of the examples in Sections~\ref{section:standard-applications} and \ref{section:generalized-applications}.
On the other hand, it is well-known that---except in special circumstances---the asymptotic frequentist coverage of composite likelihood-based posteriors 
is typically incorrect unless an adjustment is made \citep{pauli2011bayesian,ribatet2012bayesian};
see Section~\ref{section:composite-likelihood} for more details.

For each main result in Sections~\ref{section:concentration} and \ref{section:BVM},
we provide a range of alternative sufficient conditions, from more abstract to more concrete.
The more abstract versions are more generally applicable, whereas the more concrete versions have conditions that are easier to verify when applicable.
For instance, Theorem~\ref{theorem:BVM} is an abstract BvM theorem involving a quadratic representation condition;
% which very roughly corresponds to BvM theorems based on differentiability in quadratic mean \citep[Theorem 10.1]{van2000asymptotic}.
meanwhile, Theorem~\ref{theorem:altogether} is a more concrete BvM theorem involving conditions on derivatives
that are roughly analogous to the conditions in classical BvM theorems. 
% \citep[Theorem 1.4.2]{ghosh2003bayesian}.
We also provide versions of the theorems based on convexity of $f_n$
(see Theorems~\ref{theorem:concentration}(\ref{item:concentration-convex}) and \ref{theorem:altogether}(\ref{item:altogether-convex})),
which is usually easy to verify when it applies and simplifies the other required conditions.
% this is very roughly analogous to convexity-based results on asymptotic normality of estimators \citep{hjort1993asymptotics}.

See Section~\ref{section:previous-work} for a detailed technical discussion of how our assumptions, results, and proof techniques compare with those in previous work.

% We strive to make the conditions as easy as possible to verify in practice.

\section{Posterior concentration}
\label{section:concentration}

Theorem~\ref{theorem:f-balls} is a general concentration result for generalized posteriors $\Pi_n$ on a measurable space $(\Theta,\mathcal{A})$. 
The basic structure of the proof of Theorem~\ref{theorem:f-balls} follows that of Schwartz's theorem \citep{schwartz1965bayes,ghosh2003bayesian}.
Although Theorem~\ref{theorem:f-balls} is useful for theoretical purposes, in practice, one typically needs to establish concentration on neighborhoods in a relevant topology on $\Theta$. To this end, Theorem~\ref{theorem:concentration} provides a range of sufficient conditions for concentration on metric space neighborhoods of a point $\theta_0 \in \Theta$.

\begin{condition}
\label{condition:concentration}
Let $f_n:\Theta\to\R$ for $n\in\N$ be a sequence of functions on a probability space $(\Theta,\mathcal{A},\Pi)$.
For all $n$, assume $z_n < \infty$ where $z_n = \int_\Theta \exp(- n f_n(\theta))\Pi(d\theta)$, and 
define the probability measure
$$ \Pi_n(d\theta) = \exp(- n f_n(\theta))\Pi(d\theta)/z_n. $$
\end{condition}

% Note to self: Finiteness of z_n is used in the proof of the BVM thm. (In the expr for g_n.)
Throughout, all arbitrarily defined functions and sets are assumed to be measurable,
and we denote $\N = \{1,2,\ldots\}$.
% , except in cases where showing measurability is essential to a proof.
Here, $\exp(- n f_n(\theta))$ is interpreted as the ``likelihood'', possibly in some generalized sense,
$\Pi$ is the ``prior'', and $\Pi_n$ is the ``posterior''.
%, since this is almost always the case in practice.
%In most of the results below, we assume that $f_n$ converges pointwise a.e.$[\Pi]$ to some $f:\Theta\to\R$, that is, $f_n(\theta)\to f(\theta)$ for all $\theta$ except in a set of $\Pi$-measure zero.

Our main theorems in Sections~\ref{section:concentration} and \ref{section:BVM} do not involve a probability model and do not even require that there be data.
Instead, our results apply to arbitrary deterministic sequences of distributions $\Pi_n$ satisfying certain conditions.
Consequently, the mode of convergence in these theorems is not probabilistic in any sense.
In the applications in Sections~\ref{section:standard-applications} and \ref{section:generalized-applications} that involve probability models, we show that the conditions hold with probability $1$, and 
in this way we obtain almost sure convergence.

\begin{theorem}
\label{theorem:f-balls}
Assume Condition~\ref{condition:concentration}.
If $\theta_0\in\Theta$ and there exists $f:\Theta\to\R$ such that
\begin{enumerate}
\item\label{item:f-balls-cvg} $f_n(\theta)\to f(\theta)$ as $n\to\infty$ for all $\theta\in\Theta$, % a.e.$[\Pi]$, and 
\item\label{item:f-balls-pi} $\Pi(A_\epsilon)>0$ for all $\epsilon>0$, where $A_\epsilon =\{\theta\in\Theta:f(\theta) < f(\theta_0) +\epsilon\}$, and 
\item\label{item:f-balls-min} $\liminf_n \inf_{\theta\in A_\epsilon^c} f_n(\theta)> f(\theta_0)$ for all $\epsilon>0$,
\end{enumerate}
then $\Pi_n(A_\epsilon)\to 1$ as $n\to\infty$, for any $\epsilon>0$.
\end{theorem}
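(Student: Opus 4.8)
The plan is to bound the posterior mass of the complement $A_\epsilon^c$ directly from its defining ratio
$$\Pi_n(A_\epsilon^c) = \frac{\int_{A_\epsilon^c}\exp(-nf_n(\theta))\,\Pi(d\theta)}{z_n},$$
controlling the numerator from above using hypothesis~(\ref{item:f-balls-min}) and the denominator from below using hypotheses~(\ref{item:f-balls-cvg}) and~(\ref{item:f-balls-pi}). The guiding idea, which is exactly the Schwartz-type argument, is to make both bounds exponential in $n$ with the numerator rate strictly larger than the denominator rate, so that the ratio vanishes.

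For the numerator, I would first fix a constant $\beta$ with $f(\theta_0) < \beta < \liminf_n \inf_{\theta\in A_\epsilon^c}f_n(\theta)$, which is possible by hypothesis~(\ref{item:f-balls-min}). By the definition of $\liminf$, there is an $N$ such that $f_n(\theta) > \beta$ for all $\theta\in A_\epsilon^c$ and all $n\geq N$; hence $\exp(-nf_n(\theta)) < \exp(-n\beta)$ on $A_\epsilon^c$, and integrating gives $\int_{A_\epsilon^c}\exp(-nf_n(\theta))\,\Pi(d\theta)\leq \exp(-n\beta)\,\Pi(A_\epsilon^c) \leq \exp(-n\beta)$ for all $n\geq N$, since $\Pi$ is a probability measure.

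The denominator is where the real work lies, and I expect it to be the main obstacle, because the pointwise convergence $f_n\to f$ of hypothesis~(\ref{item:f-balls-cvg}) gives no uniform control on $\exp(-nf_n)$. The trick is to apply Fatou's lemma to the nonnegative integrands $g_n(\theta) = \exp(n(\gamma - f_n(\theta)))$ for a fixed constant $\gamma = f(\theta_0) + \epsilon'$. Since $\gamma - f_n(\theta)\to\gamma - f(\theta)$ pointwise, the integrand satisfies $\liminf_n g_n(\theta) = +\infty$ whenever $f(\theta) < \gamma$, that is, on the set $A_{\epsilon'}$. By hypothesis~(\ref{item:f-balls-pi}) we have $\Pi(A_{\epsilon'}) > 0$, so Fatou's lemma yields
$$\liminf_n e^{n\gamma} z_n = \liminf_n \int_\Theta g_n\,\Pi(d\theta) \geq \int_\Theta \liminf_n g_n\,\Pi(d\theta) = +\infty,$$
giving $e^{n\gamma}z_n\to\infty$ and thus $z_n > e^{-n\gamma}$ for all sufficiently large $n$.

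Finally, I would choose $\epsilon' > 0$ small enough that $\gamma = f(\theta_0)+\epsilon' < \beta$, which is possible since $\beta > f(\theta_0)$. Combining the two bounds, for all sufficiently large $n$,
$$\Pi_n(A_\epsilon^c) \leq \frac{e^{-n\beta}}{z_n} < \frac{e^{-n\beta}}{e^{-n\gamma}} = e^{-n(\beta - \gamma)} \to 0,$$
since $\beta - \gamma > 0$. Hence $\Pi_n(A_\epsilon) = 1 - \Pi_n(A_\epsilon^c)\to 1$, as claimed.
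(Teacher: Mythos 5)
Your proof is correct and follows essentially the same route as the paper's: an exponential upper bound $e^{-n\beta}$ on the numerator from condition (3), and a Fatou's-lemma lower bound $e^{-n\gamma}$ on $z_n$ using the prior-positive set $A_{\epsilon'}$ from conditions (1) and (2), with $\gamma<\beta$ forcing the ratio to vanish. The paper merely packages the same two estimates slightly differently, normalizing both integrals by $e^{n(f(\theta_0)+\beta)}$ and applying Fatou on $A_{\beta/2}$.
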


See Section \ref{section:concentration-proof} for the proof. 
In Section~\ref{section:previous-work}, we discuss the interpretation of this result in relation to Schwartz's theorem.

When $\Theta$ is a metric space, the collection of functions $(f_n)$ is said to be \textit{equicontinuous} if for any $\epsilon>0$ there exists
$\delta>0$ such that for all $n\in\N$, $\theta,\theta'\in\Theta$, if $d(\theta,\theta')<\delta$ then $|f_n(\theta) - f_n(\theta')|<\epsilon$.
For a function $f:E\to\R$ where $E\subseteq\R^D$, we denote the gradient by $f'(\theta)$
(that is, $f'(\theta) = (\frac{\partial f}{\partial\theta_i}(\theta))_{i=1}^D\in\R^D$) and the Hessian by $f''(\theta)$
(that is, $f''(\theta) = (\frac{\partial^2 f}{\partial\theta_i\partial\theta_j}(\theta))_{i,j=1}^D\in\R^{D\times D}$)
when these derivatives exist.
We use the following definition of convexity to allow the possibility that the domain $E\subseteq\R^D$ is non-convex:
$f:E\to\R$ is \textit{convex} if for all $\theta,\theta'\in E$ and all $t\in[0,1]$ such that $t \theta + (1-t) \theta'\in E$,
we have $f(t \theta + (1-t) \theta') \leq t f(\theta) + (1-t) f(\theta')$.

\begin{theorem}
\label{theorem:concentration}
Assume Condition~\ref{condition:concentration}.
Suppose $(\Theta,d)$ is a metric space and $\mathcal{A}$ is the resulting Borel sigma-algebra.
%with metric $d(\cdot,\cdot)$, and is
%Suppose the sigma-algebra of $\Theta$ is induced by a metric $d(\cdot,\cdot)$.
Fix $\theta_0\in\Theta$ and denote $N_\epsilon = \{\theta\in\Theta:d(\theta,\theta_0)<\epsilon\}$.
If $\Pi(N_\epsilon)>0$ for all $\epsilon>0$, $f_n\to f$ pointwise on $\Theta$ for some $f:\Theta\to\R$, and
any one of the following three sets of conditions hold, then for any $\epsilon>0$, $\Pi_n(N_\epsilon)\to 1$ as $n\to\infty$.
\begin{enumerate}
\item\label{item:concentration-metric} $f$ is continuous at $\theta_0$ and $\liminf_n \inf_{\theta\in N_\epsilon^c} f_n(\theta) > f(\theta_0)$ for all $\epsilon>0$.
\item\label{item:concentration-equicontinuous} $(f_n)$ is equicontinuous on some compact set $K\subseteq\Theta$, $\theta_0$ is an interior point of $K$, $f(\theta)>f(\theta_0)$ for all $\theta\in K\setminus\{\theta_0\}$, and $\liminf_n \inf_{\theta\in K^c} f_n(\theta) > f(\theta_0)$.
\item\label{item:concentration-convex} $f_n$ is convex for each $n$, $\Theta\subseteq\R^D$ with the Euclidean metric, $\theta_0$ is an interior point of $\Theta$, and either
    \begin{enumerate}
        \item\label{item:concentration-convex-min} $f(\theta)>f(\theta_0)$ for all $\theta\in\Theta\setminus\{\theta_0\}$, or
        \item $f'$ exists in a neighborhood of $\theta_0$, $f'(\theta_0) = 0$, and $f''(\theta_0)$ exists and is positive definite.
    \end{enumerate}
     % Note: Here, we mean an interior point of $\Theta$ as a subset of $\R^D$.
\end{enumerate}
Further, 2 $\Rightarrow$1 and 3 $\Rightarrow$1 under the assumptions of the theorem.
\end{theorem}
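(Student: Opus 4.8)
The plan is to reduce all three cases to condition~1: I will first show that condition~1 alone yields $\Pi_n(N_\epsilon)\to1$, and then establish the two implications $2\Rightarrow1$ and $3\Rightarrow1$ asserted at the end of the statement, so that concentration under conditions~2 and~3 follows at once. For condition~1, I would run the same two-sided estimate as in the proof of Theorem~\ref{theorem:f-balls}, but with the metric ball $N_\epsilon$ replacing the sublevel set $A_\epsilon$. Writing $\Pi_n(N_\epsilon^c)=z_n^{-1}\int_{N_\epsilon^c}\exp(-nf_n)\,d\Pi$, the hypothesis $\liminf_n\inf_{\theta\in N_\epsilon^c}f_n(\theta)>f(\theta_0)$ gives a $\gamma>0$ with $\inf_{N_\epsilon^c}f_n>f(\theta_0)+\gamma$ for all large $n$, so the numerator is at most $\exp(-n(f(\theta_0)+\gamma))$. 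For the denominator I would use continuity of $f$ at $\theta_0$: choosing $\eta\in(0,\gamma)$ and $\delta>0$ small enough that $N_\delta\subseteq\{f<f(\theta_0)+\eta\}$, the positivity $\Pi(N_\delta)>0$ makes this sublevel set carry positive prior mass, on which $\exp(n(f(\theta_0)+\eta-f_n(\theta)))\to\infty$ pointwise; Fatou's lemma then forces $\exp(n(f(\theta_0)+\eta))z_n\to\infty$, so $z_n\ge\exp(-n(f(\theta_0)+\eta))$ eventually. The ratio is then bounded by $\exp(-n(\gamma-\eta))\to0$.

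For $2\Rightarrow1$, equicontinuity of $(f_n)$ together with pointwise convergence on the compact set $K$ makes the limit $f$ inherit the common modulus of continuity (so $f$ is continuous, in particular at $\theta_0$) and upgrades the convergence to uniform convergence on $K$. To verify the infimum part of condition~1, I would split $N_\epsilon^c=(K\setminus N_\epsilon)\cup(K^c\cap N_\epsilon^c)$: on the part inside $K^c$ the hypothesis $\liminf_n\inf_{K^c}f_n>f(\theta_0)$ applies directly, while $K\setminus N_\epsilon$ is compact and excludes $\theta_0$, so $f>f(\theta_0)$ there attains a minimum $m>f(\theta_0)$, which uniform convergence transfers to $f_n$. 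Taking the smaller of the two eventual lower bounds gives $\liminf_n\inf_{N_\epsilon^c}f_n>f(\theta_0)$.

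The implication $3\Rightarrow1$ is the crux. First, 3(b) implies 3(a): since $f$ is a pointwise limit of convex functions it is itself convex, and $f'(\theta_0)=0$ with $f''(\theta_0)$ positive definite makes $\theta_0$ a strict local minimizer by Taylor expansion; for a convex function a strict local minimizer is the unique global minimizer, so $f(\theta)>f(\theta_0)$ for all $\theta\neq\theta_0$. Assuming 3(a), convexity of $f$ gives continuity at the interior point $\theta_0$, which is the continuity part of condition~1. For the infimum part I would combine two convex-analytic facts, localizing to a small closed ball $\{\|\theta-\theta_0\|\le r\}\subseteq\Theta$ with $r<\epsilon$, on which the convexity inequality is unconditional: pointwise convergence of convex functions is uniform on compact subsets of the interior, so $f_n\to f$ uniformly on the sphere $S_r=\{\|\theta-\theta_0\|=r\}$ and at $\theta_0$; and the radial convexity bound, namely that for $\theta$ with $\|\theta-\theta_0\|\ge\epsilon$, writing $\theta_r$ for the radial projection of $\theta$ onto $S_r$, convexity along the segment from $\theta_0$ to $\theta$ gives $f_n(\theta)\ge f_n(\theta_0)+\frac{\|\theta-\theta_0\|}{r}\bigl(f_n(\theta_r)-f_n(\theta_0)\bigr)$. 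Since $\min_{S_r}f>f(\theta_0)$ by 3(a) and compactness, uniform convergence makes $f_n(\theta_r)-f_n(\theta_0)\ge c>0$ uniformly over $S_r$ for large $n$; because $\|\theta-\theta_0\|/r\ge1$, this yields $f_n(\theta)\ge f_n(\theta_0)+c$ for every $\theta\in N_\epsilon^c$, and hence $\liminf_n\inf_{N_\epsilon^c}f_n\ge f(\theta_0)+c>f(\theta_0)$.

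I expect the main obstacle to be the convex case: establishing locally uniform convergence from mere pointwise convergence of the $f_n$, and converting a lower bound on the sphere $S_r$ into one over the entire exterior $N_\epsilon^c$ through the one-dimensional radial convexity inequality, all while accommodating the possibly non-convex domain $\Theta$ by restricting to a ball around the interior point $\theta_0$ where the convexity inequality holds without qualification.
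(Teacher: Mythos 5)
Your proposal is correct and follows essentially the same route as the paper: reduce cases 2 and 3 to case 1 (via uniform convergence on $K$ from equicontinuity, and via the radial convexity bound through a small sphere $S_r$ inside a ball around $\theta_0$, respectively), and handle case 1 by the Fatou-type two-sided estimate. The only cosmetic difference is that the paper proves case 1 by sandwiching the metric balls $N_\epsilon$ between sublevel sets $A_\delta$ and invoking Theorem~\ref{theorem:f-balls}, whereas you inline that same numerator/denominator estimate directly on $N_\epsilon^c$.
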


See Section \ref{section:concentration-proof} for the proof. 
Note that if $\Theta$ is compact, then case \ref{item:concentration-equicontinuous} with $K=\Theta$ simplifies to
$(f_n)$ being equicontinuous and $f(\theta)>f(\theta_0)$ for all $\theta\in \Theta\setminus\{\theta_0\}$.
This can be used to prove consistency results based on classical Wald-type conditions such as in \citet[Section 1.3.4]{ghosh2003bayesian}.
% Note to self (justification of this):
% It appears they mean to assume the conditions of their Thm 1.3.3 hold with K=Theta.
% The f_n's are finite-valued, due to the domination condition in Thm 1.3.3.
% z_n is finite since f_n is bounded (being continuous on the compact set Theta).
% The Wald-type conditions imply uniform convergence of f_n to f.
% This implies equicontinuity, by Rudin 7.24.
% The minimum condition is satisfied since the KL divergence is >0 except at theta_0 (because they assume identifiability).

%It is also interesting to note that when $\Theta\subseteq\R$, case \ref{item:concentration-convex-min} is similar to the
%conditions for consistency of estimators derived from monotone estimating equaitons \citep[][5.10]{vandervaart}.

% future: Maybe discuss connection with conditions for unbiased est eqns.

\section{Asymptotic normality and the Laplace approximation}
\label{section:BVM}

Theorem~\ref{theorem:BVM} establishes general sufficient conditions under which a generalized posterior
exhibits asymptotic normality and an asymptotically correct Laplace approximation, along with concentration at $\theta_0$.
As in Section~\ref{section:concentration}, $\pi(\theta)$ can be interpreted as the prior density
and $\pi_n(\theta)\propto \exp(-n f_n(\theta))\pi(\theta)$ can be thought of as the ``posterior'' density. 
The points $\theta_n$ can be viewed as maximum generalized likelihood estimates.
The proof of Theorem~\ref{theorem:BVM} is concise, but some of the conditions of the theorem are a bit abstract.
Thus, we also provide Theorem~\ref{theorem:altogether} to give more concrete sufficient conditions which, when satisfied, are usually easier to verify.
Theorem~\ref{theorem:altogether} is the main result used in the examples in the rest of the paper.

We remphasize that unlike previous work on BvM, the results in this section only involve conditions on $f_n$ and $\pi$,
and do not involve any assumptions at all regarding the data; indeed, we do not even require that there be any data.
Thus, the limits in these theorems are not probabilistic in any sense -- they are simply limits of deterministic sequences.
When we apply the theorems to statistical models in Sections~\ref{section:standard-applications} and \ref{section:generalized-applications}, we handle the randomness in the data by showing that the conditions of the theorems hold almost surely, which implies almost sure convergence.

We also highlight two supporting results that are employed in the proof of Theorem~\ref{theorem:altogether}.
Theorem~\ref{theorem:natural-sufficient} provides concrete sufficient conditions under which
the quadratic representation (condition \ref{item:BVM-rep}) in Theorem \ref{theorem:BVM} holds.
Theorem~\ref{theorem:regular-convergence} is a pure real analysis result on uniform convergence of $f_n$, $f_n'$, and $f_n''$, which we 
believe is interesting in its own right.

Given $x_0\in\R^D$ and $r>0$, we write $B_r(x_0)$ to denote the open ball of radius $r$ at $x_0$, that is, $B_r(x_0)=\{x\in\R^D:|x-x_0|<r\}$.
We use $|\cdot|$ to denote the Euclidean norm.
Given positive sequences $(a_n)$ and $(b_n)$, we write $a_n\sim b_n$ to denote that $a_n/b_n\to 1$ as $n\to\infty$.
We write $\Normal(x\mid \mu,C)$ to denote the normal density with mean $\mu$ and covariance matrix $C$.

\begin{theorem}
\label{theorem:BVM}
Fix $\theta_0\in\R^D$ and let $\pi:\R^D\to\R$ be a probability density with respect to Lebesgue measure
such that $\pi$ is continuous at $\theta_0$ and $\pi(\theta_0)>0$.
Let $f_n:\R^D\to\R$ for $n\in\N$ and assume:
\begin{enumerate}
\item\label{item:BVM-rep} $f_n$ can be represented as
\begin{align}\label{equation:f_n}
f_n(\theta) = f_n(\theta_n) + \tfrac{1}{2}(\theta - \theta_n)^\T H_n (\theta - \theta_n) + r_n(\theta - \theta_n)
\end{align}
where $\theta_n\in\R^D$ such that $\theta_n\to\theta_0$, $H_n \in \R^{D\times D}$ symmetric such that $H_n \to H_0$ for some positive definite $H_0$, and $r_n:\R^D\to\R$ has the following property: there exist $\epsilon_0,c_0>0$ such that for all $n$ sufficiently large, for all $x\in B_{\epsilon_0}(0)$, 
we have $|r_n(x)|\leq c_0|x|^3$; and
\item\label{item:BVM-liminf} for any $\epsilon>0$,
% note to self: assuming this for any epsilon>0 is a bit stronger than actually needed --- we only need it for a particular epsilon that depends on H_0, c_0, epsilon_0, and pi.
$\liminf_n \inf_{\theta \in B_\epsilon(\theta_n)^c} \big(f_n(\theta) - f_n(\theta_n)\big)>0.$
\end{enumerate}
Then, defining $z_n =\int_{\R^D} \exp(-n f_n(\theta))\pi(\theta) d\theta$ and $\pi_n(\theta) = \exp(-n f_n(\theta))\pi(\theta)/z_n$, we have
\begin{align}\label{equation:concentration}
{\textstyle\int_{B_\epsilon(\theta_0)}} \pi_n(\theta) d\theta \xrightarrow[n\to\infty]{} 1  \text{ for all $\epsilon > 0$},
\end{align}
that is, $\pi_n$ concentrates at $\theta_0$, 
\begin{align}\label{equation:Laplace}
z_n \sim \frac{\exp(-n f_n(\theta_n))\pi(\theta_0)}{|\det H_0|^{1/2}}\Big(\frac{2\pi}{n}\Big)^{D/2}
\end{align}
as $n\to\infty$ (Laplace approximation), and
letting $q_n$ be the density of $\sqrt{n} (\theta-\theta_n)$ when $\theta\sim \pi_n$,
\begin{align}\label{equation:normality}
\int_{\R^D} \Big\vert q_n(x) -\Normal(x \mid 0, H_0^{-1})\Big\vert d x \xrightarrow[n\to\infty]{} 0,
\end{align}
that is, $q_n$ converges to $\Normal(0, H_0^{-1})$ in total variation.
\end{theorem}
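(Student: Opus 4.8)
The plan is to pass to the rescaled variable $x=\sqrt{n}\,(\theta-\theta_n)$ and study the unnormalized rescaled density
\[
g_n(x)=\exp\!\big(-\tfrac12 x^\T H_n x-n\,r_n(x/\sqrt{n})\big)\,\pi(\theta_n+x/\sqrt{n}).
\]
Under the substitution $\theta=\theta_n+x/\sqrt{n}$, $d\theta=n^{-D/2}\,dx$, the representation \eqref{equation:f_n} gives $z_n=\exp(-n f_n(\theta_n))\,n^{-D/2}Z_n$ with $Z_n=\int_{\R^D} g_n(x)\,dx$, and the density of $\sqrt{n}\,(\theta-\theta_n)$ under $\pi_n$ is exactly $q_n=g_n/Z_n$. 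Thus all three conclusions reduce to two facts: the pointwise limit $g_n(x)\to\pi(\theta_0)\exp(-\tfrac12 x^\T H_0 x)$ and the convergence $Z_n\to Z_\infty:=\pi(\theta_0)(2\pi)^{D/2}/|\det H_0|^{1/2}$. The latter is precisely the Laplace approximation \eqref{equation:Laplace}; combined with the former it yields $q_n\to\Normal(\,\cdot\mid 0,H_0^{-1})$ pointwise, and since each $q_n$ and the limit are probability densities, Scheff\'e's lemma upgrades this to convergence in total variation, which is \eqref{equation:normality}.

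For the pointwise limit, fix $x$: $x^\T H_n x\to x^\T H_0 x$ since $H_n\to H_0$; $n\,r_n(x/\sqrt{n})\to 0$ since eventually $|n\,r_n(x/\sqrt{n})|\le c_0|x|^3/\sqrt{n}$; and $\pi(\theta_n+x/\sqrt{n})\to\pi(\theta_0)$ by continuity of $\pi$ at $\theta_0$ together with $\theta_n\to\theta_0$. The main work is $Z_n\to Z_\infty$, which I would prove by splitting $\R^D$ at a radius $\delta\sqrt{n}$ for a small fixed $\delta>0$. On the bulk $\{|x|\le\delta\sqrt{n}\}$ I would use local quadratic control: taking $\delta\le\epsilon_0$ and $\delta\le\lambda/(4c_0)$, where $\lambda>0$ is a lower bound on the eigenvalues of $H_n$ valid for large $n$ (available since $H_n\to H_0$ and $H_0$ is positive definite), the cubic remainder is absorbed into the quadratic term, giving $-\tfrac12 x^\T H_n x-n\,r_n(x/\sqrt{n})\le-\tfrac{\lambda}{4}|x|^2$ there; since $\pi$ is bounded, by $M$ say, on a neighborhood of $\theta_0$, this yields the dominating bound $g_n(x)\,\I\{|x|\le\delta\sqrt{n}\}\le M\exp(-\tfrac{\lambda}{4}|x|^2)$, and dominated convergence gives $\int_{\{|x|\le\delta\sqrt{n}\}} g_n\to Z_\infty$. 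On the complementary region $\{|x|\ge\delta\sqrt{n}\}$, equivalently $\theta\in B_\delta(\theta_n)^c$, I would invoke the separation condition~\ref{item:BVM-liminf}: it furnishes $b>0$ with $f_n(\theta)-f_n(\theta_n)\ge b$ on $B_\delta(\theta_n)^c$ for all large $n$, so that $\int_{\{|x|\ge\delta\sqrt{n}\}} g_n\le n^{D/2}e^{-nb}\to 0$. Adding the two pieces gives $Z_n\to Z_\infty$, and the concentration statement \eqref{equation:concentration} follows from the same tail bound: since $z_n=\exp(-n f_n(\theta_n))\,n^{-D/2}Z_n$, we get $\Pi_n(B_\delta(\theta_n)^c)\le n^{D/2}e^{-nb}/Z_n\to 0$ for every $\delta>0$, and because $\theta_n\to\theta_0$ we have $B_{\epsilon/2}(\theta_n)\subseteq B_\epsilon(\theta_0)$ for large $n$, whence $\int_{B_\epsilon(\theta_0)}\pi_n\to 1$.

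The main obstacle is controlling $Z_n$ near the boundary of the bulk, where the bound $|n\,r_n(x/\sqrt{n})|\le c_0|x|^3/\sqrt{n}$ holds but is far from negligible: at $|x|\approx\epsilon_0\sqrt{n}$ this term is of order $n$, so a direct domination over the whole region $\{|x|\le\epsilon_0\sqrt{n}\}$ fails. The resolution is the interplay of the two hypotheses: the local quadratic expansion (condition~\ref{item:BVM-rep}) is exploited only on a ball small enough ($\delta\le\lambda/(4c_0)$) that the cubic is swallowed by the Gaussian, while the global separation (condition~\ref{item:BVM-liminf}) disposes of everything outside that ball through an exponential bound that beats the polynomial factor $n^{D/2}$. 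Arranging $\delta$ to meet both requirements at once is the crux; the remaining steps are routine dominated convergence and Scheff\'e's lemma.
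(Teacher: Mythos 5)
Your proposal is correct and follows essentially the same route as the paper's proof: the same change of variables $x=\sqrt{n}(\theta-\theta_n)$, the same split of the rescaled integral at radius $\epsilon\sqrt{n}$ with the cubic remainder absorbed into the Gaussian on the bulk (via $\delta\lesssim\lambda/c_0$) and the separation condition supplying the $n^{D/2}e^{-nb}$ tail bound. The only cosmetic difference is the last step: the paper builds a convergent sequence of dominating functions $h_n$ and applies the generalized dominated convergence theorem to get $\int|g_n-g_0|\to 0$ directly, whereas you obtain $Z_n\to Z_\infty$ with an ordinary dominated-convergence argument and then invoke Scheff\'e's lemma to upgrade pointwise convergence of $q_n$ to total variation; both are valid.
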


See Section \ref{section:BVM-proof} for the proof. 
The virtue of Theorem~\ref{theorem:BVM} is not its technical depth -- indeed, it is fairly straightforward to prove 
using the generalized dominated convergence theorem.
Rather, the utility of this result is that it is formulated in such a way that it can be broadly applied to generalized posteriors.
In Section~\ref{section:previous-work}, we compare Theorem~\ref{theorem:BVM} to previous Bernstein--von Mises results.

Roughly speaking,
condition~\ref{item:BVM-rep} of Theorem~\ref{theorem:BVM} is that $f_n(\theta)$ can be approximated by a quadratic form in a neighborhood of $\theta_n$.
This is similar to a second-order Taylor expansion
where the constants in the bound on the remainder (namely, $\varepsilon_0$ and $c_0$) need to work for all $n$ sufficiently large;
however, unlike in Taylor's theorem, differentiability of $f_n$ is not assumed.
% This is similar to a second-order Taylor expansion with a $O(|\theta - \theta_n|^3)$ bound on the remainder,
% where the constants in the bound (namely, $\varepsilon_0$ and $c_0$) need to work for all $n$ sufficiently large.
%required for this condition to hold.
% The $O(|\theta - \theta_n|^3)$ bound is probably stronger than needed, 
% but we chose to use it since the proof remains simple and it holds in our applications of interest.  % \future{Check this last sentence -- can we actually reduce the 3 to something like $2 + \epsilon$ for $\epsilon > 0$?}
Since $\theta_n\to\theta_0$, the idea of condition \ref{item:BVM-rep} is that the log posterior density approaches a quadratic form near $\theta_0$.
The assumption that $H_0$ is positive definite is necessary to ensure that in the limit, the exponentiated quadratic form can be normalized to a probability density, namely, $\mathcal{N}(x\mid 0,H_0^{-1})$.
Note that in the special case of a correctly specified i.i.d.\ probability model,
$H_0$ typically coincides with the Fisher information matrix at $\theta_0$.

Condition~\ref{item:BVM-liminf} of Theorem~\ref{theorem:BVM} 
ensures that, asymptotically, the posterior puts negligible mass outside a neighborhood of $\theta_0$,
and thus, the locally normal part near $\theta_0$ is all that remains in the limit.
Condition~\ref{item:BVM-liminf} is stronger than necessary, but it is not clear how to 
adapt the usual probabilistic separation conditions (such as uniformly consistent tests) 
to generalized posteriors, especially since we seek almost sure convergence.

% future: Maybe discuss correspondence between Van der Vaart's conditions and ours above (see my notes on Novelty)
%Note that $q_n(x) = \pi_n(\theta_n + x/\sqrt n) n^{-D/2}$.
% Note to self: It make sense that this is centered at theta_n instead of theta_0 --- the amount of concentration is correctly calibrated to match the "uncertainty" regarding x_0.
Throughout, we use the Euclidean--Frobenius norms on vectors $v\in\R^D$, matrices $M\in\R^{D\times D}$, and tensors $T\in\R^{D^3}$, 
that is, $|v| = (\sum_i v_i^2)^{1/2}$, $\|M\| = (\sum_{i,j} M_{i j}^2)^{1/2}$, and $\|T\|= (\sum_{i,j,k} T_{i j k}^2)^{1/2}$.
Convergence and boundedness for vectors, matrices, and tensors is defined with respect to these norms.
A collection of functions $h_n:E\to F$, where $F$ is a normed space, is \textit{uniformly bounded} if the set $\{\|h_n(x)\| : x\in E,\,n\in\N\}$ is bounded, and is \textit{pointwise bounded} if $\{\|h_n(x)\| : n\in\N\}$ is bounded for each $x\in E$.
Let $f'''(\theta)$ denote the tensor of third derivatives, that is,
$f'''(\theta) = (\frac{\partial^3 f}{\partial\theta_i\partial\theta_j\partial\theta_k}(\theta))_{i,j,k=1}^D\in\R^{D^3}$.

% Note to self: $f_n''$ is guaranteed to be symmetric if $f_n''$ is continuous (i.e., if the second order derivatives are symmetric), by Rudin theorem 9.41.
% This implies that if $f_n$ is $C^{(k)}$ then the order of all partial derivatives can be permuted, since we can interchange the order of each adjacent pair.
% See Rudin exercise 9.29 regarding interchangeability of the order of kth order partial derivatives, for C^(k) functions.

\begin{theorem}
\label{theorem:altogether}
Let $\Theta\subseteq\R^D$.  Let $E\subseteq\Theta$ be open (in $\R^D$) and bounded.
Fix $\theta_0\in E$ and 
let $\pi:\Theta\to\R$ be a probability density with respect to Lebesgue measure
such that $\pi$ is continuous at $\theta_0$ and $\pi(\theta_0)>0$.  Let $f_n:\Theta\to\R$ have continuous third derivatives on $E$.
Suppose $f_n \to f$ pointwise for some $f:\Theta\to\R$, $f''(\theta_0)$ is positive definite, and $(f_n''')$ is uniformly bounded on $E$.
If either of the following two conditions is satisfied:
\begin{enumerate}
\item\label{item:altogether-liminf} $f(\theta) > f(\theta_0)$ for all $\theta\in K\setminus\{\theta_0\}$ and
    $\liminf_n \inf_{\theta\in \Theta\setminus K} f_n(\theta)>f(\theta_0)$ for some compact $K\subseteq E$ with $\theta_0$ in 
    the interior of $K$, or
\item\label{item:altogether-convex} each $f_n$ is convex and $f'(\theta_0)=0$,
\end{enumerate}
then there is a sequence $\theta_n\to\theta_0$ such that $f_n'(\theta_n) = 0$ for all $n$ sufficiently large, $f_n(\theta_n)\to f(\theta_0)$,
Equation~\ref{equation:concentration} (concentration at $\theta_0$) holds, 
Equation \ref{equation:Laplace} (Laplace approximation) holds,
and Equation \ref{equation:normality} (asymptotic normality) holds,
where $H_0 = f''(\theta_0)$.
% Note to self: As defined in theorem:BVM, $q_n$ is automatically equal to $0$ outside of $\Theta$, so we don't need to explicitly extend $\pi$ in the statement here.
% and $\pi_n$ concentrates at $\theta_0$, where $\pi_n(\theta)\propto \exp(- n f_n(\theta))\pi(\theta)$.
Further, condition \ref{item:altogether-convex} implies condition \ref{item:altogether-liminf} under the assumptions of the theorem.
\end{theorem}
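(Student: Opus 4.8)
The plan is to verify the two hypotheses of Theorem~\ref{theorem:BVM}---the quadratic representation (condition~\ref{item:BVM-rep}) and the separation condition (condition~\ref{item:BVM-liminf})---with $H_0 = f''(\theta_0)$, and then simply invoke Theorem~\ref{theorem:BVM} to obtain Equations~\ref{equation:concentration}, \ref{equation:Laplace}, and \ref{equation:normality}. I would argue the main conclusions under condition~\ref{item:altogether-liminf}, and dispatch condition~\ref{item:altogether-convex} at the end via the implication \ref{item:altogether-convex}$\Rightarrow$\ref{item:altogether-liminf}. First, since $f_n\to f$ pointwise and $(f_n''')$ is uniformly bounded on the bounded open set $E$ with the $f_n$ being $C^3$ there, Theorem~\ref{theorem:regular-convergence} gives uniform convergence of $f_n$, $f_n'$, and $f_n''$ to $f$, $f'$, $f''$ on compact subsets of $E$; in particular $f$ is $C^2$ on $E$ with $f''=\lim_n f_n''$. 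Under condition~\ref{item:altogether-liminf}, $\theta_0$ is a strict local minimizer of the $C^2$ function $f$ (it is interior to $K$ and $f(\theta)>f(\theta_0)$ on $K\setminus\{\theta_0\}$), so $f'(\theta_0)=0$; together with the hypothesis that $f''(\theta_0)$ is positive definite, this fixes $H_0=f''(\theta_0)$.

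Next I would produce the sequence $\theta_n$. Choose a closed ball $\bar B_\delta(\theta_0)\subseteq\operatorname{int}(K)\subseteq E$. Because $f_n\to f$ uniformly on $\bar B_\delta(\theta_0)$ and $f(\theta_0)<\min_{\partial B_\delta(\theta_0)} f$, for $n$ large we have $f_n(\theta_0)<\min_{\partial B_\delta(\theta_0)} f_n$, so the minimizer $\theta_n$ of $f_n$ over $\bar B_\delta(\theta_0)$ lies in the interior and hence satisfies $f_n'(\theta_n)=0$ (no convexity is needed here). A standard argmin argument using uniform convergence and the uniqueness of the minimizer of $f$ gives $\theta_n\to\theta_0$, and then $f_n(\theta_n)\to f(\theta_0)$ by uniform convergence and continuity of $f$. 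With $f_n'(\theta_n)=0$ in hand, the quadratic representation follows from Theorem~\ref{theorem:natural-sufficient} (equivalently, directly from Taylor's theorem): expanding the $C^3$ function $f_n$ about $\theta_n$,
\begin{align*}
f_n(\theta_n + x) = f_n(\theta_n) + \tfrac12 x^\T f_n''(\theta_n)\, x + r_n(x),
\end{align*}
where the uniform bound $\|f_n'''\|\le M$ on $E$ yields $|r_n(x)|\le \tfrac{M}{6}|x|^3 = c_0|x|^3$ for $|x|<\epsilon_0$, with $\epsilon_0$ chosen so that $B_{\epsilon_0}(\theta_n)\subseteq E$ for all large $n$. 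Setting $H_n=f_n''(\theta_n)$, uniform convergence of $f_n''$ and $\theta_n\to\theta_0$ give $H_n\to H_0$, which is positive definite; this is exactly condition~\ref{item:BVM-rep}.

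The step I expect to be the most delicate is condition~\ref{item:BVM-liminf}, i.e.\ $\liminf_n \inf_{\theta\in \Theta\setminus B_\epsilon(\theta_n)} (f_n(\theta)-f_n(\theta_n))>0$ for each $\epsilon>0$ (infima effectively range over the support $\Theta$ of $\pi$). I would split the region into $\Theta\setminus K$ and $K\setminus B_\epsilon(\theta_n)$. On $\Theta\setminus K$, the hypothesis $\liminf_n \inf_{\Theta\setminus K} f_n > f(\theta_0)$ combined with $f_n(\theta_n)\to f(\theta_0)$ gives a positive liminf. On $K\setminus B_\epsilon(\theta_n)$: for $n$ large $\theta_n\in B_{\epsilon/2}(\theta_0)$, so this set is contained in the compact set $K\cap B_{\epsilon/2}(\theta_0)^c$, on which $f\ge f(\theta_0)+2\eta$ for some $\eta>0$ by compactness and the strict-minimum assumption; uniform convergence then gives $f_n\ge f(\theta_0)+\eta$ there for $n$ large, and subtracting $f_n(\theta_n)\to f(\theta_0)$ leaves a positive liminf. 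This verifies condition~\ref{item:BVM-liminf}, so Theorem~\ref{theorem:BVM} applies.

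Finally, for the implication \ref{item:altogether-convex}$\Rightarrow$\ref{item:altogether-liminf}, note that $f$, being a pointwise limit of convex functions, is convex, so $f'(\theta_0)=0$ and $f''(\theta_0)\succ0$ force $\theta_0$ to be a strict local minimizer; convexity then upgrades this to $f(\theta)>f(\theta_0)$ for every $\theta\ne\theta_0$ in the domain (via $f((1-t)\theta_0+t\theta)\le(1-t)f(\theta_0)+tf(\theta)$ with small $t$). Taking $K=\bar B_r(\theta_0)\subseteq E$, I would bound $f_n$ outside $K$ using convexity: for $\theta\in\Theta\setminus K$, the segment from $\theta_0$ to $\theta$ meets $\partial B_r(\theta_0)\subseteq E\subseteq\Theta$ at a point $\theta'=(1-t)\theta_0+t\theta$ with $t\in(0,1)$, whence $f_n(\theta)\ge f_n(\theta_0)+\tfrac{1}{t}\big(f_n(\theta')-f_n(\theta_0)\big)\ge f_n(\theta_0)+\big(f_n(\theta')-f_n(\theta_0)\big)$. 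Since $f_n\to f\ge f(\theta_0)+2\eta$ uniformly on the compact sphere and $f_n(\theta_0)\to f(\theta_0)$, this is $\ge f(\theta_0)+\eta$ for $n$ large, giving $\liminf_n\inf_{\Theta\setminus K} f_n\ge f(\theta_0)+\eta>f(\theta_0)$. The domain caveat in the definition of convexity causes no trouble because the crossing point $\theta'$ lies in $E\subseteq\Theta$. This establishes condition~\ref{item:altogether-liminf}, completing the proof.
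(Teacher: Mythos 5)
Your proposal is correct and follows essentially the same route as the paper's proof: construct $\theta_n$ as interior minimizers of $f_n$ over a small ball around $\theta_0$, obtain the quadratic representation from Theorem~\ref{theorem:natural-sufficient} together with the regularity supplied by Theorem~\ref{theorem:regular-convergence}, verify the separation condition by splitting $\Theta$ into $K$ and $\Theta\setminus K$, and reduce the convex case to case~\ref{item:altogether-liminf} via the same convexity inequalities used in Lemma~\ref{lemma:convex-minimum}. The only cosmetic differences are that the paper uses a diagonal sequence $\epsilon_n\to 0$ where you use argmin consistency on a fixed ball, and that the paper formally extends $\pi$ and $f_n$ to all of $\R^D$ before invoking Theorem~\ref{theorem:BVM} (a bookkeeping step your parenthetical about the support of $\pi$ gestures at but does not carry out).
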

% note to self: the condition that f_n''(\theta_0) is posdef is not implied by f(\theta)>f(\theta_0) for \theta\in E\setminus\{\theta_0\},
%     since, for example, f(\theta) = \theta^4 and \theta_0 = 0 is a counterexample.

See Section \ref{section:BVM-proof} for the proof.
While Theorem~\ref{theorem:BVM} is more general, Theorem~\ref{theorem:altogether} provides conditions that are easier to verify when applicable.
The set $E$ simply serves as a neighborhood of $\theta_0$ on which $f_n$ is well-behaved. 
% in the sense that $(f_n''')$ is uniformly bounded.
The assumption that $f''(\theta_0)$ is positive definite ensures that $f$ is locally convex at $\theta_0$,
but not necessarily globally convex.
See Section~\ref{section:previous-work} for comparison with previous work. % and a discussion of proof techniques.
The following result is used in the proof of Theorem~\ref{theorem:altogether}.

\begin{theorem}\label{theorem:natural-sufficient}
Let $E\subseteq\R^D$ be open and convex, and let $\theta_0\in E$. Let $f_n:E\to\R$ have continuous third derivatives, and assume:
\begin{enumerate}
\item there exist $\theta_n\in E$ such that $\theta_n\to\theta_0$ and $f_n'(\theta_n) = 0$ for all $n$ sufficiently large,
\item $f_n''(\theta_0)\to H_0$ as $n\to\infty$ for some positive definite $H_0$, and
\item $(f_n''')$ is uniformly bounded.
\end{enumerate}
Then, letting $H_n = f_n''(\theta_n)$, condition \ref{item:BVM-rep} of Theorem \ref{theorem:BVM} is satisfied for all $n$ sufficiently large.
\end{theorem}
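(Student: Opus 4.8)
The plan is to take the quadratic representation \eqref{equation:f_n} as the \emph{definition} of the remainder, namely $r_n(x) = f_n(\theta_n + x) - f_n(\theta_n) - \tfrac12 x^\T H_n x$ for those $x$ with $\theta_n + x \in E$, and then verify each requirement of condition~\ref{item:BVM-rep} in turn. The convergence $\theta_n\to\theta_0$ is exactly assumption~(1), and $H_n = f_n''(\theta_n)$ is symmetric because $f_n$ has continuous second (indeed third) derivatives. So the real content reduces to two claims: that $H_n \to H_0$, and that $r_n$ obeys a cubic bound $|r_n(x)| \le c_0 |x|^3$ on a fixed ball $B_{\epsilon_0}(0)$, with $\epsilon_0$ and $c_0$ independent of $n$, for all large $n$.

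For $H_n \to H_0$, I would combine assumption~(2), which gives $f_n''(\theta_0) \to H_0$, with the uniform third-derivative control of assumption~(3). Viewing $\theta \mapsto f_n''(\theta)$ as a map into $\R^{D\times D}$ whose differential is the tensor $f_n'''(\theta)$, the mean value inequality along the segment from $\theta_0$ to $\theta_n$ (which lies in $E$ by convexity) yields $\|f_n''(\theta_n) - f_n''(\theta_0)\| \le M\, |\theta_n - \theta_0|$, where $M = \sup_{n,\,\theta\in E} \|f_n'''(\theta)\| < \infty$; the one-slot contraction estimate $\|f_n'''(\xi)[v,\cdot,\cdot]\| \le \|f_n'''(\xi)\|\,|v|$ is what converts the Frobenius bound on the tensor into this Lipschitz bound on the Hessian. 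Since $\theta_n \to \theta_0$, the right-hand side tends to $0$, so $H_n = f_n''(\theta_n) \to H_0$, which is positive definite by assumption.

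For the remainder, the key is a third-order Taylor expansion of $f_n$ about $\theta_n$. Because $E$ is open and $\theta_0 \in E$, fix $\epsilon_1 > 0$ with $B_{\epsilon_1}(\theta_0) \subseteq E$; since $\theta_n \to \theta_0$, for all large $n$ we have $|\theta_n - \theta_0| < \epsilon_1/2$, so $B_{\epsilon_0}(\theta_n) \subseteq E$ with $\epsilon_0 := \epsilon_1/2$. For $x \in B_{\epsilon_0}(0)$ the whole segment $\{\theta_n + t x : t \in [0,1]\}$ lies in $B_{\epsilon_0}(\theta_n) \subseteq E$, so Taylor's theorem with integral remainder applies. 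Using $f_n'(\theta_n) = 0$ from assumption~(1) and $H_n = f_n''(\theta_n)$, the zeroth-, first-, and second-order terms cancel, leaving $r_n(x) = \int_0^1 \tfrac{(1-t)^2}{2}\, f_n'''(\theta_n + t x)[x,x,x]\, dt$. Bounding the trilinear contraction by $\|f_n'''(\theta_n + tx)\|\,|x|^3 \le M\,|x|^3$ (Cauchy--Schwarz in $\R^{D^3}$, noting $\|x\otimes x\otimes x\| = |x|^3$) and integrating gives $|r_n(x)| \le \tfrac{M}{6}\,|x|^3$, so condition~\ref{item:BVM-rep} holds with $c_0 = M/6$.

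The main obstacle is purely one of uniformity: the constants $\epsilon_0$ and $c_0$ must be chosen once and serve all sufficiently large $n$ simultaneously. This is precisely what assumption~(3) (a \emph{single} bound $M$ on every $f_n'''$) and the convergence $\theta_n \to \theta_0$ (which keeps the balls $B_{\epsilon_0}(\theta_n)$ trapped inside $E$) are designed to deliver; without uniform third-derivative control the per-$n$ Taylor remainders could degrade as $n$ grows, and no fixed $c_0$ would suffice.
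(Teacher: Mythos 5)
Your proposal is correct and follows essentially the same route as the paper's proof: the Hessian convergence $H_n\to H_0$ is obtained from the uniform third-derivative bound via a Lipschitz estimate on $f_n''$ along the segment from $\theta_0$ to $\theta_n$ (the paper packages this as Lemma~\ref{lemma:UBD}), and the cubic remainder bound comes from a third-order Taylor expansion about $\theta_n$ with the trilinear term controlled by Cauchy--Schwarz, yielding $c_0 = M/6$ and an $\epsilon_0$ chosen so that $B_{\epsilon_0}(\theta_n)\subseteq E$ for all large $n$. The only cosmetic difference is your use of the integral form of the Taylor remainder where the paper uses the Lagrange form.
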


See Section \ref{section:BVM-proof} for the proof. 
The main tool used in the proof of Theorem \ref{theorem:altogether} is the following result, which provides somewhat more than we require.
A collection of functions $h_n:E\to F$, where $E$ and $F$ are subsets of normed spaces,
is \textit{equi-Lipschitz} if there exists $c>0$ such that for all
$n\in\N$, $x,y\in E$, we have $\|h_n(x) - h_n(y)\|\leq c\|x - y\|$.

\begin{theorem}[Regular convergence]
\label{theorem:regular-convergence}
Let $E\subseteq \R^D$ be open, convex, and bounded.
For $n\in\N$, let $f_n:E\to\R$ have continuous third derivatives, and suppose $(f_n''')$ is uniformly bounded.
If $(f_n)$ is pointwise bounded, then $(f_n)$, $(f_n')$, and $(f_n'')$ are all equi-Lipschitz and uniformly bounded.
If $f_n\to f$ pointwise for some $f:E\to\R$, then $f'$ and $f''$ exist, 
$f_n\to f$ uniformly, $f_n'\to f'$ uniformly, and $f_n''\to f''$ uniformly.
\end{theorem}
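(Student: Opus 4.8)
The plan is to bootstrap downward from the uniform control on $(f_n''')$: I would first establish all the boundedness and equi-Lipschitz claims, then deduce the uniform convergence statements from them. Two elementary facts drive the whole argument. First, on the convex set $E$ a uniform bound on the derivatives of a family forces the family to be equi-Lipschitz: integrating the derivative along the segment joining two points (which stays in $E$ by convexity) shows that if the $g_n'$ are uniformly bounded by $c$ then $g_n$ is equi-Lipschitz with constant $c$. Applying this with $g_n = f_n''$ and the hypothesized bound on $(f_n''')$ immediately makes $(f_n'')$ equi-Lipschitz. Second, a family that is equi-Lipschitz and pointwise bounded on a bounded set is automatically uniformly bounded: since $E$ is totally bounded, cover it by finitely many balls of radius $\rho$ centered at points $x_1,\dots,x_k\in E$ and bound the value at any $x$ by $\max_i(\text{value at }x_i) + c\rho$, using pointwise boundedness at the finitely many centers.

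The crux of the first assertion is therefore to propagate the \emph{assumed} pointwise boundedness of $(f_n)$ upward to pointwise boundedness of $(f_n'')$ and $(f_n')$, and here I would use finite differences with the third-derivative bound controlling the remainders uniformly in $n$. Since $E$ is open, for each $x_0\in E$ there is $\delta>0$ with $x_0\pm\delta e_i,\ x_0\pm\delta(e_i+e_j)\in E$. Applying Taylor's theorem with Lagrange remainder to $t\mapsto f_n(x_0+t h)$ yields
\[
h^\T f_n''(x_0)\, h = f_n(x_0+h)+f_n(x_0-h)-2f_n(x_0) + R_n,\qquad |R_n|\le \tfrac{1}{3}M|h|^3,
\]
where $M=\sup_{n,x}\|f_n'''(x)\|$ and $|f_n'''(\xi)[h,h,h]|\le M|h|^3$. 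Taking $h\in\{\delta e_i,\ \delta(e_i+e_j)\}$ and polarizing recovers every entry of $f_n''(x_0)$ from finitely many values of $f_n$, each bounded in $n$; hence $(f_n'')$ is pointwise bounded, and then uniformly bounded by the two facts above. A first-order Taylor expansion next expresses $\partial_i f_n(x_0)$ through $f_n(x_0+\delta e_i)-f_n(x_0)$ with remainder controlled by the now-uniform bound on $(f_n'')$, so $(f_n')$ is pointwise bounded, hence uniformly bounded and equi-Lipschitz; finally the uniform bound on $(f_n')$ makes $(f_n)$ equi-Lipschitz, while its assumed pointwise boundedness makes it uniformly bounded. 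This completes the first assertion.

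For the second assertion, pointwise convergence implies pointwise boundedness, so the first assertion gives that $(f_n),(f_n'),(f_n'')$ are equi-Lipschitz and uniformly bounded. Since $E$ is totally bounded, an equi-Lipschitz family that converges pointwise converges uniformly (an $\epsilon/3$ argument over a finite net shows it is uniformly Cauchy); thus $f_n\to f$ uniformly and $f$ is Lipschitz. To handle $(f_n')$, I would pass to the compact closure $\overline{E}$, to which these families extend by uniform continuity, and invoke Arzel\`a--Ascoli: $(f_n')$ is equicontinuous and uniformly bounded, hence precompact in the sup norm. Any uniformly convergent subsequence $f_{n_k}'\to g$ combines with $f_{n_k}\to f$ and the convexity of $E$, via the classical theorem on differentiation under uniform limits (uniform convergence of the derivatives plus convergence of the functions forces the limit to be differentiable with derivative equal to the limit), to give that $f$ is differentiable with $f'=g$. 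Since this limit $g=f'$ is the same along every convergent subsequence, the whole sequence converges, so $f'$ exists and $f_n'\to f'$ uniformly. Repeating the argument one level higher, now with $f_n'\to f'$ uniformly and $(f_n'')$ precompact, shows that $f''$ exists and $f_n''\to f''$ uniformly.

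The main obstacle is the downward propagation of boundedness in the first assertion: one must recover the pointwise size of $f_n''$ and of $f_n'$ from values of $f_n$ alone, and the only available leverage is the uniform bound on the third derivatives, which is exactly what keeps the finite-difference remainders under control uniformly in $n$. Once boundedness is secured, the convergence statements follow from Arzel\`a--Ascoli together with the differentiation-under-uniform-limits theorem, with convexity of $E$ used to integrate along segments and total boundedness used to upgrade pointwise convergence to uniform convergence.
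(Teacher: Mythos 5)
Your proposal is correct. The first assertion is handled essentially as in the paper: the same bootstrap (uniform bound on $(f_n^{(k)})$ plus convexity gives equi-Lipschitz $(f_n^{(k-1)})$; equi-Lipschitz plus pointwise bounded on a bounded set gives uniformly bounded), and the same device of symmetric second differences with the third-derivative bound controlling the Taylor remainder to propagate pointwise boundedness down to $(f_n'')$ --- the paper recovers the full matrix $f_n''(x_0)$ from the quadratic forms $u^\T f_n''(x_0)u$ via a Gram-matrix argument over a basis of rank-one symmetric matrices, whereas you polarize with the explicit directions $e_i$ and $e_i+e_j$; these are interchangeable. Where you genuinely diverge is the convergence half. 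The paper proves pointwise convergence of $(f_n')$ directly by a Cauchy argument (Taylor's theorem applied to $f_m - f_n$, with the uniform bound on $(f_n'')$ killing the remainder as $\epsilon\to 0$), upgrades to uniform convergence via equicontinuity, and then identifies the limit as $f'$ by an explicit interchange of the limits $n\to\infty$ and $x\to x_0$ in the difference quotients. You instead extend to $\bar E$, invoke Arzel\`a--Ascoli for precompactness of $(f_n')$ in sup norm, identify every subsequential limit as $f'$ via the classical differentiation-under-uniform-limits theorem (applied along segments, using convexity), and conclude full uniform convergence by uniqueness of subsequential limits. Both routes are sound; the paper's is self-contained and constructive (it never needs compactness of the function space, only completeness), while yours outsources the identification of the derivative of the limit to a standard textbook theorem at the cost of the subsequence bookkeeping and the extension to the closure. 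The iteration one level up to get $f_n''\to f''$ is the same in both.
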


Note that if $f_n\to f$ pointwise then $(f_n)$ is pointwise bounded; thus, if $f_n\to f$ pointwise then we also get the equi-Lipschitz and uniform bounded result.
See Section \ref{section:regular-convergence-proof} for proof.

\section{Coverage}
\label{section:coverage}

% future: Maybe derive typical distribution of theta_n?  Based on Taylor and CLT, I think?  Maybe in notes from ICERM2017 talk.

For a generalized posterior to provide useful quantification of uncertainty, it is important that it be reasonably well-calibrated in terms of frequentist coverage.
Ideally, we would like $\Pi_n$ to have correct frequentist coverage in the sense that posterior credible sets of probability $\rho$ have frequentist coverage $\rho$.
Obviously, an arbitrarily chosen generalized posterior cannot be expected to have correct coverage.
Thus, in Theorem~\ref{theorem:coverage}, we provide simple conditions under which a generalized posterior has correct frequentist coverage, asymptotically.
% A more stringent coverage property would be that credible sets have exactly the correct frequentist coverage when $\theta$ is sampled from the assumed prior and the model is correct \citep{monahan1992proper}.  However, \citet{monahan1992proper} conjecture that this only holds for likelihoods of the form $p(s|\theta)$ where $s$ is a statistic of the data. Since approximately correct coverage is usually satisfactory, we focus on asymptotically correct coverage.

Unfortunately, it seems that, like misspecified models, many common choices of generalized posterior do not exhibit correct coverage, even asymptotically.
In Section~\ref{section:cl-coverage}, we discuss why this problem occurs in the context of composite likelihood-based posteriors,
which is a very general class that includes nearly all of the examples in this paper.
Due to this, we do not apply our main coverage result (Theorem~\ref{theorem:coverage})
to the examples in Sections~\ref{section:standard-applications} and \ref{section:generalized-applications} for the simple reason that we do not expect it to hold,
except under correct specification or in special circumstances.
Nonetheless, we present the theorem here in order to help find those special circumstances when they do arise,
and to provide a foundation for future work that may generalize upon this result.
For instance, having correct coverage for each univariate component of the parameter, marginally, rather than jointly, is a less stringent property that would still be very useful in practice.
% and it might be interesting to investigate conditions under which this holds. % for the generalized posteriors we consider.
Alternatively, one could aim for conservative coverage, which would be more achievable as well.
% (that is, covering $\theta_0$ with probability $\geq \rho$)

To interpret Theorem~\ref{theorem:coverage}, we think of 
$\theta_n$ as a maximum generalized likelihood estimate, $\theta_0$ as the ``true'' parameter we want to cover, $\Pi_n$ as the generalized posterior distribution, 
$S_n$ as a credible set of asymptotic probability $\rho$, $Q_n$ as a centered and scaled version of $\Pi_n$, and $R_n$ as a centered and scaled version of $S_n$.
Roughly, the theorem says that if $Q_n$ converges in total variation to the asymptotic distribution of $-\sqrt{n}(\theta_n - \theta_0)$,
and $R_n$ converges pointwise, 
then asymptotically, $S_n$ contains the true parameter $100\rho$ percent of the time.
In other words, if the conditions of the theorem hold, then asymptotically, $\Pi_n$ has correct frequentist coverage in the sense that posterior credible sets of probability $\rho$ have frequentist coverage~$\rho$.

Typically, when things work out nicely, $\theta_n$ is $\sqrt{n}$-consistent and asymptotically normal and a BvM result holds for $\Pi_n$, 
% $\sqrt{n}(\theta_n - \theta_0)$ and $Q_n$ are both asymptotically normal, 
in which case the result says that $\Pi_n$ has correct coverage asymptotically 
if the covariance matrices of these two normal distributions are equal.
In other words, if $\sqrt{n}(\theta_n - \theta_0) \xrightarrow[]{\mathrm{D}} \Normal(0,C_1)$
and $Q_n \to \Normal(0,C_2)$ in total variation distance, then 
$\Pi_n$ has correct asymptotic frequentist coverage if $C_1=C_2$. % and the conditions of the theorem hold.
In this case, the only other condition is that $R_n$ converges to a set $R$ with finite nonzero Lebesgue measure,
because it is guaranteed that $Q(\partial R) = 0$.
(Note that if $X\sim \Normal(0,C_1)$ then $-X\sim\Normal(0,C_1)$ also.)
This result is precisely what one would expect; thus, the purpose of the theorem is to make this rigorous under easy-to-verify conditions.

We give $\R^D$ the Euclidean topology and the resulting Borel sigma-algebra, $\mathcal{B}$,
and we use $m(\cdot)$ to denote Lebesgue measure on $\R^D$.
We write $\partial R$ to denote the boundary of a set $R\in\R^D$, that is, $\partial R = \bar{R}\setminus R^\circ$,
where $\bar{R}$ is the closure and $R^\circ$ is the interior of $R$.
Given $R,R_1,R_2,\ldots\subseteq \R^D$, we write $R_n \to R$ to denote that for all $x\in\R^D$, $\I(x\in R_n) \to \I(x\in R)$ as $n\to\infty$.
Define $d(x,A) = \inf_{y\in A} \|x - y\|$ for $x\in \R^D$ and $A\subseteq\R^D$.

\begin{theorem}
\label{theorem:coverage}
Let $\theta_1,\theta_2,\ldots \in\R^D$ be a sequence of random vectors, and let $\theta_0\in\R^D$ be fixed.
Let $\Pi_1,\Pi_2,\ldots$ be a sequence of random probability measures on $\R^D$,
possibly dependent on $\theta_1,\theta_2,\ldots$.
Let $S_1,S_2,\ldots\subseteq\R^D$ be a sequence of random convex measurable sets such that $\Pi_n(S_n) \xrightarrow[]{\mathrm{a.s.}} \rho$
for some fixed $\rho\in(0,1)$.
For $A\in\mathcal{B}$, define $Q_n(A) = \int \I\big(\sqrt{n}(\theta - \theta_n)\in A\big)\Pi_n(d\theta)$ 
and define $R_n = \{\sqrt{n}(\theta - \theta_n) : \theta\in S_n\}$.
Suppose there is a fixed probability measure $Q$ and a fixed set $R\subseteq\R^D$ such that
\begin{enumerate}
\item\label{item:coverage1} $-\sqrt{n}(\theta_n - \theta_0) \xrightarrow[]{\mathrm{D}} Q$ as $n\to\infty$
(where $\xrightarrow[]{\mathrm{D}}$ denotes convergence in distribution),
\item\label{item:coverage2} $\sup_{A\in\mathcal{B}} |Q_n(A) - Q(A)| \xrightarrow[]{\mathrm{a.s.}} 0$ as $n\to\infty$
(that is, $Q_n \xrightarrow[]{\mathrm{a.s.}} Q$ in total variation),

% \item\label{item:coverage3} with probability $1$, for all $x\in\R^D$, $\I(x\in R_n) \to \I(x\in R)$ as $n\to\infty$, and
\item\label{item:coverage3} $R_n \xrightarrow[]{\mathrm{a.s.}} R$ as $n\to\infty$ (that is, a.s., for all $x\in\R^D$, $\I(x\in R_n) \to \I(x\in R)$), and
\item\label{item:coverage4} $Q(\partial R) = 0$ and $0 < m(R) < \infty$, where $m$ denotes Lebesgue measure on $\R^D$.
\end{enumerate}
Then $\Pr(\theta_0\in S_n)\to \rho$ as $n\to\infty$.
\end{theorem}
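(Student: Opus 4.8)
The plan is to reduce the frequentist coverage probability to a statement about the fixed limiting set $R$ and to exploit convexity of the credible sets to control the randomness. Write $W_n = \sqrt{n}(\theta_0 - \theta_n) = -\sqrt{n}(\theta_n - \theta_0)$. Two elementary identities drive everything. First, since $R_n = \{\sqrt{n}(\theta-\theta_n):\theta\in S_n\}$ and $\theta\mapsto\sqrt{n}(\theta-\theta_n)$ is an affine bijection, $x\in R_n \iff \theta_n + x/\sqrt{n}\in S_n$; taking $x = W_n$ gives $\theta_0\in S_n \iff W_n\in R_n$, so $\Pr(\theta_0\in S_n) = \Pr(W_n\in R_n)$ and it suffices to prove $\Pr(W_n\in R_n)\to\rho$. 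Second, the same change of variables yields $Q_n(R_n) = \Pi_n(S_n)$.

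I would first pin down the limit as $\rho = Q(R)$. Since $\Pi_n(S_n)\xrightarrow[]{\mathrm{a.s.}}\rho$, the identity $Q_n(R_n)=\Pi_n(S_n)$ and condition~\ref{item:coverage2} give $|Q(R_n)-\rho|\le|Q(R_n)-Q_n(R_n)| + |Q_n(R_n)-\rho|\to 0$ a.s., so $Q(R_n)\xrightarrow[]{\mathrm{a.s.}}\rho$. On the other hand, condition~\ref{item:coverage3} and dominated convergence (dominating function $1\in L^1(Q)$) give $Q(R_n)\to Q(R)$ a.s.; matching the two limits yields $Q(R)=\rho$. Along the way I note that $R$ inherits convexity from the $R_n$ (a convex combination of two points of $R$ lies in $R_n$ for all large $n$, hence in $R$), that $0<m(R)<\infty$ forces $R$ to be bounded with nonempty interior, and that $Q(\partial R)=0$ makes $R$ a $Q$-continuity set. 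Hence condition~\ref{item:coverage1} and the Portmanteau theorem give $\Pr(W_n\in R)\to Q(R)=\rho$.

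The heart of the argument is then to show $\Pr(W_n\in R_n\,\triangle\,R)\to 0$, since $|\Pr(W_n\in R_n)-\Pr(W_n\in R)|\le \Pr(W_n\in R_n\,\triangle\,R)$. Here I would establish a purely geometric containment lemma: if convex sets $R_n$ converge pointwise to the bounded convex set $R$, then for every $\delta>0$, eventually $R_n\,\triangle\,R\subseteq \bar N_\delta$, where $\bar N_\delta = \{x:d(x,\partial R)\le\delta\}$. The inner half ($R\setminus R_n\subseteq \bar N_\delta$) is easy: a compact set $C\subseteq R^\circ$ can be enclosed in a polytope $\mathrm{conv}(x_1,\dots,x_k)$ with $x_i\in R^\circ$, each $x_i$ lies in $R_n$ eventually, and convexity forces $C\subseteq R_n$. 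The outer half ($R_n\setminus R\subseteq \bar N_\delta$) is the delicate part and is where convexity is essential: a putative point $w_n\in R_n$ at distance $\ge\delta$ from $\bar R$, together with a fixed ball $\bar B(x_0,r)\subseteq R_n$ from the inner half, generates via convex hulls a fixed exterior point $p\notin\bar R$ lying inside $R_n$ for infinitely many $n$ (extract a convergent subsequence of the $w_n$, which live in a compact neighborhood of $\partial R$), contradicting $\I(p\in R_n)\to\I(p\in R)=0$.

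Finally I would convert these almost-sure set statements into the probability bound. Setting $E_n = \{R_n\,\triangle\,R\subseteq \bar N_\delta\}$, the lemma gives $\Pr(\liminf_n E_n)=1$, so Fatou yields $\Pr(E_n)\to 1$. On $E_n$ the event $\{W_n\in R_n\,\triangle\,R\}$ is contained in $\{W_n\in \bar N_\delta\}$, whence $\Pr(W_n\in R_n\,\triangle\,R)\le \Pr(W_n\in\bar N_\delta)+\Pr(E_n^c)$. Since $\bar N_\delta$ is closed, Portmanteau gives $\limsup_n\Pr(W_n\in\bar N_\delta)\le Q(\bar N_\delta)$, and as $\bar N_\delta\downarrow\partial R$ we have $Q(\bar N_\delta)\to Q(\partial R)=0$; letting $\delta\to 0$ shows $\Pr(W_n\in R_n\,\triangle\,R)\to 0$. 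Combining with $\Pr(W_n\in R)\to\rho$ gives $\Pr(\theta_0\in S_n)=\Pr(W_n\in R_n)\to\rho$. The main obstacle is the outer-containment half of the convex-set lemma together with the almost-sure-to-probability interface (the Fatou step and the simultaneous handling of the random set $R_n$ and the random vector $W_n$), since condition~\ref{item:coverage1} supplies only convergence in distribution and no dominating joint control.
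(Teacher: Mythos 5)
Your proof is correct and follows essentially the same route as the paper's: the same reduction $\Pr(\theta_0\in S_n)=\Pr(W_n\in R_n)$ and identification $Q(R)=\rho$ via $Q_n(R_n)=\Pi_n(S_n)$ with total-variation convergence plus dominated convergence, and your containment lemma ($R_n\triangle R\subseteq \bar N_\delta$ eventually) is a repackaging of the paper's Lemma~\ref{lemma:squeeze} ($A\subseteq R_n\subseteq B$), combined with the same Fatou-plus-portmanteau interface as Lemma~\ref{lemma:convergent-set}. The one spot needing a small patch is the outer containment: points of $R_n$ at distance $\ge\delta$ from $\bar R$ need not be bounded a priori, so before extracting a convergent subsequence you should replace $w_n$ by the point at distance exactly $\delta$ from $\bar R$ on the segment joining $w_n$ to the interior ball (which stays in $R_n$ by convexity and lies in the compact level set $\{x : d(x,\bar R)=\delta\}$).
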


See Section \ref{section:coverage-proof} for the proof. 
If $Q$ has a density with respect to Lebesgue measure, then the condition that $Q(\partial R) = 0$ is automatically satisfied, since 
the assumptions imply that $R$ is convex and thus $m(\partial R) = 0$.
In Theorem~\ref{theorem:coverage}, the assumption that the confidence sets $S_n$ are convex is not essential. 
The only place that convexity is used is to ensure that the conclusion of Lemma~\ref{lemma:squeeze} holds.
% -- that is, roughly speaking, that the sets $R_n$ approximate $R$ arbitrarily well in terms of Euclidean distance as $n\to\infty$.  
Indeed, Theorem~\ref{theorem:coverage} still holds if $S_1,S_2,\ldots$ are not assumed to be convex and
condition~\ref{item:coverage3} is replaced by the conclusion of Lemma~\ref{lemma:squeeze} (that is,
for any $\epsilon > 0$, if $A = \{x\in \R^D : d(x,R^c) > \epsilon \}$ and $B = \{x\in\R^D : d(x,R) \leq \epsilon\}$
then for all $n$ sufficiently large, $A\subseteq R_n\subseteq B$). 
We chose to state the theorem in this way because credible sets are often convex by construction, and
pointwise convergence of $R_n$ to $R$ is considerably easier to verify than the conclusion of Lemma~\ref{lemma:squeeze}.  
% Note to self: I carefully checked the preceding claim about dropping the convexity condition.
% In this case, it is also necessary to show that the conclusion of Lemma~\ref{lemma:squeeze} implies that a.s., for all $x$ not in the boundary of $R$,
% we have $\I(x\in R_n) \to \I(x\in R)$.  (I checked this and it is true.) Then, it is necessary to 
% slightly modify the part of the proof of Lemma~\ref{lemma:convergent-set} where the DCT is used,
% to use the fact that $\I(x\in R_n) \to \I(x\in R)$ almost everywhere wrt $Q$ (since $Q(\partial R) = 0$ by assumption) rather than everywhere.
The following lemmas are used in the proof of Theorem~\ref{theorem:coverage}, and may be useful in their own right.
See Section \ref{section:coverage-proof} for their proofs.

\begin{lemma}
\label{lemma:convergent-set}
Let $X_1,X_2,\ldots\in\R^D$ be random vectors such that $X_n\xrightarrow[]{\mathrm{D}} X$ for some random vector $X$.
Let $R_1,R_2,\ldots\subseteq\R^D$ be random convex measurable sets, possibly dependent on $X_1,X_2,\ldots$.
Assume there exists some fixed $R \subseteq \R^D$ 
with $0 < m(R) < \infty$ and $\Pr(X \in \partial R) = 0$
such that $R_n \to R$ almost surely as $n\to\infty$.
Then $\Pr(X_n \in R_n) \to \Pr(X\in R)$ as $n\to\infty$.
\end{lemma}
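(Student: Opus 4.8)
The plan is to replace the random sets $R_n$ by \emph{fixed} inner and outer approximations and then invoke the Portmanteau theorem for $X_n\xrightarrow[]{\mathrm{D}} X$. First I would fix $\epsilon>0$ and set $A_\epsilon = \{x\in\R^D : d(x,R^c)>\epsilon\}$ and $B_\epsilon = \{x\in\R^D : d(x,R)\le\epsilon\}$; since $x\mapsto d(x,R^c)$ and $x\mapsto d(x,R)$ are continuous, $A_\epsilon$ is open and $B_\epsilon$ is closed. Because each $R_n$ is convex and $R_n\to R$ almost surely, the squeeze property (Lemma~\ref{lemma:squeeze}) gives that, almost surely, $A_\epsilon\subseteq R_n\subseteq B_\epsilon$ for all $n$ sufficiently large. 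Writing $G_n = \{A_\epsilon\subseteq R_n\subseteq B_\epsilon\}$, this means $\Pr(G_n^c)\to 0$ as $n\to\infty$.

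On $G_n$ one has the pointwise sandwich $\I(X_n\in A_\epsilon)\le\I(X_n\in R_n)\le\I(X_n\in B_\epsilon)$, while on $G_n^c$ the indicator $\I(G_n^c)$ absorbs the gap, so for every $n$,
\begin{equation*}
\I(X_n\in A_\epsilon) - \I(G_n^c) \;\le\; \I(X_n\in R_n) \;\le\; \I(X_n\in B_\epsilon) + \I(G_n^c).
\end{equation*}
Taking expectations and using $\Pr(G_n^c)\to 0$ removes the random sets entirely, leaving only the fixed events $\{X_n\in A_\epsilon\}$ and $\{X_n\in B_\epsilon\}$. Applying Portmanteau to $X_n\xrightarrow[]{\mathrm{D}} X$ (the open-set lower bound for $A_\epsilon$ and the closed-set upper bound for $B_\epsilon$) then yields
\begin{equation*}
\Pr(X\in A_\epsilon) \;\le\; \liminf_n \Pr(X_n\in R_n) \;\le\; \limsup_n \Pr(X_n\in R_n) \;\le\; \Pr(X\in B_\epsilon).
\end{equation*}

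Finally I would let $\epsilon\downarrow 0$. Since $A_\epsilon$ increases to $\bigcup_{\epsilon>0}A_\epsilon = R^\circ$ and $B_\epsilon$ decreases to $\bigcap_{\epsilon>0}B_\epsilon = \overline{R}$, continuity of the finite measure $\Pr(X\in\,\cdot\,)$ from below and above gives $\Pr(X\in A_\epsilon)\uparrow\Pr(X\in R^\circ)$ and $\Pr(X\in B_\epsilon)\downarrow\Pr(X\in\overline{R})$. The hypothesis $\Pr(X\in\partial R)=0$, together with $R^\circ\subseteq R\subseteq\overline{R}$ and $\partial R=\overline{R}\setminus R^\circ$, forces $\Pr(X\in R^\circ)=\Pr(X\in R)=\Pr(X\in\overline{R})$, so the $\liminf$ and $\limsup$ are pinched to the common value $\Pr(X\in R)$, which is the claim.

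I expect the main obstacle to be the dependence between $X_n$ and $R_n$: because $R_n$ may be a function of $X_1,X_2,\ldots$, one cannot condition on $R_n$ and apply a limit theorem to $X_n$ directly. The decoupling device is precisely the deterministic squeeze into $A_\epsilon$ and $B_\epsilon$ supplied by Lemma~\ref{lemma:squeeze} (this is where convexity of the $R_n$ enters), which trades the random set for fixed sets at the cost of the vanishing error $\Pr(G_n^c)$. A secondary, routine point is the measurability of $G_n$ and of $\{X_n\in R_n\}$, which I take to be part of the random-set framework, since these probabilities already appear in the statement of Theorem~\ref{theorem:coverage}.
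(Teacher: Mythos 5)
Your proof is correct and follows essentially the same route as the paper's: the same inner/outer approximations $A_\epsilon$, $B_\epsilon$, the same appeal to Lemma~\ref{lemma:squeeze} to obtain the almost-sure eventual sandwich $A_\epsilon\subseteq R_n\subseteq B_\epsilon$, the same Portmanteau bounds, and the same limit $\epsilon\downarrow 0$ using $\Pr(X\in\partial R)=0$. The only (immaterial) difference is how you pass from the almost-sure eventual inclusion to the probability inequality: you introduce the explicit error event $G_n$ with $\Pr(G_n^c)\to 0$, whereas the paper applies Fatou's lemma and reverse Fatou's lemma to $\I(X_n\in R_n)-\I(X_n\in A_k)$; both are valid.
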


The probability $\Pr(X_n\in R_n)$ should be interpreted as $\int \I(X_n(\omega)\in R_n(\omega)) P(d\omega)$,
that is, $X_n$ and $R_n$ are jointly integrated over and $\Pr(X_n\in R_n)$ is a non-random quantity.

\begin{lemma}
\label{lemma:squeeze}
Let $R_1,R_2,\ldots\subseteq\R^D$ be convex sets.
Assume $R_n \to R$ for some $R \subseteq \R^D$ with $0 < m(R) < \infty$.
For any $\epsilon > 0$,
if $A = \{x\in \R^D : d(x,R^c) > \epsilon \}$ and $B = \{x\in\R^D : d(x,R) \leq \epsilon\}$
then for all $n$ sufficiently large, $A\subseteq R_n\subseteq B$.
\end{lemma}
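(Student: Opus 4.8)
The plan is to prove the two inclusions $A\subseteq R_n$ and $R_n\subseteq B$ separately, each for all sufficiently large $n$, and I would first record two structural facts about the limit set $R$. First, $R$ is convex: if $x,y\in R$ then $\I(x\in R_n)\to 1$ and $\I(y\in R_n)\to 1$, so both lie in $R_n$ for all large $n$, convexity of $R_n$ puts the whole segment $[x,y]$ in $R_n$ for all large $n$, and pointwise convergence then forces every point of $[x,y]$ into $R$. Second, $R$ is bounded with nonempty interior: $m(R)>0$ forces $R^\circ\neq\emptyset$, and a convex set with nonempty interior that is unbounded has infinite measure (the convex hull of a fixed ball and points escaping to infinity has volume tending to infinity), so $m(R)<\infty$ forces boundedness. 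Set $\Delta=\sup_{z\in R}\|z-x_0\|<\infty$ for a fixed $x_0\in R^\circ$.

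The engine of the argument is a sub-claim that I would prove first: if $K\subseteq R^\circ$ is compact, then $K\subseteq R_n$ for all $n$ sufficiently large. To see this, note $\delta:=d(K,R^c)>0$ (compact set versus disjoint closed set), cover $K$ by finitely many balls $B_{\delta/(2\sqrt D)}(x_i)$ with centers $x_i\in K$, and around each center take the $2D$ points $x_i\pm(\delta/2)e_j$. Each such point lies within distance $\delta/2<\delta\le d(x_i,R^c)$ of $x_i$, hence lies in $R$; and the convex hull of the $2D$ points around $x_i$ is the $\ell^1$-ball $\{y:\|y-x_i\|_1\le\delta/2\}$, which contains $B_{\delta/(2\sqrt D)}(x_i)$ because $\|u\|_1\le\sqrt D\,\|u\|_2$. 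Collecting all these points into a finite set $V\subseteq R$ yields $K\subseteq\mathrm{conv}(V)\subseteq R$. Since $V$ is finite and $\I(v\in R_n)\to 1$ for each $v\in V$, all of $V$ lies in $R_n$ once $n$ exceeds the finite maximum of the thresholds, and convexity of $R_n$ then gives $\mathrm{conv}(V)\subseteq R_n$, hence $K\subseteq R_n$.

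Given the sub-claim, the inclusion $A\subseteq R_n$ is immediate. I would observe that $A\subseteq R$ is bounded (so $\bar A$ is compact), and that every $x\in\bar A$ satisfies $d(x,R^c)\ge\epsilon$ by continuity of $d(\cdot,R^c)$, whence $B_\epsilon(x)\subseteq R$ and $x\in R^\circ$. Thus $\bar A$ is a compact subset of $R^\circ$, the sub-claim applies, and $A\subseteq\bar A\subseteq R_n$ for all large $n$.

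The inclusion $R_n\subseteq B$ is where the real work lies, and I expect it to be the main obstacle, because $R_n$ may a priori be unbounded, so I would argue by contradiction and truncate. Fix a closed ball $\bar B_r(x_0)\subseteq R^\circ$; by the sub-claim it lies in $R_n$ for all $n\ge N_1$. If $R_n\not\subseteq B$ for infinitely many $n\ge N_1$, pick $y_n\in R_n$ with $d(y_n,R)>\epsilon$, set $d_0=\Delta+\epsilon$, and replace $y_n$ by the point $q_n$ on the segment $[x_0,y_n]$ at distance $\min(\|y_n-x_0\|,d_0)$ from $x_0$; then $q_n\in R_n$ (the segment lies in $R_n$), $\|q_n-x_0\|\le d_0$, and $d(q_n,R)\ge\epsilon$ (either $q_n=y_n$, or $\|q_n-x_0\|=d_0$ forces distance $\ge d_0-\Delta=\epsilon$ from $R\subseteq\bar B_\Delta(x_0)$). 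Passing to a subsequence, $q_n\to q^*$ with $d(q^*,R)\ge\epsilon$. Now I would invoke the cone property: since $\bar B_r(x_0)\subseteq R_n$ and $q_n\in R_n$, convexity gives $\bar B_{(1-s)r}\big((1-s)x_0+sq_n\big)\subseteq R_n$ for every $s\in[0,1)$. Fixing $s_0=1-\epsilon/(2d_0)$ and the fixed point $w=(1-s_0)x_0+s_0q^*$, the centers $(1-s_0)x_0+s_0q_n$ converge to $w$, so eventually $w$ falls inside the ball of radius $(1-s_0)r>0$ around them, giving $w\in R_n$ along the subsequence; meanwhile $\|w-q^*\|=(1-s_0)\|x_0-q^*\|\le(1-s_0)d_0=\epsilon/2$, so $d(w,R)\ge\epsilon/2>0$ and $w\notin R$. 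This contradicts $\I(w\in R_n)\to\I(w\in R)=0$, establishing $R_n\subseteq B$ for all large $n$ and completing the proof. The delicate point throughout is the truncation step, which keeps the escaping witnesses in a fixed compact region so that the cone property can manufacture a single fixed exterior point lying in infinitely many $R_n$.
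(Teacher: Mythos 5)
Your proof is correct, and while the first half mirrors the paper, the second half takes a genuinely different route. For $A\subseteq R_n$ your argument is essentially the paper's: the paper covers the compact set $\bar A$ by open cubes with closures in $R$ and gets each cube into $R_n$ via its (finitely many) corners, whereas you cover a compact subset of $R^\circ$ by cross-polytopes with vertices in $R$ --- same mechanism, different polytope. The real divergence is in $R_n\subseteq B$. The paper's proof is a direct finite-cover argument: it covers the level set $S_\epsilon=\{x:d(x,R)=\epsilon\}$ by open ``shadow'' sets $C_x$ anchored at points $x\in S_{\epsilon/2}$ and a fixed cube $E\subseteq R$, notes that the finitely many anchor points eventually lie outside every $R_n$, concludes by convexity that $S_\epsilon\cap R_n=\emptyset$, and then uses an intermediate-value argument along segments into $E$ to force $R_n\subseteq B$. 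You instead argue by contradiction: you truncate escaping witnesses $y_n$ to the compact ball $\bar B_{d_0}(x_0)$ while preserving $d(q_n,R)\geq\epsilon$, extract a convergent subsequence $q_n\to q^*$, and use the cone over a fixed ball $\bar B_r(x_0)\subseteq R_n$ to manufacture a single fixed point $w\notin R$ lying in infinitely many $R_n$, contradicting $\I(w\in R_n)\to\I(w\in R)$. Both are sound; the paper's version is constructive and uniform in $n$ (no subsequences), while yours trades the somewhat delicate $C_x$ open-cover construction for a more elementary sequential-compactness argument, at the cost of working by contradiction. The truncation step you flag as delicate is indeed the key move --- without it the witnesses could escape to infinity and the cone argument would not converge to a fixed exterior point.
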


% See Section \ref{section:coverage-proof} for the proof. 

\section{Composite likelihood-based posteriors}
\label{section:composite-likelihood}

Composite likelihoods (CLs) \citep{lindsay1988composite} represent a large class of generalized likelihoods 
that encompasses essentially all of the examples in Sections~\ref{section:standard-applications} and \ref{section:generalized-applications}.
% Note to self: The median-based posterior we consider is not exactly a composite likelihood, although it is motivated by one.
The theory of maximum composite likelihood estimation is well-established \citep{lindsay1988composite,molenberghs2005models,varin2011overview}.
Theoretical results for CL-based generalized posteriors have been provided 
\citep{pauli2011bayesian,ribatet2012bayesian,ventura2016pseudo,greco2008robust,lazar2003bayesian}, subject to the caveats discussed in the introduction.
The purpose of this section is to discuss how these previous results on CL-based generalized posteriors, or CL-posteriors for short,
can be strengthened using our results in Sections~\ref{section:concentration} to \ref{section:coverage}.
Roughly speaking, CL-posteriors derived from a correctly specified model can generally be expected to be consistent,
but not necessarily correctly calibrated with respect to frequentist coverage.
% The consistency and coverage of CL-posteriors has been studied in previous work, subject to the caveats discussed in the introduction 
% \citep{pauli2011bayesian,ribatet2012bayesian}.

% In Sections~\ref{section:standard-applications} and \ref{section:generalized-applications}, we verify the conditions of Theorem~\ref{theorem:altogether} in many particular cases.

Let $y$ denote the full data set, which may take any form such as a sequence, a graph, a database, or any other data structure.
Suppose $\{P_\theta : \theta\in\Theta\}$ is an assumed model for the distribution of $y$ given $\theta$, where $\Theta\subseteq\R^D$. 
% Suppose $p_\theta(y)$ is the probability density of $y$ given $\theta$ under an assumed model $\{P_\theta : \theta\in\Theta\}$.
% with respect to a sigma-finite measure $\lambda$ on $\Y$.
For $j=1,\ldots,k$, suppose $s_j(y)$ and $t_j(y)$ are functions of the data
and, when $Y\sim P_\theta$, suppose the conditional distribution of $s_j(Y)$ given $t_j(Y)$ has density $p_\theta(s_j | t_j)$ with respect to a common dominating measure $\lambda_j$ for all values of $\theta$ and $t_j$.
% and $p_\theta(s_j | t_j)$ is the conditional density (with respect to a common dominating measure $\lambda_j$) of $s_j(Y)$ given $t_j(Y)$ when $Y\sim P_\theta$.
Define the \textit{composite likelihood} \citep{lindsay1988composite},
$$ \mathcal{L}^\mathrm{CL}(\theta) = \prod_{j = 1}^k p_\theta(s_j | t_j). $$
A few examples are given here and in Section~\ref{section:generalized-applications}; see \citet{varin2011overview} for more examples.

% future: maybe modify this to be a GLM instead of i.i.d.
\begin{example}[i.i.d.\ likelihood]
If $y=(y_1,\ldots,y_n)$, $s_j(y) = y_j$, and $t_j(y) = 0$, then $\mathcal{L}^\mathrm{CL}(\theta) = \prod_{j=1}^n p_\theta(y_j)$
is simply the likelihood for an i.i.d.\ model.
\end{example}

\begin{example}[pseudolikelihood]
If $y=(y_1,\ldots,y_n)$, $s_j(y) = y_j$, and $t_j(y) = y_{-j} := (y_1,\ldots,y_{j-1},y_{j+1},\ldots,y_n)$, then $\mathcal{L}^\mathrm{CL}(\theta) = \prod_{j=1}^n p_\theta(y_j | y_{-j})$ is a pseudolikelihood \citep{besag1975statistical}.
\end{example}

\begin{example}[restricted likelihood]
If $k=1$, $t_1(y)=0$, and $s_1(y)$ is an insufficient statistic, 
then $\mathcal{L}^\mathrm{CL}(\theta)$ is a restricted likelihood \citep{lewis2014bayesian}.
For instance, if $s_1(y)$ consists of ranks or selected quantiles, then $\mathcal{L}^\mathrm{CL}(\theta)$
is a rank likelihood \citep{pettitt1983likelihood,hoff2007extending}
or a quantile-based likelihood \citep{doksum1990consistent}, respectively.
\end{example}

Due to the special structure of composite likelihoods,
one can make some general observations about CL-posteriors
of the form $\pi_n(\theta) \propto \mathcal{L}^\mathrm{CL}(\theta)\pi(\theta)$.
% It seems that most of the conditions of Theorems~\ref{theorem:concentration} or \ref{theorem:altogether} are most easily verified on a case-by-case basis, however, 
First, a reassuring property is that if the model is correctly specified, 
then CL-posteriors are consistent under fairly general conditions; we discuss this next.
% Even when the model is true, however, it is not guaranteed that $f''(\theta_0)$ is positive definite.
% On the other hand, it is usually straightforward to check whether $f_n$ is convex and has continuous third derivatives,
% and to provide sufficient conditions under which $(f_n''')$ is uniformly bounded.

\subsection{Consistency of CL-posteriors under correct specification}
\label{section:cl-consistency}

Throughout this article, we make no assumption of model correctness in the main results (Sections~\ref{section:concentration} to \ref{section:coverage})
or the applications (Sections~\ref{section:standard-applications} and \ref{section:generalized-applications}).
% , and we simply characterize the point $\theta_0$ at which concentration occurs.
However, for interpretability, it is important to have a guarantee of consistency if the assumed model is correct or at least partially correct.
% it is desirable to have consistency --- that is, if the model is correct in some sense, then 
% the generalized posterior concentrates at the true parameter. % --- that is, that consistency holds.
Here, we show that in many cases of interest, if the model is correctly specified---or at least, if the conditional densities $p_\theta(s_j|t_j)$ are correctly specified---then the CL-posterior concentrates at the true parameter.
The analogue of this result for maximum CL estimators is well-known \citep{lindsay1988composite,varin2011overview};
also see \citet{pauli2011bayesian} and \citet{ribatet2012bayesian}.

First, observe that if $Y\sim P_{\theta_0}$, $S_j = s_j(Y)$, and $T_j = t_j(Y)$, then for all $\theta\in\Theta$,
\begin{align}
\label{equation:cl-ineq}
\E\big(\log p_{\theta_0}(S_j | T_j)\big) \geq \E\big(\log p_{\theta}(S_j | T_j)\big)
\end{align}
because the conditional relative entropy $\E\big(\log(p_{\theta_0}(S_j | T_j)/p_\theta(S_j | T_j))\big)$ is nonnegative;
this is referred to as the information inequality by \citet{lindsay1988composite}.
Now, suppose that for each $n\in\{1,2,\ldots\}$, we have a data set $Y^{n}$, model $\{P_\theta^{n} : \theta\in\Theta\}$ (where $\Theta$ does not depend on $n$), and functions $s_j^{n}$, $t_j^{n}$ for $j=1,\ldots,k_n$.
Further, suppose the assumed model is correct, 
such that $Y^{n}\sim P_{\theta_0}^{n}$ where the true parameter $\theta_0$ is shared across all $n$.
Define
$$ f_n(\theta) = -\frac{1}{n} \log \mathcal{L}_n^\mathrm{CL}(\theta) = -\frac{1}{n} \sum_{j = 1}^{k_n} \log p_\theta^{n}(S_j^{n} | T_j^{n}) $$
and $\pi_n(\theta) \propto  \exp(-n f_n(\theta))\pi(\theta) = \mathcal{L}_n^\mathrm{CL}(\theta) \pi(\theta)$
where $S_j^{n} = s_j^{n}(Y^{n})$ and $T_j^{n} = t_j^{n}(Y^{n})$.
In many cases of interest (see Sections~\ref{section:standard-applications} and \ref{section:generalized-applications}), 
we have that with probability $1$, for all $\theta\in\Theta$, 
$\lim_{n\to\infty} f_n(\theta) = f(\theta)$ where $f(\theta) = \lim_{n\to\infty} \E\big(f_n(\theta)\big)$.
Then, by Equation~\ref{equation:cl-ineq}, $f(\theta_0) \leq f(\theta)$ for all $\theta\in\Theta$,
in other words, $\theta_0$ is a minimizer of $f$.
Further, in many cases, $f$ has a unique minimizer, and $\pi_n$ concentrates at the unique minimizer;
in particular, this holds if the conditions of Theorem~\ref{theorem:concentration} or Theorem~\ref{theorem:altogether} are met.
Therefore, in such cases, the CL-posterior $\pi_n$ concentrates at the true parameter, $\theta_0$.

\subsection{Coverage of CL-posteriors under correct specification}
\label{section:cl-coverage}

Although CL-posteriors have appealing consistency properties, 
they do not generally have correct asymptotic frequentist coverage, except in special circumstances \citep{pauli2011bayesian,ribatet2012bayesian}.
Continuing in the notation of Section~\ref{section:cl-consistency},
suppose $Y^n\sim P_{\theta_0}^{n}$, 
let $\pi_n(\theta) \propto \exp(-n f_n(\theta))\pi(\theta) = \mathcal{L}_n^\mathrm{CL}(\theta)\pi(\theta)$ be the CL-posterior, and 
let $\theta_n = \argmax_\theta \mathcal{L}_n^\mathrm{CL}(\theta) = \argmin_\theta f_n(\theta)$ be the maximum composite likelihood estimator.
If Theorem~\ref{theorem:altogether} applies with probability $1$, then $Q_n \xrightarrow[]{\mathrm{a.s.}} \Normal(0,H_0^{-1})$ in total variation distance, 
where $H_0 = f''(\theta_0)$ and $Q_n$ is the distribution of $\sqrt{n} (\theta-\theta_n)$ when $\theta\sim\pi_n$.
This strengthens previous BvM results for CL-posteriors by showing almost sure convergence (rather than convergence in probability)
with respect to total variation distance (rather than in the weak topology).
% see \citet{greco2008robust,pauli2011bayesian,ribatet2012bayesian,ventura2016pseudo,lazar2003bayesian}.
% Previous results on the asymptotic normality of $\pi_n$ have been provided, subject to the caveats discussed in the introduction 

To use Theorem~\ref{theorem:coverage}, we also need to know the asymptotic distribution of $\theta_n$.
The asymptotics of $\theta_n$ are well-known \citep{lindsay1988composite,varin2011overview},
but for completeness we provide an informal derivation (see below).
Define $G_j^n = \nabla_\theta\big\vert_{\theta=\theta_0}  \log p_\theta^{n}(S_j^{n} | T_j^{n})$.
It turns out that $-\sqrt{n}(\theta_n - \theta_0) \approx  \Normal(0, A_n^{-1} J_n A_n^{-1})$
under regularity conditions, 
where $A_n = \frac{1}{n}\sum_{j=1}^{k_n} \mathrm{Cov}(G_j^n)$
and $J_n = \frac{1}{n}\mathrm{Cov}\big(\sum_{j=1}^{k_n} G_j^n\big)$.
Typically, $A_n \to H_0$ and $J_n \to J_0$ for some $J_0$, so that
$$-\sqrt{n}(\theta_n - \theta_0) \xlongrightarrow[]{\mathrm{D}} \Normal(0, H_0^{-1} J_0 H_0^{-1}).$$

Hence, under typical conditions, the asymptotic distribution of $-\sqrt{n}(\theta_n - \theta_0)$ and the limit of $Q_n$ are the same if and only if $H_0 = J_0$. 
Therefore, under these conditions, if $H_0 = J_0$ then 
the CL-posterior $\pi_n$ has correct asymptotic frequentist coverage by Theorem~\ref{theorem:coverage}.
For instance, if for each $n$, $G_1^n,\ldots,G_{k_n}^n$ are pairwise uncorrelated, 
% --- or in particular, if $(S_1^n,T_1^n),\ldots,(S_{k_n}^n,T_{k_n}^n)$ are pairwise independent for all $n$ ---
then $A_n = J_n$ and hence $H_0 = J_0$.
However, in many cases of interest, $H_0\neq J_0$ and the CL-posterior needs to be affinely transformed to have correct coverage
\citep{ribatet2012bayesian,pauli2011bayesian,friel2012bayesian,stoehr2015calibration};
also see \citet{williams2018bayesian} for a similar technique in survey sampling.

For completeness, here we provide a rough sketch of the derivation of the asymptotic distribution of $\theta_n$; 
see \citet{lindsay1988composite} and \citet{varin2011overview}.
By a first-order Taylor approximation applied to each entry of $f_n'(\theta)\in\R^D$, when $\theta_n$ is near $\theta_0$ we have
$0 = f_n'(\theta_n) \approx f_n'(\theta_0) + f_n''(\theta_0) (\theta_n - \theta_0)$,
and thus, $-\sqrt{n}(\theta_n - \theta_0) \approx f_n''(\theta_0)^{-1} (\sqrt{n} f_n'(\theta_0))$,
assuming $f_n''(\theta_0)\in\R^{D\times D}$ exists and is invertible and the error terms are negligible.
% and $g_j(Y^n,\theta) = \nabla_\theta \log p_\theta^{n}(S_j^{n} | T_j^{n})$;
When $n$ is large, we typically have $f_n''(\theta_0) \approx \E f_n''(\theta_0)$ (for instance, due to a law of large numbers result), and thus,
$f_n''(\theta_0) \approx \E f_n''(\theta_0) = \frac{1}{n}\sum_{j=1}^{k_n} \E (G_j^n {G_j^n}^\T) = \frac{1}{n}\sum_{j=1}^{k_n} \mathrm{Cov}(G_j^n) = A_n$
since $\E(G_j^n) = 0$ 
% Note to self: I believe that $\E(G_j^n) = 0$ does not hold in general if the model is not correctly specified and $\theta_0$ is the KL-nearest point.
and $\E \big(\nabla^2_\theta\big\vert_{\theta=\theta_0} \log p_\theta^n(S_j^{n} | T_j^{n})\big) 
= -\E (G_j^n {G_j^n}^\T)$,
as long as we can interchange the order of integrals and derivatives.
% Note to self: Continuity of second derivatives (and $\theta_0$ in interior of $\Theta$) may be necessary for symmetry of second derivatives.
% Note to self: pauli11 refer to failure of the second Bartlett identity -- they are apparently referring to the whole log-complik, not the individual terms.
Further, assuming a central limit theorem holds,
$\sqrt{n} f_n'(\theta_0) = -\frac{1}{\sqrt{n}} \sum_{j=1}^{k_n} G_j^n \approx \Normal(0, J_n)$
where $J_n = \frac{1}{n}\mathrm{Cov}\big(\sum_{j=1}^{k_n} G_j^n\big)$.
Thus, under appropriate conditions, $-\sqrt{n}(\theta_n - \theta_0) \approx \Normal(0, A_n^{-1} J_n A_n^{-1})$.

\section{Applications to standard posteriors}
\label{section:standard-applications}

In this section, we illustrate how our results can be used to easily prove posterior concentration, the Laplace approximation,
and asymptotic normality for standard models such as exponential families, linear regression, and generalized linear models including logistic regression and Poisson regression. 
We do not assume that the model is correctly specified; thus, this section can be compared to the misspecified setting of 
\citet{kleijn2012bernstein}.
Even in these standard models, our results go beyond the existing theory of \citet{kleijn2012bernstein} by showing almost sure convergence
and employing conditions that we believe are easier to verify; see Section~\ref{section:previous-work-misspecification} for a detailed comparison.
% These are toy examples in the sense that the results are not surprising for standard models such as these.
% for instance, we do not require that assumptions on the rate of concentration and local asymptotic normality occurs at a particular rate
Further, these ``toy'' examples serve to illustrate our general results in familiar settings, 
enabling one to compare our assumptions with commonly used assumptions for these models.

% binomial regression?
% time-series? AR(k)?

\subsection{Exponential families}
\label{section:exponential-families}

% \future{Go through applications and try to think of counterexamples.  E.g., why don't we need a nonzero variance condition in the expfam theorem?}

Consider an exponential family with density $q(y|\eta) =\exp(\eta^\T s(y) -\kappa(\eta))$ with respect to a sigma-finite Borel measure
$\lambda$ on $\Y\subseteq\R^d$ where $s:\Y\to\R^k$, $\eta\in\mathcal{E}\subseteq\R^k$, and 
$\kappa(\eta) =\log\int_\Y\exp(\eta^\T s(y))\lambda(d y)$. 
% Note to self: $\lambda(\Y)\neq 0$ is implied by the assumption that $\mathcal{E}$ is nonempty.  And $\lambda(\Y)\neq 0$ implies $\kappa(\eta) > -\infty$.
Any exponential family on $\R^d$ can be put in this form by choosing $\lambda$ appropriately and possibly reparametrizing to $\eta$.
Let $Q_\eta(E) =\int_E q(y|\eta) \lambda(d y)$ and denote $\E_\eta s(Y) = \int_\Y s(y) Q_\eta(d y)$.
For any $m\in\N$, we give $\R^m$ the Euclidean metric and the resulting Borel sigma-algebra unless otherwise specified.

\begin{condition}
\label{condition:expfam}
Assume $q(y|\eta)$ is of the form above, 
$\mathcal{E} =\{\eta\in\R^k: |\kappa(\eta)|<\infty\}$, $\mathcal{E}$ is open, $\mathcal{E}$ is nonempty, and $\eta \mapsto Q_\eta$ is one-to-one
(that is, $\eta$ is identifiable).
% is identifiable (that is, if $\eta\neq\eta'$ then $Q_\eta\neq Q_{\eta'}$).
\end{condition}

\begin{theorem}[Exponential families]
\label{theorem:expfams}
Consider a family $q(y|\eta)$ satisfying Condition~\ref{condition:expfam}.
Suppose $Y_1,Y_2,\ldots\in\Y$ are i.i.d.\ random vectors such that 
$\E s(Y_i) = \E_{\theta_0} s(Y)$ for some $\theta_0\in\Theta:=\mathcal{E}$.
Then for any open ball $E$ such that $\theta_0\in E$ and $\bar E\subseteq\Theta$, $f_n(\theta) := -\frac{1}{n}\sum_{i = 1}^n \log q(Y_i|\theta)$
satisfies the conditions of Theorem~\ref{theorem:altogether} with probability~$1$.
\end{theorem}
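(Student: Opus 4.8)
The plan is to verify, with probability one, each hypothesis of Theorem~\ref{theorem:altogether} for $f_n(\theta) = -\frac1n\sum_{i=1}^n \log q(Y_i\mid\theta)$, routing through the convexity condition~\ref{item:altogether-convex} rather than the $\liminf$ condition~\ref{item:altogether-liminf}. Substituting the log-density $\log q(y\mid\theta) = \theta^\T s(y) - \kappa(\theta)$ gives
\[
f_n(\theta) = \kappa(\theta) - \theta^\T \bar s_n, \qquad \bar s_n := \tfrac1n{\textstyle\sum_{i=1}^n} s(Y_i).
\]
The key structural observation is that $f_n$ differs from the deterministic cumulant function $\kappa$ only by a linear term; hence $f_n'' = \kappa''$ and $f_n''' = \kappa'''$ for every $n$, with no dependence on the data. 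This collapses the derivative conditions of Theorem~\ref{theorem:altogether} to statements about $\kappa$ alone, and it makes the convexity of $f_n$ automatic.

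Next I would invoke the classical analytic theory of exponential families. Since $\mathcal{E}$ is open, $\theta_0$ lies in the interior of the natural parameter space, where $\kappa$ is infinitely differentiable with derivatives obtained by differentiating under the integral sign; in particular $\kappa'(\eta) = \E_\eta s(Y)$ and $\kappa''(\eta) = \Cov_\eta\big(s(Y)\big)$, and $\kappa$ is convex on $\mathcal{E}$. Because $\bar E$ is a compact subset of the open set $\mathcal{E}$, the continuous map $\kappa'''$ is bounded on $\bar E$; as $f_n''' = \kappa'''$ for all $n$, the family $(f_n''')$ is uniformly bounded on $E$, and each $f_n$ has continuous third derivatives on $E$. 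Convexity of $\kappa$ yields convexity of each $f_n$ on $\Theta = \mathcal{E}$, which is the first half of condition~\ref{item:altogether-convex}. (The hypotheses on the prior $\pi$ are a separate modeling choice and are assumed to hold for whichever prior is used.)

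For the pointwise limit, the assumption $\E s(Y_i) = \E_{\theta_0} s(Y) =: \mu_0$ presupposes that this mean is finite, hence $\E\lvert s(Y_i)\rvert < \infty$, and the strong law of large numbers gives $\bar s_n \to \mu_0$ almost surely. Therefore $f_n \to f$ pointwise a.s.\ with $f(\theta) = \kappa(\theta) - \theta^\T \mu_0$, so $f'(\theta) = \E_\theta s(Y) - \mu_0$ and $f'(\theta_0) = \mu_0 - \mu_0 = 0$, completing condition~\ref{item:altogether-convex}. Finally $f''(\theta_0) = \Cov_{\theta_0}\big(s(Y)\big)$, which I would show is positive definite: if $a^\T \Cov_{\theta_0}\big(s(Y)\big) a = 0$ for some $a\neq 0$, then $a^\T s(Y)$ equals a constant $c$ almost surely under $Q_{\theta_0}$ (equivalently $\lambda$-a.e., since $q(\cdot\mid\theta_0)>0$), and a direct computation of $\kappa(\theta_0 + ta)$ shows $Q_{\theta_0 + ta} = Q_{\theta_0}$ for all small $t$ with $\theta_0 + ta\in\mathcal{E}$, contradicting the identifiability in Condition~\ref{condition:expfam}. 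With $f''(\theta_0)$ positive definite and condition~\ref{item:altogether-convex} verified, Theorem~\ref{theorem:altogether} applies with $H_0 = f''(\theta_0)$.

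The main obstacle is not probabilistic — randomness enters only through the elementary SLLN step — but rather the classical exponential-family analysis that must be cited or reproved: justifying differentiation under the integral sign to obtain the smoothness of $\kappa$ together with the identities $\kappa' = \E_\eta s$ and $\kappa'' = \Cov_\eta s$ on the interior of $\mathcal{E}$, and translating the one-to-one assumption of Condition~\ref{condition:expfam} into positive definiteness of $\Cov_{\theta_0}\big(s(Y)\big)$. Everything else is bookkeeping, made especially clean by the data-independence of the higher derivatives of $f_n$ and by the automatic convexity of the log-partition function.
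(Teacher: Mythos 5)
Your proposal is correct and follows essentially the same route as the paper's proof: write $f_n(\theta)=\kappa(\theta)-\theta^\T \bar s_n$, apply the strong law of large numbers to the sufficient statistics, use the standard facts that $\kappa$ is $C^\infty$ and convex with $\kappa'=\E_\theta s(Y)$ and $\kappa''$ positive definite, bound $f_n'''=\kappa'''$ by continuity on the compact set $\bar E$, and verify condition~(2) of Theorem~\ref{theorem:altogether}. The only cosmetic difference is that you derive positive definiteness of $\kappa''(\theta_0)$ from identifiability explicitly, whereas the paper cites this as part of standard exponential family theory.
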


% A few remarks on interpretation.
Condition~\ref{condition:expfam} is that the exponential family is full, regular, nonempty, identifiable, and in natural form;
these are standard conditions that hold for most commonly used exponential families \citep{hoffman1994probability,miller2014inconsistency}.
Recall that the maximum likelihood estimate (MLE) is obtained by matching the expected sufficient statistics to the observed sufficient statistics.
Thus, the assumption that $\E s(Y_i) = \E_{\theta_0} s(Y)$ for some $\theta_0$ is simply assuming that this moment matching is possible, asymptotically.
In many cases, this holds automatically since the moment space $\mathcal{M} := \{\E_\theta s(Y) : \theta\in\Theta\}$ is often equal to the full set of possible values of $\E s(Y_i)$, due to the fact that $\mathcal{M}$ is convex \citep[e.g.,][Prop.\ 19]{miller2014inconsistency}.
Thus, while exceptions can occur, the result holds very generally.

\vspace{1em}
\begin{proof}{\bf of Theorem~\ref{theorem:expfams}}~
Note that $f_n(\theta) = \kappa(\theta) -\theta^\T S_n$ where $S_n =\frac{1}{n}\sum_{i = 1}^n s(Y_i)$.
By standard exponential family theory \citep[e.g.,][Prop.\ 19]{miller2014inconsistency}, 
$\kappa$ is $C^\infty$ (that is, $\kappa$ has continuous derivatives of all order), $\kappa$ is convex on $\Theta$, 
$\kappa'(\theta) =\E_\theta s(Y)$, and $\kappa''(\theta)$ is symmetric positive definite for all $\theta\in\Theta$.
Let $s_0 =\E s(Y_i)$. Since $s_0 = \E s(Y_i) = \E_{\theta_0} s(Y) = \kappa'(\theta_0)$ and $\kappa'(\theta_0)$ is finite
(because $\kappa$ is $C^\infty$), $S_n\to s_0$ with probability $1$ by the strong law of large numbers.
Thus, letting $f(\theta) =\kappa(\theta) -\theta^\T s_0$, we have that with probability $1$, for all $\theta\in\Theta$,
$f_n(\theta) =\kappa(\theta) -\theta^\T S_n \to \kappa(\theta) -\theta^\T s_0 = f(\theta)$.
(Note that due to the almost sure convergence of the sufficient statistics, we not only have that for all $\theta$, with probability $1$,  $f_n(\theta) \to f(\theta)$,
but we have the stronger consequence that with probability $1$, for all $\theta$, $f_n(\theta) \to f(\theta)$,
which is needed for Theorem~\ref{theorem:altogether} to apply.)
Let $E$ be an open ball such that $\theta_0\in E$ and $\bar E\subseteq\Theta$.
Then $\kappa'''(\theta)$ is bounded on $\bar E$, since $\kappa'''(\theta)$ is continuous and $\bar E$ is compact.
Hence, $(f_n''')$ is uniformly bounded on $E$ because $f_n'''(\theta) = \kappa'''(\theta)$.
Therefore, with probability $1$, $f_n\to f$ pointwise, $f_n$ is convex and has continuous third derivatives on $\Theta$, 
$f'(\theta_0) = \kappa'(\theta_0) - s_0 = 0$, $f''(\theta_0) = \kappa''(\theta_0)$ is positive definite, 
and $(f_n''')$ is uniformly bounded on $E$.
\end{proof}

\subsection{Generalized linear models (GLMs)}

First, we state a general theorem for GLMs, then we show how it applies to commonly used GLMs.
Consider a regression model of the form $p(y_i\mid\theta,x_i) \propto_\theta q(y_i\mid\theta^\T x_i)$
for covariates $x_i\in\X\subseteq\R^D$ and coefficients $\theta\in \Theta\subseteq\R^D$,
where $q(y|\eta)=\exp(\eta s(y) - \kappa(\eta))$ is a one-parameter exponential family satisfying Condition~\ref{condition:expfam}.
Note that the proportionality is with respect to $\theta$, not $y_i$.
% Denote $\mathcal{E} = \{\eta\in\R : |\kappa(\eta)| < \infty\}$.
Assume $\Theta$ is open, $\Theta$ is convex, and $\theta^\T x\in\mathcal{E}$ for all $\theta\in \Theta$, $x\in\X$.

\begin{theorem}[GLMs]
\label{theorem:GLMs}
Suppose $(X_1,Y_1),(X_2,Y_2),\ldots\in\X\times\Y$ i.i.d.\ such that:
\begin{enumerate}
\item\label{item:GLMs-minimum} $f'(\theta_0)=0$ for some $\theta_0 \in \Theta$, where $f(\theta) = -\E \log q(Y_i\mid\theta^\T X_i)$,
\item\label{item:GLMs-moments} $\E|X_i s(Y_i)| < \infty$ and $\E|\kappa(\theta^\T X_i)| < \infty$ for all $\theta\in \Theta$,
\item\label{item:GLMs-linearly} for all $a\in\R^D$, if $a^\T X_i \overset{\mathrm{a.s.}}{=} 0$ then $a = 0$, and
\item\label{item:GLMs-triple} there is an open ball $E\subseteq\R^D$ such that $\theta_0\in E$, $\bar E\subseteq \Theta$, and
        for all $j,k,\ell\in\{1,\ldots,D\}$, $\E\big(\sup_{\theta\in\bar E}|\kappa'''(\theta^\T X_i) X_{i j} X_{i k} X_{i \ell}|\big)<\infty$. 
\end{enumerate}
Then for any open ball $E$ satisfying condition~\ref{item:GLMs-triple}, $f_n(\theta) := -\frac{1}{n}\sum_{i = 1}^n \log q(Y_i\mid\theta^\T X_i)$
satisfies the conditions of Theorem \ref{theorem:altogether} with probability $1$.
\end{theorem}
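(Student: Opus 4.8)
The plan is to verify, on a single probability-one event, every hypothesis of Theorem~\ref{theorem:altogether} in its convexity branch (condition~\ref{item:altogether-convex}). Since $q(y\mid\eta)=\exp(\eta s(y)-\kappa(\eta))$, we have $\log q(Y_i\mid\theta^\T X_i)=s(Y_i)\,\theta^\T X_i-\kappa(\theta^\T X_i)$, so
\[
f_n(\theta)=\frac1n\sum_{i=1}^n\big(\kappa(\theta^\T X_i)-s(Y_i)\,\theta^\T X_i\big).
\]
By standard exponential family theory (as invoked in the proof of Theorem~\ref{theorem:expfams}), $\kappa$ is $C^\infty$ on $\mathcal E$ with $\kappa''>0$; since $\theta^\T x\in\mathcal E$ for all $\theta\in\Theta$, $x\in\X$, the chain rule shows each $f_n$ has continuous third derivatives on $\Theta$, with
\[
f_n'(\theta)=\frac1n\sum_i\big(\kappa'(\theta^\T X_i)-s(Y_i)\big)X_i, \quad f_n''(\theta)=\frac1n\sum_i\kappa''(\theta^\T X_i)X_iX_i^\T,
\]
and third-derivative entries $f_n'''(\theta)_{jk\ell}=\frac1n\sum_i\kappa'''(\theta^\T X_i)X_{ij}X_{ik}X_{i\ell}$. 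Because $\kappa$ is convex, its composition with the affine map $\theta\mapsto\theta^\T X_i$ is convex, and the remaining term is linear, so each $f_n$ is convex. Thus smoothness and convexity hold for every realization, and the given $E$ (an open, bounded ball with $\theta_0\in E$ and $\bar E\subseteq\Theta$ by Condition~\ref{item:GLMs-triple}) meets the structural requirements; it remains to establish, almost surely, the pointwise limit $f_n\to f$, the uniform bound on $(f_n''')$, and positive definiteness of $f''(\theta_0)$.

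For the pointwise limit, fix $\theta\in\Theta$. Condition~\ref{item:GLMs-moments} gives $\E|\kappa(\theta^\T X_i)|<\infty$ and $\E|s(Y_i)\theta^\T X_i|\le|\theta|\,\E|X_is(Y_i)|<\infty$, so the i.i.d.\ summands defining $f_n(\theta)$ are integrable and the strong law gives $f_n(\theta)\to f(\theta)=-\E\log q(Y_i\mid\theta^\T X_i)$ almost surely. Applying this on a fixed countable dense subset of $\Theta$ and intersecting the corresponding events yields a single event on which $f_n$ converges on a dense set; since the $f_n$ and the (finite, convex) limit $f$ are convex and $\Theta$ is open and convex, convergence on a dense subset forces convergence at every point of $\Theta$, so $f_n\to f$ pointwise on $\Theta$ on this event. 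For the derivative bound, $|f_n'''(\theta)_{jk\ell}|\le\frac1n\sum_i\sup_{\vartheta\in\bar E}|\kappa'''(\vartheta^\T X_i)X_{ij}X_{ik}X_{i\ell}|$, and by Condition~\ref{item:GLMs-triple} these i.i.d.\ bounds have finite mean; the strong law makes each coordinate average converge, hence bounded in $n$ uniformly over $\theta\in E$. Taking the finite maximum over $(j,k,\ell)$ and intersecting with the previous event shows $(f_n''')$ is uniformly bounded on $E$ almost surely.

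On this event, Theorem~\ref{theorem:regular-convergence} applies (with $E$ an open, convex, bounded ball), so $f'$ and $f''$ exist and $f_n'\to f'$, $f_n''\to f''$ uniformly on $E$. In particular $f'(\theta_0)=0$ is exactly Condition~\ref{item:GLMs-minimum}. For positive definiteness, fix $a\in\R^D$; since $f_n''(\theta_0)\to f''(\theta_0)$,
\[
a^\T f''(\theta_0)\,a=\lim_n\frac1n\sum_i\kappa''(\theta_0^\T X_i)(a^\T X_i)^2,
\]
a finite, nonnegative limit. As the summands are i.i.d.\ and nonnegative, the strong law identifies this limit as $\E\big(\kappa''(\theta_0^\T X_i)(a^\T X_i)^2\big)$; because $\kappa''>0$ everywhere, the integrand is strictly positive on $\{a^\T X_i\ne0\}$, which by Condition~\ref{item:GLMs-linearly} has positive probability whenever $a\ne0$. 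Hence $a^\T f''(\theta_0)a>0$ for all $a\ne0$, i.e.\ $f''(\theta_0)$ is positive definite. Collecting these facts shows that all hypotheses of Theorem~\ref{theorem:altogether} (with the convexity condition~\ref{item:altogether-convex}) hold on a single probability-one event, establishing the claim.

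I anticipate the main obstacle to be upgrading the per-$\theta$ almost sure convergence delivered by the strong law to convergence for all $\theta$ simultaneously on one event; the convexity of $f_n$ is the key device, replacing the single-sufficient-statistic argument that was available in the exponential-family case of Theorem~\ref{theorem:expfams}. A secondary delicate point is tying $f''(\theta_0)$ to the expectation $\E\big(\kappa''(\theta_0^\T X_i)X_iX_i^\T\big)$ and then using the non-degeneracy Condition~\ref{item:GLMs-linearly} to rule out null directions.
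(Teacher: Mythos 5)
Your proof is correct and follows essentially the same route as the paper's: the same decomposition $f_n(\theta)=\frac1n\sum_i\kappa(\theta^\T X_i)-\theta^\T S_n$, the same convexity-plus-countable-dense-subset device to upgrade per-$\theta$ almost sure convergence to simultaneous convergence, Theorem~\ref{theorem:regular-convergence} to recover $f''(\theta_0)$ as a limit, and the same identification of that limit with $\E\big(\kappa''(\theta_0^\T X_i)X_iX_i^\T\big)$ to get positive definiteness from condition~\ref{item:GLMs-linearly}. The only (harmless) deviations are technical: you bound $(f_n''')$ by averaging the i.i.d.\ envelope from condition~\ref{item:GLMs-triple} and applying the ordinary strong law, where the paper invokes a uniform law of large numbers, and you identify the limit of the nonnegative quadratic forms $a^\T f_n''(\theta_0)a$ via the SLLN for nonnegative summands where the paper cites the converse-SLLN result of Kallenberg.
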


% A few remarks on interpretion.
Condition~\ref{item:GLMs-minimum} of Theorem~\ref{theorem:GLMs} is essentially that the MLE exists, asymptotically.
Condition~\ref{item:GLMs-linearly} is that the support of the covariate vector $X_i$ is not contained in any proper subspace of $\R^D$;
this is necessary to ensure identifiability.
When $\E X_i X_i^\T$ exists and is finite, condition~\ref{item:GLMs-linearly} is equivalent to the assumption that $\E X_i X_i^\T$ is non-singular, which is commonly used to ensure identifiability in GLMs \citep[Example 16.8]{van2000asymptotic};
alternatively, it is sometimes assumed that $\frac{1}{n}\sum_{i=1}^n X_i X_i^\T$ is non-singular for all $n$ sufficiently large \citep{fahrmeir1985consistency}.
Conditions~\ref{item:GLMs-moments} and \ref{item:GLMs-triple} are moment conditions that are fairly easy to work with;
for instance, if the covariates are bounded and $\E s(Y_i)$ exists, then conditions~\ref{item:GLMs-moments} and \ref{item:GLMs-triple}
are always satisfied since $\kappa$ is $C^\infty$ smooth. % \citep[e.g.,][Prop.\ 19]{miller2014inconsistency}.

For comparison, traditional theorems on the asymptotic normality of the MLE in a GLM typically
assume the model is correctly specified (whereas we do not), and they
assume conditions on the observed Fisher information $n f_n''(\theta)$, 
such as divergence of the smallest eigenvalue of $n f_n''(\theta_0)$ and bounds on the variability of $f_n''(\theta)$ near $\theta_0$ as $n\to\infty$
\citep{fahrmeir1985consistency}.
These Fisher information conditions are more closely analogous to condition~\ref{item:BVM-rep} of Theorem~\ref{theorem:BVM},
which is implied by our result in Theorem~\ref{theorem:GLMs}.
On the other hand, we show almost sure convergence of the posterior in TV distance,
whereas \citet{fahrmeir1985consistency} only show convergence in distribution of the MLE.

\vspace{1em}
\begin{proof}{\bf of Theorem~\ref{theorem:GLMs}}~
For all $\theta\in \Theta$, 
$f_n(\theta) = \frac{1}{n}\sum_{i = 1}^n \kappa(\theta^\T X_i)-\theta^\T S_n$
where $S_n =\frac{1}{n}\sum_{i = 1}^n X_i s(Y_i)$.
Thus, $f_n(\theta)$ is $C^\infty$ on $\Theta$ by the chain rule, since $\kappa(\eta)$ is $C^\infty$ on $\mathcal{E}$.
Further, $f_n(\theta)$ is convex since $\kappa(\eta)$ is convex.
Noting that 
$$ f(\theta) =  -\E \log q(Y_i\mid\theta^\T X_i) = \E(\kappa(\theta^\T X_i)) -\theta^\T \E(X_i s(Y_i)), $$
the assumed moment conditions (\ref{item:GLMs-moments}) ensure that for all $\theta\in \Theta$, with probability $1$, $f_n(\theta) \to f(\theta)$.
This implies that with probability $1$, for all $\theta\in \Theta$, $f_n(\theta) \to f(\theta)$, by the following argument.
For any countable set $C\subseteq \Theta$, we have that with probability $1$, for all $\theta\in C$, $f_n(\theta) \to f(\theta)$.
Hence, letting $C$ be a countable dense subset of $\Theta$, and using the fact that each $f_n$ is convex, we have that with probability $1$,
the limit $\tilde f(\theta) := \lim_n f_n(\theta)$ exists and is finite for all $\theta\in \Theta$ 
and $\tilde f$ is convex \citep[Theorem 10.8]{rockafellar1970convex}. Since $f$ is also convex, then $\tilde f$ and $f$ are continuous functions
\citep[Theorem 10.1]{rockafellar1970convex} that agree on a dense subset, so they are equal.

Choose $E$ according to condition~\ref{item:GLMs-triple}.
We show that with probability $1$, $(f_n''')$ is uniformly bounded on $E$.
Fix $j,k,\ell\in\{1,\ldots,D\}$, and define $T(\theta,x) = \kappa'''(\theta^\T x) x_j x_k x_\ell$ for $\theta\in \Theta$, $x\in\X$.
For all $x\in\X$, $\theta\mapsto T(\theta,x)$ is continuous, and for all $\theta\in \Theta$, $x\mapsto T(\theta,x)$ is measurable.
Since
$f_n'''(\theta)_{j k \ell} = \frac{1}{n}\sum_{i = 1}^n T(\theta,X_i)$,
% $$f_n'''(\theta)_{j k \ell} = \frac{\partial^3 f_n}{\partial\theta_j\partial\theta_k\partial\theta_\ell}(\theta) = \frac{1}{n}\sum_{i = 1}^n T(\theta,X_i),$$
condition~\ref{item:GLMs-triple} implies that with probability~$1$,
$(f_n'''(\theta)_{j k \ell})$ is uniformly bounded on $\bar E$, by the uniform law of large numbers \citep[Theorem 1.3.3]{ghosh2003bayesian}.
Letting $C_{j k \ell}(X_1,X_2,\ldots)$ be such a uniform bound for each $j,k,\ell$,
we have that with probability $1$, for all $n\in\N$, $\theta\in\bar E$,
$\|f_n'''(\theta)\|^2 = \sum_{j,k,\ell} f_n'''(\theta)_{j k \ell}^2 \leq \sum_{j,k,\ell} C_{j k \ell}(X_1,X_2,\ldots)^2 < \infty$.
Thus, $(f_n''')$ is a.s.\ uniformly bounded on $\bar E$, and hence on $E$.

By Theorem \ref{theorem:regular-convergence}, 
$f''(\theta_0) \overset{\mathrm{a.s.}}{=} \lim_{n\to\infty} f_n''(\theta_0) = \lim \frac{1}{n}\sum_{i=1}^n \kappa''(\theta_0^\T X_i) X_i X_i^\T$.
Since this limit exists and is finite almost surely, then by the strong law of large numbers, the limit must be equal to the expectation \citep[Theorem 4.23]{kallenberg2002foundations},
% Note to self: For a secondary backup reference for this, see Folland exercise 10.15.  Or see the lecture notes in krieger08.pdf.
that is, $f''(\theta_0) = \E\big(\kappa''(\theta_0^\T X_i) X_i X_i^\T\big)$.
Thus, $f''(\theta_0)$ is positive definite, since for all nonzero $a\in\R^D$,
$a^\T f''(\theta_0) a =\E\big(\kappa''(\theta_0^\T X_i) a^\T X_i X_i^\T a\big) > 0$,
by the fact that $\kappa''(\eta)>0$ for all $\eta\in\mathcal{E}$ and by condition~\ref{item:GLMs-linearly}, $a^\T X_i X_i^\T a =|a^\T X_i|^2$ is strictly positive with positive probability.
% By condition~\ref{item:GLMs-minimum}, $f'(\theta_0) = 0$.  
\end{proof}

\subsubsection{Linear regression}

% The case of linear regression is particularly simple.  
The linear regression model is $p(y_i\mid\theta,x_i) = \Normal(y_i\mid\theta^\T x_i, \sigma^2)$
for $y_i\in\Y:=\R$, $x_i\in\X := \R^D$, and $\theta\in \Theta := \R^D$. 
The model can equivalently be written as $p(y_i\mid \theta,x_i) \propto_\theta q(y_i\mid\theta^\T x_i)$ 
where $q(y|\eta) := \exp(\eta s(y) - \kappa(\eta))$ is a density with respect to $\lambda(d y) = \Normal(y\mid 0,\sigma^2)d y$
for $y\in\Y$ and $\eta\in\mathcal{E}:=\R$,
by defining $s(y) = y/\sigma^2$ and $\kappa(\eta) = \eta^2 / (2\sigma^2)$.

\begin{theorem}[Linear regression]
\label{theorem:linear-regression}
Suppose $(X_1,Y_1),(X_2,Y_2),\ldots\in\X\times\Y$ i.i.d.\ such that:
\begin{enumerate}
\item\label{item:linreg-moments} $\E|X_i Y_i| < \infty$, $\E\|X_i X_i^\T\| < \infty$, and
\item\label{item:linreg-linearly} for all $a\in\R^D$, if $a^\T X_i \overset{\mathrm{a.s.}}{=} 0$ then $a = 0$.
\end{enumerate}
Then $\theta_0 := (\E X_i X_i^\T)^{-1} \E X_i Y_i$ is well-defined and 
for any open ball $E$ such that $\theta_0\in E$, 
$f_n(\theta) := -\frac{1}{n}\sum_{i = 1}^n \log q(Y_i\mid\theta^\T X_i)$
satisfies the conditions of Theorem \ref{theorem:altogether} with probability $1$.
\end{theorem}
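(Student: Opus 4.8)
The plan is to deduce this as a special case of Theorem~\ref{theorem:GLMs}, since linear regression is precisely the GLM obtained from the one-parameter exponential family $q(y\mid\eta)=\exp(\eta s(y)-\kappa(\eta))$ with $s(y)=y/\sigma^2$ and $\kappa(\eta)=\eta^2/(2\sigma^2)$ with respect to $\lambda(dy)=\Normal(y\mid 0,\sigma^2)\,dy$. This family satisfies Condition~\ref{condition:expfam} (it is full and regular with $\mathcal{E}=\R$ open and nonempty, and identifiable since $\kappa'(\eta)=\eta/\sigma^2$ is injective), and the ambient assumptions preceding Theorem~\ref{theorem:GLMs} hold with $\Theta=\R^D$, $\mathcal{E}=\R$, $\X=\R^D$. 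The crucial simplification is that $\kappa''(\eta)=1/\sigma^2$ is constant and $\kappa'''(\eta)\equiv 0$; as we will see, this collapses several of the GLM conditions to triviality. It therefore suffices to verify conditions~(\ref{item:GLMs-minimum})--(\ref{item:GLMs-triple}) of Theorem~\ref{theorem:GLMs}.

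First I would make $\theta_0$ explicit and verify condition~(\ref{item:GLMs-minimum}). A direct computation gives $-\log q(Y_i\mid\theta^\T X_i)=\tfrac{1}{2\sigma^2}(\theta^\T X_i)^2-\tfrac{1}{\sigma^2}(\theta^\T X_i)Y_i$, so that
\[
f(\theta)=\frac{1}{2\sigma^2}\,\theta^\T\big(\E X_i X_i^\T\big)\theta-\frac{1}{\sigma^2}\,\theta^\T\,\E(X_i Y_i),
\]
where both expectations are finite by assumption~(\ref{item:linreg-moments}). Hence $f$ is a quadratic with gradient $f'(\theta)=\sigma^{-2}\big[(\E X_i X_i^\T)\theta-\E(X_i Y_i)\big]$. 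Assumption~(\ref{item:linreg-linearly}) forces $\E X_i X_i^\T$ to be positive definite---for nonzero $a\in\R^D$, $a^\T(\E X_i X_i^\T)a=\E|a^\T X_i|^2>0$ since $a^\T X_i$ is not almost surely zero---so it is invertible, $\theta_0=(\E X_i X_i^\T)^{-1}\E(X_i Y_i)$ is well-defined, and $f'(\theta_0)=0$, giving condition~(\ref{item:GLMs-minimum}).

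The remaining GLM conditions then follow. Condition~(\ref{item:GLMs-moments}) holds because $\E|X_i s(Y_i)|=\sigma^{-2}\E|X_i Y_i|<\infty$ and $\E|\kappa(\theta^\T X_i)|=\tfrac{1}{2\sigma^2}\,\theta^\T(\E X_i X_i^\T)\theta<\infty$ (using $\kappa\geq 0$), both by assumption~(\ref{item:linreg-moments}); condition~(\ref{item:GLMs-linearly}) is exactly assumption~(\ref{item:linreg-linearly}); and condition~(\ref{item:GLMs-triple}) is met by any open ball $E$ containing $\theta_0$, whose closure automatically lies in $\Theta=\R^D$, because the third-derivative moment $\E\big(\sup_{\theta\in\bar E}|\kappa'''(\theta^\T X_i)X_{ij}X_{ik}X_{i\ell}|\big)$ is identically zero as $\kappa'''\equiv 0$. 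Applying Theorem~\ref{theorem:GLMs} then yields the claim.

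I do not expect a serious obstacle: the entire argument reduces to recognizing the model as a GLM and checking four conditions, and the vanishing third derivative of $\kappa$ renders condition~(\ref{item:GLMs-triple}) immediate. The only point requiring genuine (if minor) care is the well-definedness of $\theta_0$, that is, the invertibility of $\E X_i X_i^\T$, which is precisely where the linear-independence hypothesis~(\ref{item:linreg-linearly}) and the finiteness hypothesis~(\ref{item:linreg-moments}) are jointly used.
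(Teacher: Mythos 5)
Your proposal is correct and follows essentially the same route as the paper: reduce to Theorem~\ref{theorem:GLMs}, check Condition~\ref{condition:expfam}, observe that $\kappa'''\equiv 0$ trivializes condition~(\ref{item:GLMs-triple}), verify the moment condition from assumption~(\ref{item:linreg-moments}), and obtain $\theta_0$ from the positive definiteness of $\E X_i X_i^\T$. Your explicit computation of $f'(\theta_0)=0$ is in fact slightly more detailed than the paper, which leaves that verification to the reader.
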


% To interpret the conditions, recall that the MLE is 
% $\hat\theta_n = (\frac{1}{n}\sum_{i=1}^n X_i X_i^\T)^{-1} (\frac{1}{n}\sum_{i=1}^n X_i Y_i)$.
% Condition~\ref{item:linreg-moments} is a mild moment condition ensuring that these two factors converge as $n\to\infty$,
% by the strong law of large numbers.  % \future{(In fact, I think this condition may be necessary for them to converge, by the strong form of the SLLN.)}
Condition~\ref{item:linreg-moments} is necessary to ensure that $\theta_0 := (\E X_i X_i^\T)^{-1} \E X_i Y_i$ is well-defined
and condition~\ref{item:linreg-linearly} is necessary to ensure identifiability, as in the case of Theorem~\ref{theorem:GLMs}.
Since $\kappa(\eta) = \eta^2 / (2\sigma^2)$, we have $f_n''(\theta) = \frac{1}{n}\sum_{i=1}^n X_i X_i^\T / \sigma^2$.
Thus, for the traditional MLE conditions of \citet{fahrmeir1985consistency},
it is not necessary to bound the variability of $f_n''(\theta)$, since $f_n''(\theta)$ does not depend on $\theta$ in the linear regression model.
Hence, these traditional MLE conditions reduce to assuming divergence of the smallest eigenvalue of $\sum_{i=1}^n X_i X_i^\T$,
which can be shown to be equivalent to condition~\ref{item:linreg-linearly} of Theorem~\ref{theorem:linear-regression}
when $X_1,X_2,\ldots$ are i.i.d.

\vspace{1em}
\begin{proof}{\bf of Theorem~\ref{theorem:linear-regression}}~
For any random vector $Z\in\R^k$, $\E|Z|<\infty$ if and only if $\E Z$ exists and is finite; likewise for matrices and tensors.
% Note to self: This is because $\E\|Z\|_2 < \infty$ iff $\E\|Z\|_1 < \infty$ iff $\E Z$ exists and is finite;
% where the first "iff" is because all norms are equivalent on a finite-dim linear space --- in particular, $\|Z\|_2 \leq \|Z\|_1 \leq c \|Z\|_2.
Thus, $\E X_i Y_i$ and $\E X_i X_i^\T$ exist and are finite. 
Further, $\E X_i X_i^\T$ is positive definite (and hence, invertible) since % by condition \ref{item:linreg-linearly},
for all nonzero $a\in\R^D$, $a^\T (\E X_i X_i^\T) a = \E|a^\T X_i|^2 > 0$.
Condition~\ref{condition:expfam} is easily checked: $\mathcal{E} = \{\eta\in\R : |\kappa(\eta)| < \infty\}$ since $\eta^2/(2\sigma^2) < \infty$ for all $\eta\in\R$, $\mathcal{E}$ is open and nonempty, and the mean of a normal distribution is identifiable.
% $\lambda(\Y) = 1\neq 0$
The GLM conditions are also straightforward to verify. $\Theta$ is open and convex, and $\theta^\T x\in\mathcal{E}$ for all $\theta\in \Theta$, $x\in\X$.
Condition \ref{item:GLMs-linearly} of Theorem \ref{theorem:GLMs} is satisfied by assumption,
and condition~\ref{item:GLMs-triple} of Theorem \ref{theorem:GLMs} is satisfied trivially since $\kappa'''(\eta) = 0$ for all $\eta\in\mathcal{E}$.
Condition~\ref{item:linreg-moments} of Theorem~\ref{theorem:linear-regression} implies that condition~\ref{item:GLMs-moments} of Theorem \ref{theorem:GLMs} holds,
since $\E|X_i s(Y_i)| = \E|X_i Y_i|/\sigma^2 < \infty$ and $\E|\kappa(\theta^\T X_i)| = \theta^\T (\E X_i X_i^\T) \theta / (2\sigma^2) < \infty$.
It is straightforward to verify that condition~\ref{item:GLMs-minimum} of Theorem \ref{theorem:GLMs} holds with $\theta_0 = (\E X_i X_i^\T)^{-1} \E X_i Y_i$.
\end{proof}

\subsubsection{Logistic regression}

The logistic regression model is $p(y_i\mid\theta,x_i) = \Bernoulli(y_i\mid\sigma(\theta^\T x_i))$
for $y_i\in\Y := \{0,1\}$, $x_i\in\X := \R^D$, and $\theta\in \Theta := \R^D$,
where $\sigma(\eta) = 1/(1 + e^{-\eta})$ for $\eta\in\mathcal{E} := \R$.
Thus, $p(y_i\mid \theta,x_i) = q(y_i\mid \theta^\T x_i)$ where
$q(y|\eta) := \exp(\eta y - \kappa(\eta))$ is a density with respect to 
$\lambda = \delta_0 + \delta_1$
for $y\in\Y$ and $\eta\in\mathcal{E}$,
by defining $\kappa(\eta) = \log(1 + e^\eta)$.
Here, $\delta_y$ denotes the unit point mass at $y$.

\begin{theorem}[Logistic regression]
\label{theorem:logreg}
Suppose $(X_1,Y_1),(X_2,Y_2),\ldots\in\X\times\Y$ i.i.d.\ such that:
\begin{enumerate}
% future: Is there a simpler sufficient condition guaranteeing the first condition below?
%\item\label{item:logreg-minimum} $\E(\sigma(\theta_0^\T X_i) X_i) = \E(X_i Y_i)$ for some $\theta_0\in\R^D$,
\item\label{item:logreg-minimum} $f'(\theta_0)=0$ for some $\theta_0\in\Theta$, where $f(\theta) = -\E \log q(Y_i\mid\theta^\T X_i)$,
\item\label{item:logreg-moments} $\E|X_{i j} X_{i k} X_{i \ell}| < \infty$ for all $j,k,\ell\in\{1,\ldots,D\}$, and
\item\label{item:logreg-linearly} for all $a\in\R^D$, if $a^\T X_i \overset{\mathrm{a.s.}}{=} 0$ then $a = 0$.
\end{enumerate}
Then for any open ball $E\subseteq\Theta$ such that $\theta_0\in E$, $f_n(\theta) := -\frac{1}{n}\sum_{i = 1}^n \log q(Y_i\mid\theta^\T X_i)$
satisfies the conditions of Theorem \ref{theorem:altogether} with probability $1$.
\end{theorem}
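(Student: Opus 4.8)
The plan is to recognize logistic regression as a special case of the GLM framework of Theorem~\ref{theorem:GLMs} and to obtain the conclusion by verifying that theorem's hypotheses, after which the desired properties follow immediately. Logistic regression fits the GLM template with sufficient statistic $s(y) = y$ and cumulant function $\kappa(\eta) = \log(1 + e^\eta)$, so the bulk of the work is checking that the one-parameter family $q(y \mid \eta) = \exp(\eta y - \kappa(\eta))$ satisfies Condition~\ref{condition:expfam} and that conditions~\ref{item:GLMs-minimum}--\ref{item:GLMs-triple} of Theorem~\ref{theorem:GLMs} hold with probability $1$.

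First I would dispatch the structural assumptions. Since $\kappa(\eta) = \log(1 + e^\eta)$ is finite for every $\eta \in \R$, we have $\mathcal{E} = \R$, which is open and nonempty; identifiability holds because the Bernoulli mean $\kappa'(\eta) = \sigma(\eta)$ is strictly increasing, so distinct $\eta$ yield distinct $Q_\eta$. The GLM structural requirements are then trivially met: $\Theta = \R^D$ is open and convex and $\theta^\T x \in \mathcal{E} = \R$ for all $\theta, x$. Conditions~\ref{item:GLMs-minimum} and~\ref{item:GLMs-linearly} of Theorem~\ref{theorem:GLMs} are exactly the hypotheses~\ref{item:logreg-minimum} and~\ref{item:logreg-linearly} assumed here, so nothing new is needed for them.

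The heart of the argument is the moment bookkeeping, which rests on the fact that $\kappa$ is $C^\infty$ with uniformly bounded derivatives on all of $\R$: one computes $\kappa'(\eta) = \sigma(\eta) \in (0,1)$, $\kappa''(\eta) = \sigma(\eta)(1 - \sigma(\eta)) \in (0, 1/4]$, and $\kappa'''(\eta) = \kappa''(\eta)(1 - 2\sigma(\eta))$, so $|\kappa'''(\eta)| \leq 1/4$ for all $\eta$. I would use this as follows. Hypothesis~\ref{item:logreg-moments} gives $\E|X_{i j} X_{i k} X_{i \ell}| < \infty$; taking $j = k = \ell$ and applying Lyapunov's inequality yields $\E|X_{i j}| \leq (\E|X_{i j}|^3)^{1/3} < \infty$ for each $j$, hence $\E|X_i| < \infty$. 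This supplies condition~\ref{item:GLMs-moments} of Theorem~\ref{theorem:GLMs}: since $Y_i \in \{0,1\}$ we have $\E|X_i s(Y_i)| = \E|X_i Y_i| \leq \E|X_i| < \infty$, and since $0 \leq \kappa(\eta) \leq \log 2 + |\eta|$ we get $\E|\kappa(\theta^\T X_i)| \leq \log 2 + |\theta|\,\E|X_i| < \infty$ for every $\theta \in \Theta$. For condition~\ref{item:GLMs-triple}, I would pick any open ball $E$ with $\theta_0 \in E$ (here $\bar E \subseteq \Theta = \R^D$ automatically) and dominate the integrand using $\sup_{\theta \in \bar E}|\kappa'''(\theta^\T X_i)| \leq 1/4$, giving $\E\big(\sup_{\theta \in \bar E}|\kappa'''(\theta^\T X_i) X_{i j} X_{i k} X_{i \ell}|\big) \leq \tfrac{1}{4}\,\E|X_{i j} X_{i k} X_{i \ell}| < \infty$ by hypothesis~\ref{item:logreg-moments}.

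With all hypotheses of Theorem~\ref{theorem:GLMs} verified, that theorem delivers the conclusion that $f_n$ satisfies the conditions of Theorem~\ref{theorem:altogether} with probability $1$. I do not expect any genuine obstacle here: the global boundedness of $\kappa'''$ makes logistic regression strictly easier than the general GLM case, since the domination required by condition~\ref{item:GLMs-triple} needs no uniform control beyond the finiteness of third covariate moments. The only mild subtlety worth flagging is the Lyapunov step, which is what lets the single third-moment hypothesis~\ref{item:logreg-moments} simultaneously underwrite the first-moment finiteness needed for condition~\ref{item:GLMs-moments} and the third-moment domination needed for condition~\ref{item:GLMs-triple}.
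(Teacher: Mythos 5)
Your proposal is correct and follows essentially the same route as the paper: reduce to Theorem~\ref{theorem:GLMs} by checking Condition~\ref{condition:expfam}, map hypotheses \ref{item:logreg-minimum} and \ref{item:logreg-linearly} directly to conditions \ref{item:GLMs-minimum} and \ref{item:GLMs-linearly}, use the global boundedness of $\kappa'''$ together with the third-moment hypothesis for condition \ref{item:GLMs-triple}, and deduce $\E|X_i|<\infty$ from the third moments to verify condition \ref{item:GLMs-moments} via $|\kappa(\eta)|\leq \log 2 + |\eta|$. The only (cosmetic) difference is your sharper bound $|\kappa'''|\leq 1/4$ via $\kappa'''=\sigma(1-\sigma)(1-2\sigma)$, versus the paper's looser bound of $3$.
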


Condition~\ref{item:logreg-minimum} is essentially that the MLE exists, asymptotically,
and condition~\ref{item:logreg-linearly} is necessary for identifiability (see the remarks following Theorem~\ref{theorem:GLMs}); these are both very mild.
Condition~\ref{item:logreg-moments} is a third-moment condition on the covariates,
which we use to bound $f_n'''(\theta)$; this is more stringent, but is reasonable in many practical applications.

\vspace{1em}
\begin{proof}
Condition~\ref{condition:expfam} is easily checked: $\mathcal{E} = \{\eta\in\R : |\kappa(\eta)| < \infty\}$, 
$\mathcal{E}$ is open and nonempty, and $\eta$ is identifiable since $\sigma(\eta)$ is one-to-one.
Trivially, $\Theta$ is open and convex, and $\theta^\T x\in\mathcal{E}$ for all $\theta\in \Theta$, $x\in\X$.
Conditions \ref{item:GLMs-minimum} and \ref{item:GLMs-linearly} of Theorem \ref{theorem:GLMs} are satisfied by conditions 
\ref{item:logreg-minimum} and \ref{item:logreg-linearly} of Theorem~\ref{theorem:logreg}, respectively.
Condition \ref{item:GLMs-triple} of Theorem \ref{theorem:GLMs} is satisfied due to condition \ref{item:logreg-moments} and the fact that
$|\kappa'''(\eta)|\leq 3$ for all $\eta\in\mathcal{E}$, because $\kappa''' = \sigma(1-\sigma)(1-2\sigma)^2 - 2\sigma^2(1-\sigma)^2$ and $0<\sigma(\eta)<1$.
Condition~\ref{item:logreg-moments} also implies that $\E|X_i|<\infty$, 
because $|X_i| \leq \sum_j|X_{i j}|$ and $\E|X_{i j}| < \infty$ for all $j$ \citep[6.12]{folland2013real}.
% and hence $\E|X_i| \leq \sum_j \E|X_{i j}| < \infty
It follows that condition \ref{item:GLMs-moments} of Theorem \ref{theorem:GLMs} holds,
since $\E|X_i Y_i| \leq \E|X_i| < \infty$ and $\E|\kappa(\theta^\T X_i)| \leq \log 2 + \E|\theta^\T X_i| \leq \log 2 + |\theta|\E|X_i| < \infty$,
where we have used the inequality $|\kappa(\eta)| = \log(1 + e^\eta) \leq \log 2 + |\eta|$ for $\eta\in\R$.
\end{proof}

\subsubsection{Poisson regression}

The Poisson regression model is $p(y_i\mid\theta,x_i) = \Poisson(y_i\mid \exp(\theta^\T x_i))$
for $y_i\in\Y:=\{0,1,2,\ldots\}$, $x_i\in\X := \R^D$, and $\theta\in \Theta := \R^D$. 
Thus, $p(y_i\mid \theta,x_i) \propto_\theta q(y_i\mid \theta^\T x_i)$ where
$q(y|\eta) := \exp(\eta y - \kappa(\eta))$ is a density with respect to $\lambda := \sum_{y\in\Y} \delta_y / y!$
for $y\in\Y$ and $\eta\in\mathcal{E}:=\R$,
by defining $\kappa(\eta) = e^\eta$.

\begin{theorem}[Poisson regression]
\label{theorem:poisson}
Suppose $(X_1,Y_1),(X_2,Y_2),\ldots\in\X\times\Y$ i.i.d.\ such that:
\begin{enumerate}
%\item\label{item:poisson-minimum} $\E(e^{\theta_0^\T X_i} X_i) = \E(X_i Y_i)$ for some $\theta_0\in\R^D$,
\item\label{item:poisson-minimum} $f'(\theta_0)=0$ for some $\theta_0\in\Theta$, where $f(\theta) = -\E \log q(Y_i\mid\theta^\T X_i)$,
\item\label{item:poisson-moments} $\E|X_i Y_i| < \infty$ and $\E \exp(c|X_i|) < \infty$ for all $c>0$, and
\item\label{item:poisson-linearly} for all $a\in\R^D$, if $a^\T X_i \overset{\mathrm{a.s.}}{=} 0$ then $a = 0$.
\end{enumerate}
Then for any open ball $E\subseteq\Theta$ such that $\theta_0\in E$, $f_n(\theta) := -\frac{1}{n}\sum_{i = 1}^n \log q(Y_i\mid\theta^\T X_i)$
satisfies the conditions of Theorem \ref{theorem:altogether} with probability $1$.
\end{theorem}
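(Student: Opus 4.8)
The plan is to derive this exactly as in the logistic regression case, by reducing to the general GLM theorem (Theorem~\ref{theorem:GLMs}). That is, I would verify Condition~\ref{condition:expfam} for the one-parameter family $q(y|\eta) = \exp(\eta y - e^\eta)$, check the structural assumptions stated before Theorem~\ref{theorem:GLMs}, and then verify its four hypotheses~\ref{item:GLMs-minimum}--\ref{item:GLMs-triple}; Theorem~\ref{theorem:GLMs} then yields the conditions of Theorem~\ref{theorem:altogether} with probability~$1$.

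First I would dispatch the easy verifications. Since $\kappa(\eta) = e^\eta$, we have $\mathcal{E} = \{\eta\in\R : |\kappa(\eta)| < \infty\} = \R$, which is open and nonempty, and $\eta$ is identifiable because the Poisson mean $e^\eta$ is a strictly increasing function of $\eta$; thus Condition~\ref{condition:expfam} holds. The structural assumptions are immediate: $\Theta = \R^D$ is open and convex and $\theta^\T x\in\mathcal{E} = \R$ for all $\theta,x$. Conditions~\ref{item:GLMs-minimum} and~\ref{item:GLMs-linearly} of Theorem~\ref{theorem:GLMs} are assumed directly as conditions~\ref{item:poisson-minimum} and~\ref{item:poisson-linearly} here.

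The only real work is the two moment conditions~\ref{item:GLMs-moments} and~\ref{item:GLMs-triple}, and this is where the exponential-moment hypothesis~\ref{item:poisson-moments} enters (in contrast to logistic regression, where $\kappa'''$ is bounded). Note $s(y) = y$, so for condition~\ref{item:GLMs-moments} we have $\E|X_i s(Y_i)| = \E|X_i Y_i| < \infty$ directly, while $\E|\kappa(\theta^\T X_i)| = \E\,e^{\theta^\T X_i} \le \E\,e^{|\theta||X_i|} < \infty$ on taking $c = |\theta|$ in condition~\ref{item:poisson-moments}. For condition~\ref{item:GLMs-triple}, fix an open ball $E$ with $\theta_0\in E$ and $\bar E\subseteq\Theta$, set $M = \sup_{\theta\in\bar E}|\theta| < \infty$, and use $\kappa'''(\eta) = e^\eta$ to bound $\sup_{\theta\in\bar E}|\kappa'''(\theta^\T X_i)X_{ij}X_{ik}X_{i\ell}| \le e^{M|X_i|}|X_i|^3$.

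The main (and essentially only) obstacle is then showing $\E\big(e^{M|X_i|}|X_i|^3\big) < \infty$. I would handle this with the standard ``exponential dominates polynomial'' bound: for any $\delta > 0$ there is $C_\delta < \infty$ with $|X_i|^3 \le C_\delta\, e^{\delta|X_i|}$, hence $e^{M|X_i|}|X_i|^3 \le C_\delta\, e^{(M+\delta)|X_i|}$, and taking expectations gives finiteness by condition~\ref{item:poisson-moments} with $c = M+\delta$. With all four hypotheses verified, Theorem~\ref{theorem:GLMs} delivers the conclusion. It is precisely the rapid (exponential) growth of $\kappa$ and its third derivative for the Poisson family that forces the stronger exponential moment hypothesis here rather than the finite third moment used for logistic regression.
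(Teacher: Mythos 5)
Your proposal is correct and follows the same overall route as the paper: reduce to Theorem~\ref{theorem:GLMs} by checking Condition~\ref{condition:expfam}, the structural assumptions, and the four GLM hypotheses, with conditions~\ref{item:poisson-minimum} and~\ref{item:poisson-linearly} passing through directly and the exponential moment assumption doing the work for conditions~\ref{item:GLMs-moments} and~\ref{item:GLMs-triple}. The only place you diverge is the final integrability bound for condition~\ref{item:GLMs-triple}: you bound $\sup_{\theta\in\bar E}|\kappa'''(\theta^\T X_i)X_{ij}X_{ik}X_{i\ell}| \le e^{M|X_i|}|X_i|^3$ and then absorb the polynomial into the exponential via $|X_i|^3 \le C_\delta e^{\delta|X_i|}$, whereas the paper keeps the product $e^{c|X_i|}|X_{ij}X_{ik}X_{i\ell}|$ and applies H\"older's inequality to split it into four factors, controlling the fourth moments by $\E|X_{ij}|^m \le m!\,\E e^{|X_i|}$. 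Both arguments use exactly the hypothesis $\E e^{c|X_i|}<\infty$ for all $c>0$; yours is a little more elementary (one pointwise inequality instead of H\"older plus a moment computation), while the paper's factorization makes explicit which polynomial moments are being controlled. Either way the verification is complete and Theorem~\ref{theorem:GLMs} delivers the conclusion.
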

\begin{proof}
As before, Condition~\ref{condition:expfam} is easily checked: $\mathcal{E} = \{\eta\in\R : |\kappa(\eta)| < \infty\}$, 
$\mathcal{E}$ is open and nonempty, and $\eta$ is identifiable. Trivially, $\Theta$ is open and convex, and $\theta^\T x\in\mathcal{E}$ for all $\theta\in \Theta$, $x\in\X$.
Conditions \ref{item:GLMs-minimum} and \ref{item:GLMs-linearly} of Theorem \ref{theorem:GLMs} are satisfied by conditions 
\ref{item:poisson-minimum} and \ref{item:poisson-linearly} of Theorem~\ref{theorem:poisson}.
Condition \ref{item:GLMs-moments} of Theorem \ref{theorem:GLMs} is satisfied 
due to condition \ref{item:poisson-moments} of Theorem~\ref{theorem:poisson},
since for all $\theta\in \Theta$, $\E|\kappa(\theta^\T X_i)| = \E \exp(\theta^\T X_i) \leq \E \exp(|\theta||X_i|) < \infty$.
For all $m\in\N$ and $j\in\{1,\ldots,D\}$, 
$\E|X_{i j}|^m \leq \E|X_i|^m = m! \E(|X_i|^m / m!) \leq m! \E \exp(|X_i|) < \infty.$
Further, letting $r>0$, $c = |\theta_0| + r$, and $E = B_r(\theta_0)$, we have that for all $\theta\in \bar E$,
$\kappa'''(\theta^\T X_i) = \exp(\theta^\T X_i) \leq \exp(c|X_i|)$.  Hence,
$$\E\big(\sup_{\theta\in\bar E}|\kappa'''(\theta^\T X_i) X_{i j} X_{i k} X_{i \ell}|\big)
\leq \E \big(e^{c|X_i|}|X_{i j} X_{i k} X_{i \ell}|\big)
\leq \Big(\E e^{4 c|X_i|} \E|X_{i j}|^4 \E|X_{i k}|^4 \E|X_{i \ell}|^4\Big)^{1/4}$$
by H\"older's inequality \citep[6.2]{folland2013real}; thus, condition \ref{item:GLMs-triple} of Theorem \ref{theorem:GLMs} is satisfied.
\end{proof}

\section{Applications to generalized posteriors}
\label{section:generalized-applications}

% Note to self: Another interesting application might be pseudolik for exponential random graph (ERG) models -- see https://arxiv.org/pdf/1510.00934.pdf

\subsection{Pseudolikelihood-based posteriors}
\label{section:pseudolik}

Pseudolikelihood \citep{besag1975statistical} is a powerful approach for many models in which the 
likelihood is difficult to compute due to intractability of the normalization constant.
Instead of the standard likelihood $p(y_1,\ldots,y_n \mid \theta)$,
the basic idea is to use a \textit{pseudolikelihood} $\mathcal{L}(\theta) = \prod_{i=1}^n p(y_i \mid y_{-i}, \theta)$
where $y_{-i} = (y_1,\ldots,y_{i-1},y_{i+1},\ldots,y_n)$.
Maximum pseudolikelihood estimates are used in many applications and have been shown to be 
consistent and asymptotically normal in a range of cases
\citep{besag1975statistical,geman1986markov,gidas1988consistency,comets1992consistency,jensen1994asymptotic,mase1995consistency,liang2003maximum,hyvarinen2006consistency}.
% jensen1991pseudolikelihood is probably superceded by jensen1994asymptotic?
% arnold1991pseudolikelihood states results but doesn't prove them
Usage of pseudolikelihoods for constructing generalized posteriors is much less common,
perhaps due to concerns about the validity of the resulting posterior
\citep[but see][]{zhou2009bayesian,bouranis2017efficient,pauli2011bayesian,ryden1998computational}.

In this section, we provide sufficient conditions for
concentration, asymptotic normality, and the Laplace approximation
for a large class of pseudolikelihood-based posteriors.
Specifically, we consider pseudolikelihoods in which each factor takes the form of a generalized linear model.
We provide a general result for pseudolikelihoods in this class, and
then consider three cases in particular:
Gaussian Markov random fields (Section~\ref{section:grf}),
fully visible Boltzmann machines (Section~\ref{section:boltzmann}), and
the Ising model on $\Z^m$ (Section~\ref{section:ising}).
% Note to self: It should be fine to include the following statement, because it appears that there do exist processes on $\Z^m$ for which the Ising and Gaussian MRF pseudolikelihoods arise from the conditional distributions. Specifically, I think Comets (1992) and Kunsch (1981) discusses existence of random processes satisfying the assumptions of the theorems for Ising and for the Gaussian MRF, respectively.
Any pseudolikelihood is a composite likelihood, so as discussed in Section~\ref{section:composite-likelihood}, if the model is correct then we can expect consistency but not necessarily correct frequentist coverage.

\begin{condition}
\label{condition:pseudolik}
Suppose the data can be arranged in a sequence $y_1,y_2,\ldots\in\Y\subseteq\R^d$ and consider a pseudolikelihood of the form:
$$ \mathcal{L}_n^\mathrm{pseudo}(\theta) \propto \prod_{i = 1}^n q\big(y_i \mid \theta^\T \varphi_i(\vec{y})\big) $$
for $\theta\in\Theta\subseteq\R^D$, where $\varphi_i(\vec{y})\in\X\subseteq\R^D$ is a function of $\vec{y} = (y_1,y_2,\ldots)$
and $q(y|\eta) = \exp(\eta s(y) - \kappa(\eta))$
is a one-parameter exponential family satisfying Condition~\ref{condition:expfam}
for $y\in\Y$, $\eta\in\mathcal{E}$.
Assume $\Theta$ is open and convex, and $\theta^\T x\in\mathcal{E}$ for all $\theta\in \Theta$, $x\in\X$.
\end{condition}

\begin{theorem}
\label{theorem:pseudolik}
Assume the setup in Condition~\ref{condition:pseudolik}.
Let $\vec{Y} = (Y_1,Y_2,\ldots)$ be a sequence of random vectors in $\Y$ and define $X_i = \varphi_i(\vec{Y})$.
Suppose $(X_1,Y_1),(X_2,Y_2),\ldots$ are identically distributed, but not necessarily independent.
Define $f_n(\theta) = -\frac{1}{n} \sum_{i=1}^n \log q(Y_i \mid \theta^\T X_i)$ and $f(\theta) = -\E \log q(Y_i \mid \theta^\T X_i)$ for $\theta\in\Theta$.
Assume:
\begin{enumerate}
\item\label{item:pseudolik-convergence} for all $\theta\in \Theta$, $f(\theta)$ is finite and $f_n(\theta) \xrightarrow[]{\mathrm{a.s.}} f(\theta)$ as $n\to\infty$,
\item\label{item:pseudolik-minimum-diff} there exists $\theta_0\in\Theta$ such that $f'(\theta_0)=0$ 
    and $f''(\theta_0) = \E\big(\kappa''(\theta_0^\T X_i) X_i X_i^\T\big)$,
\item\label{item:pseudolik-linearly} for all $a\in\R^D$, if $a^\T X_i \overset{\mathrm{a.s.}}{=} 0$ then $a = 0$, and
% \item\label{item:pseudolik-triple} there is an open ball $E\subseteq\Theta$ such that $\theta_0\in E$ and with probability $1$, $(f_n''')$ is uniformly bounded on $E$.
\item\label{item:pseudolik-triple} with probability $1$, $(f_n''')$ is uniformly bounded on some open ball $E\subseteq \Theta$ containing $\theta_0$.
\end{enumerate}
Then for any $E$ as in condition~\ref{item:pseudolik-triple}, $f_n$ satisfies the conditions of Theorem \ref{theorem:altogether} with probability~$1$.
\end{theorem}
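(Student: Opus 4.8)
The plan is to verify the five hypotheses of Theorem~\ref{theorem:altogether} one at a time, since Theorem~\ref{theorem:pseudolik} is essentially a translation of the pseudolikelihood setup into the language of that abstract result. The structure closely parallels the proof of Theorem~\ref{theorem:GLMs}, the key difference being that here the pairs $(X_i,Y_i)$ are only assumed identically distributed rather than i.i.d., so I cannot invoke the strong law of large numbers directly; instead the convergence $f_n\to f$ is supplied as hypothesis~\ref{item:pseudolik-convergence}, and the uniform boundedness of $(f_n''')$ is supplied as hypothesis~\ref{item:pseudolik-triple}. First I would record the explicit form $f_n(\theta) = \frac{1}{n}\sum_{i=1}^n \kappa(\theta^\T X_i) - \theta^\T S_n$ with $S_n = \frac{1}{n}\sum_{i=1}^n X_i s(Y_i)$, and note that because $q$ satisfies Condition~\ref{condition:expfam}, $\kappa$ is $C^\infty$ and convex on $\mathcal{E}$; by the chain rule each $f_n$ therefore has continuous third derivatives on $\Theta$ and is convex.

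Next I would establish that $f_n\to f$ pointwise on $\Theta$ (given by \ref{item:pseudolik-convergence}) and identify $E$ as in \ref{item:pseudolik-triple}; this open ball plays the role of the set $E$ in Theorem~\ref{theorem:altogether}, and $(f_n''')$ is uniformly bounded there by hypothesis. I would then confirm the remaining two structural requirements. The condition $f'(\theta_0)=0$ is exactly hypothesis~\ref{item:pseudolik-minimum-diff}, which also hands us the identity $f''(\theta_0) = \E(\kappa''(\theta_0^\T X_i) X_i X_i^\T)$ for free. From this identity positive definiteness of $f''(\theta_0)$ follows by the same argument as in Theorem~\ref{theorem:GLMs}: for nonzero $a\in\R^D$ we have $a^\T f''(\theta_0) a = \E(\kappa''(\theta_0^\T X_i)\,|a^\T X_i|^2)$, which is strictly positive because $\kappa''>0$ everywhere on $\mathcal{E}$ and, by hypothesis~\ref{item:pseudolik-linearly}, $a^\T X_i$ is nonzero with positive probability.

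With $f_n\to f$ pointwise, each $f_n$ convex, $f'(\theta_0)=0$, $f''(\theta_0)$ positive definite, and $(f_n''')$ uniformly bounded on $E$ all in hand, I would invoke case~\ref{item:altogether-convex} of Theorem~\ref{theorem:altogether} (the convex case), whose hypotheses are precisely these. This yields a sequence $\theta_n\to\theta_0$ with $f_n'(\theta_n)=0$ eventually, together with concentration, the Laplace approximation, and asymptotic normality, completing the proof.

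I expect the only genuinely delicate point to be justifying the identity $f''(\theta_0)=\E(\kappa''(\theta_0^\T X_i)X_iX_i^\T)$ were it not assumed outright; in the i.i.d.\ setting of Theorem~\ref{theorem:GLMs} this came from Theorem~\ref{theorem:regular-convergence} (uniform convergence forces $f_n''(\theta_0)\to f''(\theta_0)$) followed by the strong law identifying the limit with the expectation. Because the present theorem only assumes identical distribution of the $(X_i,Y_i)$, that law-of-large-numbers step is unavailable, which is exactly why the identity is promoted to an explicit hypothesis in~\ref{item:pseudolik-minimum-diff}. The rest of the argument is routine bookkeeping, and the convexity of each $f_n$ is what lets me upgrade the pointwise convergence on a countable dense set to genuine pointwise convergence and invoke the clean convex branch of Theorem~\ref{theorem:altogether} without tracking a compact separating set $K$.
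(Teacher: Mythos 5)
Your proposal is correct and follows essentially the same route as the paper: verify the hypotheses of the convex case of Theorem~\ref{theorem:altogether} using the explicit exponential-family form of $f_n$ (hence smoothness and convexity), use convexity to upgrade the ``for each $\theta$, almost surely'' convergence of condition~\ref{item:pseudolik-convergence} to ``almost surely, for all $\theta$'', and derive positive definiteness of $f''(\theta_0)$ from the assumed identity together with $\kappa''>0$ and condition~\ref{item:pseudolik-linearly}. Your closing remark correctly identifies why the Hessian identity is promoted to a hypothesis here (no SLLN without independence), which is exactly the paper's reasoning.
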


\begin{proof}
As in the proof of Theorem~\ref{theorem:GLMs}, $f_n$ is $C^\infty$, $f_n$ is convex, 
and by convexity, condition~\ref{item:pseudolik-convergence} implies that with probability $1$, for all $\theta\in \Theta$, $f_n(\theta)\to f(\theta)$.
By Theorem~\ref{theorem:regular-convergence}, $f''(\theta_0)$ exists and is finite.  
Thus, $f''(\theta_0)$ is positive definite since for all nonzero $a\in\R^D$,
$a^\T f''(\theta_0) a =\E\big(\kappa''(\theta_0^\T X_i) a^\T X_i X_i^\T a\big) > 0$ by conditions \ref{item:pseudolik-minimum-diff} and \ref{item:pseudolik-linearly}
and the fact that $\kappa''(\eta) > 0$.
\end{proof}

To explain the notation, the observed data consist of the first $n$ elements of single random sequence $\vec{Y} = (Y_1,Y_2,\ldots)$, where each $Y_i$ is a random vector.
In the Gaussian Markov random field and Ising model examples in Sections~\ref{section:grf} and \ref{section:ising} below,
$\vec{Y}$ contains the values at the vertices of a single infinite graph, arranged as a sequence.
Meanwhile, in the fully visible Boltzmann machine (Section~\ref{section:boltzmann}), we have i.i.d.\ samples of graphs.
 % which simplifies the analysis in some respects.

\subsection{Gaussian Markov random fields}
\label{section:grf}

Gaussian Markov random fields (GMRFs) are widely used in spatial statistics and time-series \citep{banerjee2014hierarchical}.
Let $G$ be an infinite regular graph with vertices $v(1),v(2),\ldots$, and let $y_1,y_2,\ldots\in\R$ be variables associated with the vertices of $G$ such that $y_i$ is the value at $v(i)$.
Consider a model in which the conditional distribution of $y_i$ given $y_{-i}$ is $p_\theta(y_i | y_{-i}) = \Normal(y_i \mid \theta^\T \varphi_i(\vec{y}), \gamma^{-1})$ where $\theta\in\Theta:=\R^D$, $\varphi_i(\vec{y}) = (y_j : j \in N_i) \in \R^D$, and $N_i = \{j \in \N : v(j) \text{ is adjacent to } v(i)\}$.
% Note to self: I think existence of processes with this conditional distribution is known from classical results, e.g., see 
% "Thermodynamics and Statistical Analysis of Gaussian Random Fields" by H. Kunsch, 1981.
This leads to the pseudolikelihood \citep{besag1975statistical}
$$ \mathcal{L}_n^\mathrm{GRF}(\theta) = \prod_{i=1}^n p_\theta(y_i | y_{-i}) = \prod_{i = 1}^n \Normal\big(y_i \mid \theta^\T \varphi_i(\vec{y}), \gamma^{-1}\big). $$
By defining $q(y | \eta) = \exp(\eta \gamma y - \kappa(\eta))$ for $y\in\R$ and $\eta\in\R$, where $\kappa(\eta) = \frac{1}{2} \gamma \eta^2$,
this pseudolikelihood can be written as
$\mathcal{L}_n^\mathrm{GRF}(\theta) \propto \prod_{i = 1}^n q\big(y_i \mid \theta^\T \varphi_i(\vec{y})\big)$.

\begin{theorem}
\label{theorem:grf}
Let $\vec{Y} = (Y_1,Y_2,\ldots)$ be a sequence of random variables in $\R$ and define $X_i = (Y_j : j \in N_i) \in \R^D$
where $N_i$ is defined as above.
% where $N_i$ is the set of neighbors of vertex $v(i)$.
% $X_i = \varphi_i(\vec{Y})$.
Suppose $(X_1,Y_1),(X_2,Y_2),\ldots$ are identically distributed, but not necessarily independent. Assume:
\begin{enumerate}
% \item\label{item:grf-moments} $\E|X_i Y_i| < \infty$ and $\E\|X_i X_i^\T\| < \infty$,
\item\label{item:grf-convergence} $\frac{1}{n}\sum_{i=1}^n X_i Y_i \xrightarrow[]{\mathrm{a.s.}} \E X_i Y_i \in \R^D$
and $\frac{1}{n}\sum_{i=1}^n X_i X_i^\T \xrightarrow[]{\mathrm{a.s.}} \E X_i X_i^\T \in\R^{D\times D}$, and  % as $n\to\infty$, and
\item\label{item:grf-linearly} for all $a\in\R^D$, if $a^\T X_i \overset{\mathrm{a.s.}}{=} 0$ then $a = 0$.
\end{enumerate}
Then $\theta_0 := (\E X_i X_i^\T)^{-1} \E X_i Y_i$ is well-defined and 
for any open ball $E$ such that $\theta_0\in E$, 
$f_n(\theta) := -\frac{1}{n}\sum_{i = 1}^n \log q(Y_i\mid\theta^\T X_i)$
satisfies the conditions of Theorem \ref{theorem:altogether} with probability $1$.
\end{theorem}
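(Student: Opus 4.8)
The plan is to recognize the GMRF pseudolikelihood as a special case of the general pseudolikelihood setup in Condition~\ref{condition:pseudolik} and then verify the four hypotheses of Theorem~\ref{theorem:pseudolik}, whose conclusion is exactly the desired statement. First I would record the setup: here $q(y\mid\eta) = \exp(\eta\gamma y - \kappa(\eta))$ with $\kappa(\eta) = \frac{1}{2}\gamma\eta^2$, so that $\kappa'(\eta) = \gamma\eta$, $\kappa''(\eta) = \gamma$, and crucially $\kappa'''(\eta) = 0$. This is a one-parameter exponential family satisfying Condition~\ref{condition:expfam} (with natural parameter space $\mathcal{E} = \R$, which is open and nonempty, and $\eta$ identifiable since it is the conditional mean), and $\Theta = \R^D$ is open and convex with $\theta^\T x\in\mathcal{E}$ for all $\theta,x$. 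These checks are routine and parallel the verification in the linear regression example.

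The core observation is that, because $\kappa$ is quadratic, $f_n$ is exactly a quadratic form: $f_n(\theta) = \frac{\gamma}{2}\theta^\T\big(\frac{1}{n}\sum_i X_iX_i^\T\big)\theta - \gamma\theta^\T\big(\frac{1}{n}\sum_i X_iY_i\big)$, and correspondingly $f(\theta) = \frac{\gamma}{2}\theta^\T(\E X_iX_i^\T)\theta - \gamma\theta^\T(\E X_iY_i)$. Condition~\ref{item:grf-convergence} supplies the a.s.\ convergence of the two sample averages to the finite limits $\E X_iX_i^\T$ and $\E X_iY_i$; since $f_n(\theta)$ is a fixed continuous function of these averages, for each $\theta$ we get $f_n(\theta)\xrightarrow[]{\mathrm{a.s.}} f(\theta)$ with $f(\theta)$ finite, which is condition~\ref{item:pseudolik-convergence} of Theorem~\ref{theorem:pseudolik}.

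It remains to identify $\theta_0$ and check the remaining three conditions. Using condition~\ref{item:grf-linearly} exactly as in the linear regression proof, $\E X_iX_i^\T$ is positive definite---for nonzero $a$, $a^\T(\E X_iX_i^\T)a = \E|a^\T X_i|^2 > 0$---hence invertible, so $\theta_0 := (\E X_iX_i^\T)^{-1}\E X_iY_i$ is well-defined and satisfies $f'(\theta_0) = 0$. Because $\kappa''\equiv\gamma$ is constant, differentiating the explicit quadratic gives $f''(\theta) = \gamma\,\E(X_iX_i^\T) = \E(\kappa''(\theta_0^\T X_i)X_iX_i^\T)$ with no interchange-of-limits issues, which is condition~\ref{item:pseudolik-minimum-diff}; condition~\ref{item:pseudolik-linearly} is literally condition~\ref{item:grf-linearly}; and since $\kappa''' = 0$ we have $f_n'''\equiv 0$, so $(f_n''')$ is trivially uniformly bounded on any open ball, giving condition~\ref{item:pseudolik-triple}. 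Invoking Theorem~\ref{theorem:pseudolik} then yields the conclusion. I do not anticipate a serious obstacle: the linearity of the GMRF conditional mean makes $\kappa$ quadratic, which collapses the two genuinely delicate points of the general pseudolikelihood argument---uniform control of the third derivatives and the justification of the Hessian identity via interchanging differentiation and expectation---into triviality. The only point requiring a little care is deducing invertibility of $\E X_iX_i^\T$ from the linear-independence condition, and this is a short argument.
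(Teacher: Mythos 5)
Your proposal is correct and follows essentially the same route as the paper: the paper's proof also invokes Theorem~\ref{theorem:pseudolik}, writes $f$ as the explicit quadratic $\frac{1}{2}\gamma\theta^\T(\E X_iX_i^\T)\theta - \gamma\theta^\T\E X_iY_i$, derives positive definiteness of $\E X_iX_i^\T$ from condition~\ref{item:grf-linearly} as in the linear regression case, and notes that $\kappa'''\equiv 0$ trivializes the uniform boundedness of $(f_n''')$. No gaps.
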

\begin{proof}%{\bf of Theorem~\ref{theorem:grf}}~
We apply Theorem~\ref{theorem:pseudolik}.
Let $f(\theta) = -\E \log q(Y_i \mid \theta^\T X_i) = \frac{1}{2}\gamma \theta^\T (\E X_i X_i^\T) \theta - \gamma \theta^\T \E X_i Y_i$ for $\theta\in\R^D$.
Thus, $f''(\theta) = \gamma (\E X_i X_i^\T) = \E\big(\kappa''(\theta^\T X_i) X_i X_i^\T\big)$ since $\kappa''(\eta) = \gamma$.
By condition \ref{item:grf-convergence}, for all $\theta\in\R^D$, $f(\theta)$ is finite and $f_n(\theta)  \xrightarrow[]{\mathrm{a.s.}} f(\theta)$ as $n\to\infty$.
As in the case of linear regression (Theorem~\ref{theorem:linear-regression}), 
$E X_i X_i^\T$ is positive definite by condition~\ref{item:grf-linearly}, $f'(\theta_0) = 0$,
and $(f_n''')$ is a.s.\ uniformly bounded on all of $\R^D$ since $\kappa'''(\eta) = 0$.
\end{proof}

The setup of Theorem~\ref{theorem:grf} is quite general; 
note that the graph $G$ may consist of a single connected component (such as the $m$-dimensional integer lattice $\Z^m$) or it may consist of many disconnected components, each of which could contain finitely many or infinitely many vertices.
% The assumption that $(X_1,Y_1),(X_2,Y_2),\ldots$ are identically distributed (but not necessarily independent) is similar to a stationarity condition, but is weaker than stationarity.
Further, the setup is that there is a single graph $G$, and more and more of the graph is observed as $n$ grows;
thus, it is necessary that $G$ be infinite in order to obtain an asymptotic result, as we do in Theorem~\ref{theorem:grf}.
The identically distributed assumption is quite general as well; for instance, it holds whenever the true distribution is stationary with 
respect to a set of transformations that can map $v(i)$ to $v(j)$ for any $i,j$.
Thus, this assumption is reasonable since stationarity is commonly assumed
\citep{banerjee2014hierarchical,lee2002markov,kervrann1995markov}; also see \citet{kunsch1981thermodynamics} for background.
Condition~\ref{item:grf-convergence} of Theorem~\ref{theorem:grf} is that a strong law of large numbers holds for $X_i Y_i$ and $X_i X_i^\T$;
in Theorem~\ref{theorem:grf-ergodic} we show that this holds whenever the true distribution
is a stationary, ergodic process on the integer lattice $\Z^m$, assuming a moment condition.
Condition~\ref{item:grf-linearly} of Theorem~\ref{theorem:grf} is simply that the support of the neighbor vector $X_i$ is not contained in any proper subspace of $\R^D$; see Theorem~\ref{theorem:GLMs} for further discussion of this non-degeneracy assumption.
% \future{(Are assumptions 1 and 2 satisfied under the model?)}

Many applications in spatial statistics involve more complex models that do not satisfy the 
assumption of a regular graph with identically distributed neighborhoods $(X_i,Y_i)$ \citep{ferreira2007bayesian}.
While Theorem~\ref{theorem:grf} could be extended to handle such generalizations,
% to non-regular graphs and non-identically distributed neighborhoods $(X_i,Y_i)$.
we chose to keep it relatively simple in order to capture the essential features of this class of models 
without being overburdened with details.

\begin{theorem}
\label{theorem:grf-ergodic}
Suppose $G$ is the $m$-dimensional lattice on $\Z^m$, and let $v:\N\to\Z^m$ be a bijection from $\N$ to $\Z^m$
% choose an ordering $v(1),v(2),\ldots$ of $\Z^m$ 
such that $R(v(1))\leq R(v(2)) \leq \cdots$ where $R(j) = \max\{|j_1|,\ldots,|j_m|\}$ for $j\in \Z^m$.  
Let $T_1,\ldots,T_m$ denote the shift transformations on $\Z^m$.
Suppose $(Y_1,Y_2,\ldots)$ is a stochastic process such that the random field $(Y_{v^{-1}(j)} : j\in\Z^m)$ is stationary 
with respect to $T_1,\ldots,T_m$ and ergodic with respect to at least one of $T_1,\ldots,T_m$.
% Further, assume that $\mathrm{Var}(Y_{1}Y_{i})<\infty$ and $\mathrm{Var}(Y_{i}Y_{i'})<\infty$
% for all $i,i'$ such that $|v(1)-v(i)|=1$ and $|v(1)-v(i')|=1$.
If $\E|Y_1|^4 < \infty$,
then condition~\ref{item:grf-convergence} of Theorem~\ref{theorem:grf} holds.
\end{theorem}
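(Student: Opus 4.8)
The plan is to recognize both averages in condition~\ref{item:grf-convergence} as ergodic averages of a fixed integrable function of the stationary field, and then to invoke a multidimensional pointwise ergodic theorem, with the radial enumeration $v$ handled by a boundary-shell estimate. First I would set up the reduction. Write $W_j = Y_{v^{-1}(j)}$ for $j\in\Z^m$, so that $(W_j)_{j\in\Z^m}$ is the given stationary field and the shifts $T_1,\dots,T_m$ act in a measure-preserving way (stationarity is precisely measure-preservation of the $\Z^m$-action generated by $T_1,\dots,T_m$). For $p\in\Z^m$ set $\tau_p = T_1^{p_1}\cdots T_m^{p_m}$. Since the neighbors of the site $v(i)$ are $v(i)\pm e_1,\dots,v(i)\pm e_m$, each entry of $X_iY_i$ and of $X_iX_i^\T$ is a product $W_aW_b$ of two field values at sites in a fixed bounded pattern around $v(i)$; hence each such entry equals $h(\tau_{v(i)}W)$ for a fixed measurable $h(w)=w_{c}w_{c'}$ with $c,c'\in\{0,\pm e_1,\dots,\pm e_m\}$. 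By stationarity every $W_j$ has the law of $Y_1$, so $\E|Y_1|^4<\infty$ (which forces $\E|Y_1|^2<\infty$) together with Cauchy--Schwarz gives $\E|h(W)|=\E|W_cW_{c'}|<\infty$; thus $h\in L^1$ and the limiting expectations $\E X_iY_i$ and $\E X_iX_i^\T$ are finite.

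Second, I would record an ergodicity upgrade. A set invariant under the whole $\Z^m$-action is in particular invariant under the single shift that is assumed ergodic, say $T_1$; hence $T_1$-ergodicity forces the invariant $\sigma$-field of the full action to be trivial, so the field is ergodic under the $\Z^m$-action. With $B_r=\{-r,\dots,r\}^m$ (so $|B_r|=(2r+1)^m$), cubes form a regular (Følner) averaging sequence, and the multidimensional pointwise ergodic theorem then yields, almost surely and for each such $h$, that $\frac{1}{|B_r|}\sum_{j\in B_r} h(\tau_jW)\to \E h(W)$ as $r\to\infty$; ergodicity makes the limit the constant mean rather than a conditional expectation. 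Applied to $|h|\in L^1$, the same statement gives $\frac{1}{|B_r|}\sum_{j\in B_r}|h(\tau_jW)|\to\E|h(W)|$.

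Third, I would transfer this to the radial partial sums. The ordering $R(v(1))\le R(v(2))\le\cdots$ means that for $|B_{r-1}|\le n\le |B_r|$ we have $B_{r-1}\subseteq\{v(1),\dots,v(n)\}\subseteq B_r$, so $\frac1n\sum_{i=1}^n h(\tau_{v(i)}W)$ splits as $\frac{|B_{r-1}|}{n}\cdot\frac{1}{|B_{r-1}|}\sum_{j\in B_{r-1}}h(\tau_jW)$ plus a term supported on the partial shell $S_r=B_r\setminus B_{r-1}$. Since $|B_{r-1}|/|B_r|=\big((2r-1)/(2r+1)\big)^m\to1$, a squeeze gives $|B_{r-1}|/n\to1$, while the cube average $\to\E h(W)$. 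For the shell term I would dominate it by $\frac{1}{|B_{r-1}|}\sum_{j\in S_r}|h(\tau_jW)|=\frac{|B_r|}{|B_{r-1}|}\cdot\frac{1}{|B_r|}\sum_{j\in B_r}|h(\tau_jW)|-\frac{1}{|B_{r-1}|}\sum_{j\in B_{r-1}}|h(\tau_jW)|$, where the two cube limits for $|h|$ and $|B_r|/|B_{r-1}|\to1$ force the difference to $0$. Hence $\frac1n\sum_{i=1}^n h(\tau_{v(i)}W)\to\E h(W)$ almost surely along all $n$, not merely a cube subsequence. Applying this to each coordinate of $X_iY_i$ and each entry of $X_iX_i^\T$ gives condition~\ref{item:grf-convergence}.

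The main obstacle is the mismatch between the radial enumeration $v$ and the cube sequence on which the ergodic theorem is clean: the outer shell $S_r$ must be shown to contribute nothing in the limit despite carrying $O(r^{m-1})$ possibly large summands, which is why I route the estimate through the ergodic theorem applied to $|h|$ rather than through a crude cardinality bound. A secondary point requiring care is confirming that ergodicity of the single shift $T_1$, rather than of the full group, already pins the ergodic limit to the constant $\E h(W)$.
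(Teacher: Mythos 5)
Your proposal is correct, and its skeleton matches the paper's: reduce each entry of $X_iY_i$ and $X_iX_i^\T$ to a fixed function of the stationary field, apply a multidimensional pointwise ergodic theorem over the cubes $B_r$, note that ergodicity of a single shift trivializes the invariant $\sigma$-field of the full $\Z^m$-action, and then show the partial outer shell contributes nothing. Where you genuinely diverge is in the shell estimate. The paper bounds the shell contribution by $c_{r}W_{r}$ with $c_r=|B_r\setminus B_{r-1}|/|B_r|=O(1/r)$ and $W_r$ an average of $|Z_i|$ over the shell, then applies Markov's inequality to $W_r^2$ and Borel--Cantelli, using $\sum_r c_r^2<\infty$; this is exactly where the hypothesis $\E|Y_1|^4<\infty$ is consumed, since the Markov bound needs $\E|Z_1|^2=\mathrm{Var}(Y_iY_j)<\infty$. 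You instead run the ergodic theorem a second time on $|h|$ and telescope: the shell average is the difference of two consecutive cube averages of $|h|$ (up to the ratio $|B_r|/|B_{r-1}|\to 1$), both converging a.s.\ to $\E|h(W)|$, so the difference vanishes. Your route only requires $h\in L^1$, i.e.\ $\E|Y_1|^2<\infty$ via Cauchy--Schwarz, so it proves the theorem under a strictly weaker moment condition and avoids the Borel--Cantelli machinery entirely; the paper's route is more self-contained in that it needs the ergodic theorem only once (for the centered variables $Z_i$) at the price of the stronger fourth-moment hypothesis. A further small difference: the paper transfers stationarity and ergodicity from $(Y_{v^{-1}(j)})$ to the derived field $(Z_{v^{-1}(j)})$ via its Lemma~\ref{lemma:stationary-ergodic} on commuting factor maps, whereas you apply the ergodic theorem directly to the function $h$ of the original field, which makes that transfer lemma unnecessary. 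Both arguments are sound; just make sure, when you formalize the cube ergodic theorem, that you either cite a version valid for the symmetric cubes $\{-r,\dots,r\}^m$ directly or handle the decomposition into orthants as the paper does with \citet[Theorems 10.12 and 10.13]{kallenberg2002foundations}.
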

See Section \ref{section:generalized-proof} for the proof.

\subsection{Fully visible Boltzmann machines}
\label{section:boltzmann}

The Boltzmann machine is a stochastic recurrent neural network originally developed as a model of neural computation \citep{hinton1983optimal,ackley1985learning}.
Maximum pseudolikelihood estimation has been shown to be consistent for fully visible Boltzmann machines \citep{hyvarinen2006consistency}.
Here, we consider the corresponding pseudolikelihood-based generalized posteriors.
To our knowledge, Theorem~\ref{theorem:boltzmann} is the first result establishing a Bernstein--von Mises theorem for this model.

Define $p_{A,b}(y) \propto \exp(y^\T A y + b^\T y)$ for $y\in\Y := \{-1,1\}^d$, where $A\in\R^{d\times d}$ is a strictly upper triangular matrix and $b\in\R^d$.
Given samples from $p_{A,b}$, inference for $A$ and $b$ is complicated by the intractability of the normalization constant $Z_{A,b} = \sum_{y\in\Y} \exp(y^\T A y + b^\T y)$ since $|\Y| = 2^d$ is very large when $d$ is large.  Observe that we can write
\begin{equation}\label{equation:boltzmann}
p_{A,b}(y_j | y_{-j}) \propto_{y_j} \exp\big(\textstyle\sum_{k=1}^{j-1} A_{k j} y_k y_j + \sum_{k=j+1}^{d} A_{j k} y_j y_k + b_j y_j\big)
= \exp\big(y_j \theta^\T \varphi_j(y)\big)
\end{equation}
where $\theta=\theta(A,b)\in\R^D$ is a $D = d + d(d-1)/2$ dimensional vector concatenating $b$ and the strictly upper triangular entries of $A$,
and $\varphi_j(y)\in\{-1,0,1\}^D$ is a function that does not depend on $y_j$. 
Thus, we have $p_{A,b}(y_j | y_{-j}) = q\big(y_j \mid \theta^\T\varphi_j(y)\big)$
by defining $q(y_j|\eta) = \exp(\eta y_j - \kappa(\eta))$ for $y_j\in\{-1,1\}$ and $\eta\in\R$, where $\kappa(\eta) = \log(e^\eta + e^{-\eta})$.
Now, suppose we have $n$ samples $y_1,\ldots,y_n\in\Y=\{-1,1\}^d$ and 
for $\theta\in\Theta:=\R^D$,
consider the pseudolikelihood
$$ \mathcal{L}_n^\mathrm{Boltz}(\theta) = \prod_{i=1}^n \prod_{j=1}^d p_{A,b}(y_{i j} | y_{i,-j}) 
= \prod_{i = 1}^n \prod_{j = 1}^d q\big(y_{i j} \mid \theta^\T\varphi_j(y_i)\big).$$

\begin{theorem}
\label{theorem:boltzmann}
Let $Y_1,Y_2,\ldots\in\Y$ be i.i.d.\ random vectors and define $X_{i j} = \varphi_j(Y_i)$.
Define $f(\theta) = -\sum_{j=1}^d \E \log q(Y_{i j} \mid \theta^\T X_{i j})$ for $\theta\in\Theta$.
Assume:
\begin{enumerate}
\item\label{item:boltzmann-minimum} $f'(\theta_0)=0$ for some $\theta_0\in\Theta$, and
% \item\label{item:boltzmann-linearly} for all $a\in\R^D$, if $a^\T X_{i j} \overset{\mathrm{a.s.}}{=} 0$ for all $j=1,\ldots,d$, then $a = 0$.
\item\label{item:boltzmann-linearly} for all nonzero $a\in\R^d$, $\mathrm{Var}(a^\T Y_i) > 0$.
\end{enumerate}
Then for any open ball $E$ such that $\theta_0\in E$, 
$f_n(\theta) := -\frac{1}{n}\sum_{i=1}^n \sum_{j=1}^d \log q\big(Y_{i j} \mid \theta^\T X_{i j}\big)$
satisfies the conditions of Theorem \ref{theorem:altogether} with probability~$1$.
\end{theorem}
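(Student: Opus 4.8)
The plan is to verify the hypotheses of Theorem~\ref{theorem:altogether} through its convex route (condition~\ref{item:altogether-convex}), closely following the proofs of Theorems~\ref{theorem:GLMs} and \ref{theorem:pseudolik}. Writing $-\log q(y_j\mid\eta) = \kappa(\eta) - \eta\, y_j$, we have $f_n(\theta) = \frac{1}{n}\sum_{i=1}^n\sum_{j=1}^d\big(\kappa(\theta^\T X_{ij}) - Y_{ij}\,\theta^\T X_{ij}\big)$, which is $C^\infty$ on $\Theta=\R^D$ by the chain rule and convex since $\kappa(\eta)=\log(e^\eta+e^{-\eta})$ is convex and each $\theta\mapsto\theta^\T X_{ij}$ is affine. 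Because $Y_i$ ranges over the finite set $\{-1,1\}^d$ and the decoding in \eqref{equation:boltzmann} gives $X_{ij}=\varphi_j(Y_i)\in\{-1,0,1\}^D$, every quantity is bounded, so all relevant expectations are finite.

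Next I would supply the convergence and regularity inputs. For pointwise convergence, the strong law of large numbers gives $f_n(\theta)\to f(\theta)$ almost surely for each fixed $\theta$, which I would upgrade to ``almost surely, for all $\theta$ simultaneously'' exactly as in Theorem~\ref{theorem:GLMs}, by restricting to a countable dense subset of $\Theta$ and invoking convexity \citep[Theorems 10.8 and 10.1]{rockafellar1970convex}. For the third derivatives, a direct computation gives $\kappa'''(\eta) = -2\tanh(\eta)\,\mathrm{sech}^2(\eta)$, so $|\kappa'''|\le 2$; together with $|(X_{ij})_p|\le 1$ this bounds every entry of $f_n'''$ by $2d$ deterministically, so $(f_n''')$ is uniformly bounded on all of $\R^D$. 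Since $f'(\theta_0)=0$ by condition~\ref{item:boltzmann-minimum}, it only remains to show $f''(\theta_0)$ is positive definite, and by Theorem~\ref{theorem:regular-convergence} together with the strong law, $f''(\theta_0)=\sum_{j=1}^d\E\big(\kappa''(\theta_0^\T X_{ij})\,X_{ij}X_{ij}^\T\big)$ almost surely.

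The crux, and the step I expect to be the main obstacle, is positive definiteness of $f''(\theta_0)$, because condition~\ref{item:boltzmann-linearly} is phrased in $\R^d$ while $f''(\theta_0)$ lives in $\R^{D\times D}$ with $D=d+\binom{d}{2}$. The key is to decode the structure of $\varphi_j$. Split a nonzero $a\in\R^D$ as $a=(\beta,\alpha)$, where $\beta\in\R^d$ indexes the $b$-block and $\alpha$ indexes the strictly-upper-triangular $A$-block, and assemble $\alpha$ into the symmetric matrix $M\in\R^{d\times d}$ with zero diagonal and off-diagonal entries given by $\alpha$. Reading off \eqref{equation:boltzmann} then gives $a^\T X_{ij}=\beta_j+(MY_i)_j$, so that
\begin{align*}
a^\T f''(\theta_0)\,a = \sum_{j=1}^d \E\Big(\kappa''(\theta_0^\T X_{ij})\,\big(\beta_j+(MY_i)_j\big)^2\Big).
\end{align*}
Since $\kappa''(\eta)=\mathrm{sech}^2(\eta)>0$, this vanishes only if $\beta_j+(MY_i)_j=0$ almost surely for every $j$, i.e.\ $MY_i=-\beta$ almost surely. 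In that case each row satisfies $\mathrm{Var}\big((M_{j\cdot})^\T Y_i\big)=0$, so condition~\ref{item:boltzmann-linearly} forces $M_{j\cdot}=0$ for all $j$, whence $M=0$ (so $\alpha=0$) and then $\beta=0$; thus $a=0$, a contradiction. Therefore $f''(\theta_0)$ is positive definite.

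With each $f_n$ convex and $f'(\theta_0)=0$, condition~\ref{item:altogether-convex} of Theorem~\ref{theorem:altogether} holds and the theorem applies. One subtlety worth flagging is why I would not simply invoke Theorem~\ref{theorem:pseudolik}: its identically-distributed hypothesis applies to pairs indexed by a single running coordinate, whereas here the per-coordinate pairs $(X_{ij},Y_{ij})$ need not be identically distributed across $j$, so the direct verification above---patterned on Theorem~\ref{theorem:GLMs}---is the cleaner route.
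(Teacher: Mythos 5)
Your proposal is correct and follows essentially the same route as the paper's proof: convexity plus the SLLN and a countable dense subset for pointwise convergence, boundedness of $\kappa'''$ and of $X_{ij}\in\{-1,0,1\}^D$ for the uniform bound on $(f_n''')$, and the decoding $a^\T X_{ij}=\beta_j+(MY_i)_j$ combined with condition~\ref{item:boltzmann-linearly} for positive definiteness of $f''(\theta_0)$. The only (immaterial) differences are that you unify the paper's two cases ($A\neq 0$ versus $A=0$) into a single argument via the symmetric matrix $M$, and you justify the formula for $f''(\theta_0)$ via Theorem~\ref{theorem:regular-convergence} and the strong law rather than by differentiating under the integral sign.
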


The assumptions of Theorem~\ref{theorem:boltzmann} are extremely mild and can be expected to typically hold in practice.  
Condition~\ref{item:boltzmann-minimum} is simply that the maximum pseudolikelihood estimator exists, asymptotically
-- or more precisely, that the asymptotic pseudolikelihood function has a critical point.
Condition~\ref{item:boltzmann-linearly} is that there is no lower-dimensional affine subspace that contains $Y_i$ almost surely;
this is analogous to the non-degeneracy condition in Theorem~\ref{theorem:pseudolik}.

\vspace{1em}
\begin{proof}{\bf of Theorem~\ref{theorem:boltzmann}}~
Observe that 
$$f_n(\theta) = \frac{1}{n}\sum_{i=1}^n \sum_{j=1}^d \kappa(\theta^\T X_{i j}) \,-\, \theta^\T \Big(\frac{1}{n}\sum_{i=1}^n \sum_{j=1}^d X_{i j} Y_{i j}\Big)$$
and $f(\theta) = \sum_{j=1}^d \E\kappa(\theta^\T X_{i j}) \,-\, \theta^\T \big(\sum_{j=1}^d \E X_{i j} Y_{i j}\big)$.
As in the proof of Theorem~\ref{theorem:GLMs}, $f_n$ is $C^\infty$ and convex.
Since $\{-1,0,1\}^D$ is a finite set, $\sup \big\{|\kappa(\theta^\T x)| : x\in\{-1,0,1\}^D\big\} < \infty$ for all $\theta\in\Theta$. 
Also, $|X_{i j k} Y_{i j}| \leq 1$, and thus, 
$f(\theta)$ is finite and 
$f_n(\theta) \xrightarrow[]{\mathrm{a.s.}} f(\theta)$ by the strong law of large numbers.
(Note that the standard strong law of large numbers applies here since the data consist of $n$ i.i.d.\ samples from the Boltzmann machine,
rather than the first $n$ elements of a single sample as in the GMRF example.)
Due to convexity, this implies that with probability $1$, for all $\theta\in \Theta$, $f_n(\theta)\to f(\theta)$ as $n\to\infty$. 

Let $E$ be an open ball containing $\theta_0$.  Then for all $\theta\in E$, 
$|f_n'''(\theta)_{k\ell m}| \leq c d$ where $c = \sup \{ |\kappa'''(\theta^\T x)| : x\in\{-1,0,1\}^D, \theta\in\bar{E}\}$, and $c<\infty$
% f_n'''(\theta)_{k\ell m} = \frac{1}{n}\sum_{i=1}^n \sum_{j=1}^d \kappa'''(\theta^\T X_{i j}) X_{i j k} X_{i j \ell} X_{i j m}
because $\kappa'''$ is continuous and $\bar{E}$ is compact.
Thus, for all $\theta\in E$, $\|f_n'''(\theta)\|^2 = \sum_{k,\ell,m} |f_n'''(\theta)_{k\ell m}|^2 \leq c^2 d^2 D^3$. 
Hence, $(f_n''')$ is uniformly bounded on $E$.

Now, we show that $f''(\theta_0)$ is positive definite.
First, $f''(\theta_0) = \sum_{j=1}^d \E\big(\kappa''(\theta_0^\T X_{i j}) X_{i j} X_{i j}^\T\big)$
because differentiating under the integral sign is justified by the bounds
$|\kappa(\eta)| \leq |\eta| + \log 2$,
$|\kappa'(\eta)| \leq 1$, 
$|\kappa''(\eta)| \leq 2$, and
$|X_{i j k}| \leq 1$ \citep[2.27]{folland2013real}.
Let $\theta\in\R^D$ be nonzero and let $A,b$ be the corresponding parameters such that $\theta = \theta(A,b)$.
Then by Equation~\ref{equation:boltzmann}, $A^\T Y_i + A Y_i + b = (\theta^\T X_{i 1},\ldots,\theta^\T X_{i d})^\T \in \R^d$.
If $A \neq 0$, then $\mathrm{Var}(\theta^\T X_{i j'}) > 0$ for some $j'$ by condition~\ref{item:boltzmann-linearly},
and hence, $\theta^\T f''(\theta_0)\theta = \sum_{j=1}^d \E\big(\kappa''(\theta_0^\T X_{i j}) |\theta^\T X_{i j}|^2\big) > 0$ because $\kappa''(\eta)>0$ and $\Pr(|\theta^\T X_{i j'}|>0) > 0$.
Meanwhile, if $A = 0$, then $b_{j'}\neq 0$ for some $j'$ (because $\theta\neq 0$), and again $\theta^\T f''(\theta_0) \theta > 0$ because $|\theta^\T X_{i j'}| = |b_{j'}| > 0$.  
Therefore, $f''(\theta_0)$ is positive definite.
\end{proof}

\subsection{Ising model}
\label{section:ising}

The Ising model is a classical model of ferromagnetism in statistical mechanics
and has gained widespread use in many other applications such as
spatial statistics \citep{banerjee2014hierarchical} and image processing \citep{geman1984stochastic}. %, and disease mapping \citep{green2002hidden}.
Pseudolikelihood-based posteriors for the Ising model and Potts model, more generally, have been used by \citet{zhou2009bayesian} for protein modeling.
% $\pi_n(\theta)\propto \mathcal{L}_n^\mathrm{Ising}(\theta)\pi(\theta)$ 

Consider the $m$-dimensional integer lattice $\Z^m$ and let $v:\N\to\Z^m$ be a bijection from $\N$ to $\Z^m$. 
Let $y_1,y_2,\ldots\in\Y:=\{-1,1\}$ be variables associated with the points of $\Z^m$ such that $y_i$ is the value at $v(i)$.
The Ising model is a Markov random field with singleton potentials $\exp(\theta_1 y_i)$ for each $i\in\N$ and pairwise potentials $\exp(\theta_2 y_i y_j)$ for each pair $i,j\in\N$ such that $v(i)$ and $v(j)$ are adjacent in $\Z^m$, that is, such that $|v(i) - v(j)| = 1$.
This motivates the use of the pseudolikelihood \citep{besag1975statistical},
$$ \mathcal{L}_n^\mathrm{Ising}(\theta) = \prod_{i = 1}^n 
\frac{\exp({\textstyle \theta_1 y_i + \theta_2 \sum_{j \in N_i} y_i y_j})}{\sum_{y\in\Y}\exp({\textstyle \theta_1 y + \theta_2 \sum_{j \in N_i} y y_j})} $$
for $\theta \in \Theta := \R^2$,
where $N_i = \{j \in \N : v(j) \text{ is adjacent to } v(i)\}$.
By defining $q(y|\eta) = \exp(\eta y - \kappa(\eta))$ for $y\in\{-1,1\}$ and $\eta\in\R$, where $\kappa(\eta) = \log(e^\eta + e^{-\eta})$,
the Ising model pseudolikelihood can be written as 
$\mathcal{L}_n^\mathrm{Ising}(\theta) = \prod_{i=1}^n q(y_i \mid \theta_1 + \theta_2 \sum_{j \in N_i} y_j)$.

% Note to self: I think existence of processes with this conditional distribution is known from classical results, e.g., see comets1992consistency.

\begin{theorem}
\label{theorem:ising}
Let $\vec{Y} = (Y_1,Y_2,\ldots)$ be a sequence of random variables in $\{-1,1\}$ and define $X_i = \big(1,\,\sum_{j\in N_i} Y_j\big)^\T\in\R^2$.
Suppose $(X_1,Y_1),(X_2,Y_2),\ldots$ are identically distributed, but not necessarily independent.
Define $f_n(\theta) = -\frac{1}{n} \sum_{i=1}^n \log q(Y_i \mid \theta^\T X_i)$ and $f(\theta) = -\E \log q(Y_i \mid \theta^\T X_i)$ for $\theta\in\Theta$.
Assume:
\begin{enumerate}
\item\label{item:ising-convergence} for all $\theta\in \Theta$, $f_n(\theta) \xrightarrow[]{\mathrm{a.s.}} f(\theta)$ as $n\to\infty$,
\item\label{item:ising-minimum} $f'(\theta_0)=0$ for some $\theta_0\in \Theta$, and
\item\label{item:ising-linearly} $\mathrm{Var}\big(\sum_{j \in N_i} Y_j\big) > 0$.
\end{enumerate}
Then for any open ball $E$ such that $\theta_0\in E$, $f_n$ satisfies the conditions of Theorem \ref{theorem:altogether} with probability~$1$.
\end{theorem}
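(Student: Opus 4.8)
The plan is to apply Theorem~\ref{theorem:pseudolik}, since the Ising pseudolikelihood has exactly the generalized-linear form required by Condition~\ref{condition:pseudolik}: it is built from the symmetric two-point exponential family $q(y\mid\eta)=\exp(\eta y-\kappa(\eta))$ with $\kappa(\eta)=\log(e^\eta+e^{-\eta})$, feature map $\varphi_i(\vec y)=(1,\sum_{j\in N_i}y_j)^\T$, and parameter space $\Theta=\R^2$. First I would check the structural hypotheses: $q$ satisfies Condition~\ref{condition:expfam} (here $\mathcal{E}=\{\eta:|\kappa(\eta)|<\infty\}=\R$ is open and nonempty, and $\eta\mapsto Q_\eta$ is one-to-one because the mean $\E_\eta Y=\kappa'(\eta)=\tanh\eta$ is strictly increasing), while $\Theta=\R^2$ is open and convex with $\theta^\T x\in\mathcal{E}=\R$ automatic. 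With the identification $X_i=\varphi_i(\vec Y)=(1,\sum_{j\in N_i}Y_j)^\T$, it then remains only to verify the four numbered conditions of Theorem~\ref{theorem:pseudolik}.

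The single fact that drives nearly all of the verification is that $X_i$ is \emph{bounded}: each vertex of $\Z^m$ has exactly $2m$ neighbors and each $Y_j\in\{-1,1\}$, so $|\sum_{j\in N_i}Y_j|\le 2m$ and $X_i$ takes values in a finite set; moreover $\kappa,\kappa',\kappa'',\kappa'''$ are all globally bounded (e.g.\ $\kappa'=\tanh$ and $0<\kappa''\le 1$). Using this, condition~\ref{item:pseudolik-convergence} follows because $\log q(Y_i\mid\theta^\T X_i)$ is bounded for each fixed $\theta$, so $f(\theta)$ is finite, while the almost sure convergence $f_n(\theta)\to f(\theta)$ is precisely assumption~\ref{item:ising-convergence} (which is assumed rather than derived because the data are the first $n$ entries of a single dependent sequence). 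Condition~\ref{item:pseudolik-triple} is immediate and in fact holds surely on all of $\R^2$: each entry $f_n'''(\theta)_{k\ell m}=\tfrac1n\sum_i\kappa'''(\theta^\T X_i)X_{ik}X_{i\ell}X_{im}$ is dominated by a deterministic constant depending only on $m$, so $(f_n''')$ is uniformly bounded.

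For condition~\ref{item:pseudolik-minimum-diff}, the requirement $f'(\theta_0)=0$ is assumption~\ref{item:ising-minimum}, and the identity $f''(\theta_0)=\E(\kappa''(\theta_0^\T X_i)X_i X_i^\T)$ follows by differentiating $f(\theta)=\E\kappa(\theta^\T X_i)-\theta^\T\E(X_i Y_i)$ twice under the expectation, the interchange being justified by the boundedness of $X_i$ and of $\kappa',\kappa''$ via dominated convergence. Finally, for the non-degeneracy condition~\ref{item:pseudolik-linearly} I would argue directly: writing $a=(a_1,a_2)^\T$, if $a^\T X_i=a_1+a_2\sum_{j\in N_i}Y_j=0$ almost surely, then $a_2\sum_{j\in N_i}Y_j$ is a.s.\ constant, so assumption~\ref{item:ising-linearly} (that $\mathrm{Var}(\sum_{j\in N_i}Y_j)>0$) forces $a_2=0$ and hence $a_1=0$. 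This translation from a variance condition to linear independence is the one step that genuinely uses assumption~\ref{item:ising-linearly} and is the only part that is not purely mechanical, though it is short. Having verified all four conditions, Theorem~\ref{theorem:pseudolik} yields that $f_n$ satisfies the hypotheses of Theorem~\ref{theorem:altogether} with probability $1$, which is the claim.
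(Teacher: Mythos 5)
Your proposal is correct and follows essentially the same route as the paper: apply Theorem~\ref{theorem:pseudolik}, using the finiteness of the range of $X_i$ (since $|\sum_{j\in N_i}Y_j|\le 2m$) to get finiteness of $f$, the uniform bound on $(f_n''')$, and the differentiation under the expectation for $f''(\theta_0)$, and deriving the non-degeneracy condition from $\mathrm{Var}(\sum_{j\in N_i}Y_j)>0$ exactly as the paper does. One immaterial slip: $\kappa(\eta)=\log(e^\eta+e^{-\eta})$ is \emph{not} globally bounded (only $\kappa'$, $\kappa''$, $\kappa'''$ are), but you never actually use that claim, since for fixed $\theta$ the quantity $\theta^\T X_i$ ranges over a finite set.
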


Condition~\ref{item:ising-convergence} is that a strong law of large numbers holds for the log-likelihood terms.
In Theorem~\ref{theorem:ising-ergodic}, we show that condition~\ref{item:ising-convergence} holds
whenever the true distribution is a stationary, ergodic process on $\Z^m$ satisfying a certain moment condition.
Condition~\ref{item:ising-minimum} is that a maximum pseudolikelihood estimate exists, asymptotically.
Condition~\ref{item:ising-linearly} is simply that the distribution of the neighbors is not degenerate, 
in the sense that their support is not restricted to an affine subspace orthogonal to the vector $(1,\ldots,1)^\T$.

\vspace{1em}
\begin{proof}{\bf of Theorem~\ref{theorem:ising}}~
We apply Theorem~\ref{theorem:pseudolik}.
Define $\X = \big\{(1,z)^\T : z \in \{-2 m, \ldots, 2 m\}\big\}$, noting that $X_i \in \X$.
It is easy to check that Condition~\ref{condition:pseudolik} holds.
For all $\theta\in \Theta$, $f(\theta)$ is finite since $|\X\times\Y| < \infty$.
If $a^\T X_i \overset{\mathrm{a.s.}}{=} 0$ then $a = 0$, since
$a^\T X_i = a_1 + a_2 \sum_{j \in N_i} Y_j$ and $\mathrm{Var}\big(\sum_{j \in N_i} Y_j\big) > 0$.
Let $E$ be an open ball containing $\theta_0$, and let $c = \sup \{ |\kappa'''(\theta^\T x)| : x\in\X, \theta\in \bar{E} \}$.
Then $c < \infty$ since $\kappa'''$ is continuous, $|\X|$ is finite, and $\bar{E}$ is compact. 
Therefore, for all $\theta \in E$, 
$|f_n'''(\theta)_{j k \ell}| \leq \frac{1}{n}\sum_{i = 1}^n |\kappa'''(\theta^\T X_i) X_{i j} X_{i k} X_{i \ell}| \leq c (2 m)^3$,
and thus, $(f_n''')$ is a.s.\ uniformly bounded on $E$.
Finally, $f''(\theta_0) = \E\big(\kappa''(\theta_0^\T X_i) X_i X_i^\T\big)$ 
because differentiating under the integral sign is justified by the bounds
$|\kappa(\eta)| \leq |\eta| + \log 2$,
$|\kappa'(\eta)| \leq 1$, 
$|\kappa''(\eta)| \leq 2$, and
$|X_{i j}| \leq 2 m$ \citep[2.27]{folland2013real}.
\end{proof}

\begin{theorem}
\label{theorem:ising-ergodic}
Let $v:\N\to\Z^m$ be a bijection %from $\N$ to $\Z^m$
% Choose an ordering $v(1),v(2),\ldots$ of $\Z^m$ 
such that $R(v(1))\leq R(v(2)) \leq \cdots$ where $R(j) = \max\{|j_1|,\ldots,|j_m|\}$ for $j\in \Z^m$.  
Let $T_1,\ldots,T_m$ denote the shift transformations on $\Z^m$.
Suppose $(Y_1,Y_2,\ldots)$ is a stochastic process such that the random field $(Y_{v^{-1}(j)} : j\in\Z^m)$ is stationary 
with respect to $T_1,\ldots,T_m$ and ergodic with respect to at least one of $T_1,\ldots,T_m$.
Assume that $\mathrm{Var}\big(\log q(Y_1 \mid \theta_1 + \theta_2 \sum_{j \in N_1} Y_j)\big)<\infty$ for all $\theta\in\Theta$.
Then condition~\ref{item:ising-convergence} of Theorem~\ref{theorem:ising} holds.
\end{theorem}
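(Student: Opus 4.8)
The plan is to recognize each summand $\log q(Y_i\mid\theta^\T X_i)$ as a shift-covariant functional of the stationary random field and then invoke a multidimensional pointwise ergodic theorem. Fix $\theta\in\Theta$. Write $W = (W_j:j\in\Z^m)$ with $W_j = Y_{v^{-1}(j)}$, and let $S^k$ denote the shift defined by $(S^k w)_j = w_{j+k}$. Since the neighbors of $v(i)$ in $\Z^m$ are the points $v(i)+u$ over the $2m$ unit vectors $u$ (those with $|u|=1$), we have $\sum_{j\in N_i} Y_j = \sum_{|u|=1} W_{v(i)+u}$, so that $\log q(Y_i\mid\theta^\T X_i) = \Phi_\theta(S^{v(i)}W)$, where $\Phi_\theta(w) = \log q\big(w_0 \mid \theta_1 + \theta_2\sum_{|u|=1} w_u\big)$ depends only on the finitely many coordinates in $\{0\}\cup\{u:|u|=1\}$. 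By stationarity of $W$ under $T_1,\ldots,T_m$, the derived field $\big(\Phi_\theta(S^k W):k\in\Z^m\big)$ is stationary.

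Next I would upgrade ergodicity and verify integrability. Any event invariant under the full $\Z^m$ action is in particular invariant under whichever single shift $T_\ell$ witnesses ergodicity; hence it has probability $0$ or $1$, so the $\Z^m$ action is ergodic. Because $Y_i\in\{-1,1\}$ and $\sum_{j\in N_i}Y_j\in\{-2m,\ldots,2m\}$, the argument of $\log q$ ranges over a finite set and $\Phi_\theta$ is bounded; in particular $\E|\Phi_\theta(W)|<\infty$, which makes the stated finite-variance hypothesis automatic in this case and supplies the integrability needed below.

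Finally I would apply the multidimensional pointwise ergodic theorem (Wiener's ergodic theorem for ergodic $\Z^m$ actions) to the centered cubes $C_r = \{j\in\Z^m : R(j)\le r\} = \{-r,\ldots,r\}^m$, which form a regular F{\o}lner sequence, to obtain $|C_r|^{-1}\sum_{j\in C_r}\Phi_\theta(S^j W)\xrightarrow[]{\mathrm{a.s.}} \E\,\Phi_\theta(W) = -f(\theta)$. It then remains to transfer this from cube averages to the partial sums ordered by $v$. Because $v$ enumerates $\Z^m$ in nondecreasing order of $R$, the set $\{v(1),\ldots,v(n)\}$ is squeezed between $C_{r-1}$ and $C_r$ with $r = R(v(n))$, and $|C_r|/|C_{r-1}| = \big((2r+1)/(2r-1)\big)^m \to 1$. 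Combining $|C_{r-1}|\le n\le |C_r|$ with boundedness of $\Phi_\theta$ to control the $O(r^{m-1})$ shell terms $C_r\setminus C_{r-1}$, a squeezing argument yields $\tfrac1n\sum_{i=1}^n \Phi_\theta(S^{v(i)}W)\to -f(\theta)$, i.e.\ $f_n(\theta)\xrightarrow[]{\mathrm{a.s.}} f(\theta)$. As $\theta$ was arbitrary and condition~\ref{item:ising-convergence} is stated pointwise in $\theta$, this establishes condition~\ref{item:ising-convergence} of Theorem~\ref{theorem:ising}. The main obstacle is precisely this transfer step: invoking the $\Z^m$ ergodic theorem over cubes and then carefully handling the boundary shell so that the $v$-ordered averages inherit the cube limit. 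The companion result Theorem~\ref{theorem:grf-ergodic} is handled the same way, except that there $Y$ is unbounded, so the shell terms and integrability must be controlled through the assumed fourth moment rather than by boundedness.
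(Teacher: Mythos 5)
Your proposal is correct and follows essentially the same route as the paper, whose proof of this theorem is simply the cube-average argument of Theorem~\ref{theorem:grf-ergodic} with $Z_i$ replaced by the centered log-pseudolikelihood terms: stationarity and ergodicity of the derived field (Lemma~\ref{lemma:stationary-ergodic}), the multivariate ergodic theorem over the cubes $\{-r,\ldots,r\}^m$, and the squeeze $|C_{r-1}|\le n\le |C_r|$ with $|C_r|/|C_{r-1}|\to 1$. The only divergence is that you control the boundary shell deterministically via boundedness of $\Phi_\theta$, whereas the paper reuses the Markov/Borel--Cantelli variance argument from the GMRF case; both are valid here, and you correctly note that the unbounded GMRF setting is where the moment-based version becomes necessary.
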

The proof is the same as Theorem~\ref{theorem:grf-ergodic}, except with $Z_i = \log q(Y_i \mid \theta_1 + \theta_2 \sum_{j \in N_i} Y_j) - \E\big(\log q(Y_i \mid \theta_1 + \theta_2 \sum_{j \in N_i} Y_j)\big)$.

\subsection{Cox proportional hazards model}
\label{section:cox}

The Cox proportional hazards model \citep{cox1972regression} is widely used for survival analysis.
The proportional hazards model assumes the hazard function for subject $i$ is $\lambda_0(y) \exp(\theta^\T x_i)$
for $y\geq 0$, where $\lambda_0(y)\geq 0$ is a baseline hazard function shared by all subjects, 
$x_i\in\R^D$ is a vector of covariates for subject $i$, and $\theta\in\R^D$ is a vector of coefficients.
To perform inference for $\theta$ in a way that does not require any modeling of $\lambda_0$ and elegantly handles censoring,
\citet{cox1972regression,cox1975partial} proposed using the \textit{partial likelihood},
$$ \mathcal{L}_n^\mathrm{Cox}(\theta) = \prod_{i=1}^n \bigg( \frac{\exp(\theta^\T x_i)}{\sum_{j=1}^n \exp(\theta^\T x_j)\I(y_j\geq y_i)} \bigg)^{z_i} $$
where $y_i \geq 0$ is the outcome time for subject $i$ and $z_i\in\{0,1\}$ indicates whether $y_i$ is an observed event time ($z_i=1$) or a right-censoring time ($z_i=0$).
When $z_i=1$, the $i$th factor in the partial likelihood can be interpreted as the conditional probability that subject $i$ has an event at time $y_i$,
given the risk set $\{j : y_j \geq y_i\}$ (the set of subjects that have not yet had an event or been censored up until time $y_i$) and
given that some subject has an event at time $y_i$.
% Note to self: I think this can be seen from the fact that the hazard function is $\lambda(y) = p(y)/\Pr(Y>y) = p(y)/\Pr(Y\geq y) = p(y | Y\geq y)$
% and so I think this conditional distribution is $q(j) \propto p(y_i | Y_j\geq y_i)\I(Y_j \geq y_i) \propto \exp(\theta^\T x_j) \I(Y_j \geq y_i)$.
See \citet{efron1977efficiency} for an intuitive explanation of the Cox partial likelihood based on a discrete approximation.
Formally, the Cox partial likelihood coincides with the likelihood of a certain generalized linear model with categorical outcomes, 
however, asymptotic analysis is complicated by the dependencies between the factors of the partial likelihood.
A number of authors have studied the asymptotics of the Cox partial likelihood; we mention, in particular,
the result of \citet{lin1989robust} on asymptotic normality of the maximum partial likelihood estimator for the Cox model under misspecification.
% using fairly sophisticated probability theory.

The generalized posterior $\pi_n(\theta) \propto \mathcal{L}_n^\mathrm{Cox}(\theta) \pi(\theta)$ based on the Cox partial likelihood has been considered by several authors 
\citep{raftery1996accounting,sinha2003bayesian,kim2009bayesian,ventura2016pseudo}.  \citet{sinha2003bayesian} show that $\pi_n$ approximates the standard posterior under a semiparametric Bayesian model, extending the results of \citet{kalbfleisch1978non}. 
Here, we provide sufficient conditions for $\pi_n$ to exhibit concentration, asymptotic normality, and an asymptotically correct Laplace approximation. 

\begin{theorem}
\label{theorem:cox}
Suppose $(X,Y,Z),(X_1,Y_1,Z_1),(X_2,Y_2,Z_2),\ldots$ are i.i.d., where $X\in\X\subseteq\R^D$, $Y\geq 0$, and $Z\in\{0,1\}$.
Define $f(\theta) = \E\big(h_Y(\theta) Z\big) - \theta^\T \E(X Z)$ for $\theta\in\Theta:=\R^D$
where $h_y(\theta) = \log \E\big(\exp(\theta^\T X) \I(Y \geq y)\big)$.
Assume:
\begin{enumerate}
% \item\label{item:cox-bounded} $|X| \overset{\mathrm{a.s.}}{\leq} m$ for some $m\in(0,\infty)$,
\item\label{item:cox-bounded} $\X$ is bounded,
\item\label{item:cox-continuous} the c.d.f.\ of $Y$ is continuous on $\R$,
\item\label{item:cox-nonconstant} $\Pr(Z = 1) > 0$ and $\mathrm{Var}(a^\T X) > 0$ for all nonzero $a\in\R^D$,
\item\label{item:cox-positive} $\Pr(Y\geq y \mid X = x) > 0$ for all $x\in\X$, $y \geq 0$, and
\item\label{item:cox-minimum} $f'(\theta_0)=0$ for some $\theta_0\in\R^D$.
\end{enumerate}
Then for any open ball $E$ such that $\theta_0\in E$, 
$f_n(\theta) := -\frac{1}{n}\log \mathcal{L}_n^\mathrm{Cox}(\theta) - \frac{1}{n} \sum_{i=1}^n Z_i \log n$
satisfies the conditions of Theorem \ref{theorem:altogether} with probability~$1$.
\end{theorem}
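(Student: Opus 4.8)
The plan is to verify the hypotheses of Theorem~\ref{theorem:altogether} through its convexity branch, condition~\ref{item:altogether-convex}. First I would put $f_n$ in a usable form. Writing $G_n(\theta,y) = \frac{1}{n}\sum_{j=1}^n \exp(\theta^\T X_j)\I(Y_j\geq y)$ and noting $\sum_{j}\exp(\theta^\T X_j)\I(Y_j\geq Y_i) = n\,G_n(\theta,Y_i)$, the subtracted $\frac{1}{n}\sum_i Z_i\log n$ cancels the $\log n$ from each risk set, leaving
\[
f_n(\theta) = \frac{1}{n}\sum_{i=1}^n Z_i\big(\log G_n(\theta,Y_i) - \theta^\T X_i\big).
\]
Since $G_n(\theta,Y_i)\geq \frac{1}{n}\exp(\theta^\T X_i)>0$ (the $j=i$ term), $f_n$ is $C^\infty$ on all of $\R^D$. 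Moreover each $\log G_n(\theta,\cdot)$ is a log-sum-exp of affine functions of $\theta$, and $Z_i\geq 0$, so every $f_n$ is convex; hence condition~\ref{item:altogether-convex} of Theorem~\ref{theorem:altogether} is available, with $f'(\theta_0)=0$ supplied by assumption~\ref{item:cox-minimum}.

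Next I would establish uniform boundedness of $(f_n''')$. With weights $p_j\propto \exp(\theta^\T X_j)\I(Y_j\geq Y_i)$ on the risk set of $i$, the derivatives of $\theta\mapsto\log G_n(\theta,Y_i)$ are the cumulants of $X$ under $p$: the Hessian is the weighted covariance and the third derivative is the weighted third central moment of $\{X_j\}$. Because $\X$ is bounded, say $|x|\leq M$ on $\X$ (assumption~\ref{item:cox-bounded}), these central moments are bounded by a constant depending only on $M$, uniformly in $\theta$, in $n$, and in the data. Thus $(f_n''')$ is (deterministically) uniformly bounded on every bounded set, in particular on any open ball $E$.

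The heart of the proof is the pointwise a.s.\ convergence $f_n\to f$, and this is where I expect the \emph{main obstacle}. Let $g(\theta,y) = \E(\exp(\theta^\T X)\I(Y\geq y))$, so $h_y(\theta)=\log g(\theta,y)$ and $f(\theta)=\E(Z h_Y(\theta))-\theta^\T\E(XZ)$. The term $\frac{1}{n}\sum_i Z_i\theta^\T X_i\to\theta^\T\E(XZ)$ by the SLLN, and $\frac{1}{n}\sum_i Z_i\log g(\theta,Y_i)\to\E(Z h_Y(\theta))$ likewise, using that $Z\log g(\theta,Y)$ is integrable: the bounds $g\leq e^{|\theta|M}$ and $g(\theta,y)\geq e^{-|\theta|M}S(y)$ with $S(y)=\Pr(Y\geq y)$ give $|\log g(\theta,Y)|\leq |\theta|M+|\log S(Y)|$, and $S(Y)$ is uniform on $(0,1)$ because the c.d.f.\ of $Y$ is continuous (assumption~\ref{item:cox-continuous}), so $\E|\log S(Y)|<\infty$. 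It remains to show $\frac{1}{n}\sum_i Z_i\big(\log G_n(\theta,Y_i)-\log g(\theta,Y_i)\big)\to 0$ a.s. Since $G_n(\theta,\cdot)$ and $g(\theta,\cdot)$ are monotone and $g(\theta,\cdot)$ is continuous, pointwise SLLN convergence on a countable dense set of $y$ upgrades to $\epsilon_n:=\sup_y|G_n(\theta,y)-g(\theta,y)|\xrightarrow{\mathrm{a.s.}}0$. Fixing $\delta>0$ and splitting the indices according to whether $g(\theta,Y_i)\geq\delta$ or $g(\theta,Y_i)<\delta$, the large-risk-set part is controlled by $\epsilon_n/\delta\to 0$ via the Lipschitz bound on $\log$ away from $0$. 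The small-risk-set part, where $\log$ is singular, is the difficult piece and cannot be handled by uniform convergence alone; here I would exploit the rank structure. Because the c.d.f.\ is continuous there are a.s.\ no ties, so if $Y_i$ is the $k$-th smallest outcome the risk set has exactly $n-k+1$ members and $G_n(\theta,Y_i)$ lies between $e^{-|\theta|M}\frac{n-k+1}{n}$ and $e^{|\theta|M}\frac{n-k+1}{n}$, whence $|\log G_n(\theta,Y_i)|\leq |\theta|M+\log\frac{n}{n-k+1}$. Summing this with the integrable bound on $|\log g(\theta,Y_i)|$ over the small-risk-set indices, and using $\frac{1}{n}\sum_{m=1}^{\lfloor n q_\delta\rfloor}\log\frac{n}{m}\to\int_0^{q_\delta}\log\frac1u\,du$ with $q_\delta=\Pr(g(\theta,Y)<\delta)$, bounds the contribution by a quantity of order $q_\delta+q_\delta\log\frac{1}{q_\delta}$. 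Letting $\delta\to 0$ (so $q_\delta\to 0$) forces the difference to $0$ and completes the pointwise convergence.

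Finally I would show $f''(\theta_0)$ is positive definite. Applying Theorem~\ref{theorem:regular-convergence} on the open ball $E$ gives $f''(\theta_0)=\lim_n f_n''(\theta_0)$, and passing to the limit yields, for any fixed $a\in\R^D$, $a^\T f''(\theta_0)a=\E\big(Z\,V_a(Y)\big)$, where $V_a(y)\geq 0$ is the variance of $a^\T X$ under the probability measure with density proportional to $e^{\theta_0^\T x}$ relative to the conditional law of $X$ given $Y\geq y$. Suppose $a\neq 0$ and $a^\T f''(\theta_0)a=0$. Then $V_a(Y)=0$ a.s.\ on $\{Z=1\}$, an event of positive probability by assumption~\ref{item:cox-nonconstant}; choosing $y^*$ with $V_a(y^*)=0$ forces $a^\T X$ to be a.s.\ constant on $\{Y\geq y^*\}$ (tilting by the positive weight $e^{\theta_0^\T x}$ preserves null sets). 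But assumption~\ref{item:cox-positive} puts every $x\in\X$ in $\{Y\geq y^*\}$ with positive probability, while $\mathrm{Var}(a^\T X)>0$ (assumption~\ref{item:cox-nonconstant}) ensures $a^\T X$ is non-degenerate, so $a^\T X$ takes distinct values on $\{Y\geq y^*\}$ with positive probability -- a contradiction. Hence $f''(\theta_0)$ is positive definite, and all hypotheses of Theorem~\ref{theorem:altogether}(\ref{item:altogether-convex}) are met with $H_0=f''(\theta_0)$.
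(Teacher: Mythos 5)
Your proposal is correct and follows essentially the same route as the paper's proof: the same decomposition of $f_n$ into risk-set averages, convexity via the log-sum-exp structure (so the convexity branch of Theorem~\ref{theorem:altogether} applies with assumption~\ref{item:cox-minimum} supplying $f'(\theta_0)=0$), uniform boundedness of $(f_n''')$ from boundedness of $\X$, pointwise a.s.\ convergence via uniform convergence of the risk-set averages away from the tail combined with a rank-based bound on the near-empty-risk-set contribution (this is exactly the content of the paper's Lemma~\ref{lemma:cox-pointwise}, which splits by $Y_i\leq c$ versus $Y_i>c$ rather than by the size of $g(\theta,Y_i)$, an equivalent device), and the same mutual-absolute-continuity argument with the tilted measure for positive definiteness (Lemma~\ref{lemma:cox-posdef}). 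The one step you state loosely is the identification $\lim_n f_n''(\theta_0)=\E\big(Z\,h_Y''(\theta_0)\big)$: the paper obtains this formula not by a law of large numbers for the risk-set weighted covariances (which would need its own argument analogous to Lemma~\ref{lemma:cox-pointwise}) but by differentiating $f(\theta)=\E(h_Y(\theta)Z)-\theta^\T\E(XZ)$ under the expectation, justified by the uniform bounds $|h_y'(\theta)|\leq m$ and $\|h_y''(\theta)\|\leq 2m^2$ that follow from boundedness of $\X$; you should make that (routine) justification explicit.
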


See Section \ref{section:generalized-proof} for the proof. 
Note that $\exp(-n f_n(\theta)) \propto \mathcal{L}_n^\mathrm{Cox}(\theta)$ since $\frac{1}{n} \sum_{i=1}^n Z_i \log n$
does not depend on $\theta$; the purpose of introducing this term is so that $f_n$ converges.
For interpretation, in words, the assumptions of Theorem~\ref{theorem:cox} are that:
(\ref{item:cox-bounded}) the covariates are bounded,
(\ref{item:cox-continuous}) the time outcome is a continuous random variable,
(\ref{item:cox-nonconstant})  the probability of observing an uncensored outcome is nonzero, and 
% the support of the covariate vector is not restricted to a lower-dimensional affine subspace (which is necessary for identifiability),
there is no lower-dimensional affine subspace that always contains the covariates (which is necessary for identifiability),
(\ref{item:cox-positive}) the survival function is nonzero, and
(\ref{item:cox-minimum}) the maximum partial likelihood estimate exists, asymptotically (or more precisely, the asymptotic partial likelihood function has a critical point).
These conditions are fairly mild and can be expected to hold in many practical applications.

\subsection{Median-based posterior for a location parameter}
\label{section:median}

% Note to self: Maybe for future work, can we generalize this to a set of quantiles rather than just the median?  That seems nice for big data sets and for robustness!

Suppose we wish to perform robust Bayesian inference for the parameter $\theta$ of a location family model $G_\theta(x) = G(x - \theta)$
where $G$ is a cumulative distribution function (c.d.f.) on $\R$.
If $G$ is misspecified, then the posterior on $\theta$ can be poorly behaved, and may even fail to converge at all.
For instance, if $G_\theta$ is the c.d.f.\ of $\Normal(\theta,\sigma^2)$ 
and the data are $X_1,X_2,\ldots$ i.i.d.\ $\sim \mathrm{Cauchy}(0,1)$, then the posterior on $\theta$
is concentrated near $\frac{1}{n}\sum_{i=1}^n X_i$ when $n$ is large, but $\frac{1}{n}\sum_{i=1}^n X_i \sim \mathrm{Cauchy}(0,1)$;
thus, the posterior does not converge to any fixed value.

\citet{doksum1990consistent} propose to use the conditional distribution of $\theta$ given the sample median (or some other robust estimate of location)
to perform robust Bayesian inference for $\theta$.
More precisely, let $M(x_{1:n})$ be a sample median of $x_{1:n} = (x_1,\ldots,x_n)$ and assume $G_\theta$ has a density $g_\theta$.
Then when $n$ is odd,
\begin{align*}
p\big(\theta \,\big\vert\, M(X_{1:n}) = m\big) 
&\propto \pi(\theta)\, p(M(X_{1:n}) = m \mid \theta) \\
&\propto \pi(\theta) g_\theta(m) G_\theta(m)^{(n-1)/2} \big(1-G_\theta(m)\big)^{(n-1)/2} \\
&= \pi(\theta) \exp\Big(\tfrac{1}{2}(n-1) \log G(m - \theta)(1-G(m - \theta)) + \log g_\theta(m)\Big)
\end{align*}
where $\pi$ is the prior on $\theta$. Here, the conditional densities are under the model in which $\theta\sim\pi$ and $X_1,\ldots,X_n|\theta$ i.i.d.\ $\sim G_\theta$.
\citet{doksum1990consistent} show that $p(\theta \mid M(X_{1:n}) = M(x_{1:n}))$ and generalizations thereof have desirable properties as 
robust posteriors for $\theta$; in particular, they provide consistency and asymptotic normality results.

% Since the density function of the $k$th order statistic is 
% $$p(x_{(k)}|\theta) = \frac{n!}{(k-1)!(n-k)!} g_\theta(x_{(k)}) G_\theta(x_{(k)})^{k-1} \big(1-G_\theta(x_{(k)})\big)^{n-k}$$
% where $g_\theta(x) = G'_\theta(x)$,
% it follows that when $n$ is odd,
% ...
% because $M(x_{1:n}) = x_{(k)}$ with $k = (n+1)/2$ when $n$ is odd.

With this as motivation, consider the generalized posterior $\pi_n(\theta) \propto \pi(\theta) \exp(-n f_n(\theta))$
where $f_n(\theta) = -\tfrac{1}{2} \log G(m_n-\theta)(1-G(m_n-\theta))$ and $m_n = M(x_{1:n})$;
this approximates $p(\theta \mid M(X_{1:n}) = m_n) $ and is somewhat simpler to analyze.
The following theorem strengthens the \citet{doksum1990consistent} asymptotic normality result by showing convergence in total variation distance, 
rather than convergence in the weak topology. 
Further, our conditions are simpler, but we do assume greater regularity of $G$ and we only consider the median.

% The following theorem proves concentration, asymptotic normality, and the Laplace approximation for $\pi_n(\theta)$
% when $G'(x)$ is symmetric and log-concave, under regularity conditions. Note that the c.d.f.\ of a log-concave distribution is also log-concave.

% Note to self: I also did a manual calculation to show that the 2nd derivative of -log Phi(x) (1-Phi(x)) is strictly positive, where Phi is the Normal(0,1) c.d.f.

\begin{theorem}
Suppose $G:\R\to(0,1)$ is a c.d.f.\ such that $G'''$ exists and is continuous, 
$G(-x) = 1-G(x)$ for all $x\in\R$, $(\log G)''(x) \leq 0$ for all $x\in\R$, and $(\log G)''(0) < 0$.
If $\theta_0\in\R$ and $m_1,m_2,\ldots\in\R$ such that $\theta_0 = \lim_{n\to\infty} m_n$, 
then for any open ball $E$ containing~$\theta_0$,
$f_n(\theta) := -\tfrac{1}{2} \log G(m_n-\theta)(1-G(m_n-\theta))$ satisfies the conditions of Theorem~\ref{theorem:altogether} on $\R$.
\end{theorem}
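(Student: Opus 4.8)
The plan is to apply the convexity branch (condition~\ref{item:altogether-convex}) of Theorem~\ref{theorem:altogether}, which only requires that each $f_n$ be convex, that $f'(\theta_0)=0$, together with the standing regularity hypotheses: continuous third derivatives, pointwise convergence $f_n\to f$, positive definite $f''(\theta_0)$, and uniform boundedness of $(f_n''')$ on $E$. The key device is to introduce the single function
$$\psi(x) = -\tfrac12\log\big(G(x)(1-G(x))\big) = -\tfrac12\big(\log G(x) + \log G(-x)\big),$$
where the second equality uses the symmetry $G(-x)=1-G(x)$. Then $f_n(\theta)=\psi(m_n-\theta)$, so all of the required properties of $f_n$ and $f$ reduce to properties of $\psi$ precomposed with the affine map $\theta\mapsto m_n-\theta$.

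First I would record three structural features of $\psi$. Since $G$ is $C^3$ and takes values in $(0,1)$, the function $\psi$ is $C^3$ on all of $\R$, which yields the continuous third derivatives. The representation via $\log G(x)+\log G(-x)$ shows $\psi$ is even, so $\psi'(0)=0$; consequently $f'(\theta_0)=-\psi'(0)=0$. Differentiating twice gives $\psi''(x)=-\tfrac12\big((\log G)''(x)+(\log G)''(-x)\big)$, so the hypothesis $(\log G)''\le 0$ everywhere makes $\psi''\ge 0$; hence $\psi$ is convex and therefore $f_n(\theta)=\psi(m_n-\theta)$ is convex, as the composition of a convex function with an affine map. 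At $\theta_0$ we obtain $f''(\theta_0)=\psi''(0)=-(\log G)''(0)>0$, using the strict inequality $(\log G)''(0)<0$; in dimension $D=1$ this is exactly the positive definiteness of $f''(\theta_0)$.

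Next I would verify the two remaining analytic conditions. For pointwise convergence, continuity of $\psi$ and $m_n\to\theta_0$ give $f_n(\theta)=\psi(m_n-\theta)\to\psi(\theta_0-\theta)=:f(\theta)$ for every $\theta$. For the uniform bound on third derivatives, note $f_n'''(\theta)=-\psi'''(m_n-\theta)$; since $E$ is bounded and $(m_n)$ is convergent hence bounded, the arguments $\{m_n-\theta:\theta\in E,\,n\in\N\}$ lie in a fixed compact interval on which the continuous function $\psi'''$ is bounded, so $(f_n''')$ is uniformly bounded on $E$.

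With convexity, $f'(\theta_0)=0$, $f''(\theta_0)>0$, pointwise convergence, and the uniform third-derivative bound in hand, condition~\ref{item:altogether-convex} of Theorem~\ref{theorem:altogether} applies and delivers the conclusion. There is no serious obstacle: the argument is essentially bookkeeping once the substitution $f_n(\theta)=\psi(m_n-\theta)$ and the symmetry-induced evenness of $\psi$ are in place. The only point requiring a moment's care is the uniform boundedness of $(f_n''')$, where one must use that $(m_n)$ is bounded (being convergent) in order to confine every argument $m_n-\theta$ to a single compact set independent of $n$.
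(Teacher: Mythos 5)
Your proposal is correct and follows essentially the same route as the paper: both use the symmetry $G(-x)=1-G(x)$ to split the log into $\log G(x)+\log G(-x)$, deduce $f'(\theta_0)=0$ and $f''(\theta_0)=-(\log G)''(0)>0$, obtain convexity of $f_n$ from $(\log G)''\le 0$, and bound $(f_n''')$ by confining the arguments $m_n-\theta$ to a compact set (the paper packages this last step as Proposition~\ref{proposition:bounded-reduction}, which is exactly your compactness argument). The only cosmetic difference is your explicit introduction of the even function $\psi$, which the paper handles by direct computation of the derivatives of $f$ and $f_n$.
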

\begin{proof}
By the chain rule, $f_n(\theta)$ has a continuous third derivative since $\log(x)$ and $G(x)$ have continuous third derivatives and $G(x)\in(0,1)$.
Define $f(\theta) = -\tfrac{1}{2} \log G(\theta_0 - \theta) (1-G(\theta_0 - \theta))$ for $\theta\in\R$.
Then for all $\theta\in\R$, $f_n(\theta) \to f(\theta)$ as $n\to\infty$ since $m_n\to\theta_0$, $\log(x)$ and $G(x)$ are continuous, and $G(x)\in(0,1)$.
Further,
\begin{align*}
f(\theta) &= -\tfrac{1}{2} \log G(\theta_0 - \theta) - \tfrac{1}{2} \log G(\theta - \theta_0), \\
f'(\theta) &= \tfrac{1}{2} (\log G)'(\theta_0 - \theta) - \tfrac{1}{2} (\log G)'(\theta - \theta_0), \\
f''(\theta) &= -\tfrac{1}{2} (\log G)''(\theta_0 - \theta) - \tfrac{1}{2} (\log G)''(\theta - \theta_0).
\end{align*}
Thus, $f'(\theta_0) = 0$ and $f''(\theta_0) = - (\log G)''(0) > 0$.
Similarly, $f_n''(\theta) = -\tfrac{1}{2} (\log G)''(m_n - \theta) - \tfrac{1}{2} (\log G)''(\theta - m_n) \geq 0$ since $(\log G)''(x) \leq 0$.
Thus, $f_n$ is convex.
Finally, for any bounded open interval $E$ containing $\theta_0$,
$(f_n''')$ is uniformly bounded on $E$ by Proposition~\ref{proposition:bounded-reduction} with $h(\theta,s) = -\tfrac{1}{2}\log G(s-\theta)G(\theta-s)$,
$K = \bar E$, and $S = [\inf m_n, \sup m_n] \subseteq \R$.
% $S\subseteq\R$ a compact set such that $m_n \in S$ for all $n$.  
\end{proof}

% Note to self: I was wondering why Doksum and Lo's asymptotic normality result involves a condition on the asymptotic distribution of the estimator, when ours doesn't.
% Since we center the posterior distribution at $\theta_n$, asymptotic normality result doesn't require any conditions on the rate at which the estimator converges.
% I think this is what is going on: Their condition is on the the asymptotic distn of -a_n(T_n-theta) UNDER THE ASSUMED MODEL.
% So, their result is that the asymptotic posterior, i.e., the distn of a_n(theta-T_n) when theta~pi_n, is the same. 
% This is basically what is required for a Bayesian posterior to be correctly calibrated when correctly specified.

In cases where $f_n(\theta) = h(\theta,s_n)$ for some finite-dimensional statistic $s_n$, the following simple proposition can make it easy to 
verify the uniform boundedness condition.

\begin{proposition}
\label{proposition:bounded-reduction}
Let $K\subseteq\R^D$ and $S\subseteq\R^d$ be compact sets. 
Suppose $f_n(\theta) = h(\theta,s_n)$ for $\theta\in K$, $n\in\N$,
where $h:K\times S \to \R$ and $s_1,s_2,\ldots\in S$.
% Suppose there is a function $h:K\times S \to \R$ and a sequence $s_1,s_2,\ldots\in S$
% such that for all $n\in\N$, $\theta\in K$, we have $f_n(\theta) = h(\theta,s_n)$. 
% If $\frac{\partial^3 h}{\partial\theta_i\partial\theta_j\partial\theta_k}(\theta,s)$ is continuous for all $i,j,k\in\{1,\ldots,D\}$,
If $(\partial^3 h/\partial\theta_i\partial\theta_j\partial\theta_k)(\theta,s)$ exists and is continuous on $K\times S$ for all $i,j,k\in\{1,\ldots,D\}$,
then $(f_n''')$ is uniformly bounded on $K$.
\end{proposition}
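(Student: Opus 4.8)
The plan is to reduce the uniform boundedness of $(f_n''')$ to the boundedness of a single fixed continuous function on a compact set, exploiting the fact that the only $n$-dependence in $f_n$ enters through the point $s_n\in S$. First I would observe that, since $f_n(\theta) = h(\theta,s_n)$ depends on $\theta$ only through the first argument of $h$, for each triple $i,j,k\in\{1,\ldots,D\}$ we have $f_n'''(\theta)_{ijk} = (\partial^3 h/\partial\theta_i\partial\theta_j\partial\theta_k)(\theta,s_n)$ for every $\theta\in K$ and every $n$. Thus the entries of the tensor $f_n'''(\theta)$ are precisely the third partial derivatives of $h$ with respect to the $\theta$ coordinates, evaluated along the slice $s = s_n$.

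Next I would invoke compactness. The product $K\times S$ is compact (a product of compact sets), and by hypothesis each $g_{ijk} := \partial^3 h/\partial\theta_i\partial\theta_j\partial\theta_k$ is continuous on $K\times S$. By the extreme value theorem, each $g_{ijk}$ attains a finite maximum in absolute value, so there is a constant $M_{ijk}<\infty$ with $|g_{ijk}(\theta,s)|\leq M_{ijk}$ for all $(\theta,s)\in K\times S$. The crucial point is that this supremum is taken over all of $K\times S$ and hence does not depend on $n$; since $(\theta,s_n)\in K\times S$ for every $\theta\in K$ and every $n$, we obtain $|f_n'''(\theta)_{ijk}| = |g_{ijk}(\theta,s_n)|\leq M_{ijk}$ uniformly in both $n$ and $\theta$.

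Finally I would assemble these entrywise bounds into a bound on the Euclidean--Frobenius norm: for all $n\in\N$ and $\theta\in K$,
$$ \|f_n'''(\theta)\|^2 = \sum_{i,j,k} f_n'''(\theta)_{ijk}^2 \leq \sum_{i,j,k} M_{ijk}^2 =: M^2 < \infty, $$
so that $\{\|f_n'''(\theta)\| : \theta\in K,\, n\in\N\}$ is bounded by $M$, which is exactly the assertion that $(f_n''')$ is uniformly bounded on $K$ in the sense defined earlier. I do not expect any real obstacle here: the entire content is that ``freezing'' the $n$-dependence into the compact index set $S$ converts a family of functions into a single continuous function on a compact domain, whose boundedness is automatic. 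The only point requiring a moment's care is ensuring that the bound is genuinely uniform in $n$, which is secured by taking the supremum over all of $K\times S$ rather than over the individual slices $s = s_n$.
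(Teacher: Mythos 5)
Your proof is correct and follows essentially the same route as the paper's: both arguments reduce the claim to the boundedness of the continuous third-derivative tensor of $h$ on the compact set $K\times S$, the only cosmetic difference being that you bound each entry $g_{ijk}$ separately and sum, while the paper bounds the Frobenius norm $\|h'''(\theta,s)\|$ directly.
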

\begin{proof}
Let $h'''(\theta,s)$ denote the tensor of third derivatives with respect to $\theta$, 
and let $c = \sup\{ \|h'''(\theta,s)\| : \theta\in K, s\in S\}$.
For all $\theta\in K$, $n\in\N$, we have $\|f_n'''(\theta)\|=\|h'''(\theta,s_n)\|\leq c$,
and $c<\infty$ since $(\theta,s)\mapsto \|h'''(\theta,s)\|$ is continuous and $K\times S$ is compact.
%a continuous function on a compact set attains its maximum. 
\end{proof}

\section{Previous work}
\label{section:previous-work}

In this section, we provide a discussion comparing our assumptions, results, and proof techniques with those in previous work.
Our discussion focuses primarily on asymptotic normality (Bernstein--von Mises), 
and we discuss work on posterior consistency in Section~\ref{section:previous-work-consistency}.

\subsection{Overview of previous BvM results}

The origins of the Bernstein--von Mises (BvM) theorem go back to \citet{laplace1810memoire}, \citet{bernstein1917theory}, and \citet{vonmises1931}.
Rigorous formulations of the theorem were developed by \citet{lecam1953some}, \citet{bickel1969some}, \citet{walker1969asymptotic},
and \citet{dawid1970limiting}.
These works employed ``classical conditions'' involving second, third, or even fourth-order derivatives of the log-likelihood; see
the texts by \citet{lehmann2006theory} and \citet{ghosh2003bayesian}.
% le1956asymptotic could be cited as well here, but his conditions on the derivatives are more implicit, kind of like our condition.

\citet{lecam1970assumptions} discovered that
the classical differentiability assumptions could be replaced by a 
less stringest condition referred to as differentiability in quadratic mean (DQM),
which yields the benefits of a quadratic expansion while only requiring a certain first-order derivative; 
also see \citet{le1986asymptotic}, \citet{pollard1997another}, \citet{van2000asymptotic}, and \citet{le2000asymptotics} for background.
% http://www.stat.yale.edu/~pollard/Books/LeCamFest/Pollard.pdf
\citet{lecam1960necessary} and \citet{schwartz1965bayes}
developed the assumption of the existence of uniformly consistent tests (UCTs)
as a way of guaranteeing that $\theta_0$ is distinguishable.
In combination, the DQM and UCT assumptions form the basis for an elegant BvM theorem in the i.i.d.\ setting \citep{van2000asymptotic}.

The works listed above focus on the canonical setting of a correctly specified, i.i.d.\ probabilistic model
in which the dimension of parameter is fixed and finite.
Going beyond this canonical setting, a number of authors have provided extensions of the theory.
For instance, BvM theorems have been established for non-i.i.d.\ models such as 
Markov processes \citep{borwanker1971bernstein} and the Cox proportional hazards model with a prior on the baseline hazard function \citep{kim2006bernstein}.
% Note to self: Borwanker et al (1971) show a.s. convergence in total variation.
More recently, \citet{kleijn2012bernstein} provide a BvM for cases in which the assumed model is misspecified, focusing primarily on the i.i.d.\ setting.
\citet{bochkina2014bernstein} provide an interesting BvM result when the true parameter is on the boundary of the parameter space,
and their result is also applicable under misspecification.

For semiparametric and nonparametric models, BvM results have been established by a number of authors
\citep{shen2002asymptotic,kim2004bernstein,leahu2011bernstein,castillo2013nonparametric,bickel2012semiparametric,castillo2013general}.
A very general result is provided by \citet{panov2015finite},
who establish a finite-sample BvM theorem for non-i.i.d.\ semiparametric models 
under misspecification, allowing the dimension of the parameter to grow with the sample size.
While the theorem of \citet{panov2015finite} is very general, their conditions are quite abstract and may be challenging for non-experts to employ. 

For more references of early BvM contributions, see \citet{bernardo2000bayesian}.

% Here, we focus on the parametric case in which $\theta$ has fixed, finite dimension---however, 
% if $\theta$ is a finite-dimensional functional of a semiparametric or nonparametric model, then 
% in principle our main results could still be applied to the posterior of $\theta$
% since our conditions are stated directly in terms of $f_n$ rather than in terms of a probability model.

% and seem difficult to verify, particularly for non-experts.

% Panov & Spokoiny 2015
%   show the validity of the BvM result for linear and generalized linear models with
%   the nuisance parameter from the Sobolev class and a uniform sieve prior distribution
%   on the parameter space

% Nice discussion by Geyer on the local quadratic aspect (in the estimation case, not Bayes I think):
% http://www.stat.umn.edu/geyer/lecam/tr643.pdf

% Bickel and Yahav 1969
% - Theorem 2.2 is a BvM theorem.
% - Strong conditions assumed -- even 4th order derivatives are required.
% - Almost sure convergence in total variation?

\subsection{Categories of BvM conclusions and conditions}

While all BvM results show that ``the posterior converges to a normal distribution'',
each result can be placed along various axes in terms of
the strength of the conclusions obtained and the generality of the conditions assumed.
% and (C) ease of applicability.

\textbf{Strength of the conclusions.}
First, the topology with respect to which convergence is shown to occur is usually either 
the weak topology (that is, convergence in distribution) or the ``strong topology'', that is, the metric topology induced by total variation (TV) distance.
Second, in either topology, one can prove convergence in probability or almost sure convergence.
In our results, we prove almost sure convergence in TV distance, which is the stronger conclusion in both respects.

% Note to self: Walker (1969) is an example of a paper showing convergence in probability with respect to the weak topology.

\textbf{Generality of the conditions.}
We group the conditions commonly assumed in BvM theorems as follows.
The key conditions fall into two categories:
% To compare the conditions assumed in BvM results, we group them into the following categories.
% The main conditions can be grouped into categories:
\begin{enumerate}[~~(A)]
\setlength\itemsep{0em}
\item\label{item:regularity} regularity of the log-likelihood or generalized log-likelihood, and
\item\label{item:separation} separation conditions enabling $\theta_0$ to be distinguished.
\end{enumerate}
% bounding the log likelihood away from its maximum, except in neighborhoods of $\theta_0$.
Other conditions often assumed are that:
\begin{enumerate}[~~(A)]\setcounter{enumi}{2}
\setlength\itemsep{0em}
\item\label{item:posdef} the limiting Hessian at $\theta_0$ is positive definite,
\item\label{item:prior} the prior density is continuous and positive at $\theta_0$,
\item\label{item:param} the dimension of parameter is fixed and finite,
\item\label{item:root} a consistent root of the likelihood exists (such as the MLE),
\item\label{item:prob} the posterior arises from a probabilistic model,
\item\label{item:iid} the data are independent and identically distributed, and
\item\label{item:correct} the model is correctly specified.
\end{enumerate}
% If there is a prior, then it needs to be nonzero at $\theta_0$ and continuous at $\theta_0$.

% We offer the convexity option as an easier way of verifying category 3.

% \future{(Make a table with x's indicating which desiderata each result satisfies?)}

\subsection{Abstract BvM conditions}
\label{section:previous-abstract-bvm}

To discuss how our Theorem~\ref{theorem:BVM} relates to the existing literature,
we compare with \citet[Theorem 10.1]{van2000asymptotic} (VdV, for short) as an example of a modern BvM theorem.

VdV assumes conditions \ref{item:prob}, \ref{item:iid}, and \ref{item:correct} (that is, the posterior arises from a correctly specified, i.i.d.\ probabilistic model), whereas Theorem~\ref{theorem:BVM} does not require these conditions -- our results hold in misspecified, non-i.i.d.\ settings and do not even require that the posterior arise from a probability model.
Theorem~\ref{theorem:BVM} and VdV both assume 
condition \ref{item:posdef} (the limiting Hessian at $\theta_0$, which coincides with the Fisher information matrix in VdV, is positive definite),
condition \ref{item:prior} (the prior is continous and positive at $\theta_0$), and
condition \ref{item:param} (the parameter dimension is fixed and finite).
Meanwhile, neither theorem explicitly assumes condition \ref{item:root} (consistent root), but both work with a sequence $\theta_n$ that converges to $\theta_0$.

In category \ref{item:regularity} (regularity conditions), VdV assumes differentiability in quadratic mean (DQM)
whereas Theorem~\ref{theorem:BVM} assumes the quadratic approximation in condition \ref{item:BVM-rep}.
DQM implies a particular quadratic expansion referred to as \textit{local asymptotic normality} (LAN) \citep[Theorem 7.2]{van2000asymptotic}.
The LAN property is roughly similar to condition \ref{item:BVM-rep},
except that it is centered at $\theta_0$ and the remainder is formulated probabilistically \citep{pollard1997another}.
In Theorem~\ref{theorem:BVM}, having a deterministic (rather than probabilistic) bound on the remainder 
facilitates applications to generalized posteriors,
since it decouples the deterministic convergence result from the (possibly complex) distribution of the data.

In category \ref{item:separation} (separation conditions), VdV assumes there exists a sequence of uniformly consistent tests (UCTs) for $H_0 : \theta = \theta_0$ versus $H_1 : \|\theta - \theta_0\| \geq \varepsilon$, for every $\varepsilon > 0$.
Correspondingly, Theorem~\ref{theorem:BVM} assumes condition \ref{item:BVM-liminf}.
The UCT assumption is less stringent in the i.i.d.\ setting, but not as broadly applicable in general;
see our discussion of Schwartz's theorem in Section~\ref{section:previous-work-consistency}.

Finally, VdV establishes convergence in probability with respect to TV distance, 
whereas Theorem~\ref{theorem:BVM} enables us to obtain almost sure convergence in TV. 
Thus, while our assumptions in categories \ref{item:regularity} and \ref{item:separation} may be stronger than VdV's in the correctly specified i.i.d.\ model setting, 
our conditions apply to much more general settings and we also obtain the stronger conclusion of almost sure convergence.
Exploring whether DQM and UCT-like conditions can be extended to generalized posteriors is a potential area for future work.

\subsection{Concrete BvM conditions}

To relate our Theorem~\ref{theorem:altogether} to the existing literature,
we compare with \citet[Theorem 1.4.2]{ghosh2003bayesian} (G\&R, for short) 
as an example of a BvM theorem employing more concrete, classical conditions.

G\&R assume conditions \ref{item:root}, \ref{item:prob}, \ref{item:iid}, and \ref{item:correct} (the posterior arises from a correctly specified, i.i.d.\ probabilistic model with a consistent root), whereas Theorem~\ref{theorem:altogether} does not assume any of these conditions.
Theorem~\ref{theorem:altogether} and G\&R both assume conditions \ref{item:posdef}, \ref{item:prior}, and \ref{item:param}.

In category \ref{item:regularity} (regularity), 
G\&R assume the third derivatives of the log-likelihood terms are dominated, uniformly over a neighborhood of $\theta_0$,
by an integrable function of the data.
Similarly, Theorem~\ref{theorem:altogether} assumes the third derivatives of $f_n$ are uniformly bounded in a neighborhood of $\theta_0$.
In the i.i.d.\ setting, the G\&R domination condition is slightly weaker, but it is not clear how to extend it to arbitrary generalized posteriors. 
In category \ref{item:regularity}, G\&R also assume that the densities have common support,
that the first and second derivatives of the log-likelihood at $\theta_0$ are integrable, and
that differentiation under the integral sign is justified;
meanwhile, we only additionally require that $f_n$ converges pointwise and has continuous third derivatives.
In both G\&R and Theorem~\ref{theorem:altogether}, 
the role of these category \ref{item:regularity} conditions is to bound the error term in a second-order Taylor expansion 
as in condition \ref{item:BVM-rep} of Theorem~\ref{theorem:BVM};
this is formalized on page 37 of G\&R and in our Theorem~\ref{theorem:natural-sufficient}.
% \future{Chapter 6, Equation 8.1, Lehmann and Casella},
This establishes that near $\theta_0$, the log posterior density approaches a quadratic form.
Note that condition \ref{item:posdef} is necessary to ensure that, when exponentiated, the limiting quadratic form can be normalized to a probability density.

In category \ref{item:separation} (separation), G\&R assume that for any $\delta > 0$, there exists $\varepsilon > 0$ such that
$\Pr\big(\inf_{|\theta - \theta_0| > \delta} (f_n(\theta) - f_n(\theta_0)) \geq \varepsilon \big) \longrightarrow 1$ as $n\to\infty$,
where the probability $\Pr(\cdot)$ is with respect to the randomness in $f_n$ due to the data.
Meanwhile, in Theorem~\ref{theorem:altogether}, we assume condition~\ref{item:altogether-liminf},
which is more stringent but is helpful in obtaining almost sure convergence rather than just convergence in probability.
The role of the category \ref{item:separation} conditions is to ensure that negligible mass is placed outside a neighborhood of $\theta_0$, asymptotically.

Like VdV, G\&R prove convergence in probability with respect to TV distance, whereas 
Theorem~\ref{theorem:altogether} enables us to obtain a.s.\ convergence in TV.
Thus, overall, we obtain a stronger and more general conclusion while assuming fewer conditions.
In the special case of correctly specified i.i.d.\ models, our third derivative bounds and our separation condition are more stringent than 
the corresponding G\&R conditions, however, our conditions extend readily to generalized posteriors.

\subsection{BvM under misspecification}
\label{section:previous-work-misspecification}

Relatively recently, \citet{kleijn2012bernstein} extended the BvM theory to handle misspecification,
that is, to apply to cases in which the assumed model is incorrect; also see \citet{bochkina2014bernstein}.
\citet[Theorem 2.1]{kleijn2012bernstein} (K\&V, for short) establish a BvM result assuming
(i) the LAN property holds at rate $\delta_n$ and
(ii) the posterior concentrates (in probability) at $\theta_0$ at the same rate, $\delta_n$.
Additionally, like our Theorem~\ref{theorem:BVM}, K\&V assume conditions \ref{item:posdef}, \ref{item:prior}, and \ref{item:param} above.
(Note that our $H_0$ corresponds to their $V_{\theta^*}$.)
Further, K\&V assume condition~\ref{item:prob} (whereas Theorem~\ref{theorem:BVM} does not), 
however, this assumption might not be essential for their proof.

In category \ref{item:regularity}, condition (i) of K\&V is roughly similar to condition \ref{item:BVM-rep} of Theorem~\ref{theorem:BVM}, 
as discussed in Section~\ref{section:previous-abstract-bvm}.
In category \ref{item:separation}, condition (ii) of K\&V roughly corresponds to condition~\ref{item:BVM-liminf} of Theorem~\ref{theorem:BVM}, except that K\&V assume concentration at a particular rate (to match the LAN rate),
whereas our condition does not require a rate.
On the other hand, (ii) is less stringent in the sense that it only involves posterior probabilities and convergence in probability,
whereas we require strict separation of $\theta_0$ in terms of the values of $f_n(\theta)$.

To facilitate application of their theorem,
% Since the conditions of  are somewhat abstract,
\citet{kleijn2012bernstein} focus in particular on the i.i.d.\ setting,
providing concrete conditions under which (i) and (ii) hold,
assuming differentiability, a Lipschitz condition, a second-order Taylor expansion,
non-singular Fisher information, and existence of UCTs.
These conditions generalize VdV to the misspecified setting.

Regarding the strength of the conclusions,
like VdV, K\&V show convergence in probability (in TV),
whereas our results show a.s.\ convergence (in TV).
Thus, while our conditions are stronger in some respects, they are weaker in other respects,
and we obtain a stronger conclusion in a more general setting.

% Conditions for abstract version (Theorem 2.1):
% - LAN property
% - prior is cts and positive at theta_0
% - posterior concentrates at theta_0 at the same rate as in the LAN property

% Conditions for concrete version:
% - Lemma 2.1 => LAN
% - Lemma 2.2 puts it in terms of MLE as centering point (see top of page 9)

\subsection{BvM for generalized posteriors}

Much previous work on generalized posteriors relies on \citet[Proposition 5.14]{bernardo2000bayesian} to establish asymptotic normality;
for instance, see \citet{lazar2003bayesian}, \citet{greco2008robust}, \citet{pauli2011bayesian}, and \citet{ventura2016pseudo}.
Thus, to relate our results to this literature, 
we compare our Theorem~\ref{theorem:BVM} to \citet[Proposition 5.14]{bernardo2000bayesian} (B\&S, for short),
which is originally due to \citet{chen1985asymptotic}.

First, in terms of the strength of the conclusions, B\&S only show convergence in distribution (that is, convergence in the weak topology), 
whereas Theorem~\ref{theorem:BVM} shows convergence in TV distance, which is much stronger.
On the other hand, both theorems consider an arbitrary deterministic sequence of distributions
(playing the role of generalized posteriors indexed by $n$), and thus, both are conducive for establishing almost sure BvM results.

In category \ref{item:regularity} (regularity), B\&S assume that 
(i) $f_n$ is twice differentiable,
(ii) the smallest eigenvalue of $n f_n''(\theta_n)$ tends to $\infty$ as $n\to\infty$, and
(iii) for all $\varepsilon > 0$, there exists $\delta > 0$ such that for all $n$ sufficiently large,
for all $\theta \in B_\delta(\theta_n)$, the Hessian $f_n''(\theta)$ satisfies
$I - A(\varepsilon) \preceq f_n''(\theta) f_n''(\theta_n)^{-1} \preceq I + A(\varepsilon)$
where $I$ is the identity matrix and $A(\varepsilon)$ is a symmetric positive semidefinite matrix 
whose largest eigenvalue tends to $0$ as $\varepsilon\to 0$.
Here, $A \preceq B$ denotes that $B - A$ is positive definite.
Meanwhile, Theorem~\ref{theorem:BVM} assumes condition \ref{item:BVM-rep}, which does not require any differentiability,
but may require stronger control on the remainder term in Equation~\ref{equation:f_n} 
compared to (iii) above (which B\&S use to derive a quadratic approximation).
B\&S's eigenvalue condition in (ii) above is related to our assumption that $H_n \to H_0$ positive definite;
indeed, the latter implies the former if $r_n''(0) \to 0$ as $n\to\infty$, where $r_n$ is the remainder term in Equation~\ref{equation:f_n}.

In category \ref{item:separation} (separation), B\&S assume that 
(iv) $f_n$ has a strict local minimum $\theta_n$, and 
(v) for any $\delta > 0$, the posterior probability of $B_\delta(\theta_n)$ converges to $1$ as $n\to\infty$.
Meanwhile, Theorem~\ref{theorem:BVM} obtains posterior concentration as a conclusion rather than assuming it,
but Theorem~\ref{theorem:BVM} does assume condition~\ref{item:BVM-liminf}.
Both theorems assume condition \ref{item:param} (the dimension of parameter is fixed and finite),
however, neither theorem requires any of the additional conditions~\ref{item:root}, \ref{item:prob}, \ref{item:iid}, or \ref{item:correct}.
Theorem~\ref{theorem:BVM} assumes condition \ref{item:prior},
while B\&S does not since their assumptions are placed directly on the posterior density.

Overall, while Theorem~\ref{theorem:BVM} assumes more stringent conditions than B\&S in terms of remainder control and separation,
Theorem~\ref{theorem:BVM} does not require differentiability and 
% the payoff is 
yields a considerably stronger result in terms of TV distance rather than convergence in distribution.

\subsection{Posterior consistency}
\label{section:previous-work-consistency}

\citet{Doob_1949} used martingales to prove a very general result on posterior consistency for correctly specified i.i.d.\ models (also see \citealp{miller2018detailed}),
however, it seems difficult to extend this proof technique, especially to generalized posteriors.
Further, Doob's theorem is only guaranteed to hold on a set of prior probability $1$.
\citet{schwartz1965bayes} established a powerful theorem on posterior consistency based on the UCT assumption
along with an assumption that the prior puts positive mass in Kullback--Leibler neighborhoods of $\theta_0$;
see \citet[Theorem 4.4.1]{ghosh2003bayesian} for a clear exposition.
Schwartz's theorem forms the basis for many modern results on posterior consistency in nonparametric Bayesian models \citep{ghosal2010dirichlet}.
Schwartz's approach improves upon Doob's theorem by guaranteeing consistency at all points, and it is also conducive to generalization.
Other notable early results on posterior consistency are due to \citet{lecam1953some} and \citet{freedman1963asymptotic}.

% While Schwartz's theorem is quite general, verifying the conditions can sometimes be awkward,
% making it desirable to have easier-to-use conditions that are more stringent but are simpler to verify when they apply.
% For instance, \citet[Theorem 1.3.4]{ghosh2003bayesian} provide a posterior consistency result
% based on classical Wald-type conditions, employing a uniform strong law of large numbers.

To interpret our posterior consistency theorems in the context of these well-known results,
we first relate our Theorem~\ref{theorem:f-balls} to Schwartz's theorem as presented in \citet[Theorem 4.4.1]{ghosh2003bayesian}.
Schwartz considers the setting of an i.i.d.\ probability model with densities $p(x|\theta)$, 
and assumes the data $X_i$ are i.i.d.\ from some $P_0$.
In this setting, $f_n(\theta) = -\frac{1}{n}\sum_{i=1}^n \log p(X_i | \theta)$ and
$f_n(\theta) \to f(\theta) = \E_{P_0}(-\log p(X_i | \theta))$ almost surely
by the strong law of large numbers (SLLN), assuming $\E_{P_0}|\log p(X_i | \theta)|<\infty$.
Further, if the model is correctly specified, so that $P_0 = P_{\theta_0}$ for some $\theta_0$,
then $f(\theta) - f(\theta_0)$ equals the Kullback--Leibler divergence, $D(P_{\theta_0} \| P_\theta)$.

Thus, in the correctly specified i.i.d.\ setting,
the interpretation of Theorem~\ref{theorem:f-balls} is as follows:
condition~\ref{item:f-balls-cvg} essentially amounts to assuming a SLLN holds,
condition~\ref{item:f-balls-pi} is that the prior puts positive mass on Kullback--Leibler neighborhoods of $\theta_0$ (just like Schwartz assumes), and roughly speaking, condition~\ref{item:f-balls-min} is that outside neighborhoods of $\theta_0$, 
the log-likelihood does not get too close to the log-likelihood at $\theta_0$ when $n$ is sufficiently large.
% the log-likelihood/$n$ at $\theta$ is bounded away from the limit of log-likelihood/$n$ at $\theta_0$.
% $\E_{P_{\theta_0}}(\log p(X_i | \theta)) -\frac{1}{n}\sum_{i=1}^n \log p(X_i | \theta)$ is strictly upper bounded by $D(P_{\theta_0} \| P_\theta)$ for all $n$ sufficiently large.
The main difference between Theorem~\ref{theorem:f-balls} and Schwartz's theorem is that Theorem~\ref{theorem:f-balls} assumes condition~\ref{item:f-balls-min} 
instead of the UCT condition.
In the i.i.d.\ setting, it seems inevitable that the UCT assumption is less stringent than condition \ref{item:f-balls-min},
however, it is not obvious how to extend the UCT approach to our setting of arbitrary generalized posteriors 
where we do not even assume there exists a data distribution. 
Thus, although in the i.i.d.\ setting, our condition is more stringent, we obtain the benefit of much broader applicability in general.

% Next, we relate our Theorem~\ref{theorem:concentration} to \citet[Theorem 1.3.4]{ghosh2003bayesian}.

% \subsection{Misspecified models}

% Kleijn and VdV paper (LeCam--Schwartz conditions?, BvM under misspecification)
% Bochkina and Green (BvM under misspecification)?

% (Decide which of the following should I discuss in detail.)

% Semiparametric and nonparametric BvM (Shen, Kim, etc)

% > Panov and Spokoiny (Abstract conditions, BvM under misspecification, semi/non-parametric)

% > BvM for generalized posteriors (Lazar, Greco, etc)

% Hjort and Pollard (convexity conditions)

\subsection{Proof techniques}

The proofs in this paper involve several new or non-standard techniques.
First, in our main results in Sections~\ref{section:concentration} and \ref{section:BVM}, a major shift in technique is to study deterministic sequences of posterior distributions, rather than the usual approach of studying sequences of random posterior distributions obtained from random data. 
By showing that the conditions of the theorems hold with probability $1$, we obtain almost sure convergence.
This device enables one to separate the problem into a real analysis part (involving asymptotics of functions) and a probability part (involving the randomness in the data),
which is particularly useful when considering generalized posteriors and misspecified models.
This decoupling technique may facilitate future work on each part, separately.
\citet[Proposition 5.14]{bernardo2000bayesian} and \citet{chen1985asymptotic} also considered deterministic sequences of distributions,
but only for showing weak convergence rather than in total variation.

The proof of Theorem~\ref{theorem:f-balls} has the same core structure as the proof of Schwartz's theorem \citep[Theorem 4.4.1]{ghosh2003bayesian},
however, to handle generalized posteriors, we use condition \ref{item:f-balls-min} rather than the UCT assumption
in order to employ the deterministic sequence technique and enable application to generalized posteriors.

The proof technique for Theorem~\ref{theorem:BVM} differs from previous BvM proofs in some key respects --- specifically,
this formulation of the conditions facilitates a succinct proof using the generalized dominated convergence theorem.
Further, we use the deterministic sequence technique described above.
On the other hand, certain aspects of the proof are adapted from \citet[Theorem 1.4.2]{ghosh2003bayesian},
such as how we break up the integral into regions.

The proof technique for Theorem~\ref{theorem:altogether} involves innovations as well, encapsulated primarily in Theorem~\ref{theorem:regular-convergence}.
Specifically, regularity properties of $f$, $f'$, $f''$, $f_n$, $f_n'$, and $f_n''$ are obtained via Theorem~\ref{theorem:regular-convergence}, rather than having to be assumed. This simplifies several aspects of the proof, such as interchanging the order of derivatives and limits or expectations.
Another advantage of our proof technique is that there is no need for a common support condition, which is sometimes assumed \citep[Theorem 1.4.2]{ghosh2003bayesian}, because we deal with $f_n$ directly, rather than with a probability model.

\acks{I would like to thank David Dunson, Matthew Harrison, Natalia Bochkina, and Basilis Gidas
for helpful conversations.}

\vskip 0.2in
\begin{spacing}{0.9}
\bibliography{refs}
\end{spacing}

%%%%%%%%%%%%%%%%%%%%%%%%%%%%%%%%%%%%%%%%%%%%%%%%%%%%%%%%%%%%%%%%%%%%%%%%%%%%%%%%
% Supplementary material

\newpage
\setcounter{page}{1}
\setcounter{section}{0}
\setcounter{table}{0}
\setcounter{figure}{0}
\setcounter{equation}{0}
\renewcommand{\theHsection}{SIsection.\arabic{section}}
\renewcommand{\theHtable}{SItable.\arabic{table}}
\renewcommand{\theHfigure}{SIfigure.\arabic{figure}}
\renewcommand{\theHequation}{SIequation.\arabic{section}.\arabic{equation}}
\renewcommand{\thepage}{S\arabic{page}}  
\renewcommand{\thesection}{S\arabic{section}}   
\renewcommand{\thetable}{S\arabic{table}}   
\renewcommand{\thefigure}{S\arabic{figure}}
\renewcommand{\theequation}{S\arabic{section}.\arabic{equation}}

\bigskip
\bigskip
\bigskip
\begin{center}
{\Large\bf Supplementary material for ``Asymptotic normality, concentration, and coverage of generalized posteriors''}
\end{center}
\medskip

\section{Proofs of concentration results}
\label{section:concentration-proof}

\begin{proof}{\bf of Theorem \ref{theorem:f-balls}}~
Let $\epsilon>0$.  Define $\mu_n(E) = \int_E e^{- n f_n(\theta)}\Pi(d\theta)$ for $E\subseteq\Theta$.  
Recall that $\mu_n(\Theta) = z_n < \infty$ by assumption.
For any $\beta\in\R$,
\begin{align*}
1 - \Pi_n(A_\epsilon) = \Pi_n(A_\epsilon^c) = \frac{\mu_n(A_\epsilon^c)}{\mu_n(\Theta)}
= \frac{e^{n(f(\theta_0)+\beta)}\mu_n(A_\epsilon^c)}{e^{n(f(\theta_0)+\beta)}\mu_n(\Theta)},
\end{align*}
so prove the result, it suffices to show that for some $\beta$, the numerator is bounded and the denominator goes to $\infty$.

First, consider the numerator. Condition \ref{item:f-balls-min} implies that there exists $\beta > 0$ such that for all $n$ sufficiently large, $\inf_{\theta\in A_\epsilon^c} f_n(\theta)\geq f(\theta_0) + \beta$. Then for all $n$ sufficiently large, for all $\theta\in A_\epsilon^c$, we have $\exp\big(- n(f_n(\theta) - f(\theta_0) - \beta)\big) \leq 1$. Hence, for all $n$ sufficiently large, 
\begin{align*}
    e^{n(f(\theta_0)+\beta)} \mu_n(A_\epsilon^c)
    =  \int_{A_\epsilon^c} \exp\big(- n(f_n(\theta) - f(\theta_0) - \beta)\big)\Pi(d\theta) 
    \leq  \int_{A_\epsilon^c} \Pi(d\theta) \leq 1.
\end{align*}

Now, consider the denominator.
For any $\theta\in A_{\beta/2}$, $f_n(\theta) - f(\theta_0) - \beta\longrightarrow f(\theta) - f(\theta_0) - \beta < -\beta/2 < 0$, and thus,
$\exp\big(- n(f_n(\theta) - f(\theta_0) - \beta)\big) \longrightarrow \infty$
as $n\to\infty$. Therefore, by Fatou's lemma,
\begin{align*}
    \liminf_{n\to\infty} e^{n(f(\theta_0)+\beta)} \mu_n(A_{\beta/2})
    =  \liminf_{n\to\infty} \int_{A_{\beta/2}} \exp\big(- n(f_n(\theta) - f(\theta_0) - \beta)\big)\Pi(d\theta) = \infty
\end{align*}
since $\Pi(A_{\beta/2})>0$. Hence, $e^{n(f(\theta_0)+\beta)} \mu_n(\Theta) \to \infty$ since $\mu_n(\Theta) \geq \mu_n(A_{\beta/2})$.
\end{proof}

\begin{lemma}\label{lemma:convex-minimum}
Suppose $\Theta\subseteq\R^D$, $E\subseteq\Theta$ is convex and open in $\R^D$, and $\theta_0\in E$.
Let $f_n:\Theta\to\R$ be convex, and assume $f_n\to f$ pointwise on $E$ for some $f:E\to\R$.
\begin{enumerate}
    \item\label{item:convex-minimum-1} If $f'$ exists in a neighborhood of $\theta_0$, $f'(\theta_0) = 0$, and $f''(\theta_0)$ exists and is positive definite,
        then $f(\theta)>f(\theta_0)$ for all $\theta\in E\setminus\{\theta_0\}$.
    \item\label{item:convex-minimum-2} If $f(\theta)>f(\theta_0)$ for all $\theta\in E\setminus\{\theta_0\}$,
        then $\liminf_n\inf_{\theta\in\Theta\setminus B_\epsilon(\theta_0)} f_n(\theta) > f(\theta_0)$ for any $\epsilon>0$.
\end{enumerate}
\end{lemma}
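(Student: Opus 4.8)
The plan is to treat the two parts separately, using throughout that the pointwise limit $f$ of the convex functions $f_n$ is itself convex on $E$ (and hence, being a finite convex function on an open set, continuous on $E$). For part (1), I would first observe that since $f'$ exists near $\theta_0$ with $f'(\theta_0)=0$ and $f$ is convex on $E$, the gradient inequality for differentiable convex functions gives $f(\theta)\ge f(\theta_0)+f'(\theta_0)^\T(\theta-\theta_0)=f(\theta_0)$ for every $\theta\in E$, so $\theta_0$ is a global minimizer of $f$ on $E$. To upgrade this to strict inequality, I would invoke the second-order Taylor expansion $f(\theta_0+h)=f(\theta_0)+\tfrac12 h^\T f''(\theta_0)h+o(|h|^2)$, which is valid because $f'$ exists in a neighborhood of $\theta_0$ and $f''(\theta_0)$ exists; positive definiteness of $f''(\theta_0)$ then makes $\theta_0$ a strict local minimizer. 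Finally, if $f(\theta_1)=f(\theta_0)$ held for some $\theta_1\in E\setminus\{\theta_0\}$, convexity together with global minimality would force $f$ to be constant (equal to $f(\theta_0)$) on the whole segment $[\theta_0,\theta_1]\subseteq E$, contradicting strict local minimality. Hence $f(\theta)>f(\theta_0)$ for all $\theta\in E\setminus\{\theta_0\}$.

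For part (2), I would first reduce to small $\epsilon$: since $\Theta\setminus B_{\epsilon'}(\theta_0)\subseteq\Theta\setminus B_\epsilon(\theta_0)$ whenever $\epsilon'\ge\epsilon$, it suffices to prove the claim for every $\epsilon$ small enough that $\bar B_\epsilon(\theta_0)\subseteq E$ (possible since $E$ is open). Let $S_\epsilon=\{\theta:|\theta-\theta_0|=\epsilon\}\subseteq E$, a compact sphere. By continuity of $f$ and the hypothesis, $m:=\min_{\theta\in S_\epsilon}f(\theta)>f(\theta_0)$; set $\delta=m-f(\theta_0)>0$. Since the $f_n$ are finite convex functions converging pointwise on the open convex set $E$, they converge uniformly on compact subsets of $E$ \citep[Theorem 10.8]{rockafellar1970convex}, so for all $n$ sufficiently large we have $f(\theta_0)-\delta/4\le f_n(\theta_0)\le f(\theta_0)+\delta/4$ and $\inf_{\theta\in S_\epsilon}f_n(\theta)\ge f(\theta_0)+3\delta/4$.

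The crux is then a radial convexity argument that converts this sphere bound into a uniform bound on the far region. For any $\theta\in\Theta$ with $|\theta-\theta_0|\ge\epsilon$, set $t=\epsilon/|\theta-\theta_0|\in(0,1]$ and $\theta'=(1-t)\theta_0+t\theta\in S_\epsilon\subseteq\Theta$; convexity of $f_n$ gives $f_n(\theta')\le(1-t)f_n(\theta_0)+tf_n(\theta)$, which rearranges to $f_n(\theta)\ge f_n(\theta_0)+\tfrac1t\big(f_n(\theta')-f_n(\theta_0)\big)$. Using $1/t\ge1$ and the bounds above, $f_n(\theta')-f_n(\theta_0)\ge\delta/2>0$ for all large $n$ uniformly in $\theta$, whence $f_n(\theta)\ge f_n(\theta_0)+\delta/2\ge f(\theta_0)+\delta/4$. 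Taking the infimum over $\theta\in\Theta\setminus B_\epsilon(\theta_0)$ and then the liminf yields $\liminf_n\inf_{\theta\in\Theta\setminus B_\epsilon(\theta_0)}f_n(\theta)\ge f(\theta_0)+\delta/4>f(\theta_0)$.

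The main obstacle is precisely this extension step: the hypothesis controls $f$ only inside $E$, whereas the conclusion concerns $f_n$ on all of $\Theta$ outside a ball, which may be unbounded. The radial inequality is what turns a bound on the compact sphere $S_\epsilon$ into a uniform bound on the entire exterior region, and pairing it with uniform (not merely pointwise) convergence on $S_\epsilon$ is what produces the strict, $n$-uniform separation of $f_n$ from $f(\theta_0)$ required for the liminf.
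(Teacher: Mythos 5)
Your proof is correct. Part (2) is essentially the paper's own argument: reduce to $\epsilon$ small enough that the sphere $S_\epsilon$ sits inside $E$, use locally uniform convergence of convex functions \citep[Theorem 10.8]{rockafellar1970convex} plus continuity of $f$ to get an $n$-uniform gap $f_n(\theta')-f_n(\theta_0)\geq\delta/2$ on $S_\epsilon$, and then propagate this outward by the monotonicity of convex difference quotients along rays; the paper phrases the same step via $\alpha_n=\inf_{S_\epsilon}f_n-f_n(\theta_0)\to\alpha>0$ rather than explicit $\delta/4$ bookkeeping, but the mechanism is identical. Part (1) takes a genuinely different, though equally valid, route: you first get \emph{global} minimality of $\theta_0$ from the subgradient inequality (a critical point of a convex function is a global minimizer), then strict \emph{local} minimality from the second-order Peano expansion, and finally rule out a non-strict minimum at $\theta_1\neq\theta_0$ by noting $f$ would be constant on the segment $[\theta_0,\theta_1]$. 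The paper instead works one ray at a time, showing $g'(r)>0$ for small $r>0$ where $g(r)=f(\theta_0+ru)$, integrating to get strictness near $\theta_0$, and extending to all of $E$ via monotonicity of $s\mapsto (f(\theta_0+su)-f(\theta_0))/s$. Your version is arguably cleaner conceptually; the paper's version avoids invoking the multivariate Peano-form Taylor expansion (though note that both arguments implicitly read ``$f''(\theta_0)$ exists'' as differentiability of $f'$ at $\theta_0$, not merely existence of the second partials, so you are not assuming more than the paper does). Either way the conclusion follows.
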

\begin{proof}
(1) As the pointwise limit of convex functions on a convex open set, $f$ is convex on $E$ \citep[][10.8]{rockafellar1970convex}.
Let $R>0$ such that $f'(\theta)$ exists for all $\theta\in B_R(\theta_0)$.
Let $u\in\R^D$ with $|u|= 1$, and define $g(r) = f(\theta_0+ r u)$ for $r\in[0,R)$.
Then $g'(r) = f'(\theta_0+ r u)^\T u$ and $g''(0) = u^\T f''(\theta_0) u$. Since
$$ \frac{g'(r)}{r} =\frac{g'(r) - g'(0)}{r}\xrightarrow[r\to 0]{} g''(0) = u^\T f''(\theta_0) u > 0, $$
then $g'(r)>0$ for all $r>0$ sufficiently small, say, all $r\in(0,\epsilon]$. Then for any $s\in(0,\epsilon]$, we have
\begin{align}\label{equation:convex-minimum}
f(\theta_0+s u) - f(\theta_0) = g(s) - g(0) =\int_0^s g'(r) d r>0.
\end{align}
Meanwhile, for any $s>\epsilon$ such that $\theta_0+ s u\in E$, we have
$$\frac{1}{s}(f(\theta_0+s u) - f(\theta_0))\geq\frac{1}{\epsilon}(f(\theta_0+\epsilon u) - f(\theta_0))>0 $$
by the convexity of $f$ and by Equation \ref{equation:convex-minimum} with $s=\epsilon$.
Hence, for any $s>0$ such that $\theta_0+ s u\in E$, $f(\theta_0+s u)>f(\theta_0)$.  Since $u$ is arbitrary, the result follows.

(2) By \citet[10.8]{rockafellar1970convex}, $f_n\to f$ uniformly on any compact subset of $E$, and $f$ is convex on $E$.
Further, $f$ is continuous on $E$, as a convex function on a convex open set \citep[][Theorem 10.1]{rockafellar1970convex}.
Let $\epsilon>0$ small enough that the $\epsilon$-sphere $S_\epsilon = \{\theta\in\R^D : |\theta - \theta_0|=\epsilon\}$ is contained in $E$.
%Then $f_n\to f$ uniformly on $S_\epsilon$. 
Let $\alpha_n =\inf_{\theta\in S_\epsilon} f_n(\theta) - f_n(\theta_0)$ and $\alpha = \inf_{\theta\in S_\epsilon} f(\theta) - f(\theta_0)$.
By uniform convergence, $\alpha_n\to\alpha$. Note that $\alpha>0$, as the minimum of the continuous positive function $f(\theta) - f(\theta_0)$
on the compact set $S_\epsilon$. For any $\theta\in\Theta\setminus B_\epsilon(\theta_0)$,
letting $\xi_\theta$ be the point of $S_\epsilon$ on the line from $\theta$ to $\theta_0$, we have, by the convexity of $f_n$,
$$ f_n(\theta) - f_n(\theta_0)\geq |\theta - \theta_0|\frac{f_n(\xi_\theta) - f_n(\theta_0)}{|\xi_\theta - \theta_0|} \geq \alpha_n $$
whenever $\alpha_n\geq 0$, since $|\theta - \theta_0|\geq|\xi_\theta - \theta_0|$.
Since $\alpha_n\to\alpha>0$, then for all $n$ sufficiently large, for all $\theta\in\Theta\setminus B_\epsilon(\theta_0)$,
$f_n(\theta)\geq f_n(\theta_0) +\alpha_n\longrightarrow f(\theta_0) +\alpha.$
Therefore, $\liminf_n \inf_{\theta\in\Theta\setminus B_\epsilon(\theta_0)} f_n(\theta)\geq f(\theta_0) +\alpha>f(\theta_0)$. 
Note that this also implies the same inequality for any $\epsilon'>\epsilon$.
\end{proof}

\begin{proof}{\bf of Theorem \ref{theorem:concentration}}~\\
\indent \textbf{(Part \ref{item:concentration-metric})}
Defining $A_\epsilon$ as in Theorem \ref{theorem:f-balls}, it suffices to show that
\begin{enumerate}
    \item[(a)] for any $\epsilon>0$ there exists $\delta>0$ such that $A_\delta \subseteq N_\epsilon$, and 
    \item[(b)] for any $\delta>0$ there exists $\epsilon'>0$ such that $N_{\epsilon'} \subseteq A_\delta$,
\end{enumerate}
since for any $\epsilon>0$, choosing $\delta$ by (a), we have $\Pi_n(N_\epsilon) \geq \Pi_n(A_\delta)$;
% Note to self: It is necessary that the following be true for all delta>0 (not just this delta), in order to be able to apply the theorem.
meanwhile, for any $\delta>0$, choosing $\epsilon'$ by (b), we have $\Pi(A_\delta)\geq \Pi(N_{\epsilon'}) > 0$ and
$\liminf_n \inf_{\theta\in A_\delta^c} f_n(\theta) \geq \liminf_n \inf_{\theta\in N_{\epsilon'}^c} f_n(\theta) > f(\theta_0)$, 
and hence, by Theorem \ref{theorem:f-balls}, $\Pi_n(A_\delta)\to 1$.

(a) Let $\epsilon>0$. Pointwise convergence and the liminf condition imply $\inf_{\theta\in N_\epsilon^c} f(\theta)>f(\theta_0)$, hence, letting $\delta =\inf_{\theta\in N_\epsilon^c} f(\theta) - f(\theta_0)$, we have $\delta>0$ and $A_\delta\subseteq N_\epsilon$.
% note to self: need to use delta = (1/2)... when using <= in defn of A_eps, but delta is ok when using <.

(b) Let $\delta>0$. By the continuity of $f$ at $\theta_0$, choose $\epsilon'>0$ such that $|f(\theta) - f(\theta_0)| < \delta$ for all $\theta\in N_{\epsilon'}$. Then for any $\theta\in N_{\epsilon'}$, $f(\theta) < f(\theta_0) +\delta$, hence, $\theta\in A_\delta$.

\textbf{(Part \ref{item:concentration-equicontinuous})}
We show that 2 implies 1.
By Lemma \ref{lemma:equicontinuous-convergence}, $f_n\to f$ uniformly on $K$.
Consequently, $f|_K$ is continuous, as the uniform limit of continuous functions \citep[][7.12]{rudin1976principles}.
In particular, $f$ is continuous at $\theta_0$, since $\theta_0$ is an interior point of $K$.
For any $\epsilon>0$,
$$\liminf_n \inf_{\theta\in K\setminus N_\epsilon} f_n(\theta) =\inf_{\theta\in K\setminus N_\epsilon} f(\theta)>f(\theta_0), $$
the first step holding since $f_n\to f$ uniformly on $K$, and the second step since $f|_K$ is continuous, $K\setminus N_\epsilon$ is compact, and $f(\theta)>f(\theta_0)$ for all $\theta\in K\setminus\{\theta_0\}$. Therefore, since $N_\epsilon^c\subseteq (K\setminus N_\epsilon)\cup K^c$,
$$\liminf_n \inf_{\theta\in N_\epsilon^c} f_n(\theta) \geq 
\liminf_n \min\Big\{\inf_{\theta\in K\setminus N_\epsilon} f_n(\theta),\,\inf_{\theta\in K^c} f_n(\theta)\Big\} > f(\theta_0). $$

\textbf{(Part \ref{item:concentration-convex})}
We show that 3 implies 1.
Denote $B_\epsilon = \{\theta\in\R^D: |\theta - \theta_0| < \epsilon\}$.
Let $r>0$ small enough that $B_r\subseteq\Theta$. 
As the pointwise limit of convex functions, $f$ is convex,
% Note to self: This follows directly from the definition of convexity --- it is not necessary to cite Rockafellar 10.8.
and thus, it is continuous on $B_r$ \citep[][10.1]{rockafellar1970convex}.
By Lemma \ref{lemma:convex-minimum} with $E = B_r$, in either case (a) or (b), we have
$$ \liminf_n \inf_{\theta\in\Theta\setminus B_\epsilon} f_n(\theta) > f(\theta_0) $$
for any $\epsilon>0$. Since $\Theta\setminus B_\epsilon = \Theta\setminus N_\epsilon = N_\epsilon^c$, this proves the result.
\end{proof}

\iffalse % Note: This is just for my reference.
\begin{lemma} If $E\subseteq\R^D$ is a cube of the form $E=\prod_{i=1}^D [a_i,b_i]$, and $f:E\to\R$ is convex, then $f$ is bounded.
\end{lemma}
\begin{proof}
    Any point in $E$ can be written as a convex combination of the $2^D$ corners of the cube.
    Thus, $f$ is bounded by the maximum value obtained on the corners, which is finite. 
\end{proof}
\begin{lemma} If $E\subseteq\R^D$ is open and convex, and $f:E\to\R$ is convex, then $f$ is continuous.
\end{lemma}
\begin{proof}
    Suppose not. Then there is some point $x\in E$ and a sequence $x_n\to x$ such that $\lim_n f(x_n)\neq f(x)$. 
    %This implies that the ratios $|f(x_n) - f(x)|/|x_n-x|$ are unbounded.
    By convexity, this implies that $f$ is unbounded on any neighborhood of $x$. 
    But by the preceding lemma, this is a contradiction.
\end{proof}
\fi

\section{Proofs of asymptotic normality results}
\label{section:BVM-proof}

\begin{lemma}\label{lemma:BVM-implies-concentration}
    Let $\theta_n\in\R^D$ such that $\theta_n\to\theta_0$ for some $\theta_0\in\R^D$, let $\pi_n$ be a density with respect to Lebesgue 
    measure on $\R^D$, and let $q_n$ be the density of $\sqrt n (\theta - \theta_n)$ when $\theta\sim \pi_n$.
    If $\int |q_n(x) - q(x)|d x \longrightarrow 0$ for some probability density $q$, then $\pi_n$ concentrates at $\theta_0$.
\end{lemma}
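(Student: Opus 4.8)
The plan is to reduce the concentration conclusion (Equation~\ref{equation:concentration}) to the total-variation hypothesis by a change of variables, followed by an inner approximation of the relevant integration region by fixed balls. Writing $\Pi_n$ for the probability measure with density $\pi_n$, I would first perform the substitution $x = \sqrt n(\theta - \theta_n)$, so that $\theta = \theta_n + x/\sqrt n$ and, by definition, $q_n$ is the density of $x$ when $\theta\sim\pi_n$. Under this change of variables the quantity to control becomes
\[
\int_{B_\epsilon(\theta_0)} \pi_n(\theta)\,d\theta = \int_{R_n} q_n(x)\,dx, \qquad R_n := \big\{x\in\R^D : \big|x/\sqrt n + (\theta_n - \theta_0)\big| < \epsilon\big\},
\]
since $|\theta - \theta_0| < \epsilon$ is equivalent to $|x/\sqrt n + (\theta_n - \theta_0)| < \epsilon$.

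The key observation is that the region $R_n$ eventually swallows any fixed ball about the origin. Indeed, fix $M > 0$; for every $x$ with $|x| \le M$ we have $|x/\sqrt n + (\theta_n - \theta_0)| \le M/\sqrt n + |\theta_n - \theta_0|$, and the right-hand side tends to $0$ because $\theta_n\to\theta_0$. Hence there is $N$ (depending on $M$ and $\epsilon$) such that $B_M(0)\subseteq R_n$ for all $n\ge N$, giving $\int_{R_n} q_n \ge \int_{B_M(0)} q_n$ for all large $n$.

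I would then combine this inner bound with the total-variation hypothesis: $\int_{B_M(0)} q_n \ge \int_{B_M(0)} q - \int_{\R^D} |q_n - q|\,dx$, and the last term vanishes as $n\to\infty$. Taking $\liminf_n$ yields $\liminf_n \int_{R_n} q_n \ge \int_{B_M(0)} q$. Finally, letting $M\to\infty$ and using that $q$ is a probability density (so $\int_{B_M(0)} q \to 1$), I obtain $\liminf_n \int_{B_\epsilon(\theta_0)} \pi_n \ge 1$; since this integral is at most $1$, it converges to $1$, which is exactly concentration at $\theta_0$.

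The only genuine subtlety—and the reason I would not try to invoke a set-convergence argument like Lemma~\ref{lemma:convergent-set}—is that $\sqrt n(\theta_n - \theta_0)$ need not converge (the hypothesis $\theta_n\to\theta_0$ gives no rate), so the centering of $R_n$ may drift off to infinity and $R_n$ need not converge to any fixed set. I sidestep this entirely by using only the inner approximation by the fixed balls $B_M(0)$, which is all that is required precisely because the limit density $q$ is tight. This makes the argument a clean $\epsilon$–$M$ estimate rather than a set-convergence obstacle.
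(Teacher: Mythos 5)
Your proof is correct and follows essentially the same route as the paper: the paper observes that $Q_n(B_\delta(0)) = \Pi_n(B_{\delta/\sqrt n}(\theta_n)) \leq \Pi_n(B_\epsilon(\theta_0))$ for all $n$ large (your inclusion $B_M(0)\subseteq R_n$), uses total-variation convergence to pass to $Q(B_\delta(0))$, and then lets $\delta\to\infty$ exactly as you let $M\to\infty$. The only difference is cosmetic — measures versus integrals.
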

\begin{proof}
    Let $\Pi_n$, $Q_n$, and $Q$ denote the probability measures corresponding to $\pi_n$, $q_n$, and $q$, respectively.
    For any $\epsilon>0$ and $\delta>0$,
    $$ Q_n(B_\delta(0)) = \Pi_n(B_{\delta/\sqrt n}(\theta_n)) \leq \Pi_n(B_\epsilon(\theta_0)) $$
    for all $n$ sufficiently large. Hence, since $Q_n\to Q$ in total variation,
    $$ Q(B_\delta(0)) = \lim_n Q_n(B_\delta(0)) \leq \liminf_n \Pi_n(B_\epsilon(\theta_0)). $$
    Taking the limit as $\delta\to\infty$ shows that $\lim_n \Pi_n(B_\epsilon(\theta_0)) = 1$.
\end{proof}

\begin{proof}{\bf of Theorem \ref{theorem:BVM}}~
Note that $q_n(x) = \pi_n(\theta_n + x/\sqrt n) n^{- D/2}$. Define
\begin{align}
g_n(x) &= \exp\big(- n[f_n(\theta_n + x/\sqrt n) - f_n(\theta_n)]\big) \pi(\theta_n + x/\sqrt n) \notag\\
&= q_n(x) e^{n f_n(\theta_n)} n^{D/2} z_n, \label{equation:g_n}
\end{align}
recalling that $z_n<\infty$ by assumption, and define
$$ g_0(x) = \exp(-\tfrac{1}{2} x^\T H_0 x)\pi(\theta_0). $$
Let $\alpha\in(0,\lambda)$, where $\lambda$ is the smallest eigenvalue of $H_0$. Let $\epsilon>0$ small enough that $\epsilon<\alpha/(2 c_0)$,
$\epsilon<\epsilon_0$, and $\pi(\theta)\leq 2\pi(\theta_0)$ for all $\theta\in B_{2\epsilon}(\theta_0)$
(which we can do since $\pi$ is continuous at $\theta_0$).
Let $\delta =\liminf_n \inf_{\theta\in B_\epsilon(\theta_n)^c}\big(f_n(\theta) - f_n(\theta_n)\big)$,
noting that $\delta>0$ by assumption. Letting $A_n = H_n -\alpha I$ and $A_0 = H_0 -\alpha I$, define
\begin{align*}
    & h_n(x) =\branch{\exp(-\tfrac{1}{2}x^\T A_n x) 2\pi(\theta_0)}{\text{if } |x|<\epsilon\sqrt n,}
                     {e^{-n\delta/2} \pi(\theta_n + x/\sqrt n)}{\text{if } |x|\geq\epsilon\sqrt n,}\\
& h_0(x) = \exp(-\tfrac{1}{2} x^\T A_0 x) 2\pi(\theta_0).
\end{align*}
We will show that 
\begin{enumerate}[(a)]
\item $g_n\to g_0$ and $h_n\to h_0$ pointwise,
\item $\int h_n\to\int h_0$,
\item $g_n = |g_n| \leq h_n$ for all $n$ sufficiently large, and
\item $g_n,g_0,h_n,h_0\in L^1$ for all $n$ sufficiently large.
\end{enumerate}
By the generalized dominated convergence theorem, this will imply that $\int g_n\to\int g_0$ and $\int|g_n - g_0|\to 0$
\citep[e.g.,][exercises 2.20, 2.21]{folland2013real}.
Supposing this for the moment, we show how the result follows.
Since $\int q_n = 1$, by Equation \ref{equation:g_n} we have
\begin{align}\label{equation:z_n}
e^{n f_n(\theta_n)} n^{D/2} z_n = \sint g_n\longrightarrow \sint g_0 = \pi(\theta_0)\frac{(2\pi)^{D/2}}{|H_0|^{1/2}},
\end{align}
where $|H_0| = |\det H_0|$, and hence,
$$ z_n\sim \frac{e^{- n f_n(\theta_n)}\pi(\theta_0)}{|H_0|^{1/2}}\Big(\frac{2\pi}{n}\Big)^{D/2} $$
as $n\to\infty$; this proves Equation~\ref{equation:Laplace}. 
% Further, $$ \int\Big\vert e^{n f_n(\theta_n)} n^{D/2} z_n q_n(x) - \exp(-\tfrac{1}{2} x^\T H_0 x)\pi(\theta_0)\Big\vert d x = \sint |g_n - g_0|\longrightarrow 0. $$
For any $a_n\to a \in\R$, we have $\int |a_n g_n - a g_0|\to 0$ since
$$ \int |a_n g_n - a g_0| \leq \int |a_n g_n - a_n g_0| + \int |a_n g_0 - a g_0| = |a_n|\int |g_n - g_0| + |a_n - a|\int |g_0| \longrightarrow 0. $$
Thus, letting
$1/a_n = e^{n f_n(\theta_n)} n^{D/2} z_n$ and $1/a =\pi(\theta_0)\frac{(2\pi)^{D/2}}{|H_0|^{1/2}}$,
we have $a_n\to a$ by Equation~\ref{equation:z_n}, and thus,
$$ \int \Big\vert q_n(x) -\frac{|H_0|^{1/2}}{(2\pi)^{D/2}} \exp(-\tfrac{1}{2}x^\T H_0 x)\Big\vert d x \longrightarrow 0, $$
proving Equation~\ref{equation:normality}.
Equation~\ref{equation:concentration} (concentration at $\theta_0$) follows by Lemma~\ref{lemma:BVM-implies-concentration}, since $\theta_n\to\theta_0$.
It remains to show (a)--(d) above.

(a) Fix $x\in\R^D$. First, consider $h_n$. For all $n$ sufficiently large, $|x|<\epsilon\sqrt n$, and thus,
$$ h_n(x) = \exp(-\tfrac{1}{2}x^\T A_n x) 2\pi(\theta_0) \longrightarrow \exp(-\tfrac{1}{2}x^\T A_0 x) 2\pi(\theta_0) = h_0(x) $$
since $A_n \to A_0$. Now, for $g_n$, first note that $\pi(\theta_n + x/\sqrt n) \to \pi(\theta_0)$ since $\pi$ is continuous at $\theta_0$
and $\theta_n\to\theta_0$, $x/\sqrt n\to 0$. By the assumed representation of $f_n$ (Equation \ref{equation:f_n}),
$$ n (f_n(\theta_n + x/\sqrt n) - f_n(\theta_n)) =\tfrac{1}{2} x^\T H_n x + n r_n(x/\sqrt n) \longrightarrow \tfrac{1}{2} x^\T H_0 x $$
since $H_n\to H_0$ and for all $n$ sufficiently large (to ensure that $|x/\sqrt n|<\epsilon_0$ and the assumed bound on $r_n$ holds),
\begin{align}\label{equation:r_n}
|n r_n(x/\sqrt n)|\leq n c_0|x/\sqrt n|^3 = c_0|x|^3/\sqrt n \to 0
\end{align}
as $n\to\infty$.  Hence, $g_n(x)\to g_0(x)$. 

(b) By the definition of $h_n$, letting $B_n = B_{\epsilon\sqrt n}(0)$, 
$$\sint h_n =\int_{B_n}\exp(-\tfrac{1}{2} x^\T A_n x) 2\pi(\theta_0) d x
+ \int_{B_n^c} e^{- n\delta/2}\pi(\theta_n + x/\sqrt n) d x.$$
Since $A_n\to A_0$ and $A_0$ is positive definite, then for all $n$ sufficiently large, $A_n$ is also positive definite
and the first term equals
$$ 2\pi(\theta_0)\frac{(2\pi)^{D/2}}{|A_n|^{1/2}}\Pr(|A_n^{-1/2} Z|<\epsilon\sqrt n)
\longrightarrow 2\pi(\theta_0)\frac{(2\pi)^{D/2}}{|A_0|^{1/2}}=\sint h_0 $$
where $Z\sim\Normal(0, I)$.
% Note to self: The probability factor converges since $|A_n^{-1/2}Z| \leq \|A_n^{-1/2}\| |Z| \leq (\|A_0^{-1/2}\|+1) |Z|$ for all $n$
% sufficiently large, and thus, $\Pr(|A_n^{-1/2}Z| \leq \epsilon\sqrt n) \geq \Pr((\|A_0^{-1/2}\|+1)|Z|\leq \epsilon \sqrt n) \to 1$.
The second term goes to zero, since it is nonnegative and upper bounded by
$$ \int_{\R^D} e^{- n\delta/2}\pi(\theta_n + x/\sqrt n) d x = e^{- n\delta/2} n^{D/2}\longrightarrow 0, $$
using the fact that $\pi(\theta_n + x/\sqrt n) n^{-D/2}$ is the density of $X =\sqrt n(\theta-\theta_n)$ when $\theta\sim\pi$. 

(c) For all $n$ sufficiently large, $|\theta_n - \theta_0| < \epsilon$, the bound on $r_n$ applies, and
$\inf_{\theta\in B_\epsilon(\theta_n)^c} f_n(\theta) - f_n(\theta_n) > \delta/2$.  
Let $n$ large enough that these hold, and let $x\in\R^D$. If $|x|\geq \epsilon \sqrt n$,
then $f_n(\theta_n + x/\sqrt n) - f_n(\theta_n) > \delta/2$, and thus,
$$ g_n(x) \leq e^{-n \delta/2} \pi(\theta_n + x/\sqrt n) = h_n(x). $$
Meanwhile, if $|x| < \epsilon\sqrt n$, then $\pi(\theta_n + x/\sqrt n) \leq 2\pi(\theta_0)$ (by our choice of $\epsilon$, since
$|(\theta_n + x/\sqrt n) - \theta_0| \leq |\theta_n - \theta_0| + |x/\sqrt n| < 2\epsilon$), and 
$$ n (f_n(\theta_n + x/\sqrt n) - f_n(\theta_n)) =\tfrac{1}{2} x^\T H_n x + n r_n(x/\sqrt n)
\geq \tfrac{1}{2} x^\T H_n x - \tfrac{1}{2}\alpha x^\T x = \tfrac{1}{2} x^\T A_n x$$
since $|n r_n(x/\sqrt n)| \leq c_0 |x|^3/\sqrt n \leq c_0\epsilon |x|^2 \leq \tfrac{1}{2}\alpha |x|^2$,
by the fact that $|x/\sqrt n|<\epsilon<\epsilon_0$ and $\epsilon < \alpha/(2 c_0)$.
Therefore, 
$$ g_n(x) \leq \exp(-\tfrac{1}{2}x^\T A_n x) 2\pi(\theta_0) = h_n(x). $$

(d) Since $H_0$ and $A_0$ are positive definite, $\int g_0$ and $\int h_0$ are finite.
By (b) and (c), since $\int h_n\to\int h_0 < \infty$, we have $\int g_n \leq \int h_n < \infty$ for all $n$ sufficiently large.
Measurability of $g_n$ and $h_n$ follows from measurability of $f_n$ and $\pi$.
\end{proof}

\begin{proof}{\bf of Theorem \ref{theorem:altogether}}~
Without loss of generality, we may assume $E$ is convex, since otherwise 
we can choose $E'\subseteq E$ to be an open ball around $\theta_0$, and proceed with $E'$ in place of $E$ throughout the proof.
First, we show that under case \ref{item:altogether-convex}, the conditions for case \ref{item:altogether-liminf} hold. 
By Lemma \ref{lemma:convex-minimum}(\ref{item:convex-minimum-1}), $f(\theta) > f(\theta_0)$ for all $\theta\in E\setminus\{\theta_0\}$
since $f'$ exists on $E$ by Theorem~\ref{theorem:regular-convergence}.
Letting $K = \overline{B_\epsilon(\theta_0)}$ where $\epsilon>0$ is small enough that $K\subseteq E$, 
we have $\liminf_n\inf_{\theta\in\Theta\setminus K} f_n(\theta) > f(\theta_0)$
by Lemma \ref{lemma:convex-minimum}(\ref{item:convex-minimum-2}).
Thus, it suffices to prove the result under case \ref{item:altogether-liminf}.

Consider case \ref{item:altogether-liminf}.
Extend $\pi$, $f_n$, and $f$ to all of $\R^D$ by defining $\pi(\theta) = 0$ and $f(\theta) = f_n(\theta) = f(\theta_0) + 1$ for all
$\theta\in\R^D\setminus\Theta$.  Then all the conditions of Theorem \ref{theorem:altogether} (under case \ref{item:altogether-liminf})
still hold with $\R^D$ in place of $\Theta$.
We will show that:
\begin{enumerate}
    \item[(a)] $(f_n)$ is equicontinuous on $E$, and $f_n''(\theta_0)\to f''(\theta_0)$ as $n\to\infty$,
    \item[(b)] there exist $\theta_n\in E$ such that $\theta_n\to\theta_0$ and $f_n'(\theta_n) = 0$ for all $n$ sufficiently large, and
    \item[(c)] $f_n(\theta_n)\to f(\theta_0)$.
\end{enumerate}
Assuming (a)--(c) for the moment, we show how the result follows.
Letting $H_0 = f''(\theta_0)$, the conditions of Theorem \ref{theorem:natural-sufficient} are satisfied, and thus, 
condition \ref{item:BVM-rep} of Theorem \ref{theorem:BVM} is satisfied for all $n$ sufficiently large.
Condition \ref{item:BVM-liminf} of Theorem \ref{theorem:BVM} holds, since for all $\epsilon>0$,
\begin{align*}
\liminf_n\inf_{\theta\in B_\epsilon(\theta_n)^c} (f_n(\theta) - f_n(\theta_n))
&= \Big(\liminf_n\inf_{\theta\in B_\epsilon(\theta_n)^c} f_n(\theta)\Big) - f(\theta_0) \\
&\geq \Big(\liminf_n\inf_{\theta\in B_{\epsilon/2}(\theta_0)^c} f_n(\theta)\Big) - f(\theta_0) > 0
\end{align*}
the first step holding by (c), the second step since $\theta_n\to\theta_0$ and thus $B_{\epsilon/2}(\theta_0)\subseteq B_\epsilon(\theta_n)$ 
for all $n$ sufficiently large, and the third step by the implication $2\Rightarrow 1$ in Theorem \ref{theorem:concentration}.
Thus, the conditions of Theorem \ref{theorem:BVM} are satisfied (except possibly for some initial sequence of $n$'s, which can be ignored
since the conclusions are asymptotic in nature), establishing 
Equation~\ref{equation:concentration} (concentration at $\theta_0$),
Equation \ref{equation:Laplace} (the Laplace approximation),
and Equation \ref{equation:normality} (asymptotic normality).
%and case \ref{item:concentration-equicontinuous} of Theorem \ref{theorem:concentration} is satisfied, establishing concentration at $\theta_0$.
To complete the proof, we establish (a), (b), and (c).

(a) By Theorem \ref{theorem:regular-convergence}, $(f_n)$ is equi-Lipschitz (hence, equicontinuous) on $E$ and $f_n''\to f''$ uniformly on $E$.

(b) Let $\epsilon>0$ small enough that $S_\epsilon\subseteq K$ where $S_\epsilon = \{\theta\in\R^D : |\theta - \theta_0| = \epsilon\}$.
By Theorem \ref{theorem:regular-convergence}, $f$ is continuous on $E$ (since $f'$ exists on $E$).
Thus, $f$ attains its minimum on the compact set $S_\epsilon$, and since $f(\theta)>f(\theta_0)$ on $S_\epsilon$, we have
$\inf_{\theta\in S_\epsilon} f(\theta) > f(\theta_0)$. For each $n$, since $f_n$ is continuous on $E$, its minimum over the set
$\overline{B_\epsilon(\theta_0)}$ is attained at one or more points; define $\theta_n^\epsilon$ to be such a minimizer.
Since $f_n\to f$ uniformly on $E$ (by Theorem \ref{theorem:regular-convergence}), then for all $n$ sufficiently large,
any such minimizer cannot be in $S_\epsilon$ (since $\inf_{\theta\in S_\epsilon} f(\theta) > f(\theta_0)$).
Hence, for all sufficiently small $\epsilon>0$, for all $n$ sufficiently large, we have $\theta_n^\epsilon\in B_\epsilon(\theta_0)$ and 
(by Lemma \ref{lemma:smooth-min}) $f_n'(\theta_n^\epsilon) = 0$.

Thus, we can choose a sequence $\epsilon_n>0$ such that (a) $\epsilon_n \to 0$ and (b) for all $n$ sufficiently large,
$\theta_n^{\epsilon_n} \in B_{\epsilon_n}(\theta_0)$ and $f_n'(\theta_n^{\epsilon_n}) = 0$.
Therefore, letting $\theta_n = \theta_n^{\epsilon_n}$, we have $\theta_n\to\theta_0$ and $f_n'(\theta_n) = 0$ for all $n$ sufficiently large.

(c) We have $|f_n(\theta_n) - f(\theta_0)| \leq |f_n(\theta_n) - f_n(\theta_0)| + |f_n(\theta_0) - f(\theta_0)| \to 0$, the first term going to zero
since $\theta_n\to\theta_0$ and $(f_n)$ is equi-Lipschitz on $E$, and the second term since $f_n\to f$ pointwise.
\end{proof}

For tensors $S,T\in\R^{D^3}$, define the inner product $\langle S,T\rangle = \sum_{i,j,k} S_{i j k} T_{i j k}$ 
(noting that this is just the dot product of the vectorized versions of $S$ and $T$).
For $x\in\R^D$, define $x^{\otimes 3} = x\otimes x \otimes x = \big(x_i x_j x_k\big)_{i,j,k=1}^D \in \R^{D^3}$, and note that $\|x^{\otimes 3}\|=|x|^3$.

\vspace{1em}
\begin{proof}{\bf of Theorem \ref{theorem:natural-sufficient}}~
By Lemma \ref{lemma:UBD}, $(f_n'')$ is equi-Lipschitz. Thus,
$$ \|f_n''(\theta_n) - H_0\| \leq 
   \|f_n''(\theta_n) - f_n''(\theta_0)\| + \|f_n''(\theta_0) - H_0\| \leq C|\theta_n - \theta_0| + \|f_n''(\theta_0) - H_0\|\longrightarrow 0, $$
and hence, $H_n\to H_0$. 
Let $C_0 =\sup_n\sup_{\theta\in E}\|f_n'''(\theta)\|$. Let $n$ large enough that $f_n'(\theta_n) = 0$. For $\theta\in E$, by Taylor's theorem,
$$ f_n(\theta) = f_n(\theta_n) +\tfrac{1}{2}(\theta -\theta_n)^\T f_n''(\theta_n)(\theta -\theta_n) + r_n(\theta -\theta_n) $$
where $r_n(\theta -\theta_n) =\tfrac{1}{6}\langle f_n'''(t_n(\theta)),(\theta -\theta_n)^{\otimes 3}\rangle$, and 
$t_n(\theta)$ is a point on the line between $\theta$ and $\theta_n$. Then by Cauchy--Schwarz,
\begin{align}\label{equation:rntt}
|r_n(\theta -\theta_n)|\leq\tfrac{1}{6}\|f_n'''(t_n(\theta))\|\|(\theta -\theta_n)^{\otimes 3}\| 
\leq \tfrac{1}{6}C_0\|(\theta -\theta_n)^{\otimes 3}\| = \tfrac{1}{6}C_0|\theta -\theta_n|^3.
\end{align}
Choose $\epsilon_0>0$ small enough that $B_{2\epsilon_0}(\theta_0)\subseteq E$, and choose $c_0 = C_0/6$. 
For all $n$ sufficiently large, $|\theta_n - \theta_0|\leq \epsilon_0$ 
 and hence for all $x\in B_{\epsilon_0}(0)$, we have $\theta_n + x\in B_{2\epsilon_0}(\theta_0)\subseteq E$;
thus, setting $\theta = \theta_n + x$ in Equation \ref{equation:rntt} yields $|r_n(x)|\leq c_0|x|^3$.
\end{proof}

\section{Proof of regular convergence theorem}
\label{section:regular-convergence-proof}

\iffalse
\begin{lemma}\label{lemma:equilip-equicts} % Omit this from the journal version.
Suppose $E,F$ are metric spaces, and $h_n:E\to F$ for $n\in\N$. If $(h_n)$ is equi-Lipschitz, then it is equicontinuous.
\end{lemma}
\begin{proof}
Let $C>0$ such that for any $n\in\N$, $x,y\in E$, we have $d(h_n(x),h_n(y))\leq C d(x,y)$. 
Let $\epsilon>0$. Choose $\delta = \epsilon/C$.
Then for any $n\in\N$, $x,y\in E$, if $d(x,y)<\delta$ then $d(h_n(x),h_n(y)) < C\delta = \epsilon$.
\end{proof}
\begin{lemma}\label{lemma:equi-Lipschitz-bound} % Omit this from the journal version.
Let $E,F$ be subsets of normed spaces, and suppose $E$ is bounded. Let $h_n:E\to F$ for $n\in\N$. 
If $(h_n)$ is equi-Lipschitz and $\{h_n(x_0):n\in\N\}$ is bounded for some $x_0\in E$, then $(h_n)$ is uniformly bounded.
\end{lemma}
\begin{proof}
Let $C>0$ such that $\|h_n(x) - h_n(y)\|\leq C\|x - y\|$ for all $n\in\N$, $x,y\in E$. 
Let $D = \sup_{x,y\in E}\|x - y\|$ and $M =\sup_n \|h_n(x_0)\|$. Then for any $n\in\N$, $x\in E$, 
$$\|h_n(x)\|- M\leq\|h_n(x)\|-\|h_n(x_0)\|\leq\|h_n(x) - h_n(x_0)\|\leq C\|x - x_0\|\leq C D, $$
by the triangle inequality, and hence, $\|h_n(x)\|\leq C D + M<\infty$.
\end{proof}
\fi

\begin{lemma}\label{lemma:derivative-bound}
Let $E\subseteq\R^D$ be open.  If $f_n:E\to\R$ has continuous second derivatives, $(f_n)$ is pointwise bounded,
and $(f_n'')$ is uniformly bounded, then $(f_n')$ is pointwise bounded.
\end{lemma}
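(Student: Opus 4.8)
The plan is to control each gradient $f_n'(x_0)$ at a fixed point $x_0\in E$ by a one-dimensional second-order Taylor expansion along each coordinate direction, using the uniform Hessian bound to absorb the remainder and the pointwise boundedness of $(f_n)$ at the endpoints. First I would let $M<\infty$ be a uniform bound for $(f_n'')$, so that $\|f_n''(y)\|\le M$ for all $y\in E$ and all $n$, and fix $x_0\in E$. Since $E$ is open, I would choose $r>0$ with $\overline{B_r(x_0)}\subseteq E$, which guarantees that for each standard basis vector $e_i$ the segment $\{x_0+t e_i : t\in[0,r]\}$ lies in $E$.

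Next, for each $i\in\{1,\ldots,D\}$ I would set $\phi_n(t)=f_n(x_0+t e_i)$, which is twice continuously differentiable with $\phi_n'(0)=f_n'(x_0)^\T e_i$ and $\phi_n''(t)=e_i^\T f_n''(x_0+t e_i)e_i$. Applying Taylor's theorem with Lagrange remainder gives, for some $\tau\in(0,r)$,
\begin{align*}
f_n(x_0+r e_i) = f_n(x_0) + r\, f_n'(x_0)^\T e_i + \tfrac{1}{2}r^2\, e_i^\T f_n''(x_0+\tau e_i)e_i.
\end{align*}
Solving for the $i$-th partial derivative and bounding $|e_i^\T f_n''(x_0+\tau e_i)e_i|\le\|f_n''(x_0+\tau e_i)\|\le M$ yields
\begin{align*}
\big|f_n'(x_0)^\T e_i\big| \le \frac{1}{r}\big(|f_n(x_0+r e_i)| + |f_n(x_0)|\big) + \tfrac{1}{2}r M.
\end{align*}

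Finally, I would observe that the right-hand side is bounded uniformly in $n$: the term $\tfrac{1}{2}r M$ is a fixed constant, while $|f_n(x_0)|$ and $|f_n(x_0+r e_i)|$ are each bounded over $n$ because $(f_n)$ is pointwise bounded and $x_0, x_0+r e_i\in E$. Hence $\sup_n|f_n'(x_0)^\T e_i|<\infty$ for each $i$, and since $|f_n'(x_0)|^2=\sum_{i=1}^D(f_n'(x_0)^\T e_i)^2$, we get $\sup_n|f_n'(x_0)|<\infty$; as $x_0$ was arbitrary, $(f_n')$ is pointwise bounded. There is no substantive obstacle here; the only point worth flagging is that the estimate invokes pointwise boundedness of $(f_n)$ at the auxiliary points $x_0+r e_i$ rather than only at $x_0$, which is legitimate precisely because the hypothesis supplies a bound at \emph{every} point of $E$.
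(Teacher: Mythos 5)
Your proof is correct and follows essentially the same route as the paper's: a second-order Taylor expansion from $x_0$ along a direction staying inside $E$, bounding the quadratic remainder by the uniform Hessian bound and the other terms by pointwise boundedness of $(f_n)$ at the two endpoints. The only cosmetic difference is that the paper expands along an arbitrary unit vector $u$ before specializing to the standard basis, whereas you work with the coordinate directions from the start; the substance is identical.
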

\begin{proof}
Let $C = \sup\{\|f_n''(x)\| : n\in\N, x\in E\} < \infty$. Fix $x\in E$, and let $\epsilon>0$ small enough that $B_{2\epsilon}(x)\subseteq E$.
By Taylor's theorem, for any $u\in\R^D$ with $|u|= 1$,
$$ f_n(x +\epsilon u) = f_n(x) +\epsilon f_n'(x)^\T u +\tfrac{1}{2}\epsilon^2 u^\T f_n''(z) u $$
for some $z$ on the line between $x$ and $x +\epsilon u$, and therefore,
$$|f_n'(x)^\T u|\leq (1/\epsilon)|f_n(x +\epsilon u) - f_n(x)| + \tfrac{1}{2}\epsilon C $$
since $|u^\T f_n''(z) u|\leq\|f_n''(z)\| |u|^2\leq C$. Thus, $\{f_n'(x)^\T u : n\in\N\}$ is bounded, for any $u$ with $|u|= 1$.
Applying this to each element of the standard basis, we see $f_n'(x)$ is bounded.
%Choosing $u_1 = (1,0,0,\ldots)^\T$, $u_2 = (0,1,0,\ldots)^\T$, etc., we have that $f_n'(x)$ is bounded.
\end{proof}

\begin{lemma}\label{lemma:Hessian-bound}
Let $E\subseteq\R^D$ be open.  If $f_n:E\to\R$ has continuous third derivatives, $(f_n)$ is pointwise bounded,
and $(f_n''')$ is uniformly bounded, then $(f_n'')$ is pointwise bounded.
\end{lemma}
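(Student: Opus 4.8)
The plan is to mimic the proof of Lemma~\ref{lemma:derivative-bound}, but to replace the one-sided Taylor expansion by a \emph{symmetric} (central) second difference of $f_n$, so that the first-order term cancels. This cancellation is precisely what lets us bound the Hessian using only the pointwise bounds on $(f_n)$ and the uniform bound on $(f_n''')$, with no need to control $(f_n')$ (which is why this lemma does not reduce directly to the previous one).

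First I would fix $x\in E$, set $C = \sup\{\|f_n'''(y)\| : n\in\N,\, y\in E\} < \infty$, and use openness of $E$ to choose $\epsilon>0$ small enough that $x \pm v \in E$, together with the connecting segments, for each direction $v = \epsilon e_i$ or $v = \epsilon(e_i + e_j)$, where $e_1,\ldots,e_D$ is the standard basis of $\R^D$. For any such $v$, applying Taylor's theorem to $t\mapsto f_n(x + tv)$ at $t = 1$ and $t = -1$ and adding the two expansions eliminates the gradient term, yielding
\begin{align*}
f_n(x+v) + f_n(x-v) - 2 f_n(x) = v^\T f_n''(x)\, v + \tfrac{1}{6}\big(\langle f_n'''(z_1), v^{\otimes 3}\rangle - \langle f_n'''(z_2), v^{\otimes 3}\rangle\big)
\end{align*}
for some $z_1, z_2$ on the respective segments. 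By Cauchy--Schwarz and $\|v^{\otimes 3}\| = |v|^3$, the remainder is bounded in absolute value by $\tfrac{1}{3} C |v|^3$, so that $|v^\T f_n''(x)\, v| \leq |f_n(x+v)| + |f_n(x-v)| + 2|f_n(x)| + \tfrac{1}{3} C |v|^3$.

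Next I would extract the individual Hessian entries by specializing $v$. Taking $v = \epsilon e_i$ gives $v^\T f_n''(x)\, v = \epsilon^2 f_n''(x)_{ii}$, whose bound involves only $f_n$ evaluated at the fixed points $x$ and $x \pm \epsilon e_i$; since $(f_n)$ is pointwise bounded, $\{f_n''(x)_{ii} : n\in\N\}$ is bounded for each $i$. Taking $v = \epsilon(e_i + e_j)$ and using symmetry of $f_n''(x)$ expresses the off-diagonal entry $f_n''(x)_{ij}$ in terms of the already-bounded diagonal entries and the displayed bound, so each $\{f_n''(x)_{ij} : n\in\N\}$ is bounded as well. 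As all $D^2$ entries are bounded uniformly in $n$, so is $\|f_n''(x)\|$, which is the claim.

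I do not expect a genuine obstacle here. The only steps needing care are the use of the central difference to cancel the first-order term (so that $(f_n')$ need not be controlled, in contrast to Lemma~\ref{lemma:derivative-bound}) and the observation that evaluating the quadratic form $v\mapsto v^\T f_n''(x)\, v$ on the finitely many directions $e_i$ and $e_i + e_j$ pins down the full Hessian.
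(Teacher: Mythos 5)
Your proof is correct and is essentially the paper's own argument: the key step in both is the symmetric second difference $f_n(x+v)+f_n(x-v)-2f_n(x)$, which cancels the gradient term and bounds $v^\T f_n''(x)v$ using only the pointwise bound on $(f_n)$ and the uniform bound on $(f_n''')$. The only cosmetic difference is the final step of recovering the full Hessian from the quadratic form: you use the explicit directions $e_i$ and $e_i+e_j$ with polarization, whereas the paper evaluates $u^\T f_n''(x)u$ on an arbitrary set of unit vectors $u_1,\ldots,u_k$ for which $\{u_iu_i^\T\}$ is a basis of the symmetric matrices and appeals to a small Gram-matrix lemma (Lemma~\ref{lemma:basis-bound}); your choice is simply a concrete instance of such a basis.
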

\begin{proof}
Let $C = \sup_n \sup_{x\in E} \|f_n'''(x)\| <\infty$. Fix $x\in E$, and let $\epsilon>0$ small enough
that $\overline{B_\epsilon(x)}\subseteq E$. By Taylor's theorem, for any $u\in\R^D$ with $|u|= 1$,
$$ f_n(x +\epsilon u) = f_n(x) +\epsilon f_n'(x)^\T u +\tfrac{1}{2}\epsilon^2 u^\T f_n''(x) u 
+ \tfrac{1}{6}\epsilon^3 \langle f_n'''(z^+), u^{\otimes 3}\rangle$$
for some $z^+$ on the line between $x$ and $x +\epsilon u$. Likewise,
$$ f_n(x -\epsilon u) = f_n(x) -\epsilon f_n'(x)^\T u +\tfrac{1}{2}\epsilon^2 u^\T f_n''(x) u 
- \tfrac{1}{6}\epsilon^3 \langle f_n'''(z^-), u^{\otimes 3}\rangle$$
for some $z^-$ on the line between $x$ and $x -\epsilon u$. Adding these two equations gives
$$ f_n(x +\epsilon u) + f_n(x -\epsilon u) = 2 f_n(x) +\epsilon^2 u^\T f_n''(x) u 
+\tfrac{1}{6}\epsilon^3\langle f_n'''(z^+) - f_n'''(z^-),\, u^{\otimes 3}\rangle. $$
For any tensor $T \in \R^{D^3}$, $|\langle T, u^{\otimes 3}\rangle| \leq \|T\|\|u^{\otimes 3}\| = \|T\|$, by the Cauchy--Schwarz inequality. Therefore,
$$|u^\T f_n''(x) u|\leq (1/\epsilon^2)|f_n(x +\epsilon u) + f_n(x -\epsilon u) - 2 f_n(x)| + \tfrac{1}{3}\epsilon C. $$
Thus, since $(f_n)$ is pointwise bounded, this implies that $\{u^\T f_n''(x) u : n\in\N\}$ is bounded, for any $u$ with $|u|= 1$.
Let $u_1,\ldots,u_k\in\R^D$, with $|u_i|=1$, such that $u_1 u_1^\T,\ldots,u_k u_k^\T$ is a basis for the vector space
$V\subseteq\R^{D\times D}$ of symmetric matrices.
(This is possible since $\lspan\{u u^\T : |u|=1\} = V$ by the spectral decomposition theorem.)
% Note to self: An explicit basis can be obtained by taking for each pair i<=j, one vector u that is zero except at u_i and u_j.
With $\langle A,B \rangle := \sum_{i,j} A_{ij} B_{ij}$, $V$ is an inner product space.
Since $\{u_i^\T f_n''(x) u_i : n\in\N\}$ is bounded for each $i$, and $u_i^\T f_n''(x) u_i = \langle u_i u_i^\T, f_n''(x)\rangle$, 
then by Lemma \ref{lemma:basis-bound}, $\{f_n''(x):n\in\N\}$ is bounded.
Since $x$ is arbitrary, $(f_n'')$ is pointwise bounded.
\end{proof}

\begin{lemma}\label{lemma:basis-bound}
Suppose $V$ is a finite-dimensional inner product space over $\R$, and let $e_1,\ldots,e_k\in V$ be a basis.
If $S\subseteq V$ such that $\{\langle e_i,x \rangle : x\in S\}$ is bounded for each $i = 1,\ldots,k$, then $S$ is bounded.
\end{lemma}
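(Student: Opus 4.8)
The plan is to read the hypothesis as boundedness of the image of $S$ under the coordinate map determined by the basis, and then to exploit the fact that any linear isomorphism between finite-dimensional spaces distorts norms by only a bounded factor. Concretely, I would define the linear map $L:V\to\R^k$ by $L(x) = (\langle e_1,x\rangle,\ldots,\langle e_k,x\rangle)$, equip $\R^k$ with the Euclidean norm $|\cdot|$, and equip $V$ with the norm $\|x\| = \langle x,x\rangle^{1/2}$ induced by its inner product. With this notation, the assumption that $\{\langle e_i,x\rangle : x\in S\}$ is bounded for each $i$ says exactly that each coordinate of $L(S)$ is bounded, hence $L(S)$ is a bounded subset of $\R^k$, say $|L(x)|\leq M$ for all $x\in S$.

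The key step is to verify that $L$ is a linear isomorphism. Injectivity follows from nondegeneracy of the inner product: if $L(x) = 0$ then $\langle e_i,x\rangle = 0$ for every $i$, so, since $e_1,\ldots,e_k$ span $V$, we get $\langle v,x\rangle = 0$ for all $v\in V$; taking $v = x$ forces $\langle x,x\rangle = 0$, whence $x = 0$. Because $e_1,\ldots,e_k$ is a basis we have $\dim V = k$, so an injective linear map $V\to\R^k$ between spaces of equal finite dimension is automatically a bijection, i.e.\ an isomorphism.

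Finally, I would invoke that on finite-dimensional spaces every linear map is bounded, so the inverse $L^{-1}:\R^k\to V$ is a bounded operator and there exists a constant $C$ with $\|x\| = \|L^{-1}(L(x))\|\leq C\,|L(x)|$ for all $x\in V$. Combining this with $|L(x)|\leq M$ on $S$ gives $\|x\|\leq CM$ for every $x\in S$, so $S$ is bounded, as desired.

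I do not expect any genuine obstacle here; the lemma is a routine finite-dimensional linear algebra fact. The only point that deserves care is the passage from coordinatewise boundedness to boundedness in norm, and this is precisely what the continuity of $L^{-1}$ supplies. If one preferred an explicit constant, the same conclusion follows from invertibility of the Gram matrix $G = (\langle e_i,e_j\rangle)_{i,j}$ (which is positive definite since $e_1,\ldots,e_k$ is a basis): writing $x=\sum_j c_j e_j$ gives $L(x) = Gc$, so $c = G^{-1}L(x)$ is bounded whenever $L(x)$ is, and then $\|x\|\leq\sum_j|c_j|\,\|e_j\|$ is bounded as well.
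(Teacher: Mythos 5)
Your proof is correct. Your primary argument takes a softer route than the paper's: you package the hypothesis as boundedness of $L(S)$ for the coordinate map $L(x)=(\langle e_1,x\rangle,\ldots,\langle e_k,x\rangle)$, check that $L$ is a linear isomorphism via nondegeneracy of the inner product and rank--nullity, and then invoke the general fact that every linear map between finite-dimensional normed spaces is bounded, so $\|x\|\leq C|L(x)|$. The paper instead works entirely through the Gram matrix $G$: it observes $b(x)=Ga(x)$ for the coordinate vector $a(x)$ and the inner-product vector $b(x)$, and derives the explicit quantitative bound $\|x\|^2=a(x)^\T G a(x)=b(x)^\T G^{-1}b(x)\leq\|G^{-1}\|\,|b(x)|^2$. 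Your abstract version is shorter and hides the constant inside the operator norm of $L^{-1}$; the paper's version produces the constant explicitly and avoids appealing to equivalence of norms in finite dimensions. Your closing alternative via $c=G^{-1}L(x)$ and the triangle inequality $\|x\|\leq\sum_j|c_j|\,\|e_j\|$ is essentially the paper's argument, differing only in using the triangle inequality in place of the quadratic-form identity for $\|x\|^2$. Both routes are sound; nothing is missing.
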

\begin{proof}
Let $G$ be the Gram matrix of $(e_i)$, i.e., $G_{ij} = \langle e_i,e_j \rangle$. Note that $G$ is positive definite, since for any $a\in\R^k$,
\begin{align}\label{equation:aGa}
a^\T G a =\sum_{i,j} a_i a_j G_{ij} =\sum_{i,j} \langle a_i e_i, a_j e_j \rangle 
= \textstyle \big\langle\sum_i a_i e_i,\sum_j a_j e_j \big\rangle =\|\sum_i a_i e_i\|^2\geq 0,
\end{align}
with equality if and only if $\sum_i a_i e_i = 0$, that is, if and only if $a = 0$ (since $(e_i)$ is a linearly independent set).
For $x\in V$, define $a(x)\in\R^k$ by the property that $\sum_i a_i(x) e_i = x$ (noting that $a(x)$ always exists and is unique, since $(e_i)$ is a basis). Define $b(x)\in\R^k$ such that $b_i(x) = \langle e_i,x \rangle$. Then for any $x\in V$,
$$ b_i(x) = \big\langle e_i, {\textstyle\sum_j} a_j(x) e_j \big\rangle = \sum_j a_j(x)\langle e_i,e_j\rangle = \sum_j a_j(x)G_{ij}, $$
and thus, $b(x) = G a(x)$. Hence, $a(x) = G^{-1} b(x)$, so by Equation \ref{equation:aGa},
$$ \|x\|^2 = a(x)^\T G a(x) = b(x)^\T G^{-1} b(x) \leq \|G^{-1}\| |b(x)|^2. $$
By assumption, $\{|b(x)| : x \in S\}$ is bounded, hence, $\{\|x\| : x \in S\}$ is bounded.
\end{proof}

\begin{lemma}\label{lemma:derivative-sandwich}
Let $E\subseteq \R^D$ be open, convex, and bounded.  Let $f_n:E\to\R$ have continuous second derivatives.
If $f_n\to f$ pointwise for some $f:E\to\R$, and $(f_n'')$ is uniformly bounded, then $f'$ exists and is continuous, and $f_n'\to f'$ uniformly.
\end{lemma}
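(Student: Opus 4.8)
The plan is to combine a uniform bound on $(f_n')$ with an Arzel\`a--Ascoli compactness argument, and then identify the limit of any uniformly convergent subsequence as the gradient of $f$ via the fundamental theorem of calculus along line segments.

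First I would establish that $(f_n')$ is equi-Lipschitz and uniformly bounded on $E$. Since $f_n\to f$ pointwise, $(f_n)$ is pointwise bounded, so Lemma~\ref{lemma:derivative-bound} gives that $(f_n')$ is pointwise bounded. Writing $C=\sup_{n,x}\|f_n''(x)\|<\infty$, convexity of $E$ lets me integrate $f_n''$ along the segment $[x,y]\subseteq E$ to obtain $|f_n'(x)-f_n'(y)|\le C|x-y|$ for all $x,y\in E$ and all $n$ (using that the Frobenius norm dominates the operator norm), so $(f_n')$ is equi-Lipschitz. Combining this with pointwise boundedness at a single base point $x_0\in E$ and the boundedness of $E$ yields $|f_n'(x)|\le |f_n'(x_0)|+C\,\mathrm{diam}(E)$, whence $(f_n')$ is uniformly bounded.

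Next I would extract subsequential uniform limits. Each $f_n'$ is uniformly continuous on the totally bounded set $E$ (being Lipschitz), hence extends continuously to the compact closure $\bar E$; the extended family is equicontinuous and uniformly bounded on $\bar E$, so Arzel\`a--Ascoli yields, for every subsequence of $(f_n')$, a further subsequence converging uniformly on $\bar E$ (and hence on $E$) to some continuous $g:E\to\R^D$. To identify $g$ as $f'$, I fix $x\in E$ and $y$ with $[x,y]\subseteq E$ and use the identity $f_{n_k}(y)-f_{n_k}(x)=\int_0^1 f_{n_k}'(x+t(y-x))^\T(y-x)\,dt$. Passing $k\to\infty$, the left side tends to $f(y)-f(x)$ by pointwise convergence, while the right side tends to $\int_0^1 g(x+t(y-x))^\T(y-x)\,dt$ because the integrand converges uniformly. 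Specializing $y=x+h e_i$, dividing by $h$, and invoking continuity of $g$ gives $\partial f/\partial x_i(x)=g_i(x)$; thus $f$ is differentiable on $E$ with $f'=g$, and $f'$ is continuous as a uniform limit of the continuous functions $f_{n_k}'$. Since every subsequential uniform limit equals $f'$ regardless of the chosen subsequence, the standard subsequence criterion forces $f_n'\to f'$ uniformly on $E$.

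The main obstacle is this identification step: converting uniform convergence of a subsequence of $(f_n')$ into differentiability of $f$ together with the equality $f'=g$. This is where convexity of $E$ is essential (to keep the integration segments inside $E$) and where uniform---rather than merely pointwise---convergence of $f_{n_k}'$ is needed to interchange limit and integral. By contrast, the Arzel\`a--Ascoli step requires only routine care in passing to the compact closure $\bar E$, and the concluding whole-sequence convergence is a formality.
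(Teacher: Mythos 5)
Your proof is correct, but it takes a genuinely different route from the paper's. The paper first proves that $(f_n')$ converges \emph{pointwise} by a direct Cauchy argument (applying Taylor's theorem to $f_m - f_n$ and letting $m,n\to\infty$), upgrades this to uniform convergence via equi-Lipschitzness and Lemma~\ref{lemma:equicontinuous-convergence}, and then identifies the limit as $f'$ by a double-limit interchange on the difference quotients $\varphi_n(x) = (f_n(x)-f_n(x_0))/|x-x_0|$ in the style of Rudin's Theorem 7.11. You instead substitute compactness for completeness: Arzel\`a--Ascoli on $\bar E$ produces subsequential uniform limits, the fundamental theorem of calculus along segments identifies every such limit as $f'$, and the subsequence criterion closes the argument. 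Both routes lean on convexity of $E$ (to keep segments inside the domain) and on the uniform bound on $(f_n'')$. What your approach buys is a cleaner identification step --- the integral representation $f(y)-f(x)=\int_0^1 g(x+t(y-x))^\T(y-x)\,dt$ avoids the somewhat delicate interchange of $\lim_n$ and $\lim_{x\to x_0}$ and delivers differentiability of $f$ together with continuity of $f'$ in one stroke. What the paper's approach buys is that it is constructive (no subsequence extraction) and does not require extending the functions to the closure $\bar E$; it also isolates the pointwise convergence of $(f_n')$ as a standalone fact before any limit is identified. One point worth making explicit in your write-up: the continuous extension of each $f_n'$ to $\bar E$ exists and preserves the common Lipschitz constant precisely because $f_n'$ is uniformly continuous on $E$, so the extended family is still equicontinuous and uniformly bounded on the compact set $\bar E$; you gesture at this and it is indeed only a routine technicality.
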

\begin{proof}
First, we show that $(f_n')$ converges pointwise.
Let $C =\sup_n\sup_{x\in E}\|f_n''(x)\|<\infty$. Let $x\in E$, and let $\epsilon>0$ small enough that $\overline{B_{\epsilon}(x)}\subseteq E$.
Then for any $u\in\R^D$ with $|u|= 1$, for any $m,n$, by applying Taylor's theorem to $f_m - f_n$,
$$ f_m(x +\epsilon u) - f_n(x +\epsilon u) 
= f_m(x) - f_n(x) + (f_m'(x) - f_n'(x))^\T(\epsilon u) +\tfrac{1}{2}(\epsilon u)^\T(f_m''(z) - f_n''(z))(\epsilon u) $$
for some $z$ on the line between $x$ and $x +\epsilon u$. Thus,
$$|(f_m'(x) - f_n'(x))^\T u|\leq\frac{1}{\epsilon}|f_m(x +\epsilon u) - f_n(x +\epsilon u)|+\frac{1}{\epsilon}|f_m(x) - f_n(x)|
+\tfrac{1}{2}\epsilon\|f_m''(z) - f_n''(z)\|.$$
The first two terms on the right go to zero as $m, n\to\infty$ (by pointwise convergence of $f_n$), and 
$\|f_m''(z)-f_n''(z)\|\leq\|f_m''(z)\|+\|f_n''(z)\|\leq 2 C$, therefore, $\limsup_{m,n\to\infty} |(f_m'(x) - f_n'(x))^\T u|\leq\epsilon C$. Since $\epsilon$ can be arbitrarily small, $|(f_m'(x) - f_n'(x))^\T u| \to 0$ as $m,n\to\infty$. Choosing $u =(1,0,0,\ldots,0)^\T$, then $u =(0,1,0,\ldots,0)^\T$, and so on, this implies $|f_m'(x) - f_n'(x)| \to 0$ as $m,n\to\infty$, and hence, $f_n'(x)$ converges.

Next, by Lemma \ref{lemma:UBD}, $(f_n')$ is equi-Lipschitz, and hence, equicontinuous.
Thus, in fact, $(f_n')$ converges uniformly, by Lemma \ref{lemma:equicontinuous-convergence}.
Finally, we show that $f'$ exists and $f_n'\to f'$ uniformly; it will follow that $f'$ is continuous,
as the limit of a uniformly convergent sequence of continuous functions.

Let $C_{m n} =\sup_{x\in E}|f_m'(x) - f_n'(x)|$. Then $C_{m n}\to 0$ as $m,n\to\infty$, by uniform convergence.
To establish the result, it suffices to show that for any $x_0\in E$, $f'(x_0)$ exists and $f_n'(x_0)\to f'(x_0)$. 
Fix $x_0\in E$, and let $B = B_\epsilon(x_0)\setminus\{x_0\}$ where $\epsilon>0$ is small enough that $B\subseteq E$.
For $x\in B$, define $\varphi_n(x) = (f_n(x) - f_n(x_0))/|x-x_0|$ and $\varphi(x) = (f(x)-f(x_0))/|x-x_0|$,
noting that $\varphi_n \to \varphi$ pointwise. 
For any $x \in B$, by Taylor's theorem applied to $f_m - f_n$, 
$$ f_m(x) - f_n(x) = f_m(x_0) - f_n(x_0) + (f_m'(z) - f_n'(z))^\T(x - x_0) $$
for some $z$ on the line between $x$ and $x_0$, and hence, 
$$|\varphi_m(x) - \varphi_n(x)| \leq |f_m'(z) - f_n'(z)| \leq C_{m n} \longrightarrow 0 $$
as $m,n\to\infty$. Therefore, $\varphi_n \to \varphi$ uniformly (on $B$) \citep[by e.g.,][7.8]{rudin1976principles}. 

Now, define $\psi_n(x) = f_n'(x_0)^\T(x-x_0)/|x-x_0|$ and $\psi(x) = v^\T(x-x_0)/|x-x_0|$ for $x\in B$, where $v = \lim_n f_n'(x_0)$.
Since $|\psi_n(x) - \psi(x)|\leq |f_n'(x_0)-v| \to 0$ as $n\to\infty$, then $\psi_n\to\psi$ uniformly as well.
Hence, $|\varphi_n - \psi_n| \to |\varphi - \psi|$ uniformly (on $B$). 

By the definition of the derivative $f_n'(x_0)$,
$$ |\varphi_n(x) - \psi_n(x)| = \frac{|f_n(x) - f_n(x_0) - f_n'(x_0)^\T(x-x_0)|}{|x-x_0|} \xrightarrow[x\to x_0]{} 0. $$
Therefore \citep[by e.g.,][7.11]{rudin1976principles},
\begin{align*}
0 = \lim_{n\to\infty} \lim_{x\to x_0} |\varphi_n(x) - \psi_n(x)|
&= \lim_{x\to x_0} \lim_{n\to\infty} |\varphi_n(x) - \psi_n(x)| = \lim_{x\to x_0} |\varphi(x) - \psi(x)| \\
&= \lim_{x\to x_0} \frac{|f(x) - f(x_0) - v^\T(x-x_0)|}{|x-x_0|}.
\end{align*}
Hence, $f'(x_0)$ exists and equals $v = \lim_n f_n'(x_0)$.
\end{proof}

\begin{proof}{\bf of Theorem \ref{theorem:regular-convergence}}~
First, suppose $(f_n)$ is pointwise bounded.
By Lemma \ref{lemma:UBD} with $k=3$, $(f_n'')$ is equi-Lipschitz,
and by Lemma \ref{lemma:Hessian-bound}, $(f_n'')$ is pointwise bounded.
Thus, since $E$ is bounded, it follows that $(f_n'')$ is uniformly bounded. % by Lemma \ref{lemma:equi-Lipschitz-bound}
Therefore, by Lemma \ref{lemma:UBD} with $k=2$, $(f_n')$ is equi-Lipschitz,
and by Lemma \ref{lemma:derivative-bound}, $(f_n')$ is pointwise bounded.
Thus, likewise, $(f_n')$ is uniformly bounded. % by Lemma \ref{lemma:equi-Lipschitz-bound} 
And lastly, applying Lemma \ref{lemma:UBD} with $k=1$, we have that $(f_n)$ is equi-Lipschitz,
and hence, uniformly bounded, % (by Lemma \ref{lemma:equi-Lipschitz-bound}, 
since it is pointwise bounded by assumption.

Now, assume $f_n\to f$ pointwise. 
Then in fact, $f_n\to f$ uniformly, by Lemma \ref{lemma:equicontinuous-convergence}, 
since $(f_n)$ is equi-Lipschitz (as just established), and hence, equicontinuous. % (by Lemma \ref{lemma:equilip-equicts}).
By Lemma \ref{lemma:derivative-sandwich}, $f'$ exists and $f_n'\to f'$ uniformly.
To complete the proof, we show that $f''$ exists and $f_n''\to f''$ uniformly.
For any $i\in\{1,\ldots,D\}$, if we define $h_n(x) = f_n'(x)_i$ and $h(x) = f'(x)_i$, then $h_n\to h$ pointwise and
%$(h_n')$ is equicontinuous (since $(f_n'')$ is equicontinuous and $|h_n'(x) - h_n'(y)|\leq \|f_n''(x) - f_n''(y)\|$), and
$(h_n'')$ is uniformly bounded (since $(f_n''')$ is uniformly bounded and $\|h_n''(x)\|\leq\|f_n'''(x)\|$);
hence, by Lemma \ref{lemma:derivative-sandwich}, $h'$ exists and is continuous, and $h_n'\to h'$ uniformly.
Since this holds for each coordinate $i$, then $f''$ exists, and $f_n''\to f''$ uniformly.
\end{proof}

\section{Proofs of coverage results}
\label{section:coverage-proof}

\begin{proof}{\bf of Theorem \ref{theorem:coverage}}~
Letting $X_n = -\sqrt{n}(\theta_n - \theta_0)$ and $X\sim Q$,
$$ \Pr(\theta_0\in S_n) \overset{\text{(a)}}{=} \Pr(\sqrt{n}(\theta_0 - \theta_n)\in R_n) = \Pr(X_n\in R_n) \overset{\text{(b)}}{\longrightarrow} \Pr(X\in R) = Q(R)$$
where step (a) is by the definition of $R_n$, and
(b) is by Lemma~\ref{lemma:convergent-set}, using conditions \ref{item:coverage1} ($X_n\xrightarrow[]{\mathrm{D}} X$), \ref{item:coverage3}, and \ref{item:coverage4}.
To see that $Q(R) = \rho$, note that $\Pi_n(S_n) \xrightarrow[]{\mathrm{a.s.}} \rho$ by assumption and also
$\Pi_n(S_n) = Q_n(R_n) \xrightarrow[]{\mathrm{a.s.}} Q(R)$ since
\begin{align*}
|Q_n(R_n) - Q(R)| &\leq |Q_n(R_n) - Q(R_n)| + |Q(R_n) - Q(R)| \\
&\leq \sup_{A\in\mathcal{B}}|Q_n(A) - Q(A)| + |Q(R_n) - Q(R)| \xrightarrow[]{\mathrm{a.s.}} 0
\end{align*}
by condition \ref{item:coverage2} and condition \ref{item:coverage3} plus the dominated convergence theorem \citep[Theorem 2.24]{folland2013real}.
% Note to self: This is also in \citep[Theorem 2.4]{jacod2012probability}.
\end{proof}

\begin{proof}{\bf of Lemma \ref{lemma:convergent-set}}~
For each $k=1,2,\ldots$, define $A_k = \{x\in \R^D : d(x,R^c) > 1/k\}$ and $B_k = \{x\in\R^D : d(x,R) \leq 1/k\}$.
Note that $A_k$ is open and $B_k$ is closed since $x\mapsto d(x,R)$ and $x\mapsto d(x,R^c)$ are continuous.
For any $k$, by Lemma~\ref{lemma:squeeze} we have that with probability $1$, for all $n$ sufficiently large, $A_k\subseteq R_n \subseteq B_k$.
Thus, with probability $1$, $\liminf_n \big(\I(X_n\in R_n) - \I(X_n\in A_k)\big) \geq \liminf_n \inf_x \big(\I(x\in R_n) - \I(x\in A_k)\big) \geq 0$.
It follows that 
$$\liminf_n \E\big(\I(X_n \in R_n) - \I(X_n\in A_k)\big) \geq \E \liminf_n \big(\I(X_n\in R_n) - \I(X_n\in A_k)\big) \geq 0$$
by Fatou's lemma applied to $\I(X_n\in R_n) - \I(X_n\in A_k) + 1$.
(The $+1$ is added to make the function nonnegative, so that Fatou's lemma applies directly.)
If $\liminf_n (a_n - b_n) \geq 0$ then $\liminf a_n = \liminf (a_n - b_n + b_n) \geq \liminf (a_n - b_n) + \liminf b_n \geq \liminf b_n$.
Therefore,
$\liminf_{n\to\infty} \Pr(X_n\in A_k) \leq \liminf_{n\to\infty} \Pr(X_n \in R_n).$
Similarly, by reverse Fatou's lemma,
$$\limsup_n \E\big(\I(X_n \in R_n) - \I(X_n\in B_k)\big) \leq \E \limsup_n \big(\I(X_n\in R_n) - \I(X_n\in B_k)\big) \leq 0,$$
and therefore, $\limsup_n \Pr(X_n\in R_n) \leq \limsup_n \Pr(X_n\in B_k)$.
% If $\limsup (a_n - b_n) \leq 0$ then $\limsup a_n = \limsup (a_n - b_n + b_n) \leq \limsup (a_n - b_n) + \limsup b_n \leq \limsup b_n$.
Hence, by the portmanteau theorem \citep[Theorem 11.1.1]{dudley2002real}, for all $k$,
\begin{align*}
\Pr(X\in A_k) & \leq \liminf_n \Pr(X_n\in A_k) \leq \liminf_n \Pr(X_n\in R_n) \\
&\leq \limsup_n \Pr(X_n\in R_n) \leq \limsup_n \Pr(X_n\in B_k) \leq \Pr(X \in B_k).
\end{align*}
Taking limits as $k\to\infty$ and using the fact that $\bigcup_{k=1}^\infty A_k = R^\circ$ and $\bigcap_{k=1}^\infty B_k = \bar{R}$,
we have
$\Pr(X\in R^\circ) = \lim_k \Pr(X\in A_k) \leq \liminf_n \Pr(X_n\in R_n) \leq \limsup_n \Pr(X_n\in R_n) \leq \lim_k \Pr(X\in B_k) = \Pr(X\in \bar{R})$
by \citet[Theorem 1.8]{folland2013real}.
Further, $\Pr(X\in R^\circ) = \Pr(X\in R) = \Pr(X\in \bar{R})$ since $\Pr(X\in \partial R) = 0$.
% $R\setminus R^\circ\subseteq\partial R$ and $\bar{R}\setminus R\subseteq \partial R$
Therefore, $\lim_n \Pr(X_n\in R_n) = \Pr(X\in R)$.
\end{proof}

\begin{proof}{\bf of Lemma \ref{lemma:squeeze}}~
First, we establish some initial facts.  
It is straightforward to check that $R$ is convex.
$R^\circ$ is nonempty
since $m(\bar R) \geq m(R) > 0$ and $m(\partial R) = 0$ \citep{lang1986note}.
% and thus $m(R^\circ) = m(\bar R) - m(\partial R) = m(\bar R) \geq m(R) > 0$.
It follows that $R$, $A$, and $B$ are bounded.
For any open cube $E$ such that $\bar{E}\subseteq R$, we have $E\subseteq R_n$ for all $n$ sufficiently large,
since $\I(x\in R_n) \to \I(x\in R)$ for each corner $x$ of the cube $E$.

Next, we show that $A\subseteq R_n$ for all $n$ sufficiently large.
For each $x\in \bar{A}$, let $E_x$ be a nonempty open cube centered at $x$ such that $\bar{E_x} \subseteq R$.
Then $\{E_x : x\in \bar{A}\}$ is an open cover of $\bar{A}$.
Since $\bar{A}$ is compact, there is a finite subcover $E_{x_1},\ldots,E_{x_k}$.
Thus, for all $n$ sufficiently large, $A\subseteq \bar{A} \subseteq \bigcup_{i=1}^k E_{x_i} \subseteq R_n$.

Now, we show that $R_n\subseteq B$ for all $n$ sufficiently large.
Let $S_\delta = \{x\in \R^D : d(x,R) = \delta\}$ for $\delta>0$.
Let $E\subseteq R$ be a nonempty open cube such that $E\subseteq R_n$ for all $n$ sufficiently large.
For each $x\in S_{\epsilon/2}$, define $C_x = \bigcup_{t>1} \{t x + (1-t) z : z\in E\}$.
Then $C_x$ is open, as a union of open sets.
Note that $y\in C_x$ if and only if $x = s y + (1-s) z$ for some $s\in(0,1)$, $z\in E$,
i.e., if and only if $x$ is a (strict) convex combination of $y$ and some point of $E$.
Thus, $\{C_x : x\in S_{\epsilon/2}\}$ is an open cover of $S_\epsilon$
(since for any $y\in S_\epsilon$, the line between $y$ and any $z\in E$ must pass through $S_{\epsilon/2}$ 
by the intermediate value theorem applied to $s\mapsto d(s x + (1-s)z, R)$).
Since $S_\epsilon$ is compact, there is a finite subcover $C_{x_1},\ldots,C_{x_k}$ for some $x_1,\ldots,x_k \in S_{\epsilon/2}$.
Since $x_i\in R^c$ for each $i=1,\ldots,k$, there exists $N$ such that for all $n\geq N$, $x_1,\ldots,x_k\in R_n^c$ and $E\subseteq R_n$.
Then for all $n\geq N$, by the convexity of $R_n$, we have $S_\epsilon \subseteq \bigcup_{i=1}^k C_{x_i} \subseteq R_n^c$ and hence $R_n \subseteq B$.
\end{proof}

\section{Proofs for generalized applications}
\label{section:generalized-proof}

\begin{proof}{\bf of Theorem~\ref{theorem:grf-ergodic}}~
Define $B_r = \{j\in\Z^m : R(j) \leq r\}$, that is, $B_r = \{-r,\ldots,-1,0,1,\ldots,r\}^m$ for $r\in\N$. 
Let $r_n = R(v(n))$, $L_n = |B_{r_n-1}|$, and $M_n = |B_{r_n}|$, for $n\in\N$.
Observe that $L_n < n \leq M_n$ since $R(v(1))\leq R(v(2)) \leq \cdots$.
Further, $M_n/n\to 1$ as $n\to\infty$, since
$$ 1 \leq \frac{M_n}{n} \leq \frac{M_n}{L_n} = \frac{|B_{r_n}|}{|B_{r_n-1}|} = \Big(\frac{2 r_n + 1}{2 r_n - 1}\Big)^m \longrightarrow 1 $$
as $n\to\infty$ since $r_n\to\infty$.

Fix $k,\ell\in\{1,\ldots,m\}$, 
and define $Z_i = Y_i X_{i k} - \E(Y_i X_{i k})$ or $Z_i = X_{i k} X_{i \ell} - \E(X_{i k}X_{i \ell})$ where $X_i$ is defined as in Theorem~\ref{theorem:grf};
the proof is the same in either case.
Then $Z_1,Z_2,\ldots$ are identically distributed, and in fact,
by Lemma~\ref{lemma:stationary-ergodic},
$(Z_{v^{-1}(j)} : j\in\Z^m)$ is stationary with respect to $T_1,\ldots,T_m$ and ergodic with respect to at least one of $T_1,\ldots,T_m$.
Note that $\E Z_i = 0$ and $\E|Z_i|^2 = \mathrm{Var}(Z_i) < \infty$,
since for all $i,j$, $\mathrm{Var}(Y_i Y_j) \leq \E|Y_i Y_j|^2 \leq (\E|Y_i|^4 \E|Y_j|^4)^{1/2} < \infty$ by the Cauchy--Schwarz inequality.

To prove the result, we need to show that $\frac{1}{n}\sum_{i=1}^n Z_i \to 0$ almost surely as $n\to\infty$.
The key part is showing that (A) $\frac{1}{M_n} \sum_{i=1}^{M_n} Z_i \to 0$ by the multivariate ergodic theorem;
the remainder of the proof is showing that (B) the difference between $\frac{1}{n}\sum_{i=1}^n Z_i$ and $\frac{1}{M_n} \sum_{i=1}^{M_n} Z_i$ is negligible.

(A) For the first part, letting $\mathcal{F}_k$ denote the invariant sigma-field with respect to $T_k$,
\begin{align}
\label{equation:ergodic}
\frac{1}{M_n} \sum_{i=1}^{M_n} Z_i &= \frac{1}{|B_{r_n}|} \sum_{j\in B_{r_n}} Z_{v^{-1}(j)} \notag\\
&= \frac{1}{(2 r_n + 1)^m} \sum_{j_1=-r_n}^{r_n} \cdots \sum_{j_m=-r_n}^{r_n} Z_{v^{-1}(j_1,\ldots,j_m)} \\
&\xrightarrow[n\to\infty]{\mathrm{a.s.}} \E^{\mathcal{F}_m}\cdots\E^{\mathcal{F}_1} Z_1 = \E(Z_1 \mid \cap_k \mathcal{F}_k) = \E Z_1 = 0  \notag
\end{align}
by applying the multivariate ergodic theorem \citep[Theorems 10.12 and 10.13]{kallenberg2002foundations}
to each of the $2^m$ orthants of $\Z^m$.
A few clarifying remarks.
The subset $\{j\in\Z^m : \min\{|j_1|,\ldots,|j_m|\}=0\}$ can be handled by shifting each orthant to ensure that, collectively, they form a partition of $\Z^m$.
\citet[Theorem 10.12]{kallenberg2002foundations} shows that the limit is $\E^{\mathcal{F}_m}\cdots\E^{\mathcal{F}_1} Z_1$,
and since $T_1,\ldots,T_m$ are commutative, we have
$\E^{\mathcal{F}_m}\cdots\E^{\mathcal{F}_1} Z_1 = \E(Z_1\mid \cap_k \mathcal{F}_k)$ 
by \citet[Theorem 10.13]{kallenberg2002foundations}.
Ergodicity with respect to $T_k$ means that the corresponding invariant sigma-field $\mathcal{F}_k$ is trivial, 
that is, the probability of any set $A\in\mathcal{F}_k$ is either $0$ or~$1$.
So by assumption, at least one of $\mathcal{F}_1,\ldots,\mathcal{F}_m$ is trivial,
and hence, $\cap_k \mathcal{F}_k$ is trivial as well.
Therefore, $\E(Z_1 \mid \cap_k \mathcal{F}_k) = \E Z_1 = 0$.

(B) Now, for the remainder of the proof,
$$ \Big\vert\frac{1}{M_n}\sum_{i=1}^n Z_i\Big\vert 
\leq \Big\vert\frac{1}{M_n}\sum_{i=1}^n Z_i - \frac{1}{M_n}\sum_{i=1}^{M_n} Z_i \Big\vert + \Big\vert \frac{1}{M_n}\sum_{i=1}^{M_n} Z_i\Big\vert. $$
As for the second term, we have $\frac{1}{M_n}\sum_{i=1}^{M_n} Z_i \to 0$ a.s.\ by Equation~\ref{equation:ergodic}.
As for the first term, we have 
\begin{align*}
\Big\vert\frac{1}{M_n}\sum_{i=1}^n Z_i - \frac{1}{M_n}\sum_{i=1}^{M_n} Z_i \Big\vert
\leq \frac{1}{M_n}\sum_{i=n+1}^{M_n} |Z_i|
\leq \frac{1}{M_n} \sum_{i=L_n+1}^{M_n} |Z_i|
= c_{r_n} W_{r_n}
\end{align*}
where $c_r = |B_r\setminus B_{r-1}| / |B_r|$, $W_r = \frac{1}{|S_r|} \sum_{i\in S_r} |Z_i|$, and $S_r = \{L_n+1,\ldots,M_n\}$.
If we can show that $c_{r_n} W_{r_n} \to 0$ almost surely as $n\to\infty$, then this will prove the result, since then
$\frac{1}{n}\sum_{i=1}^n Z_i = \big(\frac{M_n}{n}\big) \frac{1}{M_n}\sum_{i=1}^n Z_i \to 0$ a.s.,
using the fact that $M_n/n\to 1$ as shown above.

We show that $c_{r_n} W_{r_n} \to 0$ using the Borel--Cantelli lemma.
For all $\varepsilon > 0$, $r\in\N$, by Markov's inequality we have
\begin{align}
\label{equation:ergodic-markov}
\Pr(c_r W_r \geq \varepsilon) = \Pr(W_r^2 \geq (\varepsilon/c_r)^2) \leq (c_r/\varepsilon)^2 \E(W_r^2) \leq (c_r/\varepsilon)^2 \mathrm{Var}(Z_1),
\end{align}
where the last step holds since $\E(W_r^2) \leq \E\big(\frac{1}{|S_r|}\sum_{i\in S_r} |Z_i|^2 \big) = \E |Z_1|^2 = \mathrm{Var}(Z_1)$
by Jensen's inequality.
Now, for all $r\in\N$, we have the bound
\begin{align}
\label{equation:ergodic-bound}
c_r = \frac{(2 r + 1)^m - (2 r - 1)^m}{(2 r + 1)^m} \leq \frac{(2 r - 1)^{m-1} 3^m}{(2 r + 1)^m} \leq \frac{3^m}{r}
\end{align}
where the first inequality holds 
% Now, it holds that $c_r \leq 3^m/r$ for all $r\in\N$; this bound is trivial when $r=1$, and when $r\geq 2$, 
% \begin{align*}
% c_r &= \frac{(2 r + 1)^m - (2 r - 1)^m}{(2 r + 1)^m} \leq \frac{(2 r + 2)^m - (2 r - 2)^m}{(2 r)^m} \\
% &= \frac{(r+1)^m - (r-1)^m}{r^m} \overset{\textup{(a)}}{\leq} \frac{(r-1)^{m-1} (1+2)^m}{r^m} \leq \frac{3^m}{r}
% \end{align*}
% where step (a) holds 
by the following application of the binomial theorem, taking $x = 2 r - 1$ and $y = 2$:
for all $m\in\N$, $x\geq 1$, $y\geq 0$, 
$$ (x + y)^m - x^m = \sum_{k=1}^m {m \choose k} x^{m-k} y^k \leq x^{m-1}\sum_{k=0}^m {m \choose k} y^k = x^{m-1} (1+y)^m. $$
Therefore, combining Equations~\ref{equation:ergodic-markov} and \ref{equation:ergodic-bound},
we have that for all $\varepsilon>0$, 
$$ \sum_{r=1}^\infty \Pr(c_r W_r \geq \varepsilon) \leq \sum_{r=1}^\infty (c_r/\varepsilon)^2 \mathrm{Var}(Z_1) 
\leq (3^m/\varepsilon)^2 \mathrm{Var}(Z_1) \sum_{r=1}^\infty \frac{1}{r^2} < \infty. $$
Hence, by the Borel--Cantelli lemma, 
for all $\varepsilon>0$, $\Pr(\limsup_r c_r W_r \leq \varepsilon) = 1$.
Therefore, $\Pr(\lim c_r W_r = 0) = \Pr(\cap_{k=1}^\infty \{\limsup_r c_r W_r \leq 1/k\}) = 1$,
that is, $c_r W_r \to 0$ a.s.\ as $r\to\infty$.
\end{proof}

In the following lemma, we adopt the notational conventions and definitions of \citet{kallenberg2002foundations}, page 181.

\begin{lemma}
\label{lemma:stationary-ergodic}
Let $\xi$ be a random element in $S$ with distribution $\mu$,
and let $T:S\to S$ be $\mu$-preserving (that is, $T(\xi) \overset{d}{=} \xi$).
Suppose $f:S\to S$ is a measurable function such that $f\circ T = T \circ f$.
Then $T(f(\xi)) \overset{d}{=} f(\xi)$, that is, $T$ preserves the distribution of $f(\xi)$.
If, further, $\xi$ is $T$-ergodic, then $f(\xi)$ is $T$-ergodic.
\end{lemma}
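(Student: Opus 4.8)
The plan is to treat the two assertions separately, since each reduces to a short manipulation once the commutation hypothesis $f\circ T = T\circ f$ is exploited. For the stationarity claim, I would rewrite $T(f(\xi)) = (T\circ f)(\xi) = (f\circ T)(\xi) = f(T(\xi))$; since $T$ is $\mu$-preserving we have $T(\xi)\overset{d}{=}\xi$, and applying the fixed measurable map $f$ to two random elements sharing a distribution yields random elements sharing a distribution, so $f(T(\xi))\overset{d}{=}f(\xi)$. Chaining these gives $T(f(\xi))\overset{d}{=}f(\xi)$, which is the first conclusion.

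For the ergodicity claim, I would work directly with invariant sets, recalling that $\xi$ being $T$-ergodic means $\Pr(\xi\in A)\in\{0,1\}$ for every measurable $T$-invariant set $A$ (that is, every $A$ with $T^{-1}A = A$), and that the goal is to establish the same triviality for $f(\xi)$. The key step is to show that $f$ pulls invariant sets back to invariant sets: for $T$-invariant $A$, the preimage identity $(g\circ h)^{-1} = h^{-1}\circ g^{-1}$ together with the commutation hypothesis gives
$$ T^{-1}(f^{-1}A) = (f\circ T)^{-1}A = (T\circ f)^{-1}A = f^{-1}(T^{-1}A) = f^{-1}A, $$
so $f^{-1}A$ is itself $T$-invariant. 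Then, for any $T$-invariant $A$, ergodicity of $\xi$ forces $\Pr(f(\xi)\in A) = \Pr(\xi\in f^{-1}A)\in\{0,1\}$, which is precisely the statement that $f(\xi)$ is $T$-ergodic.

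The argument is short, and the only genuine subtlety --- the step I would be most careful about --- lies in the ergodicity part: tracking the order reversal when composing preimages, and confirming that the notion of invariance preserved by the displayed computation (strict invariance $T^{-1}A = A$) is exactly the one used in the definition of $T$-ergodicity adopted on page 181 of \citet{kallenberg2002foundations}. No limiting arguments, regularity assumptions, or special properties of the underlying space $S$ are needed; everything follows from the algebra of preimages and the defining property of a measure-preserving map.
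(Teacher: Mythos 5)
Your proposal is correct and follows essentially the same route as the paper's proof: the stationarity claim via $T(f(\xi)) = f(T(\xi)) \overset{d}{=} f(\xi)$, and the ergodicity claim by checking that $T^{-1}(f^{-1}(A)) = f^{-1}(T^{-1}(A)) = f^{-1}(A)$ for any strictly $T$-invariant $A$, so that $\Pr(f(\xi)\in A) = \Pr(\xi\in f^{-1}(A)) \in \{0,1\}$. The preimage order-reversal you flag as the delicate point is handled correctly.
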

\begin{proof}
The first part is immediate, since $T(f(\xi)) = f(T(\xi)) \overset{d}{=} f(\xi)$.
Suppose $\xi$ is $T$-ergodic.  In other words, suppose that for any measurable set $A$ such that $T^{-1}(A) = A$,
we have $\Pr(\xi \in A) \in \{0,1\}$.  (This is also equivalent to saying that the $T$-invariant sigma-algebra is trivial under $\mu$.)
To show that $f(\xi)$ is $T$-ergodic, let $A$ such that $T^{-1}(A) = A$.
Then $T^{-1}(f^{-1}(A)) = f^{-1}(T^{-1}(A)) = f^{-1}(A)$ since $f\circ T = T\circ f$, so we have $\Pr(f(\xi)\in A) = \Pr(\xi \in f^{-1}(A)) \in \{0,1\}$.
Hence, $f(\xi)$ is $T$-ergodic.
\end{proof}

In the proof of Theorem~\ref{theorem:grf-ergodic}, we apply Lemma~\ref{lemma:stationary-ergodic} in the following way.
Suppose $\xi$ is a real-valued stochastic process on $\Z^m$, that is, $\xi = (\xi(i_1,\ldots,i_m) : i \in \Z^m)$
where $\xi(i_1,\ldots,i_m)$ is a real-valued random variable.
Suppose $T_k$ is the shift transformation in coordinate $k$, that is, $T_k\xi = T_k(\xi) = (\xi(i_1,\ldots,i_k+1,\ldots,i_m) : i \in \Z^m)$.
Let $\varphi(\xi)\in\R$ be a measurable function of $\xi$, and 
define $f(\xi) = (\varphi(T_1^{j_1} \cdots T_m^{j_m} \xi) : j\in\Z^m)$.
Then $f(T_k \xi) = (\varphi(T_1^{j_1}\cdots T_k^{j_k+1}\cdots T_m^{j_m}\xi) : j\in\Z^m) = T_k f(\xi)$
and thus, $f\circ T_k = T_k\circ f$.
Hence, if $\xi$ is stationary with respect to $T_k$ (that is, $T_k\xi \overset{d}{=} \xi$)
and $\xi$ is $T_k$-ergodic, then by Lemma~\ref{lemma:stationary-ergodic}, $f(\xi)$ is stationary with respect to $T_k$ and is $T_k$-ergodic.

\vspace{1em}
\begin{proof}{\bf of Theorem \ref{theorem:cox}}~
For $\theta\in\R^D$,
$$ f_n(\theta) = -\frac{1}{n}\log \mathcal{L}_n^\mathrm{Cox}(\theta) - \frac{1}{n} \sum_{i=1}^n Z_i \log n
= \frac{1}{n}\sum_{i=1}^n H_{Y_i}^n(\theta) Z_i - \theta^\T \big({\textstyle\frac{1}{n} \sum_{i=1}^n X_i Z_i}\big)$$
where $H_y^n(\theta) = \log\big(\frac{1}{n}\sum_{j=1}^n \exp(\theta^\T X_j) \I(Y_j\geq y)\big)$.
Note that $f_n$ is $C^\infty$, as a composition of $C^\infty$ functions. 
Further, $f_n$ is convex on $\R^D$, since $H_{Y_i}^n(\theta)$ is convex by Lemma~\ref{lemma:K-lemma} with $\mu = \frac{1}{n}\sum_{j:Y_j\geq Y_i} \delta_{X_j}$.
By Lemma~\ref{lemma:cox-posdef}, $f''(\theta_0)$ is positive definite.

By the strong law of large numbers, $\frac{1}{n}\sum_{i=1}^n X_i Z_i \overset{\mathrm{a.s.}}{\longrightarrow} \E(X Z)$ as $n\to\infty$,
and by Lemma~\ref{lemma:cox-pointwise}, for all $\theta\in\R^D$, $\E|h_Y(\theta)Z| < \infty$ and
$\frac{1}{n}\sum_{i=1}^n H_{Y_i}^n(\theta) Z_i \overset{\mathrm{a.s.}}{\longrightarrow} \E\big(h_Y(\theta) Z\big)$ as $n\to\infty$.
Therefore, for all $\theta\in\R^D$, with probability $1$, $f_n(\theta)\to f(\theta)$.
Due to convexity, this implies that with probability $1$, for all $\theta\in\R^D$, $f_n(\theta)\to f(\theta)$. 

Let $m = \sup\{|x| : x\in\X\} < \infty$.
Then by Lemma~\ref{lemma:K-lemma}, $\big\vert (\partial^3/\partial\theta_j\partial\theta_k\partial\theta_\ell) H_{Y_i}^n(\theta)\big\vert \leq (2 m)^3 = 8 m^3$ for all $\theta\in\R^D$.  
Thus, $\|f_n'''(\theta)\|^2 = \sum_{j,k,\ell} \big\vert (\partial^3/\partial\theta_j\partial\theta_k\partial\theta_\ell) f_n(\theta)\big\vert^2 
\leq D^3 (8 m^3)^2$ for all $\theta\in\R^D$, $n\in\N$.  Hence, $(f_n''')$ is a.s.\ uniformly bounded on all of $\R^D$.
Thus, for any open ball $E$ containing $\theta_0$, the conditions of Theorem~\ref{theorem:altogether} are satisfied  with probability $1$.
% Note to self: since for all i, wp1 |X_i|\leq m, then wp1 for all i, |X_i|\leq m since there are countably many i's.
\end{proof}

Note that $H_{Y_1}^n(\theta),H_{Y_2}^n(\theta),\ldots$ are not i.i.d., which is why the next lemma is not trivial.

\begin{lemma}
\label{lemma:cox-pointwise}
Suppose $(X,Y,Z),(X_1,Y_1,Z_1),(X_2,Y_2,Z_2),\ldots$ are i.i.d., where $X\in\X\subseteq\R^D$, $Y\geq 0$, and $Z\in\{0,1\}$.
Define $h_y(\theta) = \log \E\big(\exp(\theta^\T X) \I(Y \geq y)\big)$ and
$H_y^n(\theta) = \log\big(\frac{1}{n}\sum_{j=1}^n \exp(\theta^\T X_j) \I(Y_j\geq y)\big)$
for $\theta\in\R^D$, $y\geq 0$.
If $\X$ is bounded and the c.d.f.\ of $Y$ is continuous on $\R$,
then for all $\theta\in\R^D$,
$\E|h_Y(\theta)Z| < \infty$ and
$$\frac{1}{n}\sum_{i=1}^n H_{Y_i}^n(\theta)Z_i  \xrightarrow[n\to\infty]{\mathrm{a.s.}} \E\big(h_Y(\theta)Z\big).$$
\end{lemma}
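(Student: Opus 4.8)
The plan is to prove the two claims separately: the integrability statement is a direct moment computation, while the almost-sure limit is obtained by reducing it to a classical strong law plus a Glivenko--Cantelli-type uniform estimate, the only genuine difficulty being the behaviour of $\log G_n$ on the upper tail of $Y$. Throughout, I would write $m = \sup_{x\in\X}|x|<\infty$ (finite since $\X$ is bounded), so that $e^{-|\theta|m}\leq\exp(\theta^\T x)\leq e^{|\theta|m}$ for all $x\in\X$, and abbreviate $g(y)=\E\big(\exp(\theta^\T X)\I(Y\geq y)\big)$ and $G_n(y)=\frac1n\sum_{j=1}^n\exp(\theta^\T X_j)\I(Y_j\geq y)$, so that $h_y(\theta)=\log g(y)$ and $H_y^n(\theta)=\log G_n(y)$.

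For the integrability claim, I would first show $\E|\log g(Y)|<\infty$. The upper bound $g(y)\leq e^{|\theta|m}$ gives $\log g(Y)\leq|\theta|m$. For a lower bound, using $\exp(\theta^\T X)\geq e^{-|\theta|m}$ and letting $F$ denote the (continuous) c.d.f.\ of $Y$,
\[
g(y)\geq e^{-|\theta|m}\,\Pr(Y\geq y)=e^{-|\theta|m}(1-F(y)),
\]
so that $\log g(Y)\geq -|\theta|m+\log(1-F(Y))$. Since $F$ is continuous, $1-F(Y)$ is uniform on $(0,1)$, whence $\E|\log(1-F(Y))|=\int_0^1|\log u|\,du=1<\infty$. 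Combining the two bounds gives $\E|\log g(Y)|<\infty$, and since $Z\in\{0,1\}$ we get $\E|h_Y(\theta)Z|\leq\E|\log g(Y)|<\infty$.

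For the almost-sure limit I would split
\[
\frac1n\sum_{i=1}^n H_{Y_i}^n(\theta)Z_i=\frac1n\sum_{i=1}^n h_{Y_i}(\theta)Z_i+\frac1n\sum_{i=1}^n\big(\log G_n(Y_i)-\log g(Y_i)\big)Z_i.
\]
The first sum is an average of i.i.d.\ integrable terms $h_{Y_i}(\theta)Z_i$, so it converges a.s.\ to $\E(h_Y(\theta)Z)$ by the strong law; it remains to show the second sum tends to $0$ a.s. Here I would establish the uniform estimate $\Delta_n:=\sup_y|G_n(y)-g(y)|\to 0$ a.s.: both $G_n$ and $g$ are non-increasing, $g$ is continuous (the finite measure $B\mapsto\E(\exp(\theta^\T X)\I(Y\in B))$ has no atoms, because $Y$ has none) with finite range, and $G_n(y)\to g(y)$ a.s.\ for each fixed $y$ by the strong law for bounded summands; a finite-grid argument as in the classical Glivenko--Cantelli proof then upgrades pointwise to uniform a.s.\ convergence.

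Finally I would bound $\frac1n\sum_i|\log G_n(Y_i)-\log g(Y_i)|$ by fixing $\delta>0$ and splitting on whether $g(Y_i)\geq\delta$ or $g(Y_i)<\delta$. On the bulk $\{g(Y_i)\geq\delta\}$, once $\Delta_n<\delta/2$ we have $G_n(Y_i)\geq\delta/2$, so the mean value theorem gives $|\log G_n(Y_i)-\log g(Y_i)|\leq 2\Delta_n/\delta$, and this part vanishes. The tail $\{g(Y_i)<\delta\}$ is the crux: the $|\log g(Y_i)|$ contribution converges by the strong law to $\E(|\log g(Y)|\I(g(Y)<\delta))$, which is small for small $\delta$ by dominated convergence, but the $|\log G_n(Y_i)|$ contribution is delicate because $G_n(Y_i)$ can be of order $1/n$. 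The key observation is that the $i$-th summand of $G_n(Y_i)$ is always present, so $G_n(Y_i)\geq e^{-|\theta|m}R_i/n$ with $R_i=\#\{j:Y_j\geq Y_i\}$, giving $-\log G_n(Y_i)\leq|\theta|m+\log(n/R_i)$; since $g(Y_i)<\delta$ selects the largest $Y_i$'s, i.e.\ the smallest ranks, one has $\frac1n\sum_{i:g(Y_i)<\delta}\log(n/R_i)=\frac1n\sum_{k=1}^{N_\delta}\log(n/k)$, which a Stirling estimate bounds by roughly $p_\delta\log(1/p_\delta)+p_\delta$, where $N_\delta/n\to p_\delta\leq\delta e^{|\theta|m}$. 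Taking $\limsup_n$ and then $\delta\to 0$ shows the second sum vanishes a.s. The main obstacle is exactly this tail control of $\log G_n(Y_i)$; the rank lower bound together with the order-statistic/Stirling estimate is what makes it go through.
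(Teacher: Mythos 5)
Your proof is correct, and it rests on the same two pillars as the paper's: a uniform-convergence (Glivenko--Cantelli/Polya) argument for the empirical survival-type function on the region where it is bounded away from zero, and the rank lower bound $G_n(Y_i)\geq e^{-m|\theta|}R_i/n$ combined with the order-statistic sum $\tfrac{1}{n}\sum_{k=1}^{N}\log(n/k)\to p-p\log p$ to control the tail. The differences are organizational. The paper truncates in $Y$-space at a level $c$ below the essential supremum of $Y$, proves uniform convergence of the \emph{logged} functions $H_y^n(\theta)\to h_y(\theta)$ on $[0,c]$ (via its Lemma~\ref{lemma:uniform-convergence} for monotone functions with continuous limit), and then lets $c$ increase; you instead truncate in $g$-space at level $\delta$, prove a \emph{global} uniform law for the un-logged functions (which is available precisely because they are bounded, whereas the logged versions diverge in the upper tail), and transfer to logarithms on the bulk via the mean value theorem. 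Since $g$ is non-increasing in $y$, your event $\{g(Y_i)<\delta\}$ is essentially the paper's event $\{Y_i>c\}$, so the two truncations are equivalent; your version buys a cleaner, fully global uniform estimate and makes explicit why the logarithm is the only source of difficulty, at the cost of an extra $\delta/2$-type argument to keep $G_n(Y_i)$ away from zero on the bulk. One further small difference: you split off the ideal sum $\tfrac{1}{n}\sum h_{Y_i}(\theta)Z_i$ in one step by the strong law (using the integrability you established first), whereas the paper handles $\E(h_Y(\theta)Z\,\I(Y\leq c))$ piecewise and recovers the full expectation by dominated convergence at the end; both are valid.
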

\begin{proof}
Let $F(y) = \Pr(Y\leq y)$, $c^* = \sup\{y\in\R : F(y) < 1\}$, and $m = \sup\{|x| : x\in\X\} < \infty$.
% Note that $c^*\in(0,\infty]$, $F(y)<1$ for all $y\in[0,c^*)$, and $F(y)\to 1$ as $y\to c^*$.
Since $|X|\leq m$ and $F$ is continuous,
$\E|h_Y(\theta)Z| \leq m|\theta| - \E \log(1-F(Y)) = m|\theta| + 1$ 
because $F(Y)\sim \mathrm{Uniform}(0,1)$.
Fix $\theta\in\R^D$ and define $g(y) = h_y(\theta)$ and $G_n(y) = H_y^n(\theta)$.

First, we show that for all $c\in(0,c^*)$, 
\begin{align}\label{equation:cox-first}
\sup_{y\in[0,c]} |G_n(y) - g(y)| \xrightarrow[n\to\infty]{\mathrm{a.s.}} 0.
\end{align}
Let $S$ be a countable dense subset of $[0,c]$ such that $0,c\in S$.
For all $y\in S$, $G_n(y)\overset{\mathrm{a.s.}}{\longrightarrow} g(y)\in\R$ by the strong law of large numbers
since $0 < \E(e^{\theta^\T X}\I(Y\geq y)) \leq e^{m|\theta|} < \infty$.
Next, $G_n$ is a non-increasing function on $[0,c]$ (that is, if $0 \leq y < y' \leq c$ then $G_n(y)\geq G_n(y')$) since $y\mapsto \I(Y_j \geq y)$ is non-increasing.
Further, $g(y)$ is continuous on $[0,c]$ by the dominated convergence theorem, %\citep[Theorem 2.24]{folland2013real}
since $|e^{\theta^\T X}\I(Y\geq y)| \leq e^{m|\theta|}$ and $\Pr(Y=y)=0$ by the continuity of $F$.
% and $\E(e^{\theta^\T X}\I(Y\geq y)) \geq e^{-m|\theta|}\Pr(Y\geq y) > 0$ for all $y\in[0,c]$.
Thus, with probability $1$, for all $n$ sufficiently large, $G_n$ is finite on $[0,c]$ 
since $G_n(0)\overset{\mathrm{a.s.}}{\longrightarrow} g(0)$
and $G_n(c)\overset{\mathrm{a.s.}}{\longrightarrow} g(c)$.
It follows that $\sup_{y\in[0,c]} |G_n(y) - g(y)| \overset{\mathrm{a.s.}}{\longrightarrow} 0$
by Lemma~\ref{lemma:uniform-convergence}.

Second, we show that for all $c\in(0,c^*)$, 
\begin{align}\label{equation:cox-second}
\frac{1}{n}\sum_{i=1}^n G_n(Y_i)Z_i\I(Y_i \leq c) \xrightarrow[n\to\infty]{\mathrm{a.s.}} \E\big(g(Y)Z\I(Y\leq c)\big).
\end{align}
To see this, observe that by Equation~\ref{equation:cox-first},
\begin{align*}
\bigg\vert \frac{1}{n}\sum_{i=1}^n & G_n(Y_i)Z_i\I(Y_i \leq c) - \frac{1}{n}\sum_{i=1}^n g(Y_i)Z_i\I(Y_i\leq c)\bigg\vert \\
&\leq \frac{1}{n}\sum_{i=1}^n |G_n(Y_i) - g(Y_i)|\I(Y_i\leq c) 
\leq \sup_{y\in[0,c]} |G_n(y) - g(y)| 
\xrightarrow[n\to\infty]{\mathrm{a.s.}} 0
\end{align*}
and $\frac{1}{n}\sum_{i=1}^n g(Y_i)Z_i\I(Y_i\leq c) \xrightarrow[n\to\infty]{\mathrm{a.s.}} \E\big(g(Y)Z\I(Y\leq c)\big)$ by the strong law of large numbers.

Third, we show that for all $c\in(0,c^*)$, 
\begin{align}\label{equation:cox-third}
\limsup_{n\to\infty} \bigg\vert\frac{1}{n}\sum_{i=1}^n G_n(Y_i) Z_i \I(Y_i > c)\bigg\vert 
\overset{\mathrm{a.s.}}{\leq} m|\theta|p_c - p_c\log p_c + p_c
\end{align}
where $p_c = \Pr(Y > c)$. This follows from the fact that
\begin{align*}
\bigg\vert\frac{1}{n}\sum_{i=1}^n G_n(Y_i) Z_i \I(Y_i > c)\bigg\vert 
&\leq \frac{1}{n} \sum_{i=1}^n |G_n(Y_i)|\I(Y_i > c) \\
&\leq \frac{1}{n}\sum_{i=1}^n \Big(m|\theta| - \log\big({\textstyle\frac{1}{n}\sum_{j=1}^n \I(Y_j \geq Y_i)}\big)\Big) \I(Y_i > c) \\
&\overset{\mathrm{a.s.}}{=} m|\theta| K_n/n - \frac{1}{n} \sum_{k=1}^{K_n} \log(k/n) \\
&\xrightarrow[n\to\infty]{\mathrm{a.s.}} m|\theta| p_c - \int_0^{p_c} (\log x) d x
= m|\theta| p_c - p_c\log p_c + p_c
\end{align*}
where $K_n = \sum_{i=1}^n \I(Y_i > c)$, using that $\Pr(Y_i = Y_j)=0$ for $i\neq j$ by continuity of $F$.

Now, we put these pieces together to obtain the result.
Writing $\frac{1}{n}\sum_{i=1}^n G_n(Y_i)Z_i = \frac{1}{n}\sum_{i=1}^n G_n(Y_i)Z_i\I(Y_i \leq c) + \frac{1}{n}\sum_{i=1}^n G_n(Y_i)Z_i\I(Y_i > c)$,
for all $c\in(0,c^*)$ we have
\begin{align*}
\bigg\vert \frac{1}{n}\sum_{i=1}^n & G_n(Y_i)Z_i - \E(g(Y)Z)\bigg\vert
\leq \bigg\vert \frac{1}{n}\sum_{i=1}^n G_n(Y_i)Z_i\I(Y_i \leq c) - \E\big(g(Y)Z\I(Y\leq c)\big) \bigg\vert  \\
& + \bigg\vert \E\big(g(Y)Z\I(Y\leq c)\big) - \E(g(Y)Z) \bigg\vert + \bigg\vert \frac{1}{n}\sum_{i=1}^n G_n(Y_i)Z_i\I(Y_i > c) \bigg\vert,
\end{align*}
and therefore, by Equations~\ref{equation:cox-second} and \ref{equation:cox-third},
\begin{align}
\limsup_{n\to\infty} & \bigg\vert \frac{1}{n}\sum_{i=1}^n G_n(Y_i)Z_i - \E(g(Y)Z)\bigg\vert \notag \\
&\overset{\mathrm{a.s.}}{\leq} 
\big\vert \E\big(g(Y)Z\I(Y\leq c)\big) - \E(g(Y)Z) \big\vert + m|\theta| p_c - p_c\log p_c + p_c.  \label{equation:cox-limsup}
\end{align}
Let $c_1,c_2,\ldots\in(0,c^*)$ such that $c_k\to c^*$.
Then $p_{c_k} \to p_{c^*} = 0$ by continuity of $F$, and thus, $m|\theta| p_{c_k} - p_{c_k}\log p_{c_k} + p_{c_k} \to 0$ as $k\to\infty$.
Further, $\E\big(g(Y)Z\I(Y\leq c_k)\big) \to \E(g(Y)Z)$ by the dominated convergence theorem,
since $|g(Y)Z\I(Y\leq c_k)|\leq |g(Y)Z|$, $\E|g(Y)Z|<\infty$, and $\I(Y\leq c_k) \overset{\mathrm{a.s.}}{\to} 1$ as $k\to\infty$.
Applying Equation~\ref{equation:cox-limsup} to each $c_k$ and taking limits as $k\to\infty$,
we have that $\limsup_{n\to\infty} \big\vert \frac{1}{n}\sum_{i=1}^n G_n(Y_i)Z_i - \E(g(Y)Z)\big\vert = 0$ almost surely. 
\end{proof}

\begin{lemma}
\label{lemma:cox-posdef}
Under the conditions of Theorem~\ref{theorem:cox}, $f''(\theta)$ is positive definite for all $\theta\in\R^D$.
\end{lemma}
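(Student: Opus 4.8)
The plan is to show that the Hessian $f''(\theta)$ is a $Z$-weighted average of covariance matrices, each positive semidefinite, and then to use the nondegeneracy hypotheses (\ref{item:cox-nonconstant}) and (\ref{item:cox-positive}) to force strict positivity in every direction. Write $w(\theta,y) = \E\big(\exp(\theta^\T X)\I(Y\geq y)\big)$, so that $h_y(\theta) = \log w(\theta,y)$; note $w(\theta,y)>0$ for every $y\geq 0$ since $\Pr(Y\geq y)>0$ by condition~\ref{item:cox-positive}. Because $\X$ is bounded (condition~\ref{item:cox-bounded}), $w$ is smooth in $\theta$ and I can differentiate under the expectation, obtaining $\nabla_\theta h_y(\theta) = \E_{\theta,y}(X)$ and $\nabla^2_\theta h_y(\theta) = \E_{\theta,y}(XX^\T) - \E_{\theta,y}(X)\E_{\theta,y}(X)^\T = \mathrm{Cov}_{\theta,y}(X)$, where $\E_{\theta,y}$ denotes expectation under the tilted law $\nu_{\theta,y}$ whose density is proportional to $\exp(\theta^\T X)\I(Y\geq y)$. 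This is the usual identity that the Hessian of a log-moment-generating function is a covariance, hence positive semidefinite. Since the linear term $\theta^\T\E(XZ)$ does not contribute to the Hessian, and since the entries of $\mathrm{Cov}_{\theta,Y}(X)$ are bounded by $m^2$ with $m=\sup_{x\in\X}|x|$ while $Z\leq 1$, a second application of dominated convergence gives $f''(\theta) = \E\big(Z\,\mathrm{Cov}_{\theta,Y}(X)\big)$.

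Fixing a nonzero $a\in\R^D$, this yields $a^\T f''(\theta)a = \E\big(Z\,\mathrm{Var}_{\theta,Y}(a^\T X)\big)$, an expectation of a nonnegative quantity, so $f''(\theta)$ is at least positive semidefinite. The crux is to show the integrand is strictly positive on a set of positive probability. Since $\exp(\theta^\T X)$ is strictly positive and bounded on the bounded set $\X$, the marginal tilted law of $X$ under $\nu_{\theta,y}$ is mutually absolutely continuous with the conditional law of $X$ given $Y\geq y$; hence $\mathrm{Var}_{\theta,y}(a^\T X)>0$ if and only if $a^\T X$ is not almost surely constant under $\Pr(\,\cdot\mid Y\geq y)$. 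I would verify this nondegeneracy for every $y\geq 0$: if $a^\T X = c$ almost surely given $Y\geq y$, then $\Pr(a^\T X\neq c,\,Y\geq y)=0$; but condition~\ref{item:cox-nonconstant} ($\mathrm{Var}(a^\T X)>0$) gives $\Pr(a^\T X\neq c)>0$, and on that event condition~\ref{item:cox-positive} gives $\Pr(Y\geq y\mid X)>0$, so $\Pr(a^\T X\neq c,\,Y\geq y)=\E\big(\I(a^\T X\neq c)\Pr(Y\geq y\mid X)\big)>0$, a contradiction.

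Consequently $\mathrm{Var}_{\theta,Y}(a^\T X)>0$ for every realized value of $Y$, so $Z\,\mathrm{Var}_{\theta,Y}(a^\T X)$ is strictly positive exactly on $\{Z=1\}$, which has positive probability by condition~\ref{item:cox-nonconstant}. Therefore $a^\T f''(\theta)a = \E\big(Z\,\mathrm{Var}_{\theta,Y}(a^\T X)\big)>0$ for all nonzero $a$, so $f''(\theta)$ is positive definite for every $\theta\in\R^D$. I expect the main obstacle to be the middle step — turning condition~\ref{item:cox-positive} into the statement that conditioning on $\{Y\geq y\}$ cannot collapse the support of $a^\T X$, which is precisely where the two nondegeneracy hypotheses must be combined carefully via the equivalence of the tilted and conditional laws; by contrast, the differentiation-under-the-integral computations are routine given the boundedness of $\X$.
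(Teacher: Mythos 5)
Your proposal is correct and follows essentially the same route as the paper's proof: both express $h_y''(\theta)$ as the covariance of $X$ under the exponentially tilted measure (the paper via its Lemma on log-normalizers of exponential families), justify $f''(\theta)=\E\big(Z\,h_Y''(\theta)\big)$ by dominated convergence using the boundedness of $\X$, and transfer non-degeneracy through mutual absolute continuity before invoking $\Pr(Z=1)>0$. The only cosmetic difference is that you pass through the conditional law of $X$ given $Y\geq y$ as an intermediate step, whereas the paper compares the tilted law directly to the unconditional law of $X$; both hinge on the same strict positivity of $\exp(\theta^\T x)\Pr(Y\geq y\mid X=x)$ supplied by conditions \ref{item:cox-bounded} and \ref{item:cox-positive}.
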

\begin{proof}
Recall that 
$f(\theta) = \E\big(h_Y(\theta) Z\big) - \theta^\T \E(X Z)$
where $h_y(\theta) = \log \E(e^{\theta^\T X} \I(Y \geq y))$  for $\theta\in\R^D$.
First, we put $h_y(\theta)$ in the form of $\kappa(\theta)$ in Lemma~\ref{lemma:K-lemma} by noting that
% $h_y(\theta) = \log \E\big(\exp(\theta^\T X)\I(Y \geq y)\big) = 
$h_y(\theta) = \log \E(e^{\theta^\T X}\Pr(Y \geq y \mid X)) = \log\int \exp(\theta^\T x) \mu_y(d x)$
where $\mu_y(d x) = \Pr(Y\geq y \mid X = x) P(d x)$ 
and $P$ is the distribution of $X$
\citep[10.2.1-10.2.2]{dudley2002real}.
% Note to self: Also see Folland exercise 2.14 for the fact that $\mu_y$ is a measure and $\E(e^{\theta^\T X}\Pr(Y \geq y \mid X)) = \int \exp(\theta^\T x) \mu_y(d x)$.
Let $m = \sup\{|x| : x\in\X\} < \infty$.
We have $|h_y(\theta)|<\infty$ for all $\theta\in\R^D$ and all $y\geq 0$ because
$\exp(-m|\theta|) \leq \exp(\theta^\T X) \leq \exp(m|\theta|)$,
and thus $-\infty < -m|\theta| + \log \Pr(Y \geq y) \leq h_y(\theta) \leq m|\theta| + \log\Pr(Y \geq y) < \infty$
due to conditions \ref{item:cox-bounded} and \ref{item:cox-positive} of Theorem~\ref{theorem:cox}.

For any given $\theta\in\R^D$ and $y\geq 0$, following Lemma~\ref{lemma:K-lemma}, we define a probability measure $\tilde P = \tilde P_{\theta,y}$ on $\X$ by
$\tilde P(d x) = \exp(\theta^\T x - h_y(\theta)) \Pr(Y \geq y \mid X = x) P(d x)$.
Note that $P$ and $\tilde P$ are mutually absolutely continuous
since $\exp(\theta^\T x - h_y(\theta)) \Pr(Y \geq y \mid X = x)$ is strictly positive for all $x\in\X$.
By Lemma~\ref{lemma:K-lemma}, $h_y'(\theta) = \E(\tilde X)$ and $h_y''(\theta) = \mathrm{Cov}(\tilde X)$ where $\tilde X\sim \tilde P$.
We claim that for any nonzero $a\in\R^D$, $a^\T h_y''(\theta) a > 0$. 
To see this, suppose $a\in\R^D$ such that $a^\T h_y''(\theta) a = 0$. 
Since $a^\T h_y''(\theta) a = \mathrm{Var}(a^\T \tilde X)$,
it follows that $\Pr(a^\T \tilde X = \E(a^\T \tilde X)) = 1$.
But then $\Pr(a^\T X = \E(a^\T \tilde X)) = 1$ since $P \ll \tilde P$.
Hence, $a^\T X$ is a.s.\ equal to a constant, so $\mathrm{Var}(a^\T X) = 0$, which implies $a = 0$ by condition  
\ref{item:cox-nonconstant} of Theorem~\ref{theorem:cox}.

To justify differentiating under the expectation in $\E(h_Y(\theta) Z)$,
we apply \citet[Theorem 2.27b]{folland2013real} using the following bounds.
First, $\E|h_Y(\theta)Z| < \infty$ by Lemma~\ref{lemma:cox-pointwise}.
Next, $|\tilde X| \leq m$ because $\tilde P$ is supported on $\X$.
Thus, $|\frac{\partial}{\partial\theta_j} h_y(\theta)z| = |\E(\tilde X_j)z| \leq \E|\tilde X_j| \leq \E|\tilde X| \leq m$ 
and $|\frac{\partial^2}{\partial\theta_j\partial\theta_k} h_y(\theta)z| = |\mathrm{Cov}(\tilde X_j,\tilde X_k)z| \leq \E|\tilde X_j||\tilde X_k| + \E|\tilde X_j|\E|\tilde X_k| \leq 2 m^2$ for $z\in\{0,1\}$.

Hence, $f''(\theta) = \E\big(h_Y''(\theta)Z\big)$, and we have that for any nonzero $a\in\R^D$, $a^\T f''(\theta) a = \E\big(a^\T h_Y''(\theta) a Z\big) > 0$
because $a^\T h_Y''(\theta) a > 0$ and $\Pr(Z = 1) > 0$ due to condition \ref{item:cox-nonconstant} of Theorem~\ref{theorem:cox}.
Therefore, $f''(\theta)$ is positive definite.
\end{proof}

\section{Supporting results}
\label{section:supporting-results}

This section contains miscellaneous supporting results used in the proofs.
A metric space $E$ is \textit{totally bounded} if for any $\delta>0$, there exist $x_1,\ldots,x_k\in E$,
for some $k\in\N$, such that $E =\bigcup_{i = 1}^k\{x\in E: d(x,x_i)<\delta\}$. 
In particular, any bounded subset of a Euclidean space is totally bounded.
%In particular, any compact subset of a metric space is totally bounded.

\begin{lemma}\label{lemma:equicontinuous-convergence}
Suppose $h_n:E\to F$ for $n\in\N$, where $E$ is a totally bounded metric space and $F$ is a normed space.
If $(h_n)$ converges pointwise and is equicontinuous, then it converges uniformly.
\end{lemma}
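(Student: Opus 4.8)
The plan is to run a standard three-$\epsilon$ argument, using total boundedness to reduce control of $(h_n)$ over all of $E$ to control at finitely many points. First I would let $h(x) = \lim_n h_n(x)$ denote the pointwise limit, which exists by hypothesis, and reduce the claim to showing $\sup_{x\in E}\|h_n(x) - h(x)\| \to 0$ as $n\to\infty$.

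Fix $\epsilon > 0$. By equicontinuity, I would choose $\delta > 0$ such that for all $n\in\N$ and all $x,x'\in E$ with $d(x,x') < \delta$ we have $\|h_n(x) - h_n(x')\| < \epsilon/3$. Since $E$ is totally bounded, cover $E$ by finitely many balls of radius $\delta$ centered at points $x_1,\ldots,x_k\in E$. Because pointwise convergence holds at each of these finitely many points, there is a single index $N$ such that $\|h_n(x_i) - h(x_i)\| < \epsilon/3$ for all $n\geq N$ and all $i\in\{1,\ldots,k\}$; the key feature is that $N$ depends only on the finite net, not on the point $x$ being tested.

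The one step requiring a moment's care is that the limit $h$ inherits the equicontinuity bound: for $x\in E$ with $d(x,x_i)<\delta$, letting $n\to\infty$ in the inequality $\|h_n(x) - h_n(x_i)\| < \epsilon/3$ yields $\|h(x) - h(x_i)\| \leq \epsilon/3$. Then, given any $x\in E$, I would pick $x_i$ with $d(x,x_i)<\delta$ and apply the triangle inequality to obtain, for all $n\geq N$, the bound $\|h_n(x) - h(x)\| \leq \|h_n(x) - h_n(x_i)\| + \|h_n(x_i) - h(x_i)\| + \|h(x_i) - h(x)\| < \epsilon$. Since $N$ is independent of $x$, taking the supremum over $x\in E$ gives uniform convergence. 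The argument presents no genuine obstacle beyond this passage to the limit in the equicontinuity inequality; the essential mechanism is simply that total boundedness converts a uniform statement into finitely many pointwise ones.
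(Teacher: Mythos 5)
Your proof is correct and follows essentially the same argument as the paper: total boundedness reduces the problem to a finite $\delta$-net, equicontinuity controls the deviation from net points, and the triangle inequality finishes. The only cosmetic difference is that you bound $\|h_n(x)-h(x)\|$ directly by passing to the limit in the equicontinuity inequality, whereas the paper shows $(h_n)$ is uniformly Cauchy and invokes the Cauchy criterion for uniform convergence; your variant has the minor advantage of not needing any appeal to completeness of $F$.
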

\begin{proof}
Let $\epsilon>0$. Choose $\delta>0$ by equicontinuity, so that for any $n\in\N$, $x,y\in E$,
if $d(x,y)<\delta$ then $\|h_n(x)-h_n(y)\|<\epsilon$.
Choose $x_1,\ldots,x_k\in E$ by totally boundedness, and by pointwise convergence, let $N$ such that for all $m,n>N$,
for all $i\in\{1,\ldots,k\}$,
$\|h_m(x_i) - h_n(x_i)\|<\epsilon$. Then, for any $x\in E$, there is some $i\in\{1,\ldots,k\}$ such that $d(x,x_i)<\delta$, and thus
$$\|h_m(x) - h_n(x)\|\leq\|h_m(x) - h_m(x_i)\|+\|h_m(x_i) - h_n(x_i)\|+\|h_n(x_i) - h_n(x)\| < 3\epsilon $$
for any $m,n>N$. Therefore, $(h_n)$ converges uniformly \citep[by e.g.,][7.8]{rudin1976principles}. 
\end{proof}

\iffalse
\begin{lemma}\label{lemma:coefficient}
If $\int |f_n - f| \to 0$, $\int |f| < \infty$, and $a_n\to a \in\R$, then $\int |a_n f_n - a f|\to 0$.
\end{lemma}
\begin{proof} By the triangle inequality,
$$ \int |a_n f_n - a f| \leq \int |a_n f_n - a_n f| + \int |a_n f - a f| = |a_n|\int |f_n - f| + |a_n - a|\int |f| \longrightarrow 0. $$
\end{proof}
\fi

% Note to self: We use the multivariate form of Taylor's theorem given by Rudin's exercise 9.30,
% part (a) and the first formula of part (b) (not the rest).
% Note that a function of class C^(k) is defined as having continuous kth order partial derivatives.
% See Rudin exercise 9.29 regarding interchangeability of the order of kth order partial derivatives, for C^(k) functions.
% Also see Rudin Thms 9.15 (chain rule), 9.17, and 9.21.

%We say $f$ is of class $C^k$ if it has continuous derivatives of order $k$;  
When all the $k$th order partial derivatives of $f$ exist,
let $f^{(k)}(x)$ denote the $k$-way tensor of $k$th derivatives;
in particular, $f^{(1)} = f'$, $f^{(2)}=f''$, and so on.
When these derivatives are continuous, the order of differentiation does not matter
\citep[exercise 9.29]{rudin1976principles}.

\begin{lemma}\label{lemma:UBD}
Let $E\subseteq\R^D$ be open and convex, and let $f_n:E\to\R$ for $n\in\N$.
For any $k\in\N$, if each $f_n$ has continuous $k$th-order derivatives and $(f_n^{(k)})$ is uniformly bounded,
then $(f_n^{(k-1)})$ is equi-Lipschitz.
\end{lemma}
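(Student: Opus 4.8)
The plan is to exploit the fact that the $k$th-order derivative tensor $f_n^{(k)}$ is, componentwise, the collection of gradients of the entries of $f_n^{(k-1)}$, and then to convert the uniform bound on $f_n^{(k)}$ into a uniform Lipschitz bound on $f_n^{(k-1)}$ by integrating along line segments, which stay inside $E$ thanks to convexity.

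First I would set $C = \sup\{\|f_n^{(k)}(x)\| : n\in\N,\, x\in E\} < \infty$, which is finite by the uniform boundedness hypothesis. Fix a multi-index $(i_1,\ldots,i_{k-1})$ and consider the scalar component $g_n = (f_n^{(k-1)})_{i_1\cdots i_{k-1}}$. Its gradient has entries $\partial g_n/\partial x_j = (f_n^{(k)})_{i_1\cdots i_{k-1} j}$ for $j=1,\ldots,D$, which form a subcollection of the entries of $f_n^{(k)}$; hence $|\nabla g_n(z)| \le \|f_n^{(k)}(z)\| \le C$ for every $z\in E$. Continuity of the $k$th-order derivatives ensures each $g_n$ is $C^1$, so these gradient manipulations are justified.

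Next, for any $x,y\in E$, convexity of $E$ guarantees that the segment $\{y + t(x-y) : t\in[0,1]\}$ lies in $E$, so the fundamental theorem of calculus gives
\begin{align*}
|g_n(x) - g_n(y)| = \Big\vert \int_0^1 \nabla g_n\big(y + t(x-y)\big)^\T (x-y)\, dt\Big\vert \le C\,|x - y|,
\end{align*}
using Cauchy--Schwarz together with the gradient bound. Summing the squares over all $D^{k-1}$ multi-indices then yields $\|f_n^{(k-1)}(x) - f_n^{(k-1)}(y)\|^2 \le D^{k-1} C^2 |x-y|^2$, that is, $\|f_n^{(k-1)}(x) - f_n^{(k-1)}(y)\| \le D^{(k-1)/2} C\, |x - y|$. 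Since the constant $c := D^{(k-1)/2} C$ depends on neither $n$, $x$, nor $y$, the collection $(f_n^{(k-1)})$ is equi-Lipschitz.

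There is no serious obstacle here; the only points requiring care are (i) observing that each component's gradient is dominated by the full tensor norm, so that the single uniform bound $C$ transfers cleanly to every component, and (ii) invoking convexity so that the mean value inequality applies along a segment contained entirely in $E$ — which is precisely where the convexity hypothesis is used. Uniformity of the Lipschitz constant in $n$ is then automatic, since $C$ is itself the uniform bound.
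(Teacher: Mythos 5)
Your proof is correct and follows essentially the same route as the paper: reduce to the scalar components of $f_n^{(k-1)}$, bound each component's gradient by the Frobenius norm of $f_n^{(k)}$, apply a one-dimensional mean-value/fundamental-theorem argument along the segment (where convexity of $E$ is used), and assemble the component Lipschitz bounds into a tensor-norm bound. The only cosmetic difference is that you integrate the gradient along the segment while the paper invokes Taylor's theorem in mean-value form; the two are interchangeable here.
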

\begin{proof}
First, we prove the case of $k = 1$.
Let $C = \sup_n \sup_{x\in E}|f_n'(x)|<\infty$. By Taylor's theorem, for any $n\in\N$, $x,y\in E$,
$f_n(x) = f_n(y) + f_n'(z)^\T (x - y)$ for some $z$ on the line between $x$ and $y$, and therefore, 
$$ |f_n(x) - f_n(y)|\leq|f_n'(z)| \,|x - y|\leq C|x - y|. $$
Thus, $(f_n)$ is equi-Lipschitz.

%(2) For any $i\in\{1,\ldots,D\}$, if $h_n(x) = f_n'(x)_{i}$,
%then $(h_n')$ is uniformly bounded (since $|h_n'(x)|\leq\|f_n''(x)\|$ and $(f_n'')$ is uniformly bounded), and
%hence, $(h_n)$ is equi-Lipschitz (by part 1).
%Thus, $(f_n')$ is equi-Lipschitz, since if $C_{i}$ is the equi-Lipschitz constant for entry $i$, then 
%$$|f_n'(x) - f_n'(y)|^2 = \sum_{i} |f_n'(x)_{i} - f_n'(y)_{i}|^2 \leq C^2|x-y|^2$$
%where $C^2 = \sum_{i} C_{i}^2$.  

For notational clarity, we prove the case of $k = 3$, and observe that the extension from this to the general case is immediate.
For any $i,j\in\{1,\ldots,D\}$, if we define $h_n(x) = f_n''(x)_{i j} = \frac{\partial^2}{\partial x_i \partial x_j} f_n(x)$,
then $(h_n')$ is uniformly bounded (since $|h_n'(x)|\leq\|f_n'''(x)\|$ and $(f_n''')$ is uniformly bounded), and
hence, $(h_n)$ is equi-Lipschitz by the case of $k = 1$ just proven.
Thus, $(f_n'')$ is equi-Lipschitz, since if $C_{i j}$ is the equi-Lipschitz constant for entry $(i,j)$, then 
$$\|f_n''(x) - f_n''(y)\|^2 = \sum_{i,j} |f_n''(x)_{i j} - f_n''(y)_{i j}|^2 \leq C^2|x-y|^2$$
where $C^2 = \sum_{i,j} C_{i j}^2$.  
\end{proof}

\begin{lemma}\label{lemma:smooth-min}
Let $B\subseteq\R^D$ be open and let $f:B\to\R$ be differentiable. If $x_0\in B$ such that $f(x)\geq f(x_0)$ for all $x\in B$, then $f'(x_0) = 0$.
\end{lemma}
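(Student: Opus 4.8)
The plan is to reduce the multivariate statement to the familiar one-dimensional fact that a differentiable function attaining an interior minimum has vanishing derivative there, applied along each direction through $x_0$. First I would fix an arbitrary direction $u\in\R^D$. Since $B$ is open and $x_0\in B$, there is some $\rho>0$ such that $x_0+t u\in B$ for all $t$ with $|t|<\rho$; this is exactly where openness is used. In particular, the line segment through $x_0$ in direction $u$ stays inside $B$ for small $|t|$, so $f(x_0+t u)$ is well-defined on $(-\rho,\rho)$.

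Next I would invoke differentiability of $f$ at $x_0$ to write the first-order expansion $f(x_0+t u)=f(x_0)+t\,f'(x_0)^\T u+o(t)$ as $t\to 0$. The hypothesis $f(x)\geq f(x_0)$ for all $x\in B$ gives $t\,f'(x_0)^\T u+o(t)\geq 0$ for all $|t|<\rho$. Dividing by $t>0$ and letting $t\to 0^+$ yields $f'(x_0)^\T u\geq 0$; dividing by $t<0$, which reverses the inequality, and letting $t\to 0^-$ yields $f'(x_0)^\T u\leq 0$. Hence $f'(x_0)^\T u=0$.

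Since $u\in\R^D$ was arbitrary, letting $u$ range over the standard basis vectors $e_1,\ldots,e_D$ shows that every component of $f'(x_0)$ vanishes, so $f'(x_0)=0$. There is no serious obstacle here: the only points requiring care are the two-sided limit argument, where the inequality must be flipped when dividing by negative $t$, and the observation that openness of $B$ is precisely what guarantees we can approach $x_0$ from both sides along every direction. Without interiority the conclusion can fail at a boundary minimum, which is why the lemma---invoked in the proof of Theorem~\ref{theorem:altogether}, where the minimizers $\theta_n^\epsilon$ are first shown to lie in the open ball $B_\epsilon(\theta_0)$---requires $x_0$ to be an interior point.
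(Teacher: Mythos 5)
Your proof is correct and follows essentially the same route as the paper: both reduce to directional derivatives along lines through $x_0$, using openness of $B$ to move in both directions. The only cosmetic difference is how the argument closes---you take two-sided limits along each coordinate direction to get $f'(x_0)^\T u = 0$ directly, while the paper derives $f'(x_0)^\T u \geq 0$ for every unit $u$ and then reaches a contradiction by choosing $u = -f'(x_0)/|f'(x_0)|$.
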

\begin{proof}
For any $u\in\R^D$ with $|u|=1$, $f'(x_0)^\T u = \lim_{\epsilon\to 0} (f(x_0 + \epsilon u) - f(x_0)) \geq 0$. 
If $f'(x_0)\neq 0$, then choosing $u = -f'(x_0)/|f'(x_0)|$, we have $0 \leq f'(x_0)^\T u = -|f'(x_0)| < 0$, a contradiction.
\end{proof}

% ______________________________________________
% Note to self: The following are used in the proofs for the Cox model

\begin{lemma}
\label{lemma:uniform-convergence}
Let $a,b\in\R$ such that $a<b$, let $g:[a,b]\to\R$ be continuous, and for $n\in\N$, let $g_n:[a,b]\to\R$ be a non-increasing function.
If there is a dense subset $S\subseteq[a,b]$ such that $a,b\in S$ and $g_n(y)\to g(y)$ for all $y\in S$, 
then $\sup_{y\in[a,b]} |g_n(y) - g(y)| \longrightarrow 0$ as $n\to\infty$.
\end{lemma}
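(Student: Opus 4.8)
The plan is to prove this by a Pólya-type argument, exploiting the monotonicity of each $g_n$ together with the continuity of the limit $g$ in order to squeeze the value of $g_n$ at an arbitrary point between its values at nearby points of $S$. The crucial observation is that, although pointwise convergence is assumed only on the dense set $S$, monotonicity allows us to control $g_n$ everywhere once we have controlled it at finitely many points of $S$.

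First I would fix $\epsilon > 0$. Since $g$ is continuous on the compact interval $[a,b]$, it is uniformly continuous, so there exists $\delta > 0$ such that $|g(x) - g(y)| < \epsilon$ whenever $x,y \in [a,b]$ with $|x - y| < \delta$. Using that $S$ is dense in $[a,b]$ and contains both endpoints $a$ and $b$, I would choose a finite partition $a = y_0 < y_1 < \cdots < y_m = b$ with every $y_i \in S$ and with mesh $\max_i (y_i - y_{i-1}) < \delta$. Because there are only finitely many partition points and $g_n(y_i) \to g(y_i)$ for each $i$ (as $y_i \in S$), there is some $N$ such that $|g_n(y_i) - g(y_i)| < \epsilon$ for all $n \geq N$ and all $i \in \{0,\ldots,m\}$.

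Next I would bound $|g_n(y) - g(y)|$ for an arbitrary $y \in [a,b]$ and $n \geq N$. Locating $y$ in a subinterval $[y_{i-1}, y_i]$, the fact that $g_n$ is non-increasing gives $g_n(y_i) \leq g_n(y) \leq g_n(y_{i-1})$. For the upper bound, $g_n(y) \leq g_n(y_{i-1}) < g(y_{i-1}) + \epsilon < g(y) + 2\epsilon$, the last step using $|y_{i-1} - y| < \delta$; for the lower bound, $g_n(y) \geq g_n(y_i) > g(y_i) - \epsilon > g(y) - 2\epsilon$, again using $|y_i - y| < \delta$. Hence $|g_n(y) - g(y)| < 2\epsilon$ for all $y \in [a,b]$ and all $n \geq N$, so $\sup_{y \in [a,b]} |g_n(y) - g(y)| \leq 2\epsilon$ for $n \geq N$. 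Since $\epsilon$ was arbitrary, this gives $\sup_{y \in [a,b]} |g_n(y) - g(y)| \to 0$ as $n \to \infty$.

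The only delicate point is the placement of the partition endpoints inside $S$: this is precisely where the hypotheses that $S$ is dense and that $a, b \in S$ are used, since pointwise convergence is unavailable off $S$. Everything else is routine bookkeeping, and the one thing to keep straight is the direction of the monotonicity inequalities (``non-increasing'' reverses the order), but no genuine obstacle arises.
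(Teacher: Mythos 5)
Your proof is correct, and since the paper explicitly omits the proof of this lemma as ``straightforward to verify,'' your P\'olya-type argument (partition by points of $S$ with mesh controlled by the uniform continuity of $g$, then squeeze $g_n$ between its values at adjacent partition points using monotonicity) is exactly the standard argument the author had in mind. All the inequality directions are handled correctly for the non-increasing case, and the hypotheses $a,b\in S$ and density of $S$ are used precisely where needed, so there is nothing to add.
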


Lemma~\ref{lemma:uniform-convergence} is straightforward to verify, so we omit the proof.
Lemmas~\ref{lemma:K-differentiation} and \ref{lemma:K-lemma} are standard well-known results, but we provide precise statements and proofs for completeness.
We write $S^\circ$ to denote the interior of $S$.

\begin{lemma}
\label{lemma:K-differentiation}
Let $\mu$ be a Borel measure on $\R^D$ and define $G(\theta) = \int_{\R^D} \exp(\theta^\T x) \mu(d x)$ for $\theta\in\R^D$.
Let $S = \{\theta\in\R^D : G(\theta) < \infty\}$.
Then $G$ is $C^\infty$ on $S^\circ$ and for all $\theta\in S^\circ$, $k\in\{0,1,2,\ldots\}$, $i_1,\ldots,i_k\in\{1,\ldots,D\}$,
we have
\begin{equation}
\label{equation:K-differentiation}
\frac{\partial}{\partial\theta_{i_1}} \cdots \frac{\partial}{\partial\theta_{i_k}} G(\theta) = \int x_{i_1}\cdots x_{i_k} \exp(\theta^\T x) \mu(d x).
\end{equation}
\end{lemma}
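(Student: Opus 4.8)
The plan is to establish Equation~\ref{equation:K-differentiation} by induction on $k$, differentiating under the integral sign via \citet[Theorem 2.27]{folland2013real} at each step; the only substantive work is producing, near an arbitrary interior point, a single $\mu$-integrable function that dominates the integrands $x_{i_1}\cdots x_{i_k}\exp(\theta^\T x)$ uniformly over a neighborhood of that point. Fix $\theta_0\in S^\circ$. Since $S^\circ$ is open, choose $\rho>0$ with $B_\rho(\theta_0)\subseteq S$, and pick $\delta>0$ small enough that $2\delta\sqrt{D}<\rho$, so that each of the $2^D$ corner points $\theta_0+2\delta s$, $s\in\{-1,1\}^D$, lies in $B_\rho(\theta_0)\subseteq S$ and hence satisfies $G(\theta_0+2\delta s)<\infty$. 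Note that no convexity of $S$ is needed, only that $\theta_0$ is interior.

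The key estimate combines three elementary bounds valid for $\theta\in B_\delta(\theta_0)$ and $x\in\R^D$. First, by Cauchy--Schwarz, $\theta^\T x=\theta_0^\T x+(\theta-\theta_0)^\T x\le \theta_0^\T x+\delta|x|$, so $\exp(\theta^\T x)\le \exp(\theta_0^\T x)\exp(\delta|x|)$. Second, since $|x_{i_1}\cdots x_{i_k}|\le|x|^k$ and $t\mapsto t^k e^{-\delta t}$ is bounded on $[0,\infty)$ by some $C_k<\infty$, we have $|x_{i_1}\cdots x_{i_k}|\exp(\delta|x|)\le C_k\exp(2\delta|x|)$. Third, using $|x|\le\sum_i|x_i|$ together with $\exp(2\delta|x_i|)\le \exp(2\delta x_i)+\exp(-2\delta x_i)$ and expanding the product gives $\exp(2\delta|x|)\le\prod_{i=1}^D\big(\exp(2\delta x_i)+\exp(-2\delta x_i)\big)=\sum_{s\in\{-1,1\}^D}\exp(2\delta s^\T x)$. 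Putting these together, for all $\theta\in B_\delta(\theta_0)$,
$$ |x_{i_1}\cdots x_{i_k}|\exp(\theta^\T x)\le C_k\sum_{s\in\{-1,1\}^D}\exp\big((\theta_0+2\delta s)^\T x\big)=:g_k(x), $$
and $g_k$ is $\mu$-integrable because $\int\exp((\theta_0+2\delta s)^\T x)\,\mu(dx)=G(\theta_0+2\delta s)<\infty$ for each $s$.

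With the dominating functions in hand, the induction is routine. The base case $k=0$ is the definition of $G$, and $g_0$ shows $G(\theta)<\infty$ throughout $B_\delta(\theta_0)$. For the inductive step, suppose Equation~\ref{equation:K-differentiation} holds with $k-1$ factors on $S^\circ$; the integrand $x_{i_1}\cdots x_{i_{k-1}}\exp(\theta^\T x)$ has $\theta_{i_k}$-partial derivative $x_{i_1}\cdots x_{i_k}\exp(\theta^\T x)$, which is dominated by $g_k$ on $B_\delta(\theta_0)$, so \citet[Theorem 2.27]{folland2013real} justifies differentiating under the integral and yields Equation~\ref{equation:K-differentiation} at $\theta_0$. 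Each such derivative is moreover continuous on $B_\delta(\theta_0)$ by dominated convergence with the same majorant $g_k$. Since $\theta_0\in S^\circ$ was arbitrary, all partial derivatives of all orders exist and are continuous on $S^\circ$, so $G\in C^\infty(S^\circ)$, and the order of differentiation is immaterial, consistent with the symmetry of the right-hand side of Equation~\ref{equation:K-differentiation}.

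The main obstacle is exactly the construction of $g_k$: once $\theta$ is allowed to move off $\theta_0$ the integrand acquires the extra factor $|x|^k\exp(\delta|x|)$, and the point is to trade the resulting $\exp(2\delta|x|)$ for finiteness of $G$ at the finitely many nearby corner points $\theta_0+2\delta s$, which is guaranteed precisely because $\theta_0$ is interior to $S$. Everything else is bookkeeping inside the induction.
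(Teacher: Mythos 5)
Your proof is correct. The overall skeleton is the same as the paper's --- induction on $k$ with differentiation under the integral sign via \citet[Theorem 2.27]{folland2013real} --- but the construction of the dominating function is genuinely different. The paper dominates only along the one-dimensional line of differentiation: it bounds $|x_j|\leq e^{\epsilon|x_j|}/\epsilon$ and absorbs the resulting exponential factors into the $k$-factor integrand evaluated at the two endpoints $\theta_0\pm 2\epsilon u$, so the majorant at stage $k$ is built from the moment integrands whose integrability is itself part of the induction hypothesis. You instead build a single majorant $g_k$ valid uniformly over a full ball $B_\delta(\theta_0)$, trading the polynomial factor $|x|^k$ and the drift $e^{\delta|x|}$ for finiteness of $G$ itself at the $2^D$ corner points $\theta_0+2\delta s$, $s\in\{-1,1\}^D$. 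Your version buys two things: the induction carries less (integrability of every moment integrand near $\theta_0$ falls out of the $g_k$ bound rather than being tracked inductively), and the same majorant immediately gives continuity of each derivative on the ball by dominated convergence, so smoothness requires no extra argument. The cost is the combinatorial expansion over the $2^D$ corners, which the paper's directional trick avoids. Both arguments are complete and correct.
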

\begin{proof}
We proceed by induction.  
By construction, for all $\theta\in S^\circ$, $\int |e^{\theta^\T x}| \mu(d x) < \infty$ and Equation~\ref{equation:K-differentiation} holds when $k=0$.
Fix $i_1,\ldots,i_k\in\{1,\ldots,D\}$ and suppose that for all $\theta\in S^\circ$, $\int |x_{i_1}\cdots x_{i_k} e^{\theta^\T x}| \mu(d x) < \infty$
and Equation~\ref{equation:K-differentiation} holds.
Let $j\in\{1,\ldots,D\}$ and $\theta_0\in S^\circ$. Define $u = (0,\ldots,0,1,0,\ldots,0)\in\R^D$ where the $1$ is in the $j$th position.
Choose $\epsilon > 0$ such that $\theta_0 + t u \in S^\circ$ for all $t\in [-2\epsilon,2\epsilon]$.
Define $f(x,t) = x_{i_1}\cdots x_{i_k} e^{(\theta_0 + t u)^\T x}$ and $F(t) = \int f(x,t) \mu(d x)$ for $x\in\R^D$, $t\in[-2\epsilon,2\epsilon]$.
Note that $\int |f(x,t)|\mu(d x) < \infty$ for all $t\in[-2\epsilon,2\epsilon]$ by the induction hypothesis.
Define $g(x) = |f(x,2\epsilon)|/\epsilon + |f(x,-2\epsilon)|/\epsilon$.
It is straightforward to verify that $|\frac{\partial f}{\partial t}(x,t)| = |x_j f(x,t)| \leq g(x)$ for all $x\in\R^D$, $t\in[-\epsilon,\epsilon]$,
by using the inequality $|x_j| \leq e^{\epsilon |x_j|}/\epsilon$. 
Further, $\int|g(x)|\mu(d x) < \infty$ by the induction hypothesis.
Therefore, $F$ is differentiable and $F'(t) = \int \frac{\partial f}{\partial t}(x,t) \mu(d x)$ for all $t \in (-\epsilon,\epsilon)$ 
by \citet[Theorem 2.27b]{folland2013real}.

Putting these pieces together, we have
\begin{align*}
\frac{\partial}{\partial\theta_j}\bigg\vert_{\theta=\theta_0} \frac{\partial}{\partial\theta_{i_1}} \cdots \frac{\partial}{\partial\theta_{i_k}} G(\theta)
&= \frac{\partial}{\partial\theta_j}\bigg\vert_{\theta=\theta_0} \int x_{i_1}\cdots x_{i_k} \exp(\theta^\T x) \mu(d x) \\
&= \frac{\partial}{\partial t}\bigg\vert_{t = 0} \int f(x,t)\mu(d x) 
= F'(0) = \int \frac{\partial f}{\partial t}(x,0)\mu(d x) \\
&= \int x_j f(x,0)\mu(d x)
= \int x_j x_{i_1}\cdots x_{i_k} \exp(\theta_0^\T x) \mu(d x)
\end{align*}
and $\int |x_j x_{i_1}\cdots x_{i_k} e^{\theta_0^\T x}| \mu(d x) = \int |\frac{\partial f}{\partial t}(x,0)|\mu(d x) \leq \int |g(x)|\mu(d x) < \infty$.
Since $j\in\{1,\ldots,D\}$ and $\theta_0\in S^\circ$ are arbitrary, this completes the induction step.
\end{proof}

\begin{lemma}
\label{lemma:K-lemma}
Let $\mu$ be a Borel measure on $\R^D$ and define $\kappa(\theta) = \log \int_{\R^D} \exp(\theta^\T x) \mu(d x)$ for $\theta\in\R^D$.
Let $\Theta = \{\theta\in\R^D : |\kappa(\theta)| < \infty\}$, and 
define $P_\theta(A) = \int_A \exp(\theta^\T x - \kappa(\theta)) \mu(d x)$ for $\theta\in\Theta$ and $A\subseteq\R^D$ Borel measurable.
Then $\Theta$ is a convex set and $\kappa$ is convex on $\Theta$.  Further, for all $\theta$ in the interior of $\Theta$, for all $i,j,k\in\{1,\ldots,D\}$, if $X\sim P_\theta$ then
\begin{enumerate}
\item\label{K-lemma-1} $\displaystyle \frac{\partial\kappa}{\partial\theta_i}(\theta) = \E(X_i)$,
\item\label{K-lemma-2} $\displaystyle \frac{\partial^2\kappa}{\partial\theta_i\partial\theta_j}(\theta) = \E\big((X_i - \E X_i)(X_j - \E X_j)\big) = \mathrm{Cov}(X_i,X_j)$, and
\item\label{K-lemma-3} $\displaystyle \frac{\partial^3\kappa}{\partial\theta_i\partial\theta_j\partial\theta_k}(\theta) = \E\big((X_i - \E X_i)(X_j - \E X_j)(X_k - \E X_k)\big)$.
\end{enumerate}
\end{lemma}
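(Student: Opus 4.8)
The plan is to reduce everything to Lemma~\ref{lemma:K-differentiation} applied to $G(\theta) = \int_{\R^D}\exp(\theta^\T x)\mu(dx)$, using the observation that $\kappa = \log G$. First I would record that $\Theta = \{\theta : 0 < G(\theta) < \infty\}$, since $|\kappa(\theta)| = |\log G(\theta)| < \infty$ precisely when $G(\theta)\in(0,\infty)$. For convexity, if $\Theta=\emptyset$ there is nothing to prove; otherwise $\mu$ is nonzero, so $G(\eta)>0$ for every $\eta\in\R^D$. Then for $\theta,\theta'\in\Theta$ and $t\in(0,1)$, writing $\exp\big((t\theta + (1-t)\theta')^\T x\big) = \exp(\theta^\T x)^t\,\exp((\theta')^\T x)^{1-t}$ and applying H\"older's inequality with conjugate exponents $1/t$ and $1/(1-t)$ gives $G(t\theta + (1-t)\theta') \leq G(\theta)^t G(\theta')^{1-t} < \infty$. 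Since the left-hand side is also positive, $t\theta+(1-t)\theta'\in\Theta$, so $\Theta$ is convex; taking logarithms yields $\kappa(t\theta+(1-t)\theta') \leq t\kappa(\theta) + (1-t)\kappa(\theta')$, i.e.\ $\kappa$ is convex (the cases $t\in\{0,1\}$ being trivial).

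For the derivative formulas I would work on the interior $\Theta^\circ$. Since $\Theta\subseteq S := \{\theta : G(\theta)<\infty\}$, we have $\Theta^\circ\subseteq S^\circ$, and Lemma~\ref{lemma:K-differentiation} shows that $G$ is $C^\infty$ on $\Theta^\circ$ with $\partial_{i_1}\cdots\partial_{i_\ell}G(\theta) = \int x_{i_1}\cdots x_{i_\ell}\exp(\theta^\T x)\mu(dx)$, while $G(\theta)>0$ there. Hence $\kappa = \log G$ is $C^\infty$ on $\Theta^\circ$. Introducing the shorthand $m_{i_1\cdots i_\ell}(\theta) := \partial_{i_1}\cdots\partial_{i_\ell}G(\theta)/G(\theta)$, the differentiation formula together with the definition of $P_\theta$ identifies these as raw moments: $m_{i_1\cdots i_\ell}(\theta) = \int x_{i_1}\cdots x_{i_\ell}\exp(\theta^\T x - \kappa(\theta))\mu(dx) = \E(X_{i_1}\cdots X_{i_\ell})$ for $X\sim P_\theta$.

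The three claims then follow by the chain and quotient rules. The first derivative is $\partial_i\kappa = \partial_i G/G = m_i = \E(X_i)$. Differentiating once more, $\partial_i\partial_j\kappa = m_{ij} - m_i m_j = \E(X_iX_j) - \E(X_i)\E(X_j) = \Cov(X_i,X_j) = \E\big((X_i-\E X_i)(X_j-\E X_j)\big)$. For the third derivative I would use the elementary identity $\partial_k m_{i_1\cdots i_\ell} = m_{i_1\cdots i_\ell k} - m_{i_1\cdots i_\ell}m_k$ (again just the quotient rule combined with the differentiation formula), apply it to each factor of $m_{ij} - m_i m_j$, and collect terms to obtain $m_{ijk} - m_{ij}m_k - m_{ik}m_j - m_{jk}m_i + 2m_im_jm_k$; expanding the claimed third central moment $\E\big((X_i-\E X_i)(X_j-\E X_j)(X_k-\E X_k)\big)$ produces exactly this expression, which completes the proof.

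There is no genuine analytic obstacle here: the substantive content, namely the smoothness of $G$ and the justification for differentiating under the integral sign, is already delivered by Lemma~\ref{lemma:K-differentiation}, and H\"older's inequality disposes of convexity. The only points requiring care are the identification $\Theta^\circ\subseteq S^\circ$ (so that the lemma applies and $G$ remains strictly positive, making $\log G$ smooth) and the purely algebraic bookkeeping in the third-derivative computation, which I expect to be the most tedious but entirely routine step.
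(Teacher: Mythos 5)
Your proposal is correct and follows essentially the same route as the paper's proof: convexity of $\Theta$ and $\kappa$ via H\"older's inequality, and the derivative identities via Lemma~\ref{lemma:K-differentiation} applied to $G = e^\kappa$ together with the chain/quotient rule. You simply spell out the details (the identification $\Theta^\circ\subseteq S^\circ$, the moment shorthand, and the third-derivative bookkeeping) that the paper leaves as "straightforward."
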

\noindent More succinctly, items \ref{K-lemma-1} and \ref{K-lemma-2} state that $\kappa'(\theta) = \E(X)$ and $\kappa''(\theta) = \mathrm{Cov}(X)$ where $X\sim P_\theta$.
\begin{proof}
Convexity of $\Theta$ and $\kappa$ is a straightforward application of H\"older's inequality.
% We show that $\kappa$ is convex on $\Theta$.
% If $\mu(\R^D) = 0$ then $\Theta = \emptyset$, and the result is trivial. Suppose $\mu(\R^D)\in(0,\infty]$.
% Let $\theta,\eta\in\R^D$, and let $p,q > 0$ such that $1/p + 1/q = 1$.
% Then by H\"older's inequality \citep[Theorem 6.2]{folland2013real}, 
% $ \int e^{\theta/p + \eta/q} \mu(d x) = \int (e^{\theta^\T x)/p} e^{\eta^\T x)/q} \mu(d x) 
% \leq \big(\int e^{\theta^\T x} \mu(d x)\big)^{1/p} \big(\int e^{\eta^\T x} \mu(d x)\big)^{1/q}, $
% and therefore, $\kappa(\theta/p + \eta/q) \leq \kappa(\theta)/p + \kappa(\eta)/q$, since $\log$ is order-preserving.
% Hence, $\kappa$ is convex.
Define $G(\theta) = \int \exp(\theta^\T x) \mu(d x)$ for $\theta\in\R^D$.
By Lemma~\ref{lemma:K-differentiation}, $G$ is $C^\infty$ on the interior of $\Theta$ and
its partial derivatives are given by Equation~\ref{equation:K-differentiation}.
The identities in items \ref{K-lemma-1} - \ref{K-lemma-3} are straightforward to derive using 
Equation~\ref{equation:K-differentiation} and the chain rule.
\end{proof}

% \bibliographystyle{abbrvnatcap}
% \bibliography{refs}

\end{document}